\documentclass{amsart}

\usepackage{amsmath,amssymb, xypic, color}
\usepackage{graphicx}
\usepackage{hyperref}

\usepackage{tikz-cd}

\newcommand{\arr}{\longrightarrow}
\newcommand{\R}{\mathbb{R}}
\newcommand{\Z}{\mathbb{Z}}
\newcommand{\C}{\mathbb{C}}

\newcommand{\X}{\mathcal{X}}

\newcommand{\M}{\mathcal{M}}
\newcommand{\alb}{\mathsf{X}}
\newcommand{\xs}{\alb^*}
\newcommand{\xo}{\alb^\omega}
\newcommand{\xmo}{\alb^{-\omega}}
\newcommand{\nuke}{\mathcal{N}}

\newcommand{\img}{\mathop{\mathrm{IMG}}}

\newcommand{\G}{\Gamma}

\newcommand{\lims}[1][G]{\mathcal{J}_{#1}}
\newcommand{\limg}[1][G]{\mathcal{X}_{#1}}
\newcommand{\bim}{\mathfrak{B}}
\newcommand{\til}{\mathcal{T}}
\newcommand{\si}{\mathsf{s}}

\newcommand{\cdim}{\mathop{\mathrm{c.dim}}}
\newcommand{\udim}{\mathop{\mathrm{u.dim}}}
\newcommand{\ldim}{\mathop{\mathrm{l.dim}}}
\newcommand{\diam}{\mathop{\mathrm{Diam}}}

\newcommand{\mapdown}[1]%
{\Big\downarrow\rlap{$\vcenter{\hbox{$\scriptstyle#1$}}$}}

\newtheorem{theorem}{Theorem}[section]
\newtheorem{lemma}[theorem]{Lemma}
\newtheorem{proposition}[theorem]{Proposition}
\newtheorem{corollary}[theorem]{Corollary}

\theoremstyle{definition}
\newtheorem{defi}[theorem]{Definition}
\newtheorem{example}[theorem]{Example}

\title{Conformal dimension and self-similar groups}
\author{Volodymyr Nekrashevych}

\begin{document}
\maketitle

\begin{abstract}
We study the Ahlfors-regular conformal dimension of spaces with locally expanding self-covering maps and, more generally, limit spaces of contracting self-similar groups. We show that the conformal dimension is characterized by combinatorial moduli associated with the Schreier graphs of these groups. We compare these moduli with contraction coefficients of self-similar group actions and with algebras of functions on the limit spaces satisfying asymptotic $p$-energy conditions. We also give conditions on the group when the conformal dimension is equal to the infimum of visual metrics on the self-similarity complex.
\end{abstract}

\tableofcontents

\section{Introduction}

\subsection{Main results}

Conformal dimension was introduced by Pierre Pansu (see, for example,~\cite{pansu:dimensionconforme}) in the study of
boundaries of hyperbolic spaces and groups. The boundary of a hyperbolic group does not carry a distinguished metric; rather, it carries a natural \emph{quasisymmetry} class of \emph{visual metrics}. The conformal dimension of the boundary is defined as the infimum of the Hausdorff dimensions of the metrics in this class. It is a natural invariant with important applications, including
to Cannon's conjecture (see, for example,~\cite{bonkkleiner:cannon}). For an overview of the main properties and applications of conformal dimension, we refer the reader to~\cite{mackaytyson}.

Another setting in which conformal dimension arises naturally is that of locally expanding self-coverings of compact metric spaces, and of their generalizations. Such dynamical systems share many features with the actions of hyperbolic groups on their boundaries, and provide examples of \emph{hyperbolic groupoids}, see~\cite{nek:hyperbolic}. The study of conformal dimension for spaces equipped with a locally expanding dynamical system was initiated by P.~Haissinsky and K.~Pilgrim, see~\cite{haisinskypilgrim}.

\begin{defi}
A covering map of compact metric spaces $f\colon\M\arr\M$ is
\emph{locally expanding} if there exist constants $\delta>0$ and $L>1$ such
that
\[
d(f(x), f(y))\ge Ld(x,y)
\]
whenever $x,y\in\M$ satisfy $d(x,y)<\delta$. 
\end{defi}

For example, the self-map of the circle
$f\colon\R/\Z\arr\R/\Z$ given by $f(x)=2x$ is locally expanding, with
$\delta=1/4$ and $L=2$. Another classical example is the one-sided shift
$f\colon\xo\arr\xo$ on the space $\xo$ of sequences over a finite alphabet
$\alb$, defined by $f(x_1x_2\ldots)=x_2x_3\ldots$.

The definition of a locally expanding covering map can also be formulated in
purely topological terms, without explicit reference to a metric. Namely, the
preceding definition is equivalent to the following one.

\begin{defi}
\label{def:locallyexpanding}
Let $f\colon\M\arr\M$ be a covering map of a compact space $\M$. We say that
$f$ is \emph{locally expanding} if it generates an expansive action of the
monoid generated by $f$: that is, if there exists a neighborhood of the
diagonal, or \emph{entourage}, $U\subset\M\times\M$ such that
\[
(f^n(x),f^n(y))\in U \quad\text{for all } n\ge 0
\]
implies $x=y$.
\end{defi}

Let $U\subset\M\times\M$ be an entourage satisfying the conditions of Definition~\ref{def:locallyexpanding}. Denote by $\ell_U(x, y)$, for $x, y\in\M$ the largest $n$ such that $(f^k(x), f^k(y))\in U$ for all $k=0, 1, \ldots, n$. The function $\ell_U$ measures ``closeness'' of points: the closer $x$ and $y$ are, the larger is $\ell_U$. According to Definition~\ref{def:locallyexpanding}, we have $\ell_U(x, y)=\infty$ if and only if $x=y$.

The function $\ell_U$ is an analogue of the \emph{Gromov product} on the
boundary of a hyperbolic space. As in the construction of visual metrics on
Gromov boundaries, it can be used to construct a \emph{visual metric} on
$\M$. More precisely, for every sufficiently small $\alpha>0$ there exists a
metric $d_\alpha$ on $\M$ such that
\begin{equation}
\label{eq:visual}
C^{-1}e^{-\alpha\ell_U(x,y)}
 \le d_\alpha(x,y)
 \le C e^{-\alpha\ell_U(x,y)}
\end{equation}
for some constant $C>1$ and all $x,y\in\M$. The proof is the same as in the
case of visual metrics on Gromov boundaries. In this context it was
essentially proved by Fried~\cite{fried:naturalmetric}.

The Lipschitz class of $d_\alpha$ is uniquely determined by $\alpha$, and is  independent of the choice of $U$. Thus a locally
expanding covering map $f\colon\M\arr\M$ naturally determines a class of
metrics on $\M$. Any two metrics in this class are quasisymmetric to one
another (indeed, they are \emph{snowflake equivalent}, which is a stronger
equivalence relation).

It is therefore natural to consider the class of all metrics
that are quasisymmetric to a visual metric $d_\alpha$. This class admits a
direct intrinsic description, given in~\cite{haisinskypilgrim}.

An important source of locally expanding covering maps is given by the
restrictions of \emph{hyperbolic} complex rational functions to their Julia
sets. The restriction of the spherical metric to the Julia set is not, in
general, itself a visual metric $d_\alpha$; nevertheless, it is
quasisymmetric to such metrics, and hence belongs to the same framework.

An expanding self-covering $f\colon\M\arr\M$ is determined, up to a topological conjugacy, by its \emph{iterated monodromy group} $\img(f)$. This group is finitely generated and comes equipped with a \emph{self-similar action} on the rooted tree $\xs$ of finite words over a finite alphabet $\alb$. It describes the monodromy action of the fundamental group $\pi_1(\M, t)$ on the preimages of the basepoint $t$ under iterations of $f$, while the self-similarity of the action encodes the dynamics of the map $f$.

A faithful action of a group $G$ on the tree $\xs$ is said to be \emph{self-similar} if for every $g\in G$ and every $v\in\xs$ there exists an element $g|_v\in G$ such that $g(vw)=g(v)g|_v(w)$ for all $w\in\xs$.
The dynamical system $f\colon\M\arr\M$ can then be recovered from the iterated monodromy group as the \emph{limit dynamical system} of this self-similar group. 
We refer the reader to Section~\ref{s:ssgroups} for the relevant definitions.

The limit dynamical system is defined for any \emph{contracting self-similar group}. A group $G$ with a self-similar action on $\xs$ is said to be contracting if there exists a finite set $\nuke\subset G$ (called the \emph{nucleus}) such that for every $g\in G$ there is an integer $n$ such that $g|_v\in\nuke$ for all words $v$ of length at least $n$. If $G$ is finitely generated, then this condition is equivalent to the inequality $\eta_\infty<1$, where 
\begin{equation}
    \label{eq:etainfty}
\eta_\infty=\lim_{n\to\infty}\sqrt[n]{\limsup_{l(g)\to\infty}\frac{\max_{v\in\alb^n}l(g|_v)}{l(g)}},
\end{equation}
where $l(g)$ denotes the length of $g$ with respect to a fixed generating set of $G$.

The iterated monodromy group of an expanding self-covering is contracting. Conversely, if $G$ is a \emph{regular self-similar group}, then its limit dynamical system is an expanding self-covering, and $G$ is the iterated monodromy group of this system. Here regularity means that for every infinite word $x_1x_2\ldots\in\xo$ there exists $n$ such that either $g(x_1x_2\ldots x_n)\ne x_1x_2\ldots x_n$, or $g|_{x_1x_2\ldots x_n}=1$.

If $G$ is not regular, then the associated limit dynamical system is no longer a covering map. It may nevertheless be viewed as a branched covering and realized as a \emph{virtual endomorphism} of an orbispace. This yields a broader class of dynamical systems. For example, \emph{sub-hyperbolic rational functions} fit naturally into this framework. We refer the reader to~\cite{haispilgr:subhyperbolic}, where this larger class is defined and studied from the point of view of conformal dimension.

The main goal of our paper is to show how the conformal gauge (the set of metrics quasi-symmetric to the visual metrics) and the conformal dimension can be studied using techniques from the theory of self-similar groups. In particular, we relate basic aspects of conformal dimension to algebraic properties of self-similar group actions.

Our first result is a version of a theorem of M.~Carrasco~Piaggio~\cite{carrascopiaggio}. The \emph{combinatorial modulus},
introduced in the general setting of compact metric spaces
in~\cite{carrascopiaggio}, admits a particularly natural and simple
formulation in the language of self-similar groups.

Let $G$ be a contracting self-similar group acting on $\xs$, and let $\nuke$ be its nucleus. For each $n\ge 0$, consider the \emph{Schreier graph} $\G_n$ of the action of $G$ on $\alb^n$. Its vertex set is $\alb^n$, and for every $v\in\alb^n$ and $s\in\nuke$ there is an arrow from $v$ to $s(v)$, labeled by $s|_v$. 

A directed path in $\G_n$ is called \emph{essential} if the product of the labels along the path does not belong to $\nuke$. A function
\(\phi\colon E(\G_n)\to [0,\infty)\) is called \emph{admissible} if the sum of its values along every essential path is at least \(1\).
The \emph{combinatorial \(p\)-modulus} of \(\G_n\), for \(p\ge 1\), is defined by
\[M_p(n)=\inf_{\phi}\sum_{e\in E(\G_n)} \phi(e)^p,\]
where the infimum is taken over all admissible functions \(\phi\colon E(\G_n)\to [0,\infty)\). The corresponding
\emph{asymptotic \(p\)-modulus} is
\[\theta_p=\lim_{n\to\infty} \sqrt[np]{M_p(n)}.\]

We show that for any $1\le p<q\le\infty$ we have
\begin{equation}
    \label{eq:etapetaq}
\theta_p\ge\theta_q,\qquad \frac{\theta_p}{|\alb|^p}\le \frac{\theta_q}{|\alb|^q},
\end{equation}
and prove the following version of the main result of~\cite{carrascopiaggio}, see Theorem~\ref{th:modulus}.

\begin{theorem}
\label{theorem1}
The Ahlfors-regular conformal dimension of the limit space of $G$ is equal to $\inf\{p\colon\theta_p<1\}$.
\end{theorem}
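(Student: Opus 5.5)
The plan is to reduce Theorem~\ref{theorem1} to the theorem of Carrasco~Piaggio~\cite{carrascopiaggio}. That theorem says the Ahlfors-regular conformal dimension of a compact, doubling, uniformly perfect, approximately self-similar metric space is the critical exponent $\inf\{p\colon \lim_n M_p(\mathcal{S}_n)^{1/n}=0\}$. Here $\mathcal{S}_n$ is a sequence of coverings at scale $e^{-\alpha n}$, and the curve family consists of curves of diameter bounded below by a fixed constant. So I would build a good covering sequence of the limit space from the tiles of the self-similar group, and then compare its combinatorial modulus with $M_p(n)$ defined via the Schreier graphs $\G_n$.

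First, I would verify the hypotheses of Carrasco's theorem for a visual metric $d_\alpha$. The covering $\mathcal{S}_n$ is taken to be the family of level-$n$ tiles $\til_v$, $v\in\alb^n$, or slightly thickened tiles (unions of tiles adjacent through the nucleus). Two tiles $\til_u,\til_v$ intersect iff $v=s(u)$ for some $s\in\nuke$, up to the standard identification. Thus the nerve of $\mathcal{S}_n$ is quasi-isometric to $\G_n$ with uniformly bounded multiplicity, and the tiles have diameter comparable to $e^{-\alpha n}$ by \eqref{eq:visual}. Doubling and uniform perfectness follow from contraction and the bounded number of neighbours. Approximate self-similarity comes from the shift: the map $\til_v\to\til_{\emptyset}$ induced by deleting the prefix $v$ is a uniform bi-Lipschitz rescaling by $e^{\alpha n}$.

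Second, I would match the curve families. A path in the limit space of diameter at least a constant $c$ crosses a chain of level-$n$ tiles. Reading this chain as a path in $\G_n$, the product of labels along it is the element $g$ of $G$ relating its endpoints' tiles at level $0$. The path has large diameter iff this product is not in $\nuke$ (after enlarging $\nuke$ to a fixed finite set, which changes $M_p(n)$ only by a multiplicative constant via a concatenation argument). Conversely, every essential path in $\G_n$ is realized by a curve of diameter bounded below. This gives $M_p(n)\asymp M_p(\mathcal{S}_n)$ up to constants independent of $n$, and edge weights versus vertex weights differ only by the bounded degree. Hence the critical exponent of Carrasco equals $\inf\{p\colon\theta_p<1\}$. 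Here I use that $M_p(n)$ is submultiplicative, $M_p(n+m)\lesssim M_p(n)M_p(m)$, by composing admissible functions via the self-similarity $\G_{n+m}\to\G_n$ with fibers copies of $\G_m$. So $\theta_p$ exists, and $\theta_p<1$ iff $M_p(n)\to0$ exponentially, iff $M_p(n)\to 0$. The monotonicity \eqref{eq:etapetaq} in $p$ gives that the set $\{p\colon\theta_p<1\}$ is an upward ray.

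The main obstacle is the nonregular case: the limit space is then not a covering space, and essential paths in $\G_n$ have to be matched to genuine curves. Carrasco's modulus uses discrete chains rather than continuous curves, so I would work purely with chains in $\mathcal{S}_n$. The key lemma to prove is that a chain of tiles has diameter $\ge c$ iff the corresponding label product lies outside a fixed finite set. I would derive it from the contracting property: elements of $\nuke$ move points a bounded visual distance, and elements of large length move some point far. Getting this uniformity, with the constant independent of $n$, is where the care is needed.
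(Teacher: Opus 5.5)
Your reduction to Carrasco Piaggio's theorem does not cover the statement as posed. His critical exponent is built from the modulus of the family of \emph{continuous curves} of diameter bounded below, and his theorem carries connectivity hypotheses on the space. The paper says explicitly that it cannot apply that theorem, because these hypotheses need not hold for $\lims$. Indeed, $\lims$ is not assumed locally connected, and it is disconnected whenever $G$ is not level-transitive, since tiles over different $G$-orbits of $\alb^n$ are disjoint. Your remedy, ``Carrasco's modulus uses discrete chains, so I would work purely with chains,'' misstates his setting. Once curves are replaced by chains of tiles, his theorem can no longer be invoked as a black box.

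What is then left unproved is exactly the hard direction: from $\theta_p<1$ you must actually construct a metric in $\mathcal{AR}(G,\bim)$ of Hausdorff dimension at most $p$. The paper does this directly, and some construction of this kind is the missing idea:
\begin{enumerate}
\item Pass to $\bim^{\otimes n}$ so that some admissible $\phi$ on $\alb$ has $\sum_x\phi(x)^p$ small.
\item Give the vertical edges of the self-similarity complex length $-\log\phi(x)$ and the horizontal edges a large length $M$, and let $\nu(v)$ be the distance from $v$ to the root.
\item Show that $\nu$ is a positive $G$-quasi-invariant quasi-morphism.
\item Show that a visual metric $\asymp e^{-\ell_\nu}$ of exponent $1$ exists. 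This is the chain-shortening argument of Proposition~\ref{prop:exponetiated}, and it is where admissibility is used.
\item Show that the $\nu$-entropy is at most $p$, by Proposition~\ref{pr:entropycriterion}.
\item Conclude: the Patterson-type measure gives Ahlfors regularity, and Proposition~\ref{pr:quasimorphismsquasisymmetric} places the metric in the gauge.
\end{enumerate}

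There are also smaller problems. Your claimed converse, that every essential path in $\G_n$ is realized by a curve of diameter bounded below, is false in general. A chain of pairwise intersecting tiles need not be traced by any curve, since tiles need not even be connected. You also do not need it, because the inequality $\cdim(G,\bim)\ge\inf\{p\colon\theta_p<1\}$ is elementary with chains. Lift an Ahlfors-regular metric of dimension $\alpha<p$ to $\limg$ and set $\phi_n(v)=\diam(\til\otimes v)$. This is $r_0^{-1}$-admissible because $\til$ and $\til\cdot g$ are at distance at least $r_0$ for $g\notin\nuke$; that separation is your ``key lemma'', and it is the easy part. Then $\sum_v\phi_n(v)^p\le\max_v\phi_n(v)^{p-\alpha}\sum_v\phi_n(v)^\alpha\lesssim\lambda^{n(p-\alpha)}$. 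So even where Carrasco's theorem does apply, only the automatic one-sided comparison (curve modulus $\lesssim$ chain modulus) is needed.

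Finally, in the non-regular case the map on $\til_v$ induced by deleting $v$ need not be injective. Approximate self-similarity therefore has to come from the finitely many local models $S_{v_1,v_2}$ of Propositions~\ref{pr:Sv1Sv2} and~\ref{pr:localssimilarities}.
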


We cannot apply the main theorem of~\cite{carrascopiaggio} directly, because its hypotheses need not hold in our setting (we do not assume that the limit space is locally connected). On the other hand, limit spaces of self-similar groups have additional structure, which allows us to use techniques from the theory of self-similar groups and leads to a somewhat simpler proof. In particular, we show that, if \(\theta_p<1\), then one can construct a metric of dimension at most \(p\) by a direct generalization of the visual metric, using a notion of a \emph{quasi-homomorphism} from a free monoid to \([0,\infty)\).

Our second main result is a version of a theorem of M.~Bourdon and B.~Kleiner~\cite{bourdonkleiner}. We formulate it here in the setting of expanding covering maps (we prove it in a more general setting of limit spaces of contracting groups).

Let $F\colon\M\arr\M$ be a locally expanding covering map. By definition, the iterated monodromy group $\img(F)$ acts by 
permutations on the backward orbit
\[T_t=\bigsqcup_{n\geq 0} F^{-n}(t)\]
of a basepoint $t\in\M$. Denote by $\mathcal{D}_p$ the set of all
continuous functions $u\colon \M \arr \C$ such that
\[
        \sum_{z\in T_t} |u(z)-u(g(z))|^p <\infty
\]
for every $g\in \img(F)$. Similarly, let $\mathcal{D}_{p,\lambda}$ be the
set of all continuous functions $u\colon \M \arr \C$ such that, for every
$g\in \img(F)$, there exists a constant $C>0$ with
\[
        \sum_{z\in F^{-n}(t)} |u(z)-u(g(z))|^p \leq C\lambda^n
\]
for all $n\geq 0$. Both $\mathcal{D}_p$ and $\mathcal{D}_{p,\lambda}$ are
$*$-subalgebras of $C(\M)$.

We prove the following theorem.

\begin{theorem}
\label{th:gradientalgebras}
The Ahlfors-regular conformal dimension of $\M$ is equal to the infimum of
all $p$ for which $\mathcal{D}_p$ is dense in $C(\M)$. Moreover, if $p$ is
larger than the Ahlfors-regular conformal dimension, then the infimum of all
$\lambda$ for which $\mathcal{D}_{p,\lambda}$ is dense in $C(\M)$ is equal
to $\theta_p$.
\end{theorem}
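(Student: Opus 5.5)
The plan is to deduce both statements from Theorem~\ref{theorem1}, using the identification of $\M$ with the limit space of $G=\img(F)$. Under this identification the tree $T_t$ becomes $\xs$, and the levels $F^{-n}(t)$ become the vertex sets $\alb^n$ of the Schreier graphs $\G_n$. For $g\in G$ we then have
\[\sum_{z\in F^{-n}(t)}|u(z)-u(g(z))|^p,\]
which, after writing $g$ as a product of nucleus elements (or generators) and using $|a+b|^p\le 2^{p-1}(|a|^p+|b|^p)$, is comparable, up to constants depending only on $g$, to the $p$-energy $E_n(u)=\sum_{e\in E(\G_n)}|u(\text{source}(e))-u(\text{target}(e))|^p$. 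So the theorem reduces to two claims: (a) $u\in\mathcal{D}_{p,\lambda}$ whenever $E_n(u)\lesssim \lambda^n$; (b) density of such functions holds exactly in the range $\lambda>\theta_p^p$ (up to the normalization of $\theta_p$ implicit in the statement), and fails for $\lambda$ below it. Writing $\lambda$ versus $\theta_p^p$ is a normalization issue I will settle by matching the definition $\theta_p=\lim M_p(n)^{1/(np)}$.

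\emph{Upper bound (construction of functions).} Assume $\lambda>\theta_p$ in the appropriate normalization, with $\theta_p<1$ available when $p$ exceeds the conformal dimension, by Theorem~\ref{theorem1} and the monotonicity \eqref{eq:etapetaq}. Take nearly optimal admissible functions $\phi_n$ on $\G_n$. Concatenating them along levels, as in the quasi-homomorphism construction of the metric in the proof of Theorem~\ref{theorem1}, gives a metric $d$ in the conformal gauge in which the $d$-length of an edge of $\G_n$ is controlled by $\phi$-weights. Lipschitz functions for $d$, for example $u(x)=\max(0,r-d(x,x_0))$, then satisfy $|u(v)-u(s(v))|\lesssim$ the weight of the edge, and hence $E_n(u)\lesssim\sum\phi^p\lesssim\lambda^n$. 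These functions separate points, and $\mathcal{D}_{p,\lambda}$ is a unital $*$-algebra (the product rule gives $|uv(z)-uv(gz)|\le\|u\|_\infty|v(z)-v(gz)|+\|v\|_\infty|u(z)-u(gz)|$), so Stone--Weierstrass gives density. When $\lambda<1$, summing the geometric series gives $\mathcal{D}_{p,\lambda}\subset\mathcal{D}_p$. This yields density of $\mathcal{D}_p$ for every $p$ above the conformal dimension.

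\emph{Lower bound.} Suppose $\mathcal{D}_{p,\lambda}$ is dense. Choose $u$ in it that is close to a function equal to $0$ and $1$ on two disjoint sets. Density together with compactness lets us take finitely many such $u_i$ that ``detect'' every essential path: an essential path in $\G_n$ has endpoints whose corresponding pieces of the limit space are far apart at scale $1$, because the product of labels lies outside $\nuke$. Then
\[\phi(e)=\sum_i|u_i(\text{source}(e))-u_i(\text{target}(e))|\]
is admissible after a constant rescaling. Hence $M_p(n)\lesssim\sum_i E_n(u_i)\lesssim\lambda^n$, which forces $\theta_p\le\lambda$ in the matching normalization.

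For $\mathcal{D}_p$ dense, I would use the same construction, but finiteness of the total sum only gives $E_n(u)\to 0$. This yields $M_p(n)\to0$ along a subsequence, and submultiplicativity of $M_p$ then gives $\theta_p<1$, so $p\ge$ the conformal dimension by Theorem~\ref{theorem1}.

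The main obstacle is the admissibility step in the lower bound. I must show that every essential path in $\G_n$ projects, in the limit space, to a chain connecting points that are separated at a uniform scale, and that the $u_i$ change by a definite amount along such a chain. I would attack this via the nucleus: a label product $h\notin\nuke$ means $h$ moves some tile far, so the tiles $\til_{v}$ and $\til_{h(v)}$ are at combinatorial distance bounded below. I would then choose the $u_i$ from a finite cover of the limit space by sets whose stars are separated.
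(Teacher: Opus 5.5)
Your upper-bound plan is sound in outline. The lower bound, however, has a genuine gap exactly at the step you flag, and the geometric claim you intend to use there is false. Essentiality of a path in $\G_n$ is a monodromy condition; it says nothing about where the endpoints sit in $\M=\lims$. For the adding machine $a=(01)(1,a)$ one has $a^{2^n}|_v=a$ for all $v\in\alb^n$. So the path that runs twice around the $2^n$-cycle of $\G_n$ and returns to its starting vertex has label product $a^2\notin\nuke=\{1,a^{\pm1}\}$: it is an essential \emph{loop}. For this path $\sum_i|u_i(\mathrm{end})-u_i(\mathrm{start})|=0$ for any functions $u_i$ on $\M$, so no endpoint argument can make your $\phi$ admissible. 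Your suggested remedy does not help either, since $h$ need not move tiles of $\lims$ far: here $\til_w$ and $\til_{a^2(w)}$ are at most two edges apart in $\G_k$. The separation lives in $\limg$. The lifted chain $\til\otimes u_1,\ldots,\til\otimes u_{m+1}$, with $u_1=v_1$ and $u_{i+1}=s_i\cdot u_i\in\bim^{\otimes n}$, starts in $\til$ and ends in $\til\cdot h$ with $h=(s_m\cdots s_1)|_{v_1}\notin\nuke$. These two tiles are disjoint, hence at distance at least some $r_0>0$ in the lifted metric. In the example the chain runs in $\limg\cong\R$ from inside $[0,1]$ to inside $[2,3]$.

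The paper uses this separation directly. It first shows that density of $\mathcal{D}_p(\lims)$ implies density of $\mathcal{D}_p(\limg)$, by cutting off a $G$-invariant function near a point using properness. It then takes one $f\in C_0(\limg)$ with $f=1$ on $\til$ and $f=0$ on every $\til\cdot g$, $g\notin\nuke$, and sets $\phi(s,v)=\bigl(\sum_{g\in G}|f(\xi\otimes v\cdot g)-f(\xi\otimes s\cdot v\cdot g)|^p\bigr)^{1/p}$. On each edge, one of the summands is the increment of $f$ along the lifted path, which goes from the value $1$ to the value $0$; so $\phi$ is admissible, and $\sum_v\phi(s,v)^p$ is exactly the energy over $\bim^{\otimes n}$.

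If you prefer to stay with functions on $\M$, you need two ingredients that are missing. First, a uniform $\epsilon_0>0$ such that $\til_{v_1}\cup\cdots\cup\til_{v_{m+1}}$ has diameter at least $\epsilon_0$ in $\M$ for every essential path of large level. For a locally expanding covering this holds because $G$ acts on $\limg$ freely, properly and cocompactly: preimages of small balls of $\M$ split into uniformly separated small pieces, a chain of small tiles cannot jump between them, and so a small chain downstairs would contradict the $r_0$-separation upstairs. Second, you must bound the $\phi$-length below by the \emph{oscillation} along the path of some bump function from a fine finite cover, not by its endpoint difference.

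Either route gives $M_p(n)\lesssim\lambda^n$, i.e.\ $\lambda\ge\theta_p^p$, so your normalization suspicion is right. For the adding machine, Lipschitz functions on the circle have level-$n$ energy $\asymp 2^{n(1-p)}=\theta_p^{pn}$. Your upper-bound route, through the quasi-morphism metric built from near-optimal $\phi$, is what reaches that rate. The paper's argument via tile-diameter decay for a fixed $\mathcal{AR}$ metric only yields some $\lambda<1$.
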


Theorem~\ref{th:gradientalgebras} admits a reformulation in the spirit of
A.~Connes' ``quantized calculus''~\cite{connes:noncomg} and a theorem of A.~Connes and D.~Sullivan on the Hausdorff dimension
of quasicircles~\cite[Proposition~14, p.~323]{connes:noncomg}.

Consider the Hilbert space
\[
        H=\ell^2\left(\bigsqcup_{n\geq 0} F^{-n}(t)\right).
\]
The action of $\img(F)$ on $T_t$ defines a unitary representation $\pi$ of
$\img(F)$ on $H$. The algebra $C(\M)$ has a natural representation $\rho$ on
the same Hilbert space, given by
\[
        \rho(u)(\delta_z)=u(z)\delta_z .
\]
With this notation, $\mathcal{D}_p$ is precisely the set of all
$u\in C(\M)$ such that, for every $g\in \img(F)$, the commutator
\([\rho(u),\pi(g)]\)
belongs to the Schatten ideal $\mathcal{L}^p(H)$.

We also relate the asymptotic combinatorial moduli
\(\theta_p\) to the \emph{contraction coefficients} \(\eta_p\). The
latter are defined by
\[\eta_p=\lim_{n\to\infty}\sqrt[n]{\limsup_{l(g)\to\infty}\frac{\left(\sum_{v\in\alb^n}l(g|_v)^p\right)^{1/p}}{l(g)}},\]
with the usual modification when \(p=\infty\), as in~\eqref{eq:etainfty} above.

The contraction coefficients satisfy the analogous monotonicity
inequalities
\[\eta_p\ge\eta_q,\qquad\frac{\eta_p}{|\alb|^p}\le\frac{\eta_q}{|\alb|^q}\]
for all $0<p<q\le\infty$.

We prove that
\begin{equation}
    \label{eq:etatheta}
    \eta_p\le\theta_p
\end{equation}
for all $1\le p\le\infty$. Moreover, we have $\eta_\infty=\theta_\infty$. 

In fact, for many self-similar groups one has \(\eta_p=\theta_p\) for all \(p\). On the other hand, the inequality
\(\eta_1<1\) is the crucial ingredient in the proof that the Grigorchuk group has intermediate growth, whereas in this case
\(\theta_1=1\). It is therefore an interesting problem to determine for which groups the inequality~\eqref{eq:etatheta} is in fact an
equality.

\subsection{Structure of the paper}

Section~``Self-similar groups'' recalls the main definitions and basic results concerning contracting self-similar groups and their limit spaces.

The following section, ``Metrics on $\lims$,'' introduces the class $\mathcal{AR}$ of Ahlfors-regular metrics on the limit space $\lims$ that are quasisymmetric to visual metrics. It also discusses visual metrics and measures on $\lims$ associated with \emph{$G$-invariant quasi-morphisms} $\xs \arr [0,\infty)$.

Section~4 is devoted to the proof of Theorem~\ref{theorem1}. We introduce several asymptotically equivalent versions of the combinatorial modulus $M_p(n)$. As a consequence of the proof, we obtain that the Ahlfors-regular conformal dimension $\cdim(G)$ is equal to the infimum of the Hausdorff dimensions of the visual metrics associated with $G$-invariant quasi-morphisms. Thus, the study of conformal dimension is, in a sense, equivalent to the study of $G$-invariant quasi-morphisms and their asymptotic properties.

Section~5 studies the contraction coefficients $\eta_p$. The infimum of the set
\(\{p : \eta_p < 1\}\)
is denoted by $\ldim(G)$. Since $\eta_p \leq \theta_p$, the conformal dimension $\cdim(G)$ is greater than or equal to $\ldim(G)$. We show that $\ldim(G)$ gives an upper bound $\exp(n^{\ldim(G)+\epsilon})$ on the growth of \emph{portraits} of elements of the group. In particular, the growth rate of $G$ is bounded above by $n^{\ldim(G)+\epsilon}$ for every $\epsilon>0$. This estimate is vacuous when $\ldim(G)\geq 1$, but there are examples of groups, such as the Grigorchuk group, for which it yields a subexponential growth bound. Moreover, by results of A.~Erschler and T.~Zheng~\cite{erschlerzheng:grigorchukgroup}, this bound is optimal for the Grigorchuk group.

In Section~\ref{ss:thurstonpmap}, we also discuss a result of~\cite{mattebonnekzheng:liouvills} that provides a method for obtaining lower bounds on $\cdim(G)$ using the contraction coefficients $\eta_p$. This method is modeled on Thurston's characterization of post-critically finite rational functions.

Section~6 is devoted to the proof of Theorem~\ref{th:gradientalgebras}.

In the final section, we study the infimum of the Hausdorff dimensions of visual metrics on the limit space $\lims$. We call this quantity the \emph{uniform dimension} and denote it by $\udim(G)$. We prove that
\[
\udim(G)=\frac{\log|\alb|}{-\log\theta_\infty},
\]
and show that $\eta_\infty=\theta_\infty$. We also give conditions on contracting self-similar groups that imply
\[
\udim(G)=\ldim(G)=\cdim(G).
\]
In particular, we show that this equality holds for all abelian contracting self-similar groups.

\subsection{Acknowledgments}  The author is grateful to Kevin Pilgrim, Dylan Thurston, and Misha Hlushchanka for discussions on the topics of the paper and useful remarks. The author was supported by NSF grants DMS-2204379 and DMS-2604376.

\subsection{AI declaration} AI was used only for proofreading.

\section{Self-similar groups}
\label{s:ssgroups}

\subsection{Main definitions}

We study metrics on spaces with locally expanding covering maps using symbolic dynamics, and in particular \emph{iterated monodromy groups}. We begin with a brief review of the relevant constructions and definitions. For more details and for proofs, see~\cite{nek:book,nek:dyngroups}.

\begin{defi}
    A \emph{self-similar group} is a pair $(G, \bim)$, where $G$ is a discrete group, and $\bim$ is a set together with commuting left and right actions of $G$ on it (such sets are called $G$-\emph{bisets}), such that the right action is free and has finitely many orbits. Here an action $(x, g)\mapsto x\cdot g$ for $x\in\bim$ and $g\in G$ is said to be free if $x\cdot g=x$ implies $g=1$.

    Two self-similar groups $(G_i, \bim_i)$, for $i=1, 2$, are called \emph{equivalent} if there exists an isomorphism $\phi\colon G_1\arr G_2$ of groups and a bijection $F\colon\bim_1\arr\bim_2$ such that $F(g_1\cdot x\cdot g_2)=\phi(g_1)\cdot F(x)\cdot\phi(g_2)$ for all $g_1, g_2\in G_1$ and $x\in\bim_1$.
\end{defi}

Suppose that $\bim_1$ and $\bim_2$ are $G$-bisets. Consider the direct product $\bim_1\times\bim_2$ and take the set, denoted $\bim_1\otimes\bim_2$, of the $G$-orbits for the action $(x_1, x_2)\mapsto (x_1\cdot g, g^{-1}\cdot x_2)$ of $G$ on it. We denote by $x_1\otimes x_2$ the orbit of $(x_1, x_2)$. We then have $x_1\cdot g\otimes x_2=x_1\otimes g\cdot x_2$. The set $\bim_1\otimes\bim_2$ is naturally a $G$-biset with respect to the actions $g_1\cdot(x_1\otimes x_2)\cdot g_2=(g_1\cdot x_1)\otimes (x_2\cdot g_2)$. It is also easy to see that $(\bim_1\otimes\bim_2)\otimes\bim_3$ is naturally isomorphic to $\bim_1\otimes(\bim_2\otimes\bim_3)$.

In particular, for any $G$-biset $\bim$, we have the biset $\bim^*$ equal to the disjoint union of the bisets $\bim^{\otimes n}$ for $n\ge 0$. Here $\bim^{\otimes 0}$ is defined as $G$ together with the left and right actions on itself by multiplication. 

The biset $\bim^*$ is also a semigroup with respect to the multiplication $v_1\otimes v_2\in\bim^{\otimes (n_1+n_2)}$ for $v_1\in\bim^{\otimes n_1}$ and $v_2\in\bim^{\otimes n_2}$. (If $n_1$ or $n_2$ are equal to $0$, then the product is the result of the corresponding action of $G$.)

\begin{defi}
Let $(G, \bim)$ be a self-similar group. A \emph{basis} of $\bim$ is a set $\alb\subset\bim$ intersecting every orbit of the right action of $G$ on $\bim$ exactly once.
\end{defi}

If $\alb$ is a basis, then every element of $\bim$ is uniquely written in the form $x\cdot g$ for $x\in\alb$ and $g\in G$. The set $\alb^{\otimes n}$ is a basis of $\bim^{\otimes n}$. We denote $x_1\otimes x_2\otimes\cdots\otimes x_n$ for $x_i\in\alb$ just by $x_1x_2\ldots x_n$, and denote $\alb^{\otimes n}$ by $\alb^n$. We also denote $\xs=\bigcup_{n\ge 0}\alb^n$, where $\alb^0$ consists of the \emph{empty word} (the identity element of $G=\bim^{\otimes 0}$).

The set of right orbits $\bim^*/G$ has a natural structure of a rooted tree, where the orbit of $v\in\bim^{\otimes n}$ is connected to the orbits of the elements of the form $v\otimes x$ for $x\in\bim$. In other words, the tree is induced on $\bim^*/G$ by the Hasse diagram of the left divisibility relation on $\bim^*$. The left $G$-action on $\bim^*$ induces an action of $G$ on the tree $\bim^*/G$ by automorphisms. We say that the self-similar group is \emph{faithful} if this action is faithful. Otherwise, if $K$ is the kernel of the action of $G$ on $\bim^*/G$, then the group $G/K$, more precisely, the self-similar group $(G/K, \bim/K)$ is called the \emph{faithful quotient} of $(G, \bim)$.  Here $\bim/K$ is the set of orbits of the right action of $K\le G$ on $\bim$. It is naturally a $G/K$-biset.

If we identify the tree $\bim^*/G$ with the set $\xs$ by the map sending an orbit $vG$ to its representative in the basis $\xs$, then we get an action of $G$ on $\xs$. We will denote it by $g\colon v\mapsto g(v)$, distinguishing it from the left action $g\colon v\mapsto g\cdot v$ of $G$ on $\bim^*$.

For any $g\in G$ and $v\in\xs$, we have $g\cdot v=g(v)\cdot h$ for a unique $h\in G$. We denote $h=g|_v$ and call it the \emph{section} of $g$ in $v$. We have
\begin{equation}
\label{eq:sectionaction}
g(v_1v_2)=g(v_1)g|_{v_1}(v_2)
\end{equation}
for all $g\in G$ and $v_1, v_2\in\xs$, and
\begin{equation}
\label{eq:sections}
g|_{v_1v_2}=g|_{v_1}|_{v_2},\qquad (g_1g_2)|_v=g_1|_{g_2(v)}g_2|_v.
\end{equation}
for all $v_1, v_2, v\in\xs$ and $g_1, g_2, g\in G$.

Let $S$ be a generating set of $G$. If we know, for every $s\in S$, the action $x\mapsto s(x)$ and the sections $s|_x$ for all $x\in\alb$, then we will know the action of $G$ on $\xs$, using~\eqref{eq:sectionaction} and~\eqref{eq:sections}.
We will often write this information compactly as equalities (rather, assignments)
\[s=\sigma(s|_x)_{x\in\alb}\]
where $\sigma$ is the permutation of $\alb$ defined by $s$. For example, if $\alb=\{1, 2\}$, and we write
\[a=(12)(1, a^3),\]
then it means that 
\[a(1w)=2w,\qquad a(2w)=1a^3(w)\]
for all $w\in\xs$. Note that this rule uniquely determines the action of $a$ on $\xs$. We call such descriptions of the generators of $G$ the \emph{wreath recursion}.

\begin{defi}
    A \emph{topological virtual endomorphism} is a pair of continuous maps $f, \iota\colon\M_1\arr\M$, where $\M_1, \M$ are path-connected topological spaces, and $f$ is a finite degree covering map.
\end{defi}

In particular, if $f\colon\M\arr\M$ is a finite degree covering map, then we consider it as a topological virtual endomorphism for $\iota\colon\M\arr\M$ equal to the identity map. Similarly, if $f\colon\M_1\arr\M$ is a covering map, where $\M_1$ is a subset of $\M$, then we take $\iota\colon\M_1\arr\M$ to be the identity embedding.

Let $f, \iota\colon\M_1\arr\M$ be a topological virtual endomorphism such that $\M$ is path connected. Choose a point $t\in\M$. Define the $\pi_1(\M, t)$-biset $\bim_{f, \iota}$ as the set of pairs $(z, [\alpha])$, where $z\in \M_1$ is such that $f(z)=t$, and $[\alpha]$ is the homotopy class of a path in $\M$ from $t$ to $\iota(z)$. The right $\pi_1(\M, t)$-action is given, for a loop $\gamma$ at $t$, by
\[(z, [\alpha])\cdot [\gamma]=(z, [\alpha\gamma]),\]
where we concatenate paths as functions: in the path $\alpha\gamma$, the path $\gamma$ is passed before $\alpha$.
The left action is given by
\[[\gamma]\cdot(z, [\alpha])=(y, [\gamma_z\alpha]),\]
where $\gamma_z$ is the lift of $\gamma$ by $f$ to a path starting in $z$, and $y$ is the end of $\gamma_z$.

One can show that $(\pi_1(\M, t), \bim_{f, \iota})$ is a self-similar group. Its faithful quotient is called the \emph{iterated monodromy group} of $f, \iota\colon\M_1\arr\M$. We denote it $(\img(f, \iota), \bim_{f, \iota})$, slightly abusing the notation for the biset.

If $\iota\colon\M_1\arr\M$ is the identity embedding of a subset $\M_1\subseteq\M$ into $\M$, then we denote $\img(f)=\img(f, \iota)$, and call it the \emph{iterated monodromy group} of the partial self-covering map $f\colon\M_1\arr\M$.

\begin{example}
\label{ex:angledoubling}
Consider the angle doubling map $f\colon\R/\Z\arr\R/\Z$ given by $f(x)=2x$, seen as a topological virtual endomorphism with $\iota$ equal to the identity map. Choose $t=0$ as the basepoint. We have $f^{-1}(0)=\{0, 1/2\}$. The biset $\bim_f$ consists of pairs $(z, [\alpha])$, where $z\in\{0, 1/2\}$, and $\alpha$ is a homotopy class of a path from $0$ to $z$. Since $z$ is uniquely determined by $\alpha$, we omit it. The homotopy class $[\alpha]$ is determined by the winding number $m\in\frac 12\Z$. Namely, a winding number $m$ corresponds to the homotopy class of the image in $\R/\Z$ of a path from $0$ to $m$ in $\R$. Similarly, the fundamental group of $\R/\Z$ is naturally identified with $\Z$. We get the following actions of $\Z=\pi_1(\R/\Z)$ on $\bim_f$:
\[[m]\cdot [n]=[m+n],\qquad [n]\cdot [m]=\left[\frac n2+m\right],\]
where $[m]\in\bim_f$ and $[n]\in\pi_1(\R/\Z)$ are elements associated with the corresponding numbers $m\in\frac 12\Z$ and $n\in\Z$.

We usually denote the elements $[0], [1/2]$ of $\bim_f$ by $0$ and $1$, respectively. Then, for the generator $a=[1]$ of $\pi_1(\R/\Z)$, we get the recurrent formulae
\[a\cdot 0=1,\qquad a\cdot 1=0\cdot a,\]
i.e., by the wreath recursion
\[a=(01)(1, a).\]
It follows that, in this notation, $a(x_0x_1\ldots x_{n-1})=y_0y_1\ldots y_{n-1}$ if and only if $\sum_{i=0}^{n-1}y_i2^i=1+\sum_{i=0}^{n-1}x_i2^i\pmod{2^n}$, i.e., $a$ acts as adding 1 to an integer written in base 2.
\end{example}

\begin{example}
\label{ex:basilicaimgcomput}
Consider the polynomial $f(z)=z^2-1$. Denote $\M_0=\C\setminus\{0, -1\}$, and $\M_1=f^{-1}(\M_0)=\C\setminus\{0, 1, -1\}$. Consider the topological virtual endomorphism \[f, \iota\colon \M_1\arr\M_0,\]
where $\iota$ is the identity embedding.

\begin{figure}
    \centering
    \includegraphics{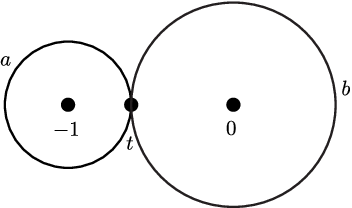}
    \caption{Generators of $\img(z^2-1)$}
    \label{fig:basilicaab}
\end{figure}

The fundamental group of $\M_0$ is generated by loops $a$ and $b$ going around $-1$ and $0$, as shown in Figure~\ref{fig:basilicaab}. We choose the fixed point $t=\frac{1-\sqrt{5}}2$ as the basepoint for the fundamental group. We have $f^{-1}(t)=\{t, -t\}$.

Consider the elements $x_0$ and $x_1$ of $\bim_{f, \iota}$ defined by the path from $t$ to $-t$ shown in Figure~\ref{fig:basilicaimg} and the trivial path at $t$, respectively.

\begin{figure}
    \centering 
    \includegraphics{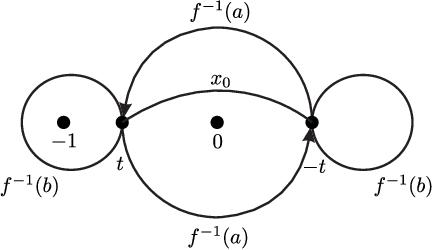}
    \caption{Computation of $\img(z^2-1)$}
    \label{fig:basilicaimg}
\end{figure}

We have the following relations in $\bim_{f, \iota}$. We replace here $x_0, x_1$ by $0$ and $1$, respectively.
\begin{alignat*}{2}
a\cdot 0&=1,&\qquad a\cdot 1&=0\cdot b,\\
b\cdot 0&=0,&\qquad b\cdot 1&=1\cdot a.
\end{alignat*}
Equivalently, $\img(z^2-1)$ is generated by the wreath recursion
\[a=(01)(1, b),\qquad b=(1, a).\]
\end{example}

\begin{defi}
    We say that a self-similar group $(G, \bim)$ is \emph{contracting} if there exists a finite set $\nuke\subset G$ such that for every $g\in G$ there exists $n\ge 0$ such that $g|_v\in\nuke$ for all $v\in\xs$ of length at least $n$.

    The smallest set $\nuke$ satisfying this condition is called the \emph{nucleus} of the group (with respect to the basis $\alb$).
\end{defi}

\begin{example}
Let $a$ be the generator of the iterated monodromy group of the angle-doubling map, see Example~\ref{ex:angledoubling}. Then, we have
\[a^2|_1=a^2|_0=a.\]
Consequently, by~\eqref{eq:sections}, for every element $g$ of the iterated monodromy group there exists $n$ such that $g|_v\in\{1, a, a^{-1}\}$ for all $v\in\xs$ of length at least $n$. It follows that the group is contracting with the nucleus $\nuke=\{1, a, a^{-1}\}$.
\end{example}

\begin{example}
\label{ex:basilicanucleus}
Consider the group $\img(z^2-1)$ from Example~\ref{ex:basilicaimgcomput}. We have 
\begin{alignat*}{2}
a^2&=(b, b), &\quad b^2&=(1, a^2),\\
ab&=(01)(1, ba), &\quad ba&=(01)(a, b),\\
ab^{-1}&=(01)(1, ba^{-1}),&\quad a^{-1}b&=(01)(b^{-1}, a).
\end{alignat*}
These equalities imply that the nucleus of this group is $\{1, a, a^{-1}, b, b^{-1}, ab^{-1}, ba^{-1}\}$.
\end{example}

The following proposition is proved in~\cite[Proposition~2.11.5]{nek:book}.

\begin{proposition}
\label{pr:sumgeometric}
    Let $(G, \bim)$ be a self-similar group. Choose a basis $\alb\subset\bim$. Suppose $G$ is contracting with respect to $\alb$. Then for every finite set $A\subset\bim$ there exists a finite set $\nuke_A\subset G$ such that every product $a_1\otimes a_2\otimes\cdots\otimes a_n$ of elements of $A$ is equal to $v\cdot g$ for $v\in\xs$ and $g\in\nuke_A$.
\end{proposition}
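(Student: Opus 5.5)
We reduce the statement to the contracting property by writing each element of $A$ in the basis and then pushing group elements to the right one letter at a time. Each $a\in A$ can be written uniquely as $a=x_a\cdot h_a$ with $x_a\in\alb$ and $h_a\in G$. Since $A$ is finite, the set $H=\{h_a\colon a\in A\}$ is finite.

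For a product $a_1\otimes\cdots\otimes a_n$ we use $x\cdot g\otimes y=x\otimes g\cdot y$ and $g\cdot y=g(y)\cdot g|_y$ repeatedly. This gives the recursion
\[
a_1\otimes\cdots\otimes a_{k+1}=(v_k\cdot g_k)\otimes x_{a_{k+1}}\cdot h_{a_{k+1}}=v_k\, g_k(x_{a_{k+1}})\cdot g_k|_{x_{a_{k+1}}}h_{a_{k+1}},
\]
with $g_1=h_{a_1}$. So the group parts satisfy $g_{k+1}=g_k|_{x}\,h$ for some $x\in\alb$ and $h\in H$. It therefore suffices to find a finite set $\nuke_A$ that contains $H$ and is closed under the operation $g\mapsto g|_x h$ for $x\in\alb$, $h\in H$. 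Induction on $n$ then finishes the proof.

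To build such a set, we use contraction in its standard quantitative form. By the definition of the nucleus, for every finite $S\subset G$ there is an $m$ such that $s|_v\in\nuke$ for all $s\in S$ and all $|v|\ge m$.

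Fix $m$ such that every element of the finite set $\nuke H^{\le m}$ of products has all sections at words of length $m$ in $\nuke$. Here $H^{\le m}$ denotes the products of at most $m$ elements of $H$.

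Now consider $g_n$ for $n$ large and expand it backwards $m$ steps using the recursion and the section rules~\eqref{eq:sections}:
\[
g_{k+m}=g_k|_{w}\cdot(\text{a product } P \text{ of } m \text{ sections of elements of } H,\text{ each followed by an element of } H),
\]
where $w$ is a word of length $m$.

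The main obstacle is to make this bounded uniformly, rather than letting it grow with $n$. The cleanest route is the norm argument. Take $S=\nuke\cup H$ as a finite generating set of a subgroup, let $l$ be the word length, and use the consequence of contraction that there are $\lambda<1$, $m$, and $C$ with $l(g|_v)\le \lambda l(g)+C$ for all $|v|=m$ and all $g$ in the subgroup generated by $\nuke\cup H$. Sections of elements of $H$ are themselves elements of the group with bounded length, so they cause no trouble.

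The recursion then gives $l(g_{k+m})\le \lambda l(g_k)+C'$, where $C'$ depends only on $H$ and $m$. Hence $l(g_k)$ stays bounded by $\max\bigl(C'/(1-\lambda),\ \max_{j\le m}l(g_j)\bigr)$. The elements $g_j$ with $j\le m$ range over a finite set. A bounded word length in a finitely generated group means the $g_k$ lie in a finite ball, and this ball is $\nuke_A$.

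If the length-contraction inequality is not available in the required form, we derive it directly. Every element of the finite ball of radius $R$ has its level-$m$ sections in $\nuke$ once $m$ is large. Splitting $g$ into blocks of length $R$ and applying~\eqref{eq:sections} then yields $l(g|_v)\le \frac{l(g)}{R}\max_{s\in\nuke}l(s)+\text{const}$, which gives the contraction for $R$ large.
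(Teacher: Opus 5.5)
The paper does not prove this statement; it only refers to Nekrashevych's book. Your argument is the standard one and works in outline. The recursion $g_{k+1}=g_k|_xh$, iterated to $g_{k+m}=g_k|_uP$ with $u\in\alb^m$ and $P$ the group part of $a_{k+1}\otimes\cdots\otimes a_{k+m}$, combined with an affine contraction $l(g|_u)\le\lambda l(g)+C$ with $\lambda<1$, bounds $l(g_n)$ uniformly. Your block-splitting derivation of that contraction is correct. One step, however, fails as written.

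\textbf{The length function is not defined on $g_n$ and $P$.} You measure word length in $\langle\nuke\cup H\rangle$. But $P$ is built from sections $h|_w$ of elements of $H$ at short words $w$, and already $g_2=h_{a_1}|_x h_{a_2}$ is of this kind. These sections need not lie in $\langle\nuke\cup H\rangle$, so $l(P)$ and $l(g_{k+m})$ are undefined.

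Your block argument does not cover this. It only places $g|_u$ in $\langle\nuke\rangle$ when $|u|=m$ exactly, and says nothing about these short sections. Your remark that they ``are elements of the group with bounded length'' presupposes a word length on $G$. That is not available, since $G$ is not assumed finitely generated.

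\textbf{Fix.} Enlarge the generating set to
\[
S=\nuke\cup\{h|_w\colon h\in H,\ w\in\xs\},
\]
which is finite because $G$ is contracting: $h|_w\in\nuke$ once $|w|$ is large. Then the argument goes through:
\begin{enumerate}
\item $g_n\in S^n$ and $P\in S^m$.
\item Your block argument gives $l(g|_u)\le l(g)/R+1$ for $g\in\langle S\rangle$, $|u|=m=m(R)$ and $R\ge 2$.
\item Hence $l(g_{k+m})\le l(g_k)/R+1+m$.
\item So $l(g_n)\le (1+m)R/(R-1)$ for all $n$, and the ball of this radius in $\langle S\rangle$ serves as $\nuke_A$.
\end{enumerate}

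\textbf{A dangling step.} The earlier sentence ``Fix $m$ such that every element of $\nuke H^{\le m}$ has all sections at words of length $m$ in $\nuke$'' is circular as stated. The set grows with $m$, so the existence of such an $m$ is itself a logarithmic depth bound that needs proof. It is never used afterwards, so remove it.
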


\begin{corollary}
    If $(G, \bim)$ is contracting with respect to one basis, then it is contracting with respect to any basis.
\end{corollary}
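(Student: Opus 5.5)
Let $\alb$ be a basis with respect to which $(G,\bim)$ is contracting, with nucleus $\nuke$, and let $\mathsf{Y}\subset\bim$ be another basis. The plan is to apply Proposition~\ref{pr:sumgeometric} twice. First to $\mathsf{Y}$, which expresses $\mathsf{Y}$-words in terms of $\alb$-words. Then to a suitable finite set of products, which brings us back from $\alb$-words to $\mathsf{Y}$-words.

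\textbf{Step 1.} Fix a finite set $A\subset\bim$ that contains $\mathsf{Y}$ and also the elements $g\cdot y$ for $g$ in a finite set of group elements to be determined. By Proposition~\ref{pr:sumgeometric}, every product $a_1\otimes\cdots\otimes a_n$ of elements of $A$ equals $v\cdot h$ with $v\in\xs$ and $h\in\nuke_A$, a finite set.

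\textbf{Step 2.} Write the sections with respect to $\mathsf{Y}$. For $g\in G$ and $w=y_1\ldots y_n\in\mathsf{Y}^n$ we have $g\cdot w=w'\cdot g|^{\mathsf{Y}}_w$, where $w'\in\mathsf{Y}^n$ is unique. Symmetrically, each $x\in\alb$ can be written as $x=y\cdot k_x$ with $y\in\mathsf{Y}$. Hence each $v\in\alb^n$ equals $u\cdot k$ with $u\in\mathsf{Y}^n$, and $k$ ranges over a finite set $K_n$. The crux is to show that $k$ actually ranges over a finite set independent of $n$.

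To get this, apply Proposition~\ref{pr:sumgeometric} with the roles of the bases swapped. The proposition needs contraction only for the basis in which we write, and we do not yet know $\mathsf{Y}$ is contracting, so a different route is needed. Write $y=x\cdot h_y$ with $x\in\alb$. Then the set $B=\{x\cdot k_x\}=\alb$ itself consists of products of the elements $y\cdot k_x$. So instead we argue as follows.

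\textbf{Step 3 (main step).} Take $g\in G$ and a long word $w\in\mathsf{Y}^n$. The product $g\cdot w$ lies in $g\cdot\mathsf{Y}^{\otimes n}$, and $g\cdot y_1\otimes\cdots\otimes y_n$ is a product of elements of the finite set $A_g=\{g\cdot y\}\cup\mathsf{Y}$. By Proposition~\ref{pr:sumgeometric}, $g\cdot w=v\cdot h_1$ with $v\in\xs$ and $h_1\in\nuke_{A_g}$. Likewise $w'=v'\cdot h_2$ with $h_2\in\nuke_{\mathsf{Y}}$.

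Since $w'$ and $g\cdot w$ lie in the same right orbit, we get $v=v'$. Therefore
\[
g|^{\mathsf{Y}}_w=h_2^{-1}h_1 .
\]
For the long-word behaviour we want $h_1\in\nuke_{\mathsf{Y}}\cdot\nuke$-type sets, independent of $g$. To achieve this, split $w=w_1w_2$. The section $g|^{\mathsf{Y}}_{w_1}$ stays in a finite set depending on $g$, and the tail contributes sections controlled by the contraction over $\alb$. Concretely, $g\cdot w_1\otimes w_2=v_1\cdot h\otimes w_2$ with $h$ in the finite set. Since $h\cdot w_2=(h\cdot w_2\text{ in }\alb\text{-terms})$, for $w_2$ long the $\alb$-sections of $h$ lie in $\nuke$. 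Applying Proposition~\ref{pr:sumgeometric} to the fixed finite set $\nuke\cdot\mathsf{Y}\cup\mathsf{Y}$ then bounds the final $h_1$ in a finite set independent of $g$.

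This yields a finite set $\nuke'=\nuke_{\mathsf{Y}}^{-1}\nuke_{\nuke\cdot\mathsf{Y}\cup\mathsf{Y}}$ containing $g|^{\mathsf{Y}}_w$ for all sufficiently long $w$. So $G$ is contracting with respect to $\mathsf{Y}$.

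The delicate point is this uniformity in $g$: arranging that, after a prefix of length depending on $g$, the remaining product involves only elements from a fixed finite set. I expect this bookkeeping to be the main obstacle.
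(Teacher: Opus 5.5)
There is a genuine gap, and it is where you expected it: the uniformity in $g$ in Step~3 is asserted, not proved. The sentence ``for $w_2$ long the $\alb$-sections of $h$ lie in $\nuke$'' is a statement about $\alb$-words, but $w_2$ is a $\mathsf{Y}$-word. To move $h$ past $w_2$ you must write $w_2=u_2\cdot f$ with $u_2\in\alb^{|w_2|}$. Then $h\cdot w_2=h(u_2)\cdot h|_{u_2}f$, so the element left over is $h|_{u_2}f$, which need not lie in $\nuke$. Unless $f$ is known to range over a finite set independent of $|w_2|$, neither this step nor the later appeal to the finite set $\nuke\cdot\mathsf{Y}\cup\mathsf{Y}$ gives a bound uniform in $g$. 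That uniform bound on $f$ is exactly what you called ``the crux'' in Step~2. You abandoned it there because you only considered applying Proposition~\ref{pr:sumgeometric} with the roles of the bases swapped.

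No swap is needed. You already use the right tool in Step~3 when you write $w'=v'\cdot h_2$ with $h_2\in\nuke_{\mathsf{Y}}$. Proposition~\ref{pr:sumgeometric}, applied in the contracting basis $\alb$ to the finite set $A=\mathsf{Y}$, gives a finite set $B=\nuke_{\mathsf{Y}}$. Every $w\in\mathsf{Y}^n$, for every $n$, equals $v\cdot b$ with $v\in\alb^n$ and $b\in B$; in Step~2 you may therefore take $k=b^{-1}$.

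Apply this to $w$ itself instead of building $A_g$. Write $w=v\cdot b$ and $w'=v'\cdot b'$ with $b,b'\in B$, where $g\cdot w=w'\cdot g|^{\mathsf{Y}}_w$. Then
\[
g\cdot w=g(v)\cdot g|_v\,b=v'\cdot b'\,g|^{\mathsf{Y}}_w .
\]
Freeness of the right action gives $v'=g(v)$ and $g|^{\mathsf{Y}}_w=b'^{-1}\,g|_v\,b$. Since $g|_v\in\nuke$ once $|v|\ge n_g$, every $\mathsf{Y}$-section of $g$ at a word of length at least $n_g$ lies in the fixed finite set $B^{-1}\nuke B$. The dependence on $g$ enters only through $n_g$, so neither the splitting $w=w_1w_2$ nor the $g$-dependent sets $A_g$ are needed.

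This is the argument the paper intends; the same conjugation by elements of $B$ appears in the proof of Proposition~\ref{pr:moduluschangealb}. With this ingredient your Step~3 can also be repaired: the leftover element is $h|_{u_2}f$ with $f\in\nuke_{\mathsf{Y}}$, which gives the bound $\nuke_{\mathsf{Y}}^{-1}\nuke\,\nuke_{\mathsf{Y}}$. The detour through $A_g$ then becomes superfluous.
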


\subsection{Self-similarity complex and the limit space}

Let $(G, \bim)$ be a finitely generated self-similar group. Choose a basis $\alb\subset\bim$ and a finite generating set $S\subset G$. Then the semigroup $\bim^*$ is generated by $S\cup\alb$. The quotient of the \emph{left} Cayley graph of $\bim^*$ with respect to this generating set by the right $G$-action is called the \emph{self-similarity complex}. If we naturally identify its set of vertices with $\xs$, then its edges are of the following two types:
\begin{enumerate}
    \item \emph{vertical edges} connecting $v$ to $xv$ for $v\in\xs, x\in\alb$;
    \item \emph{horizontal edges} connecting $v$ to $g(v)$ for $v\in\xs, g\in S$.
\end{enumerate}

The following is proved in~\cite{nek:hyplim}, see also~\cite[Theorem~3.8.6]{nek:book}.

\begin{proposition}
    If $(G, \bim)$ is contracting, then its self-similarity complex is Gromov hyperbolic.
\end{proposition}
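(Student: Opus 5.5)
We will prove hyperbolicity through a local criterion: a graph in which every loop of length at most $K$ bounds a filling of area at most $C$, and which satisfies a linear isoperimetric inequality, is Gromov hyperbolic. Rather than verifying that directly, we will use the more geometric picture of a ``horoball-like'' structure. The vertices of level $n$ are $\alb^n$. A vertical edge moves one level deeper, from $v$ to $xv$; here the new letter is prepended, and this is consistent with the left action $g\cdot(x\otimes v)=g(x)\otimes g|_x\cdot v$. Horizontal edges at level $n$ form the Schreier graph of the action of $S$ on $\alb^n$. The key step is to show that the horizontal distance on level $n$ shrinks exponentially when we go up one level. Concretely, suppose two vertices $xv,yu$ at level $n+1$ are joined by a horizontal path of length $L$, so $yu=g(xv)$ with $l(g)\le L$. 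Then $u=g|_x(v)$, and contraction gives $l(g|_x)\le \lambda L+C$ for some $\lambda<1$. This uses the standard consequence of contraction, namely $\eta_\infty<1$ (equivalently, lengths of sections decrease by a uniform factor after a fixed number $m$ of levels).

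The plan is then as follows. First, we pass to a power of the biset, i.e.\ replace $\alb$ by $\alb^m$. This changes the complex only by a quasi-isometry: each new vertical edge is a path of length $m$. After this replacement we may assume that $l(g|_x)\le \frac12 l(g)+C$ for all $g\in G$ and $x\in\alb$. Second, we derive a description of geodesics. Any two vertices $v,w$ can be joined by the following path: go up vertically from both (deleting first letters) to their ancestors at a level $k$, where their horizontal distance is bounded by a constant $D$; connect the ancestors horizontally by at most $D$ edges; and come back down. We will show that every path can be shortened to such a ``tent'' shape up to bounded error. A horizontal segment of length $L>D$ at level $n$ can be replaced by going up one level, traversing about $L/2+C$ edges, and going down. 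This replacement costs $2$ vertical edges but saves about $L/2$ horizontal ones. Consequently, geodesics are uniformly close to tents. Third, from this we get that the distance is approximated by $|v|+|w|-2k(v,w)$, where $k$ is the largest level at which the ancestors are within horizontal distance $D$. This makes $k$ a Gromov-product-like quantity. We then check the $\delta$-thin (or four-point) condition. The relevant fact is that ``$D$-closeness at level $k$'' is nearly transitive: if the ancestors of $u,v$ and of $v,w$ are $D$-close at level $k$, then the ancestors of $u,w$ are $2D$-close there, and hence $D$-close after going up a bounded number of levels. This yields $k(u,w)\ge\min(k(u,v),k(v,w))-c$, which is the four-point condition.

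We expect the main obstacle to be the rigorous estimate that geodesics are close to tents, i.e.\ that the quantity $|v|+|w|-2k(v,w)$ is within a bounded additive error of the true distance. The bound ``$\le$'' is immediate from the tent path. For ``$\ge$'' we would argue by induction on path length that any path from $v$ to $w$ can be pushed upward. The step is to project the whole path to the level above its lowest horizontal portion using the section-shrinking inequality, and to track the total length via a potential function combining the horizontal edge counts weighted by $2^{-\text{level}}$. The care needed lies in controlling the additive constants $C$ so that they do not accumulate across levels. We would handle this by choosing $D$ large compared to $C$, specifically $D>4C$, so that contraction strictly dominates at horizontal scales above $D$.
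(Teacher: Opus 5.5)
Your overall architecture is the standard one behind the cited proof: tent paths, the Gromov product at the root equal to $k(v,w)$ up to bounded error, and the near-ultrametric inequality $k(u,w)\ge\min(k(u,v),k(v,w))-c$. The upper bound, the inheritance of $D$-closeness toward the root (which needs $D\ge 2C$), and the four-point step are all fine.

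The gap is the step you flag yourself: the lower bound $d(v,w)\ge |v|+|w|-2k(v,w)-c$ is never proved. The sentence ``Consequently, geodesics are uniformly close to tents'' does not follow from replacing horizontal segments longer than $D$. That replacement only bounds the horizontal runs of a geodesic. It says nothing about a geodesic that zigzags, i.e.\ leaves level $n$ for level $n+1$, takes a few horizontal steps, and comes back. Your proposed repair is to push the deepest horizontal portions one level toward the root and induct on length. That operation is not length-nonincreasing. A detour $u\to xu\to\cdots\to yu'\to u'$ with $L$ horizontal edges has length $L+2$, while the only available bound on its projection is a horizontal path of length $L/2+C$. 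When $L$ is small and $C$ is large, the projection is longer. So under this procedure the additive constants do accumulate, and choosing $D>4C$ does nothing for short segments.

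The missing idea is that you do not need to straighten paths at all; a crude projection suffices.
\begin{itemize}
\item Let $\gamma$ be any path from $v$ to $w$, and let $m$ be the smallest level it visits.
\item Replace each vertex of $\gamma$ by its suffix of length $m$. Vertical edges become trivial moves. A horizontal edge $u'u''\mapsto s(u')\,s|_{u'}(u'')$ becomes a move by $s|_{u'}$.
\item By contraction, the sections $s|_{u'}$ with $s\in S$, $u'\in\xs$ form a finite set; let $K$ bound their lengths.
\item Let $d_j$ be the horizontal distance between the level-$j$ ancestors of $v$ and $w$. Then $\gamma$ has at least $d_m/K$ horizontal edges and at least $|v|+|w|-2m$ vertical ones.
\item Reading the contraction inequality backwards gives $d_{j+1}\ge 2(d_j-C)$, i.e.\ $d_{j+1}-2C\ge 2(d_j-2C)$.
\item If $m>k=k(v,w)$, maximality of $k$ gives $d_{k+1}>D>4C$, hence $d_m>2^{m-k}C$. (The case $m\le k$ is immediate.)
\end{itemize}
Therefore $\mathrm{length}(\gamma)\ge |v|+|w|-2m+2^{m-k}C/K\ge |v|+|w|-2k-c$, with $c=\sup_{t\ge 0}(2t-2^tC/K)$: exponential growth beats the linear saving $2(m-k)$ in vertical edges. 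This is where $D>4C$ genuinely enters. The four-point condition needs only this distance estimate, not any statement about the shape of geodesics.

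Alternatively, replace $S$ by the ball of radius $R\ge 2C$. This set is closed under sections and satisfies $l(g|_x)\le\lceil (l(g)+1)/2\rceil$. With it, your exchange moves never increase length, exactly as in Lemma~\ref{lem:geodesicsinSigmaphi}, and you get honest tent-shaped geodesics.
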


\begin{example}
\begin{figure}
    \centering 
    \includegraphics{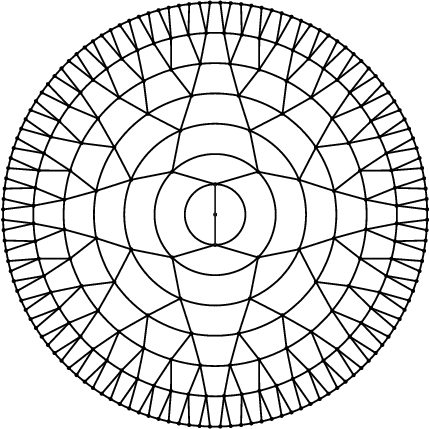}
    \caption{The self-similarity complex for the odometer}
    \label{fig:admcom}
\end{figure}
See Figure~\ref{fig:admcom}, where a part of the self-similarity complex for the adding machine action of $\Z$ from Example~\ref{ex:angledoubling} is shown.
\end{example}

The boundary of the self-similarity complex of a contracting self-similar group $(G, \bim)$ is called the \emph{limit space} of $(G, \bim)$, and is denoted $\lims[(G, \bim)]$ or just $\lims[G]$. 

The limit space can be alternatively described in the following way, suitable also for infinitely generated groups.

\begin{proposition}
\label{pr:lims}
Every point of $\lims[(G, \bim)]$ is the limit $\lambda(\ldots x_2x_1)$ of a sequence $\emptyset, x_1, x_2x_1, x_3x_2x_1, \ldots$ for some $\ldots x_2x_1\in\xmo$. The map $\lambda\colon\xmo\arr\lims[(G, \bim)]$ is a quotient map. We have $\lambda(\ldots x_2x_1)=\lambda(\ldots y_2y_1)$ if and only if there exists a finite set $N\subset G$ and a sequence $g_n\in N$ (equivalently, a sequence $g_n\in\nuke$, where $\nuke$ is the nucleus) such that $g_n(x_n\ldots x_2x_1)=y_n\ldots y_2y_1$ for every $n$.
\end{proposition}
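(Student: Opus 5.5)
The statement to prove is Proposition~\ref{pr:lims}. I would work directly in the self-similarity complex $\Sigma$ with vertex set $\xs$, vertical edges $v$--$xv$ and horizontal edges $v$--$s(v)$ for $s\in S$. The key structural fact is that every vertex $x_n\ldots x_1$ is joined to the root $\emptyset$ by the vertical path $\emptyset, x_1, x_2x_1,\ldots$. Contraction will be used to show that these vertical rays are uniform quasi-geodesics.

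\emph{Step 1: vertical rays are quasi-geodesic and cover the boundary.} I would show that any path in $\Sigma$ of length $L$ between $v$ and $w$ can be pushed toward the root so that the levels of the endpoints differ by at most $L$. Horizontal edges at level $m$ map to horizontal ``edges'' at level $m-1$. Indeed, if $w=s(v)$ with $v=x_m\ldots x_1$, then deleting the first letter gives $s|_{x_m}$-related words, by~\eqref{eq:sectionaction} read appropriately. More precisely, with the convention that $xv$ extends on the left, the prefix relations give $w'=g(v')$ where $g$ is a section.

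After $k$ steps of deletion, a horizontal path of length $\ell$ becomes a word in sections. Since $G$ is contracting, there is $k_0$ such that $\ell$ consecutive horizontal edges collapse to a path of length $\le \ell/2+C$ after descending $k_0$ levels. This is the standard fact $\eta_\infty<1$, or equivalently the finiteness of $\nuke$ combined with~\eqref{eq:sections}. It follows that geodesics go down, travel horizontally a bounded amount, and go back up. Hence vertical rays are $(\lambda,c)$-quasi-geodesic from the root, every geodesic ray from $\emptyset$ is at bounded distance from a vertical ray, and every boundary point is $\lambda(\ldots x_2x_1)$. Then $\lambda$ is surjective.

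\emph{Step 2: the equivalence criterion.} Two quasi-geodesic rays define the same boundary point if and only if they stay at bounded distance. Suppose $g_n(x_n\ldots x_1)=y_n\ldots y_1$ with $g_n\in N$ finite. Then the horizontal distance at each level is at most $\max_{g\in N} l(g)$, so the limits coincide.

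Conversely, suppose the two rays are at bounded distance $D$ in $\Sigma$. By Step 1, there is a horizontal connection at a level near $n$ of bounded length. Descending a bounded number of levels and using contraction gives $g_n$ with $l(g_n)$ bounded, hence lying in a finite set. Applying sections from a fixed level onward then puts $g_n$ in $\nuke$ for all large $n$. For the finitely many small $n$, I would use the relation $g_n=g_{n+k}|_{\ldots}$ in reverse. It is simpler to note that if $g_{n+k}$ relates the length-$(n+k)$ prefixes, then its section relates the shorter ones. The section of an element of $\nuke$ lies in $\nuke$, so $\nuke$-sequences can be chosen for every $n$.

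\emph{Step 3: $\lambda$ is a quotient map.} The map $\lambda$ is continuous because agreement in the last $n$ letters forces Gromov products of at least $n-c$. Since $\xmo$ is compact and $\lims$ is Hausdorff, a continuous surjection is closed, and therefore a quotient map.

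The main obstacle is Step 1: turning contraction, which is a statement about sections of long words, into a uniform geometric descent estimate for horizontal paths in $\Sigma$. I would treat it via the hyperbolicity proof cited from~\cite{nek:hyplim}, whose key lemma is exactly this halving of horizontal paths after a fixed number of descents.
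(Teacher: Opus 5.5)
Your argument is correct, and it is essentially the standard argument behind the results the paper relies on here: the paper states this description of $\partial\Sigma$ without proof, citing \cite{nek:hyplim} and \cite{nek:book} for the hyperbolicity of $\Sigma$.

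The step you flag as the main obstacle is actually free. Every edge of $\Sigma$ changes the level by at most one, so $d(\emptyset,v)=|v|$, and the geodesic rays from $\emptyset$ are exactly the vertical rays. Surjectivity of $\lambda$ therefore follows at once from the cited hyperbolicity, with no need for the descent estimate. In Step~2, a bounded path between $x_n\ldots x_1$ and $y_n\ldots y_1$ projects down a bounded number of levels to a product of boundedly many sections of generators at boundedly long words, which already gives the finite set $N$. Contraction is only needed afterwards, to replace $N$ by $\nuke$.
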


The equivalence described in Proposition~\ref{pr:lims} is invariant under the shift $\ldots x_2x_1\mapsto \ldots x_3x_2$, hence the shift induces a continuous map $\si\colon\lims\arr\lims$, which we call the \emph{limit dynamical system}.

\begin{example}
\label{ex:realcircle}
Consider the adding machine from Example~\ref{ex:angledoubling}. Denote, for $w=\ldots x_2x_1\in\{0, 1\}^{-\omega}$ by $\theta(w)$ the point of $\R/\Z$ represented by the real number $\sum_{k=1}^\infty x_k2^{-k}$, i.e., by the binary real $.x_1x_2\ldots$. If $a^{n_k}$ is a sequence such that $a^{n_k}(x_k\ldots x_1)=y_k\ldots y_1$, then $|\theta(\ldots x_2x_1)-\theta(\ldots y_2y_1)|\le 2^{-k}n_k$. Consequently, if the sequence $(n_k)$ is bounded, then $\theta(\ldots x_2x_1)=\theta(\ldots y_2y_1)$. 

Conversely, if $\theta(w_1)=\theta(w_2)$, then the sequences $w_i$ are of the form $\ldots 0001x_n\ldots x_2x_1$ and $\ldots 1110x_n\ldots x_2x_1$. We have
\[a(111\ldots 10x_n\ldots x_2x_1)=000\ldots 01x_n\ldots x_2x_1,\]
hence $w_1$ and $w_2$ are equivalent (represent the same point of the limit space).

It follows that the  limit space in this case is homeomorphic to the circle $\R/\Z$ and the limit dynamical system is the angle doubling map.
\end{example}

The previous example is generalized to all locally expanding coverings as follows.

\begin{theorem}
\label{th:limstheorem}
    Let $f\colon\M\arr\M$ be a locally expanding covering map, where $\M$ is a compact metric space. Then the limit dynamical system of $\img(f)$ is topologically conjugate to $f$. In particular, the limit space of $\img(f)$ is homeomorphic to $\M$.
\end{theorem}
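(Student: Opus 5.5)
We will build an explicit map $\Phi\colon\xmo\arr\M$, show that it is a continuous surjection whose fibres are exactly the equivalence classes described in Proposition~\ref{pr:lims}, and check that it intertwines the shift with $f$.

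\textbf{Step 1: the coding map.} Fix a basepoint $t\in\M$ and identify $\bim_f$ with pairs $(z,[\alpha])$. Choose a basis $\alb$, so each $x\in\alb$ is a path $\ell_x$ from $t$ to a preimage $z_x\in f^{-1}(t)$. Compactness and local expansion give a constant $\delta>0$ such that every set of diameter less than $\delta$ is evenly covered by $f$, and inverse branches on such sets contract distances by a factor $L^{-1}$. By compactness and local path-connectedness (which we may need to derive from the existence of expanding coverings, or assume), we can choose the paths $\ell_x$ and finitely many loops representing generators.

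For $w=\ldots x_2x_1$ define the path $\ell_{x_n\ldots x_1}$ as the concatenation
\[
\ell_{x_1}\cdot f^{-1}_{\ell_{x_1}}(\ell_{x_2})\cdots,
\]
where each term is the lift by $f^n$ of $\ell_{x_{n+1}}$ starting at the endpoint of the previous path. Its $n$-th segment is a lift by $f^{n}$ of one of finitely many paths, so its diameter is $O(L^{-n})$, after subdividing the $\ell_x$ into pieces of diameter less than $\delta$. Hence the endpoints $z_{x_n\ldots x_1}\in f^{-n}(t)$ converge. Set $\Phi(w)$ to be the limit. This map is uniformly continuous on $\xmo$, and it satisfies $f(\Phi(\ldots x_2x_1))=\Phi(\ldots x_3x_2)$, up to the convention on which letter is applied first; this is immediate since $f$ maps the lifted path to the shifted one.

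\textbf{Step 2: surjectivity and the fibres.} Since $\M$ is connected and $f$ is expanding, $\bigcup_n f^{-n}(t)$ is dense: inverse branches shrink any ball onto preimage points. Together with compactness of $\xmo$, this gives surjectivity.

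If $g_n\in\nuke$ satisfy $g_n(x_n\ldots x_1)=y_n\ldots y_1$, then the points $z_{x_n\ldots x_1}$ and $z_{y_n\ldots y_1}$ are joined by the $f^n$-lift of a loop representing $g_n$ (conjugated appropriately). These loops come from a finite set, so the lifts have diameter $O(L^{-n})$ and $\Phi(w)=\Phi(w')$.

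Conversely, suppose $\Phi(w)=\Phi(w')$. For large $n$ the points $z_{x_n\ldots x_1}$ and $z_{y_n\ldots y_1}$ are close, and we connect them by a short path. Pushing this path forward by $f^n$ gives a path of bounded size, which, after closing with the bounded paths $\ell$, represents a group element from a finite set. The monodromy of this element sends $x_n\ldots x_1$ to $y_n\ldots y_1$.

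\textbf{Main obstacle.} The crux is making the converse uniform. We must show that closeness at level $n$ yields elements of bounded word length, which requires controlling the pushed-forward path by $f^n$ uniformly. I would handle this by working with a finite cover by evenly covered connected open sets of diameter less than $\delta$ (a Lebesgue number argument). Two points at scale $L^{-n}$ then lie in a common $f^n$-preimage of such a set, and the induced element lies in a fixed finite set.

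Finally, $\Phi$ factors through a continuous bijection $\lims\arr\M$. Since $\lims$ is compact (Proposition~\ref{pr:lims}) and $\M$ is Hausdorff, this bijection is a homeomorphism, and it conjugates $\si$ to $f$.
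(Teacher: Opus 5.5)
The paper does not prove this theorem; it quotes it from the literature. The general form is the result cited in Theorem~\ref{th:regularimg}, where a path-free definition of $\img(f)$ is used precisely so that $\M$ need not be path-connected; the one-sided shift on $\xo$ from the introduction is an example. Your path-lifting argument is the classical one. As written, however, it proves the theorem only when $\M$ is path-connected and also \emph{locally path-connected}, and the second hypothesis is neither in the statement nor established.

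You use local path-connectedness essentially in three places:
\begin{enumerate}
\item the converse $\Phi(w)=\Phi(w')\Rightarrow w\sim w'$, which needs a finite cover by small evenly covered \emph{path}-connected open sets;
\item the finiteness of the set of group elements you produce;
\item surjectivity, where your justification of the density of $\bigcup_n f^{-n}(t)$ is hand-waving; with local path-connectedness it follows from a uniform bound on the number of $\delta$-small pieces needed to join any point to $t$, followed by lifting.
\end{enumerate}
``Derive or assume'' is exactly the missing step. Assuming it changes the theorem, and deriving it is not routine. One way to obtain it for connected $\M$ is through this very theorem together with local connectivity of limit spaces of level-transitive contracting groups, which would be circular.

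You also never show that $\img(f)$ is contracting, yet ``the limit space'' and Proposition~\ref{pr:lims} presuppose it. In your setting it would follow from a finiteness lemma: loops at $t$ of uniformly bounded $\delta$-length represent only finitely many elements of $\img(f)$. You would apply this to the loops representing sections $g|_v$, which are $\ell_v$, then the $f^n$-lift of a loop for $g$, then $\ell_{g(v)}$ reversed. That lemma again needs local path-connectedness.

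Even granting local path-connectedness, the converse step is misstated. Two distinct points $z_{x_n\ldots x_1}$ and $z_{y_n\ldots y_1}$ of $f^{-n}(t)$ can never lie in one sheet of $f^{-n}(U)$, because $f^n$ is injective on a sheet and sends both points to $t$. Moreover, pushing an arbitrary short path between them forward by $f^n$ gives a loop at $t$ whose size you do not control.

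What works is to apply the Lebesgue number at $f^n(\xi)$, where $\xi=\Phi(w)=\Phi(w')$, with a fixed offset $k$.
\begin{enumerate}
\item Put $u=x_{n+k}\ldots x_{n+1}$ and $u'=y_{n+k}\ldots y_{n+1}$. For $k$ large, the points $z_u$, $z_{u'}$ and $f^n(\xi)$, together with the tails joining $z_u,z_{u'}$ to $f^n(\xi)$ (of diameter $O(L^{-k})$), all lie in one set $U_i$ of the cover.
\item Lifting these tails by $f^n$ shows that $z_{x_{n+k}\ldots x_1}$, $z_{y_{n+k}\ldots y_1}$ and $\xi$ lie in a single $f^n$-sheet over $U_i$.
\item Choose a path $\alpha\subset U_i$ from $z_u$ to $z_{u'}$, and form the loop $\ell_u$, then $\alpha$, then $\ell_{u'}$ reversed. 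Its $f^n$-lift from $z_{x_n\ldots x_1}$ runs to $z_{x_{n+k}\ldots x_1}$, crosses the sheet to $z_{y_{n+k}\ldots y_1}$, and returns to $z_{y_n\ldots y_1}$. So its monodromy sends $x_n\ldots x_1$ to $y_n\ldots y_1$.
\item Since $U_i$ is evenly covered by all iterates, the action of this loop on the tree does not depend on the choice of $\alpha$. It is determined by $(u,u',i)$, so it ranges over a finite set.
\end{enumerate}
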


The limit space $\lims$ of a contracting group $(G, \bim)$ has a natural structure of a (developable) \emph{orbispace}, i.e., a quotient of a topological space by a proper action of a group. Namely, we lift the equivalence relation from Proposition~\ref{pr:lims} to an equivalence relation on $\xmo\times G$ in the following way.

\begin{defi}
    Consider the topological space $\xmo\times G$, where $G$ is discrete. We say that $\ldots x_2x_1\cdot g, \ldots y_2y_1\cdot h\in\xmo\times G$ are \emph{asymptotically equivalent} if there exists a finite set $N\subset G$ and a sequence $g_n\in N$ such that
    \[g_n\cdot x_n\ldots x_2x_1\cdot g=y_n\ldots y_2y_1\cdot h\]
    for all $n$. 

    The quotient of $\xmo\times G$ by the asymptotic equivalence relation is denoted $\limg=\limg[(G, \bim)]$, and is called the \emph{limit $G$-space}. The right action of $G$ on $\limg$ is induced by the natural right action of $G$ on $\xmo\times G$.
\end{defi}

\begin{example}
Similar arguments to those in Example~\ref{ex:realcircle} show that the limit $G$-space of the binary adding machine is naturally homeomorphic to $\R$ with the usual action of $\Z$ on it. The image of a sequence $\ldots x_2x_1\cdot a^n\in\xmo\times G$ is the real number $n+\sum_{k=1}^\infty x_k2^{-k}$.
\end{example}

The limit space $\lims$ is naturally the space of orbits $\limg/G$. One can show that the action of $G$ on $\limg$ is proper (see~\cite[Proposition~4.6.1]{nek:book}).

\begin{defi}
\label{def:regularssgroup}
Let $(G, \bim)$ be a self-similar group, and let $\alb\subset\bim$ be a basis. We say that $(G, \bim)$ is \emph{regular} if for every $x_1x_2\ldots\in\xo$ there exists $n$ such that $g(x_1x_2\ldots x_n)\ne x_1x_2\ldots x_n$ or $g|_{x_1x_2\ldots x_n}=1$. (This is equivalent to the condition that the groupoid of germs of the action of $G$ on $\xo$ is \emph{principal}, i.e., that all its isotropy groups are trivial.)
\end{defi}

The following is proved in~\cite[Proposition~4.5.23]{nek:dyngroups}.

\begin{theorem}
\label{th:regularimg}
    The action of $G$ on $\limg$ is free if and only if $(G, \bim)$ is regular. 

    A self-similar contracting group is regular if and only if it is equivalent to the iterated monodromy group of a locally expanding covering map $f\colon\M\arr\M$.

The limit dynamical system of the iterated monodromy group of a locally expanding covering map $f\colon\M\arr\M$ is topologically conjugate to $f$.
\end{theorem}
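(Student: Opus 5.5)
The plan is to prove the three assertions in order, using the description of $\limg$ as a quotient of $\xmo\times G$ by asymptotic equivalence, and Proposition~\ref{pr:lims}.

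\emph{Freeness versus regularity.} Suppose a point $\xi\in\limg$, represented by $\ldots x_2x_1\cdot g$, is fixed by $h\ne 1$. Replacing $h$ by $g h g^{-1}$, we may assume $g=1$. Then $\ldots x_2x_1\cdot h$ is asymptotically equivalent to $\ldots x_2x_1$. So there is a sequence $g_n\in\nuke$ with $g_n\cdot x_n\ldots x_1\cdot h=x_n\ldots x_1$ in $\bim^{\otimes n}$. Reading this in the basis gives $g_n(x_n\ldots x_1)=x_n\ldots x_1$ and $g_n|_{x_n\ldots x_1}=h^{-1}\ne 1$.

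The sequence $g_n$ takes finitely many values, and the condition at level $n+1$ implies a compatible condition at level $n$, since sections and the action are compatible by~\eqref{eq:sections}. A compactness argument (König's lemma over the finite set $\nuke$) then produces a single nuclear element and a ``reversed'' infinite path witnessing non-regularity. To match the directions of words, one uses that regularity is a condition on germs of $G$ acting on $\xo$, equivalently on the groupoid. So one shows that the asymptotic data give a nontrivial isotropy germ $(g,w)$ with $g(w)=w$ and $g|_{v}\ne 1$ for every prefix $v$ of $w$.

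For the converse, a nontrivial isotropy germ, i.e.\ $g$ fixing all prefixes $v$ of $w$ with $g|_v\ne1$, yields by contraction a nuclear element that appears infinitely often along $w$. From this we build an infinite backward sequence and an element $h\ne 1$ fixing the corresponding point of $\limg$. I expect this bookkeeping between left-infinite sequences $\xmo$ and right-infinite words $\xo$ to be the main technical obstacle. I would handle it by passing to the section graph on $\nuke$, with an edge $g\to g|_x$ whenever $g(x)=x$, and looking at infinite paths and cycles in this finite graph.

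\emph{Regular contracting groups are iterated monodromy groups.} If $G$ is regular, $G$ acts freely and properly on $\limg$ by the first part, so $\limg\to\lims$ is a covering map. The shift on $\xmo\times G$ descends to $\si\colon\lims\arr\lims$. Because the right action of $G$ on $\bim$ is free with $|\alb|$ orbits, $\si$ is a $|\alb|$-fold covering. Contraction gives expansivity: if $\si^n(\xi)$ and $\si^n(\zeta)$ stay in a fixed small entourage for all $n$, then their codings differ by nuclear elements at every level, so $\xi=\zeta$ by Proposition~\ref{pr:lims}. Hence $\si$ is locally expanding in the sense of Definition~\ref{def:locallyexpanding}.

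One then identifies the biset $\bim_{\si}$ with $\bim$. Fix a basepoint $t\in\lims$. The standard paths used to compute the iterated monodromy group are obtained from lifts in $\limg$, so that $\pi_1(\lims,t)$ maps onto $G$ via deck transformations. One checks that this map intertwines the bisets, which gives $\img(\si)\cong G$. Conversely, for a locally expanding covering $f$, $\img(f)$ is contracting by a standard Lebesgue-number argument: long sections of loops become short paths. It is regular because the lifts of $f^{-n}$ on a simply connected neighborhood give a free action.

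\emph{Conjugacy of the limit dynamical system to $f$.} Given $f$ locally expanding, I would define a map $\xmo\to\M$ by following the preimage tree. A word $\ldots x_2x_1$ determines a sequence of points $z_n\in f^{-n}(t)$ connected by paths $\ell_{x_n}$ lifted by $f^{-n}$. Expansion makes the lifted paths shrink exponentially, so $z_n$ converges. The resulting map is continuous and surjective, and it intertwines the shift with $f$. Two sequences have the same image exactly when they are related by bounded group elements. One direction holds because bounded loops lift to short loops. For the other, nearby points are connected by short paths, which represent nuclear elements. By Proposition~\ref{pr:lims}, the map therefore descends to a homeomorphism $\lims\to\M$ conjugating $\si$ to $f$. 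Compactness of $\M$ ensures the descended map is a homeomorphism, and this also proves Theorem~\ref{th:limstheorem}.
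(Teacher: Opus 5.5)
The paper gives no argument of its own here. It cites \cite[Proposition~4.5.23]{nek:dyngroups} and notes that $\img(f)$ is meant in a generality where $\M$ need not be path connected.

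\textbf{First part.} This is sound once you commit to the section graph on $\nuke\setminus\{1\}$. A point fixed by $h\ne 1$ gives arbitrarily long paths $g_n\to g_n|_{x_n}\to\cdots\to g_n|_{x_n\ldots x_1}=h^{-1}$. Every vertex is nontrivial because the last one is. Hence there is a cycle, i.e.\ $s\cdot u=u\cdot s$ with $s\ne 1$, which is non-regularity along $uuu\ldots$. Conversely, contraction turns a non-regular pair into such a cycle, and then $s$ fixes the point of $\limg$ represented by $\ldots uu\cdot 1$.

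\textbf{Second part: covering and expansivity.} Here there is a genuine gap. You deduce that $\si$ is an $|\alb|$-fold covering from freeness of the right action on $\bim$, and that it is expansive from contraction alone. Both hypotheses hold for every contracting group. Yet for the non-regular $\img(z^2+i)$, $\si$ is $z\mapsto z^2+i$ on a dendrite containing the critical point $0$. That map is neither a covering nor expansive, since $\si(z)=\si(-z)$ with $z,-z$ arbitrarily close.

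Your sentence ``their codings differ by nuclear elements at every level'' is exactly where this breaks. Closeness of $\si^k(\xi),\si^k(\zeta)$ for each $k$ only relates possibly different representatives locally. Assembling these into bounded $g_n$ with $g_n(x_n\ldots x_1)=y_n\ldots y_1$ is a unique-lifting property, and it must come from regularity.

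One way to supply it: by compactness of $\xo$, regularity gives $k_0$ such that $g(v)=v$ implies $g|_v=1$ for all $g\in\nuke^3$ and $v\in\alb^{k_0}$. Then $\si^n|_{\widetilde\til_{v_0v}}$ is the homeomorphism $S_{v_0v,v_0}$ of Proposition~\ref{pr:Sv1Sv2}. This uniform local injectivity yields the covering property. Proposition~\ref{pr:localssimilarities} with $\nu(v)=|v|$ then gives local expansion for a uniform metric. The covering property alone also follows from freeness on $\limg$ together with $\limg\otimes\bim\cong\limg$.

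\textbf{Second and third parts: generality.} There is a second gap. Your identification $\img(\si)\cong G$, your argument that $\img(f)$ is contracting and regular, and your conjugacy all go through $\pi_1$, deck transformations, simply connected neighbourhoods, path lifting, and short paths joining nearby points. That is the classical argument for path connected, locally path connected spaces.

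The theorem, however, concerns arbitrary compact metric spaces. For example, the trivial group with $|\alb|=2$ is contracting and regular, its limit space is a Cantor set, and $\si$ is the one-sided shift. This lies outside the path-connected setting in which $\bim_{f,\iota}$ is defined in Section~\ref{s:ssgroups}. The paper also deliberately avoids assuming local connectedness of $\lims$.

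So as written, you prove the last two assertions only under extra topological hypotheses. The general case needs a path-free definition of $\img(f)$, with the contraction, regularity and conjugacy arguments redone in those terms.
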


Here we use a more general definition of the iterated monodromy group that also applies when $\M$ is not path-connected, see~\cite[Subsection~4.3.1]{nek:dyngroups}.

\section{Metrics on $\lims$}

Throughout the paper, we write $x\doteq_C y$ or just $x\doteq y$ for $x-C\le y\le x+C$. Similarly, we write $x\asymp_C y$ or just $x\asymp y$ for $C^{-1}x\le y\le Cx$. 

\subsection{Ahlfors-regular quasiconformal metrics}

Let $(G, \bim)$ be a contracting group. Choose a basis $\alb\subset\bim$, and let $\nuke\subset G$ be the associated nucleus. The \emph{tile} $\til\subset\limg$ is the image of $\xmo\cdot 1\subset\xmo\times G$ in $\limg$. 

For $v\in\xs$, we denote by $\til_v$ the image of the set $\xmo v$ in $\lims$. If $v\in\alb^n$, then we call $\til_v$ the \emph{tile of $n$th level}. By~\cite[Proposition~3.3.5]{nek:book}, two tiles of the same level $\til_v$ and $\til_u$ intersect if and only if there exists $g\in\nuke$ such that $u=g(v)$.

\begin{lemma}
\label{lem:intersectingtiles}
Let $v, u\in\xs$ be such that $|v|\ge |u|$. Then the tiles $\til_v, \til_u$ intersect if and only if there exists $g\in\nuke$ such that $u$ is a suffix of $g(v)$.
\end{lemma}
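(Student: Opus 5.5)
Write $v=v_1v_2$ with $|v_2|=|u|$; here $v_1$ is the prefix of length $|v|-|u|$. Since $\til_v$ is the image of $\xmo v=\xmo v_1v_2$, we have $\til_v\subseteq\til_{v_2}$. The plan is to reduce everything to the same-level criterion quoted above from~\cite[Proposition~3.3.5]{nek:book}, applied on level $|u|$. The key identity is \eqref{eq:sectionaction}: $g(v)=g(v_1v_2)=g(v_1)\,g|_{v_1}(v_2)$. The suffix of length $|u|$ of $g(v)$ is therefore $g|_{v_1}(v_2)$.

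\emph{Sufficiency.} Suppose $g\in\nuke$ and $u$ is a suffix of $g(v)$. Then $g(v)=wu$ for some $w$ of length $|v_1|$. By Proposition~\ref{pr:lims}, a sequence $\ldots x_2x_1v$ and its image $\ldots y_2y_1 g(v)$ under the action of $g$ are identified by $\lambda$. Indeed, all truncations $g(x_n\ldots x_1 v)$ use the same element $g$, and the suffixes of length $\le|v|$ are the corresponding suffixes of $g(v)$, so the sequence of group elements can be taken constant. More cleanly, we want $\til_v\cap\til_{g(v)}\neq\emptyset$ at level $|v|$, which is exactly the same-level criterion. Since $\til_{g(v)}=\til_{wu}\subseteq\til_u$, we get $\til_v\cap\til_u\neq\emptyset$.

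\emph{Necessity.} Suppose $\til_v\cap\til_u\neq\emptyset$. Pick a point $\zeta=\lambda(\ldots x_2x_1v)=\lambda(\ldots y_2y_1u)$. By Proposition~\ref{pr:lims} there is a sequence $g_n\in\nuke$ with $g_n(x_n\ldots x_1v)=y_n\ldots y_1u$ for all $n$; I index by the total word length $n+|v|$. We must produce a single $g\in\nuke$ with $u$ a suffix of $g(v)$. Since $\nuke$ is finite, infinitely many of the $g_n$ coincide with one element $g$. By \eqref{eq:sectionaction}, $g(x_n\ldots x_1v)=g(x_n\ldots x_1)\,g|_{x_n\ldots x_1}(v)$, so the suffix of length $|v|$ of $y_n\ldots y_1u$ is $g|_{x_n\ldots x_1}(v)$. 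Its suffix of length $|u|$ is $u$. Thus $u$ is a suffix of $h(v)$ with $h=g|_{x_n\ldots x_1}$, and $h\in\nuke$ because the nucleus is closed under taking sections (a standard property of the minimal nucleus).

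The main obstacle is the indexing convention. The words in Proposition~\ref{pr:lims} grow to the left, so the group elements act on the leftmost letters first, and sections are taken at the left prefix. Once this convention is handled carefully, as above, the argument is routine.

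If one prefers to avoid the closure of $\nuke$ under sections, there is an alternative. Apply the same-level criterion at level $|v|$ to the truncated words directly: $\til_v$ meets $\til_{v'}$, where $v'$ is the length-$|v|$ suffix of $y_n\ldots y_1u$ for $n$ large. This $v'$ ends with $u$, and the criterion yields $g\in\nuke$ with $g(v)=v'$.
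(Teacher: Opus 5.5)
Your proof is correct. For necessity it is the paper's argument: your alternative (apply the same-level criterion to $\til_v$ and $\til_{v'}$, where $v'$ is the length-$|v|$ suffix of a representative of the common point in $\til_u$) is exactly what the paper does. Your main route through a deeper truncation and closure of $\nuke$ under sections also works, but the truncation of length $|v|$ already suffices. Write the truncations with equal lengths: as displayed, $g_n(x_n\ldots x_1v)=y_n\ldots y_1u$ compares words of lengths $n+|v|$ and $n+|u|$.

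For sufficiency your route genuinely differs from the paper's text, and yours is the one that works. The paper passes to level $|u|$ and deduces $\til_{u_1}\cap\til_u\neq\emptyset$ from $g|_{v_1}(u_1)=u$. It then invokes $\til_{u_1}\subset\til_v$, which is backwards: as you note, $\til_v\subseteq\til_{v_2}$. So that step does not yield $\til_v\cap\til_u\neq\emptyset$. Your argument stays at level $|v|$: $\til_v\cap\til_{g(v)}\neq\emptyset$, and $\til_{g(v)}\subseteq\til_u$. This uses that the section $g|_{v_1}$ comes from an element of $\nuke$ acting on the whole of $v$, which is what closes the proof.

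Delete the sentence claiming that a constant sequence $g_n=g$ identifies $\ldots x_2x_1v$ with a point of $\til_{g(v)}$. The length-$|v|$ suffix of $g(x_n\ldots x_1v)$ is $g|_{x_n\ldots x_1}(v)$, not $g(v)$. In fact not every point of $\til_v$ is identified with a point of $\til_{g(v)}$: for the adding machine, $\til_v$ and $\til_{a(v)}$ are adjacent arcs of the circle meeting only at endpoints.
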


\begin{proof}
Let $v=v_1u_1$, where $|u_1|=|u|$. Suppose that $g(v_1u_1)$ has $u$ as a suffix, i.e., that $g(v_1u_1)=g(v_1)u$. Then $u=g|_{v_1}(u_1)$ and $g|_{v_1}\in\nuke$, hence $\til_{u_1}$ and $\til_u$ intersect. But $\til_{u_1}\subset\til_v$, hence $\til_v$ and $\til_u$ also intersect.

Suppose that $\til_v$ and $\til_u$ intersect. Then there exists a point of $\lims$, which can be represented both by a sequence of the form $\ldots v_1u_1$ and by a sequence $\ldots v_0u\in\xmo$ for some $v_0\in\alb^{|v_1|}$. It follows that these two sequences are asymptotically equivalent, hence by Proposition~\ref{pr:lims}, there exists $g\in\nuke$ such that $g(v_1u_1)=v_0u$.
\end{proof}

Similarly, a \emph{tile of level $n$} in $\limg$ is a set of the form $\til\otimes v$ for $v\in\bim^{\otimes n}$. We also have $\til\otimes v\cap\til\otimes u\ne\emptyset$, for $v, u\in\bim^{\otimes n}$, if and only if there exists $g\in\nuke$ such that $u=g\cdot v$ in the biset $\bim^{\otimes n}$.

For $\xi, \zeta\in\lims$, denote by $\ell_\alb(\xi, \zeta)$  the largest $n$ such that $\xi$ and $\zeta$ can be represented by sequences $\ldots x_2x_1$ and $\ldots y_2y_1\in\xmo$ such that $y_n\ldots y_2y_1=s(x_n\ldots x_2x_1)$ for some $s\in\nuke$. 
Equivalently, it is the largest $n$ such that $\xi$ and $\zeta$ belong to intersecting tiles of $n$th level. If $\alb$ and $\mathsf{Y}$ are two bases of $\bim$, then there exists $C>0$ such that $\ell_\alb(\xi, \zeta)\doteq_C\ell_{\mathsf{Y}}(\xi, \zeta)$ for all $\xi, \zeta\in\lims$.

The function $\ell_\alb(\xi, \zeta)$ is equal (up to an additive constant) to the classical Gromov product on the boundary $\lims=\partial\Sigma$ of the self-similarity complex $\Sigma$ based at the empty word $\emptyset$. Consequently, it satisfies the hyperbolicity condition
\begin{equation}
\label{eq:logscale}
\ell_\alb(\xi, \zeta)\ge \min(\ell_\alb(\xi, \eta), \ell_\alb(\eta, \zeta))-C
\end{equation}
for some $C>0$, which is not hard to prove directly, using the definition of contracting groups.

If $(G, \bim)$ is the iterated monodromy group of a locally expanding covering map $f\colon\M\arr\M$, then, after identification of $\M$ with $\lims$ using Theorem~\ref{th:regularimg}, the function $\ell$ becomes equal, up to an additive constant, to the log-scale $\ell_U$ defined in the introduction.

Denote, for $\xi\in\lims$ and a non-negative integer $n$,
\[T_n(\xi)=\{\zeta\in\lims\colon \ell_\alb(\xi, \zeta)\ge n\}.\]
The set $T_n(\xi)$ is equal to the union of tiles of $n$th level that intersect tiles of the $n$th level containing $\xi$. The union of tiles of $n$th level containing $\xi$ is a neighborhood of $\xi$ (see~\cite[Proposition~3.4.1]{nek:book}), hence $T_n(\xi)$ is also a neighborhood of $\xi$.
We also denote by $T_n(\xi)$, for $\xi\in\limg$, the neighborhood of $\xi$ equal to the union of the $n$th level tiles of $\limg$ intersecting the $n$th level tiles containing $\xi$.

\begin{defi}
  Let $\X$ be a space with a log-scale $\ell$ (i.e., a function satisfying~\eqref{eq:logscale} for some $C$).  A metric $d$ on $\X$ is said to be a \emph{metric of exponent $\alpha$}, where $\alpha$ is a positive real number, if it satisfies
    \[d(\xi_1, \xi_2)\asymp e^{-\alpha\ell(\xi_1, \xi_2)}\]
    for all $\xi_1, \xi_2\in\X$.
\end{defi}

Note that any two visual metrics of exponent $\alpha$ are bi-Lipschitz equivalent to each other.
A visual metric $d_\alpha(\xi, \eta)$ of exponent $\alpha$  can be defined as the infimum of sums
\[\sum_{i=0}^m e^{-\alpha\ell(\xi_i, \xi_{i+1})}\]
over all sequences $\xi=\xi_0, \xi_1, \ldots, \xi_m=\eta$. The function $d_\alpha$ will be symmetric, satisfy the triangle inequality, and $d_\alpha(\xi, \xi)=0$ for all $\xi$. The only potentially missing condition for it to be a metric may be positivity. 

\begin{defi}
\label{def:lowerexponentforlog}
Let $\ell$ be a log-scale on a set $\X$. We say that $\alpha$ is a \emph{lower exponent} of $\ell$ if there exists $C>0$ such that for any $n$, any $\xi, \zeta\in\X$ and any sequence $\xi_0=\xi, \xi_1, \ldots, \xi_m=\zeta$ of points of $\X$ such that $\ell(\xi_i, \xi_{i+1})\ge n$ for all $i=1, \ldots, m-1$, we have
\[m\ge Ce^{\alpha(n-\ell(\xi, \zeta))}.\]
\end{defi}

The following characterization of visual metrics is proved in Theorem~\cite[Theorem~3.4]{nek:smale}.

\begin{theorem}
The supremum of values $\alpha$ for which $d_\alpha$ is a metric is equal to the supremum of the lower exponents of the log-scale $\ell$.
\end{theorem}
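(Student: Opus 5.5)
The plan is to prove the two inequalities between the two suprema separately, by comparing lengths of chains with the sums defining $d_\alpha$. Throughout, $L=\ell(\xi,\zeta)$, and a chain $\xi=\xi_0,\ldots,\xi_m=\zeta$ is called \emph{$n$-fine} if $\ell(\xi_i,\xi_{i+1})\ge n$ for its inner links.

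\emph{Metric $\Rightarrow$ lower exponent.} Suppose $d_\alpha$ is a metric. I first want the uniform lower bound $d_\beta(\xi,\zeta)\ge c\,e^{-\beta L}$ for every $\beta<\alpha$. For $0<t=\beta/\alpha<1$ we have $\sum a_i^{t}\ge(\sum a_i)^t$. Applying this to the terms $a_i=e^{-\alpha\ell(\xi_i,\xi_{i+1})}$ of any chain gives $d_\beta\ge d_\alpha^{\beta/\alpha}$. What this does not give for free is $d_\alpha\ge c\,e^{-\alpha L}$: positivity of $d_\alpha$ only yields a bound depending on the pair. Here I would use the upper bound $d_\alpha\le e^{-\alpha L}$ together with the hyperbolicity inequality~\eqref{eq:logscale}. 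The idea is to rescale a chain between points at log-distance $L$ into a chain at log-distance $O(1)$; in our setting this is done by the shift/self-similarity of tiles, and in general by a compactness argument on bounded scales. That rescaling shows that the ratio $d_\beta/e^{-\beta L}$ is bounded below uniformly, possibly after losing an arbitrarily small amount of exponent, which is harmless since we only compare suprema.

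Granting $d_\beta\ge c\,e^{-\beta L}$, take an $n$-fine chain with $n>L$. I may assume the two end links also satisfy $\ell\ge n$: otherwise I insert nearby points, using that the sets $T_n(\cdot)$ are neighborhoods, or shorten the chain. Then
\[
c\,e^{-\beta L}\le d_\beta(\xi,\zeta)\le m\,e^{-\beta n},
\]
so $m\ge c\,e^{\beta(n-L)}$. Hence every $\beta<\alpha$ is a lower exponent, which proves that the supremum of lower exponents is at least the supremum of the $\alpha$ for which $d_\alpha$ is a metric.

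\emph{Lower exponent $\Rightarrow$ metric.} Let $\alpha$ be a lower exponent and $\beta<\alpha$. I will show by a Frink-type argument that every chain satisfies
\[
\sum_i e^{-\beta\ell(\xi_i,\xi_{i+1})}\ge c\,e^{-\beta L},
\]
which gives positivity of $d_\beta$, and in fact $d_\beta\asymp e^{-\beta L}$.

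Fix a threshold $n=L+k$ and call a link \emph{short} if its $\ell$ is less than $n$. If the chain has no short inner link, the lower exponent condition gives $m\ge C e^{\alpha k}$, so the sum is at least $C e^{\alpha k}e^{-\beta(L+k)}\ge C e^{-\beta L}$. If there are short links, each contributes at least $e^{-\beta n}$. The short links cut the chain into $n$-fine pieces, and the endpoints of consecutive pieces are related through~\eqref{eq:logscale}. I would sum over the dyadic levels $k=0,1,2,\ldots$, counting $N_k$, the number of links with $\ell\in[L+k,L+k+1)$. Applying the lower exponent bound to each maximal $(L+k)$-fine piece yields
\[
\sum_k N_k\,e^{\alpha k}\gtrsim 1
\]
up to constants. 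Since $\beta<\alpha$, the weights $e^{-\beta k}$ against the counts $N_k$ then give the bound, with the gap $\alpha-\beta$ supplying the convergence of the geometric series.

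The main obstacle is this combinatorial step: controlling how the short links glue the fine pieces together so that the additive constants from~\eqref{eq:logscale} do not accumulate along the chain. I would try strong induction on $m$. Split the chain at the link of smallest $\ell$, and use~\eqref{eq:logscale} to show that one of the two halves already has endpoints at log-distance at most $L+C$. The constant loss per step is then absorbed by passing from $\alpha$ to $\beta<\alpha$.
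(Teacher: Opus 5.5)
The main direction of your proof has a genuine gap. You need to show that a lower exponent $\alpha$ gives $d_\beta\ge c\,e^{-\beta\ell}$ for every $\beta<\alpha$, and this is not proved.

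The estimate $\sum_k N_k e^{\alpha k}\gtrsim 1$ is asserted, not derived. The lower-exponent bound applied to a maximal $(L+k)$-fine piece says nothing unless you control the log-distance between that piece's endpoints, and you do not. As written the inequality carries no information; with $e^{-\alpha k}$ it is already the conclusion at exponent $\alpha$.

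Your fallback induction, splitting at the link of smallest $\ell$, uses only \eqref{eq:logscale}. It passes from a bound on one half to a bound on $\gamma$ at the cost of replacing $L$ by $L+O(C)$. That is a fixed multiplicative loss at every step with no gain to offset it. The lower-exponent hypothesis never enters, and without it the conclusion fails for large $\beta$. So the gap $\alpha-\beta$ cannot absorb anything in this scheme.

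The missing idea is to apply the lower exponent to a coarse chain extracted from $\gamma$, whose links are pinned to a window of width $C$. Let $C_0$ be the lower-exponent constant. Fix $T$ with $C_0e^{\alpha T}\ge 2$ and $(C_0e^{\alpha T}-1)e^{-\beta(T+C)}\ge 1$. Put $K=T+C$ and $c=e^{-\beta K}$, and induct on the number $m$ of links.
\begin{itemize}
\item If some link has $\ell<L+K$, the mass is at least $c\,e^{-\beta L}$.
\item Otherwise set $w_0=x_0$. Given $w_s=x_{i_s}$, let $i_{s+1}$ be the largest index with $\ell(w_s,x_i)\ge L+T$ for all $i_s<i\le i_{s+1}$.
\item The link following $x_{i_{s+1}}$ has $\ell\ge L+T+C$, so \eqref{eq:logscale} forces $\ell(w_s,w_{s+1})<L+T+C$ for every block except the last.
\item The chain $w_0,\dots,w_r$ has all links $\ge L+T$, so $r\ge C_0e^{\alpha T}\ge 2$.
\item Each block therefore has fewer than $m$ links. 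The induction hypothesis gives total mass at least $c(r-1)e^{-\beta(L+T+C)}\ge c\,e^{-\beta L}$.
\end{itemize}
The induction closes with the same constant precisely because $\beta<\alpha$.

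For the other direction, read ``$d_\alpha$ is a metric'' as $d_\alpha\asymp e^{-\alpha\ell}$, which is how the paper uses visual metrics elsewhere. Then your display with $\beta=\alpha$ already shows that $\alpha$ is a lower exponent. The detour through $d_\beta\ge d_\alpha^{\beta/\alpha}$ and rescaling is unnecessary.

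Under the literal reading (positivity only), the uniform bound is the whole difficulty, and it does not follow ``in general by compactness.'' Take the disjoint union over $k$ of the sets $\{p_0,\dots,p_{2^k}\}$. Set $\ell(p_i,p_j)=C(k-\lceil\log_2|i-j|\rceil)$ inside a set and $\ell=0$ between different sets. This is a log-scale for which every $d_\alpha$ is positive, yet no $\alpha>\log 2/C$ is a lower exponent.

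Finally, the lower-exponent condition must constrain all links $i=0,\dots,m-1$. As printed, a one-link chain violates it for every $\alpha>0$. So your step of inserting nearby points to handle the end links is not needed.
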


A \emph{uniform metric of exponent $\alpha$} on the limit space $\lims$ is a visual metric of exponent $\alpha$ associated with the log-scale $\ell_\alb$ on $\lims$.

The limit spaces of $(G, \bim)$ and $(G, \bim^{\otimes n})$ are identified by the map induced by the natural identification of $\xmo$ with $(\alb^n)^{-\omega}$. The limit dynamical system of the latter group is the $n$th iteration of the limit dynamical system of the former. It is easy to check that the sets of the uniform metrics on $\lims$ coming from $(G, \bim)$ and $(G, \bim^{\otimes n})$ are the same. More precisely, a metric on $\lims$ is uniform of exponent $\alpha$ for $(G, \bim)$ if and only if it is uniform of exponent $n\alpha$ for $(G, \bim^{\otimes n})$.

If $d_1$ and $d_2$ are uniform metrics of exponents $\alpha_1$, $\alpha_2$, respectively, then we have
\[\frac{d_1(x, y)}{d_1(x, z)}\le C_1e^{-\alpha_1(\ell(x, y)-\ell(x, z))}\]
and
\[C_2e^{-\alpha_2(\ell(x, y)-\ell(x, z))}\le \frac{d_2(x, y)}{d_2(x, z)}\]
for some $C_1, C_2>0$, hence $\frac{d_1(x, y)}{d_1(x, z)}\le\eta\left(\frac{d_2(x, y)}{d_2(x, z)}\right)$ for $\eta(t)=C_2C_1^{-\alpha_2/\alpha_1}t^{\alpha_2/\alpha_1}$.

Two metrics $d_1, d_2$ are said to be \emph{quasi-symmetric} (see~\cite{heinonen}) if there exists an increasing homeomorphism $\eta\colon [0, \infty)\arr [0, \infty)$ such that
\[\frac{d_1(x, y)}{d_1(x, z)}\le\eta\left(\frac{d_2(x, y)}{d_2(x, z)}\right)\]
for all $x, y, z$ such that $x\ne z$. 
As we have seen above, any two uniform metrics on the limit space $\lims$ are quasi-symmetric to each other.

The \emph{uniform measure} $\mu$ on $\lims$ is the push-forward of the uniform Bernoulli measure on $\xmo$ by the quotient map $\xmo\arr\lims$.  

\begin{proposition}
Let $d_\alpha$ and $\mu$ be a uniform metric of exponent $\alpha$ and the uniform measure on $\lims$, respectively. Then there exists $C>1$ such that 
\[\mu(B_\xi(R))\asymp_C R^{\frac{\log \mathsf{D}}{\alpha}}\]
for every $\xi\in\lims$ and every $R<\diam\lims$, where $\mathsf{D}=|\bim/G|$.

In particular, the Hausdorff dimension of $(\lims, d_\alpha)$ is equal to $\frac{\log\mathsf{D}}{\alpha}$.
\end{proposition}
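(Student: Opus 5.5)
The plan is to compare balls with tiles. The uniform measure of a tile $\til_v$ with $v\in\alb^n$ is at least $\mathsf{D}^{-n}$, since $\til_v$ is the image of the cylinder $\xmo v$, whose Bernoulli measure is $|\alb|^{-n}=\mathsf{D}^{-n}$; here $|\alb|=|\bim/G|=\mathsf{D}$. For the upper bound, I will show that the set of sequences in $\xmo$ mapping into a union of boundedly many $n$th level tiles has measure $\asymp\mathsf{D}^{-n}$. The balls themselves are handled through the log-scale: by the defining property of a metric of exponent $\alpha$, $d_\alpha(\xi,\zeta)\asymp e^{-\alpha\ell_\alb(\xi,\zeta)}$. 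So for $R<\diam\lims$ with $e^{-\alpha(n+1)}< R\le e^{-\alpha n}$, there is a constant $k$, depending only on $\alpha$ and the constant $C$ in $d_\alpha\asymp e^{-\alpha\ell}$, such that
\[T_{n+k}(\xi)\subseteq B_\xi(R)\subseteq T_{n-k}(\xi).\]
It therefore suffices to prove $\mu(T_m(\xi))\asymp\mathsf{D}^{-m}$ uniformly in $\xi$ and $m$. Indeed, this gives $\mu(B_\xi(R))\asymp\mathsf{D}^{-n}=e^{-n\log\mathsf{D}}\asymp R^{\log\mathsf{D}/\alpha}$, with a constant factor $\mathsf{D}^{\pm 2k}$ absorbed into $C$.

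For the lower bound, $T_m(\xi)$ contains an $m$th level tile $\til_v$ containing $\xi$, so $\mu(T_m(\xi))\ge\mu(\til_v)\ge\mathsf{D}^{-m}$.

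For the upper bound, $T_m(\xi)$ is the union of $m$th level tiles $\til_u$ intersecting some $m$th level tile $\til_v$ that contains $\xi$. If $\xi$ lies in $\til_v$ and $\til_{v'}$, then these tiles intersect, so $v'=g(v)$ with $g\in\nuke$. Similarly, each such $u$ equals $h(v')$ with $h\in\nuke$. Hence there are at most $|\nuke|^2$ such words $u$, all of the form $g(v)$ with $g\in\nuke^2$. Next I will verify that $\mu(\til_u)\le C'\mathsf{D}^{-m}$. The preimage of $\til_u$ in $\xmo$ consists of sequences $\ldots y_2y_1$ equivalent to some $\ldots x_2x_1u$. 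By Proposition~\ref{pr:lims}, this means $y_m\ldots y_1=s(u)$ for some $s\in\nuke$, so the preimage lies in the union of at most $|\nuke|$ cylinders of level $m$. Thus $\mu(\til_u)\le|\nuke|\mathsf{D}^{-m}$ and $\mu(T_m(\xi))\le|\nuke|^3\mathsf{D}^{-m}$.

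The main obstacle is the comparison of $B_\xi(R)$ with $T_{n\pm k}(\xi)$ near the top scale, where $R$ is close to $\diam\lims$ and $n$ is small or zero. There I will simply enlarge constants, since all measures involved are between a fixed positive number and $1$.

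For the Hausdorff dimension, the two-sided estimate $\mu(B_\xi(R))\asymp R^{s}$ with $s=\log\mathsf{D}/\alpha$ makes $\mu$ an Ahlfors $s$-regular measure. The standard mass distribution principle then gives $\dim_H\ge s$. A covering by $n$th level tiles, of which there are $\mathsf{D}^n$, each of diameter $\lesssim e^{-\alpha n}$ because points in one tile have $\ell_\alb\ge n$, gives $\dim_H\le s$.
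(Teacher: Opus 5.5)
Your proof is correct and follows essentially the same route as the paper: you bound $\mu(T_m(\xi))$ between $\mathsf{D}^{-m}$ and $|\nuke|^3\mathsf{D}^{-m}$ via cylinders in $\xmo$, and you sandwich balls between the neighborhoods $T_{n\pm k}(\xi)$ using $d_\alpha\asymp e^{-\alpha\ell}$. Your mass-distribution and tile-covering argument simply spells out the Hausdorff dimension consequence that the paper leaves implicit.
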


\begin{proof}
Let $\alb$ be a basis of $\bim$. We have $\mathsf{D}=|\alb|$.
Let a point $\xi, \zeta\in\lims$ be represented by sequences $\ldots x_2x_1, \ldots y_2y_1\in\xmo$, respectively. The set $\xmo x_n\ldots x_2x_1$ belongs to the preimage of $T_\xi(n)$ in $\xmo$ and \[\mu(T_\xi(n))\ge \mathsf{D}^{-n}.\] 

On the other hand, if $\zeta\in T_\xi(n)$, then $\zeta$ and $\xi$ belong to intersecting tiles of level $n$, hence there exist $v\in\alb^n$ and $s\in\nuke$ such that $\xi\in\til_v$ and $\zeta\in\til_{s(v)}$. Therefore, there exists $s\in\nuke^3$ such that $y_n\ldots y_2y_1=s(x_n\ldots x_2x_1)$. It follows that the preimage of $\til_\xi(n)$ is contained in $\bigcup_{s\in\nuke^3}\xmo s(x_n\ldots x_2x_1)$, hence \[\mu(T_\xi(n))\le|\nuke|^3\cdot\mathsf{D}^{-n}.\]

It is enough to prove the inequality in the proposition for all $R<R_0$ for some $R_0>0$. Let $C_0$ be such that
\[d_\alpha(\xi_1, \xi_2)\asymp_{C_0} e^{-\alpha\ell(\xi_1, \xi_2)}\] for all $\xi_1, \xi_2\in\lims$.

Then, for every $\xi\in\lims$ and $R>0$, we have
\[T_\xi\left(-\log(C_0^{-1}R)/\alpha\right)\subseteq B_\xi(R)\subseteq T_\xi\left(-\log(C_0R)/\alpha\right).\]
Consequently, for all $R<C_0^{-1}$ and all $\xi\in\lims$, we have
\[\mathsf{D}^{\log(C_0^{-1}R)/\alpha}\le\mu(B_\xi(R))\le |\nuke|^3\mathsf{D}^{\log(C_0R)/\alpha},\]
which is equivalent to
\[\mathsf{D}^{-\log C_0/\alpha}\cdot R^{\frac{\log\mathsf{D}}{\alpha}}\le |\nuke|^3\mathsf{D}^{\log C_0/\alpha}\cdot R^{\frac{\log\mathsf{D}}{\alpha}},\]
which finishes the proof.
\end{proof}

\begin{defi}
\label{def:AR}
Denote by $\mathcal{AR}=\mathcal{AR}(G, \bim)$ the set of metrics $d$ on $\lims$ satisfying the following two conditions.
\begin{enumerate}
\item The metric $d$ is quasi-symmetric to uniform metrics on $\lims$.
\item There exist $C>1$, $\alpha>0$, and a measure $\mu$ on $\lims$ such that
\[\mu(B_\xi(R))\asymp_C R^\alpha\]
for every $\xi\in\lims$ and $R\le\diam\lims$.
\end{enumerate}

The \emph{(Ahlfors-regular) conformal dimension} $\cdim(G, \bim)$ is the infimum of the Hausdorff dimensions of the metrics in the class $\mathcal{AR}(G, \bim)$.
\end{defi}

We have $\mathcal{AR}(G, \bim)=\mathcal{AR}(G, \bim^{\otimes n})$ for every $n\ge 1$, after the natural identification of $\lims[(G, \bim)]$ and $\lims[(G, \bim^{\otimes n})]$.

\begin{proposition}
\label{pr:diamoftiles}
Choose a metric $d\in\mathcal{AR}(G, \bim)$, and let $\mu$ be the associated Hausdorff measure of dimension $\alpha$. 

There exists a constant $C>1$ such that $\mu(\til_v)\asymp_C(\diam\til_v)^{\alpha}$ for all $v\in\xs$.

There exist $\lambda\in (0, 1)$ and $C>1$ such that $\diam\til_v\le C\lambda^{|v|}$ for all $v\in\xs$.
\end{proposition}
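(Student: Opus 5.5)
The plan is to compare each tile $\til_v$ with balls of the $\mathcal{AR}$ metric $d$, using quasi-symmetry with a uniform metric $d_0$ of some exponent $\alpha_0$. The basic geometric fact is that tiles are uniformly ``round'' in $d_0$. Indeed, $\til_v$ has $d_0$-diameter $\asymp e^{-\alpha_0|v|}$. Moreover, $\til_v$ contains a $d_0$-ball of comparable radius: by the level-shift self-similarity (tiles $\til_v$ are images of $\til_\emptyset=\lims$ under the inverse branch $\ldots x_2x_1\mapsto\ldots x_2x_1v$), $\til_v$ contains a subtile $\til_{wv}$ with $|w|$ bounded, such that the union of level-$|wv|$ tiles meeting $\til_{wv}$ lies inside $\til_v$. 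This uses the existence of interior points of $\lims$ not in the boundary of any level-$1$ tile, which holds because the union of tiles containing a point is a neighborhood of it \cite[Proposition~3.4.1]{nek:book}. Then $T_{|wv|+c}(\xi)\subset\til_v$ for some $\xi$. So in $d_0$ we get $B_{d_0}(\xi,r)\subset\til_v\subset B_{d_0}(\xi,Kr)$ with $K$ uniform.

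Quasi-symmetry preserves this roundness. Pick $\xi$ as above and let $R=\sup_{\zeta\in\til_v}d(\xi,\zeta)\asymp\diam_d\til_v$. If $\zeta\notin B_{d_0}(\xi,r)$ and $\zeta'\in\til_v$, then $d_0(\xi,\zeta')/d_0(\xi,\zeta)\le K$. By the quasi-symmetry function $\eta$ this gives $d(\xi,\zeta')\le\eta(K)\,d(\xi,\zeta)$. Hence $B_d(\xi,R/\eta(K))\subset B_{d_0}(\xi,r)\subset\til_v\subset B_d(\xi,R)$. Ahlfors regularity of $\mu$ then yields $\mu(\til_v)\asymp R^\alpha\asymp(\diam\til_v)^\alpha$, with constants independent of $v$.

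For the exponential decay, compare $\til_{xv}\subset\til_v$ for $x\in\alb^m$ with $m$ a fixed large integer. In $d_0$ the ratio $\diam_{d_0}\til_{xv}/\diam_{d_0}\til_v\asymp e^{-\alpha_0 m}$ can be made smaller than any prescribed $\epsilon$. A standard consequence of quasi-symmetry (choose a point $\xi\in\til_{xv}$ and compare distances from $\xi$) gives $\diam_d\til_{xv}\le 2\eta(C\epsilon)\,\diam_d\til_v$. Choosing $m$ so that $2\eta(C\epsilon)\le1/2$ gives halving every $m$ levels, so $\diam\til_v\le C\lambda^{|v|}$ with $\lambda=2^{-1/m}$. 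Here the constant $C$ absorbs $\diam\lims$ and the fewer than $m$ leftover levels. Alternatively, the same bound follows from the measure estimate: $\mu(\til_{xv})\le\mu(\til_v)-\mu(\text{a disjoint sibling ball})\le(1-c)\mu(\til_v)$.

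The main obstacle is the uniform interior-ball step, i.e.\ that every tile $\til_v$ contains a $d_0$-ball of radius comparable to its diameter. It requires $\til_\emptyset$ to have nonempty interior not meeting the other level-one tiles, transported uniformly to all levels by contraction (sections lying in the nucleus). The remaining steps are routine quasi-symmetry manipulations.
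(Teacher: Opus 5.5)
\textbf{The gap: the uniform interior-ball step.} Your overall plan is sound. Your quasi-symmetry transfer in the first part, from $B_{d_0}(\xi,r)\subset\til_v\subset B_{d_0}(\xi,Kr)$ to $B_d(\xi,R/\eta(K))\subset\til_v\subset B_d(\xi,R)$, makes explicit a step the paper leaves implicit. The gap is in the step you flag yourself. You try to prove the uniform interior ball inside $\lims$, and there it fails for two reasons.

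First, $\ldots x_2x_1\mapsto\ldots x_2x_1v$ does not descend to a map $\lims\arr\til_v$. Suppose $\ldots x_2x_1$ and $\ldots y_2y_1$ are identified through nucleus elements $h_n$ with $h_n(x_n)=y_n$ and $h_n|_{x_n}=h_{n-1}$. Then $\ldots x_2x_1v$ is identified with $\ldots y_2y_1h_0(v)$, not with $\ldots y_2y_1v$.

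Second, interiority relative to $\lims$ is the wrong notion. Take $\img(z^2-2)$, with $a=\sigma$, $b=(a,b)$ and nucleus $\{1,a,b\}$. Here $\lims$ is a segment and $\limg=\R$, with $a,b$ acting as reflections. The endpoint $\lambda(\ldots 111)$ has a unique representative, lies only in $\til_1$, and is interior to $\til_1$ relative to $\lims$, so it passes your test. But $\lambda(\ldots111v)=\lambda(\ldots111\,b(v))$, because $b|_1=b$, and $b(v)\ne v$ for all but two words $v$ of each length. So the transported point is the common endpoint of the adjacent segments $\til_v$ and $\til_{b(v)}$, and no neighbourhood $T_{\zeta}(|v|+c)$ of it lies in $\til_v$. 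In $\limg$ this endpoint is a mirror point on the boundary of $\til$, which is exactly the information $\lims$ forgets.

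\textbf{The repair: work in $\limg$, as the paper does.} The closed sets $\til\cdot g$, $g\in G$, are countably many and cover the locally compact space $\limg$. By Baire, $\til$ has an interior point $\xi$, so $T_\xi(n_0)\subset\til$ for some $n_0$. The map $\zeta\mapsto\zeta\otimes v$ is well defined on $\limg$. Using $\limg\otimes_G\bim^{\otimes|v|}\cong\limg$, every level-$(|v|+n_0)$ tile containing $\xi\otimes v$ has the form $\til\otimes u\otimes v$ with $\xi\in\til\otimes u$. Hence $T_{\xi\otimes v}(|v|+n_0)\subset T_\xi(n_0)\otimes v\subset\til\otimes v$. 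Projecting gives $T_\zeta(|v|+n_0)\subset\til_v\subset T_\zeta(|v|)$ with $\zeta=P(\xi\otimes v)$, uniformly in $v$.

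In your language: choose $w\in\alb^{n_0}$ with $\xi\in\til\otimes w$. Then $\til\otimes g\cdot w\subset\til$ for all $g\in\nuke$, hence $\til_{g(wv)}\subset\til_v$. So the subtile you wanted does exist; it is only your justification for it that fails. With this repair your first part is complete.

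\textbf{Your second part.} This is a genuinely different and valid route. The paper quotes an exponential bound on $\sup_\xi\diam T_n(\xi)$ from work of Matte Bon, Nekrashevych and Zheng. You instead derive decay directly from the quasi-symmetric comparison $\diam_d\til_{uv}\le 2\eta\bigl(2\diam_{d_0}\til_{uv}/\diam_{d_0}\til_v\bigr)\diam_d\til_v$. Its only extra input is $\diam_{d_0}\til_v\gtrsim e^{-\alpha_0|v|}$, which follows from the repaired interior-ball statement.
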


\begin{proof}
Since the tiles $\til\cdot g$ cover $\limg$ and are closed, by the Baire category theorem, the interior of $\til$ is non-empty. Let $\xi$ be a point in the interior of $\til$. Then there exists $n_0$ such that $T_\xi(n_0)\subset\til$. Then for every $n\ge 1$ and $v\in\bim^{\otimes n}$ we have $T_{\xi\otimes v}(n+n_0)\subset \til\otimes v$. Mapping these sets to $\lims$, we get that every tile $\til_v$ of $\lims$ contains a set of the form $T_\zeta(|v|+n_0)$ for some $\zeta\in\til_v$. Note that $\til_v\subset T_\zeta(|v|)$. It follows that there exist radii $r, R$ such that
\[B_\zeta(r)\subset\til_v\subset B_\zeta(R)\]
and $R/r\le C_1$ for some constant $C_1$. Condition (2) of Definition~\ref{def:AR} implies then that $\mu(\til_v)\asymp (\diam\til_v)^\alpha$, as $r<\diam\til_v<2R$.

It is shown in~\cite[Proposition~3.8]{mattebonnekzheng:liouvills} that $\sup_{\xi\in\lims}\diam T_n(\xi)\le C_2\lambda^n$ for some $C_2>0$ and $\lambda\in (0, 1)$, which implies the second statement of our proposition, since $\til_v\subset T_n(\xi)$ for every $v\in\alb^n$ and $\xi\in\til_v$. 
\end{proof}

\subsection{Quasi-morphisms and the associated measures}

\begin{defi}
\label{def:qcoc}
Let $G$ be a group acting on $\xs$. A \emph{quasi-morphism} is a map $\nu\colon \xs\arr [0, \infty)$ such that there exists $C>0$ such that
    \[\nu(v_1v_2)\doteq_C\nu(v_1)+\nu(v_2)\]
    for all $v_1, v_2\in\xs$.

    We say that the quasi-morphism $\nu$ is \emph{positive} if 
    \[\lim_{n\to\infty}\min_{v\in\alb^n}\nu(v)\to\infty.\]

    We say that it is \emph{$G$-quasi-invariant} if for every $g\in G$ there exists $C_g>0$ such that 
    \[\nu(g(v))\doteq_{C_g} \nu(v)\]
    for all $v\in\xs$.
\end{defi}

For example $\nu(v)=|v|$ is a positive $G$-quasi-invariant quasi-morphism. In fact, it is $G$-invariant and is a homomorphism from the semigroup $\xs$ to the additive semigroup $[0, \infty)$.

Let $\nu$ be a positive quasi-morphism, and let $C$ be a constant satisfying the condition of the definition of a quasi-morphism. We may assume (after changing $C$ to a larger number) that $\nu(x)<C$ for all $x\in\alb$. By the positivity of $\nu$, there exists $m$ such that $\nu(v)>C+1$ for all $v\in\alb^m$. Then, by induction on $k$, we get $\nu(w)>C+k$ for all $w\in\alb^{mk}$. It follows that we have
\[m^{-1}|v|\le \nu(v)\le C|v|\]
for all $v\in\xs$.

Consequently, for any two positive quasi-morphisms $\nu_1, \nu_2\colon \xs\arr [0, \infty)$, we have $\nu_1\asymp\nu_2$.

The next construction is a particular case of Patterson measures, specified for the case of the boundary of the self-similarity complex $\Sigma$. For a more general setting, see~\cite{nek:psmeasure}. We provide here a proof for completeness.

\begin{theorem}
Let $\nu\colon \xs\arr [0, \infty)$ be a positive quasi-morphism. There exists $\beta>0$ such that for every sufficiently large $R$ we have
\[|\{v\in\xs\colon\nu(v)\le R\}|\asymp e^{\beta R}.\]
\end{theorem}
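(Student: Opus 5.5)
We will prove that the counting function $N(R)=|\{v\in\xs\colon\nu(v)\le R\}|$ is quasi-multiplicative, and then extract $\beta$ by a Fekete-type argument.

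\textbf{Setup.} Let $C$ be the quasi-morphism constant; we may assume $\nu(x)<C$ for $x\in\alb$. By the discussion preceding the statement, $m^{-1}|v|\le\nu(v)\le C|v|$. Hence the set counted by $N(R)$ is finite, and $N(R)$ is nondecreasing in $R$.

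\textbf{Submultiplicativity.} Let $w$ satisfy $\nu(w)\le R_1+R_2$. Split $w=v_1v_2$ at the largest prefix $v_1$ with $\nu(v_1)\le R_1$. Adding one more letter to a prefix changes $\nu$ by at most $2C$, so $\nu(v_1)\ge R_1-2C$. Then
\[
\nu(v_2)\le \nu(w)-\nu(v_1)+C\le R_2+3C .
\]
For a fixed length of $v_1$ the decomposition is unique, and the length of $v_1$ is determined by $w$. This gives
\[
N(R_1+R_2)\le N(R_1)\,N(R_2+3C).
\]
Setting $a(R)=\log N(R-3C)$, the inequality becomes $a(R_1+R_2)\le a(R_1)+a(R_2)$ up to shifts by constants. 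Fekete's lemma, applied along a lattice and extended to real $R$ by monotonicity, then gives that $\beta=\lim_{R\to\infty} \log N(R)/R$ exists and $N(R)\ge c\,e^{\beta R}$ for some $c>0$.

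\textbf{Supermultiplicativity.} We need a lower bound of the shape $N(R_1+R_2+C)\ge c\,N(R_1)N(R_2)$. Concatenation $(v_1,v_2)\mapsto v_1v_2$ maps pairs counted by $N(R_1)\times N(R_2)$ into words counted by $N(R_1+R_2+C)$. Its fibres have size at most the number of possible split positions. To bound this, restrict to the shells
\[
S(R)=\{v\colon R-D<\nu(v)\le R\}.
\]
For pairs in $S(R_1)\times S(R_2)$, every split point $k$ of the product must satisfy $\nu(\text{prefix of length }k)\in(R_1-D-C,\,R_1+C]$. Since $\nu$ grows at least linearly in blocks of length $m$, there are only boundedly many such $k$. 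This gives $|S(R_1)|\,|S(R_2)|\le K\,N(R_1+R_2+C)$.

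\textbf{Main obstacle.} It remains to compare $|S(R)|$ with $N(R)$, i.e.\ to show $|S(R)|\ge c\,N(R)$ for $D$ large. Every $w$ with $\nu(w)\le R-D$ extends by one letter at a time, with increments at most $2C$ and unbounded growth. Choosing $D>2C$, some extension $wu$ lands in $S(R)$; take the first one, so its prefix $w$ is determined up to a bounded number of choices. The difficulty is that, a priori, this map $N(R-D)\to S(R)$ could have large fibres. We resolve this by choosing a \emph{canonical} extension $w\mapsto wx^k$ for a fixed letter $x$. Then $w$ is recovered from the image by deleting a bounded number of trailing $x$'s (bounded because $\nu(x^k)\ge k/m-C$), so the fibres are bounded. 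This gives $N(R)\le N(R-D)+|S(R)|\le (K'+1)|S(R)|$.

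\textbf{Conclusion.} Supermultiplicativity of $N$ up to constants now follows, and together with submultiplicativity Fekete's lemma gives $N(R)\asymp e^{\beta R}$. Finally, $\beta>0$ because $N(R)\ge|\alb|^{\lfloor R/C\rfloor}$, and $|\alb|\ge 2$ (the case $|\alb|=1$ being degenerate).
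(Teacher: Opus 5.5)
Your submultiplicative half is the same as the paper's: split at the longest prefix with $\nu\le R_1$. There is a small slip in the shift: the subadditive function is $a(R)=\log N(R+3C)$, not $\log N(R-3C)$.

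You are right not to trust concatenation in the other direction. The paper's proof simply asserts that $(v_1,v_2)\mapsto v_1v_2$ from $V(R_1)\times V(R_2)$ is injective, which is false: $(x,xx)$ and $(xx,x)$ have the same image. Your restriction to shells, together with the bounded number of split points, is a correct way to get $|S(R_1)|\,|S(R_2)|\le K\,N(R_1+R_2+C)$.

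\textbf{The gap.} It is in the step you flagged as the main obstacle. The canonical extension $w\mapsto wx^{k(w)}$ does \emph{not} have bounded fibres. The number $k(w)$ of appended letters is of order $R-D-\nu(w)$. It is bounded only for $w$ near the top of $V(R-D)$. The inequality $\nu(x^k)\ge k/m-C$ gives only $k\lesssim m(R-\nu(w)+C)$, which is unbounded when $\nu(w)$ is small.

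A concrete case: take $\nu(v)=|v|$ and set $j=\lfloor R-D\rfloor+1$. Every word $x^i$ with $i\le R-D$ is sent to the same word $x^j$. That fibre therefore has about $R$ elements. So the argument does not establish $N(R)\le (K'+1)|S(R)|$, and the upper bound is left unproved.

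\textbf{The fix.} Extend by all single letters rather than by one long canonical word. Take $D\ge 2C$, $\nu(w)\le R-D$ and $y\in\alb$. Then $\nu(wy)<\nu(w)+2C\le R$. The map $(w,y)\mapsto wy$ is injective, since the last letter and the rest of the word recover $(w,y)$. Hence $|\alb|\,N(R-D)\le N(R)$, so $|S(R)|\ge (1-|\alb|^{-1})N(R)$.

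Combined with your shell estimate, this gives
\[
N(R_1)N(R_2)\le K_1 N(R_1+R_2+C), \qquad K_1=K(1-|\alb|^{-1})^{-2}.
\]
So $\gamma(R)=K_1^{-1}N(R-C)$ is supermultiplicative. Fekete's lemma then gives $\gamma(R)\le e^{\beta R}$, with the same $\beta$ as in your lower bound. This yields $N(R)\le K_1e^{\beta(R+C)}$, and the proof is complete.
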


We call the number $\beta$ satisfying the conditions of the theorem the \emph{$\nu$-entropy}. For example, if $\nu(v)=|v|$, then $\nu$-entropy is equal to $\log|\alb|$.

\begin{proof}
Denote $V(R)=\{v\in\xs\colon\nu(v)\le R\}$. It is finite, since $\nu(v)\asymp |v|$ for all $v\in\xs$.

Let $m$ be such that $\nu(v)\ge C+1$ for all $v\in\alb^m$, where $C$ is a constant satisfying the inequality in Definition~\ref{def:qcoc}. Let $M$ be the maximal value of $\nu(x)$ for $x\in\alb$.

Let $R_1, R_2$ be arbitrary positive numbers. Consider a word $v\in\xs$ such that $\nu(v)\le R_1+R_2$. Let $v_1$ be the longest prefix of $v$ such that $\nu(v_1)\le R_1$. 
Let $v_2\in\xs$ be such that $v=v_1v_2$. If $x$ is the first letter of $v_2$, then $R_1<\nu(v_1x)\le\nu(v_1)+\nu(x)+C\le\nu(v_1)+M+C$. Consequently, $\nu(v_1)>R_1-M-C$.

Then $\nu(v_1)+\nu(v_2)\le \nu(v_1v_2)+C\le R_1+R_2+C$, hence $\nu(v_2)\le R_1-\nu(v_1)+R_2+C<R_1-(R_1-M-\Delta)+R_2+C=
R_2+m+2C$. 

We have shown that every word $v$ such that $\nu(v)\le R_1+R_2$ can be split into two words $v=v_1v_2$ such that $\nu(v_1)\le R_1$ and $\nu(v_2)\le R_2+M+2C$. It follows that
\[|V(R_1+R_2)|\le |V(R_1)|\cdot |V(R_2+M+2C)|.\]
Equivalently,
\[|V(R_1+R_2-K)|\le|V(R_1)|\cdot |V(R_2)|\]
for $K=M+2C$ and all $R_1, R_2$ such that $R_1+R_2>K$.

Consider the function $\gamma(R)=|V(R+K)|$. 
Then, for all $R_1, R_2>0$, we have
\begin{multline*}\gamma(R_1+R_2)=|V(R_1+R_2+K)|=|V(R_1+K+R_2+K-K)|\le\\ 
|V(R_1+K)|\cdot|V(R_2+K)|=\gamma(R_1)\gamma(R_2).
\end{multline*}

It then follows from Fekete's Lemma that the limit $\lim_{R\to\infty}\frac{\log\gamma(R)}{R}$ exists and is equal to $\inf_{R>0}\frac{\log |V(R+K)|}{R}$. Let us denote the limit by $\beta$. We have 
\[|V(R+K)|\ge e^{\beta R}\] for all $R>0$,
hence 
\[C_1e^{\beta R}\le |V(R)|\]
for $C_1=e^{-\beta K}$ and all $R>K$.

On the other hand, for any pair $R_1, R_2>0$,  the map $V(R_1)\times V(R_2)\colon (v_1, v_2)\mapsto v_1v_2$ takes values in $V(R_1+R_2+C)$ and is injective. Consequently,
\[|V(R_1+R_2+C)|\ge |V(R_1)|\cdot |V(R_2)|\]
for all $R_1, R_2>0$.
The function $\gamma(R)=|V(R-C)|$ satisfies then
\begin{multline*}
    \gamma(R_1+R_2)=|V(R_1+R_2-C)|=|V(R_1-C+R_2-C+C)|\ge\\
    |V(R_1-C)|\cdot|V(R_2-C)|=\gamma(R_1)\gamma(R_2).
\end{multline*}
Consequently, the limit $\beta=\lim_{R\to\infty}\frac{\log\gamma(R)}R$ is equal to the supremum of $\frac{\log\gamma(R)}R$, consequently,
\[\frac{\log|V(R-C)|}R\le\beta\]
for all $R>C$, i.e., 
\[|V(R-C)|\le e^{\beta R},\]
which gives us the estimate
\[|V(R)|\le C_2e^{\beta R}\]
for $C_2=e^{\beta C}$ and all $R>0$.
\end{proof}

\begin{proposition}
\label{pr:entropycriterion}
    Let $\nu$ be a positive quasi-morphism, and let $\beta$ be the $\nu$-entropy. Suppose that $\beta_0>0$ is such that there exists $\lambda\in (0, 1)$ such that for every $v\in\xs$ we have
    \[\sum_{x\in\alb}e^{-\beta_0\nu(xv)}\le\lambda e^{-\beta_0\nu(v)}.\]
    Then $\beta\le\beta_0$.
\end{proposition}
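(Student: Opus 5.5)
Iterating the hypothesis gives a uniform bound on the sum $\sum_{|w|=n} e^{-\beta_0\nu(w)}$. Counting $V(R)$ with weights $e^{-\beta_0\nu}$ then gives an upper bound on $|V(R)|$ in terms of $\beta_0$. The preceding theorem gives the matching lower bound $|V(R)|\ge C_1e^{\beta R}$, and comparing the two yields $\beta\le\beta_0$.

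First, by induction on $n$, I will show that for every $v\in\xs$ and $n\ge 0$,
\[\sum_{w\in\alb^n}e^{-\beta_0\nu(wv)}\le\lambda^n e^{-\beta_0\nu(v)}.\]
The case $n=1$ is the hypothesis. For the inductive step, write $w=xw'$ with $x\in\alb$ and $w'\in\alb^{n-1}$. Applying the hypothesis to the word $w'v$ gives
\[\sum_{x\in\alb}e^{-\beta_0\nu(xw'v)}\le\lambda e^{-\beta_0\nu(w'v)}.\]
Summing over $w'$ and using the induction hypothesis gives the bound with $\lambda^n$. Taking $v=\emptyset$, we get
\[\sum_{w\in\alb^n}e^{-\beta_0\nu(w)}\le\lambda^n e^{-\beta_0\nu(\emptyset)},\]
and summing over $n$,
\[\sum_{w\in\xs}e^{-\beta_0\nu(w)}\le\frac{e^{-\beta_0\nu(\emptyset)}}{1-\lambda}=:K<\infty.\]
The condition $\lambda<1$ is used exactly here. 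Note that $\nu(\emptyset)$ is finite and, by the quasi-morphism inequality with $v_1=v_2=\emptyset$, satisfies $|\nu(\emptyset)|\le C$.

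Next comes the counting step. Every $w\in V(R)$ satisfies $e^{-\beta_0\nu(w)}\ge e^{-\beta_0R}$, so
\[|V(R)|\,e^{-\beta_0R}\le K,\qquad\text{that is,}\qquad |V(R)|\le Ke^{\beta_0R}.\]
By the preceding theorem, $|V(R)|\ge C_1e^{\beta R}$ for all sufficiently large $R$. Hence $C_1e^{(\beta-\beta_0)R}\le K$ for all large $R$, which forces $\beta\le\beta_0$.

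I expect no real obstacle in this argument. The only point needing care is the orientation of the induction: the hypothesis prepends one letter on the left, so the induction must peel letters off the left end of $w$, exactly as in the inductive step above. Everything else is a direct Chebyshev-type count. The positivity of $\nu$ enters only through the existence of $\beta$, which the theorem supplies.
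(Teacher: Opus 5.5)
Your proof is correct and follows essentially the same route as the paper. The paper also iterates the hypothesis to get $\sum_{u\in\alb^n}e^{-\beta_0\nu(uv)}\le\lambda^n e^{-\beta_0\nu(v)}$, deduces that $\sum_{v\in\xs}e^{-\beta_0\nu(v)}$ converges, and bounds $|V(R)|\le\Lambda e^{\beta_0R}$ by a Chebyshev-type count; you additionally spell out the induction and the comparison with the lower bound $|V(R)|\ge C_1e^{\beta R}$, which the paper leaves implicit.
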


\begin{proof}
The inequality in the proposition implies that for every $n$ and $v\in\xs$ we have
\[\sum_{u\in\alb^n}e^{-\beta_0\nu(uv)}\le \lambda^n e^{-\beta_0\nu(v)}.\]

It follows that the series $\Lambda=\sum_{v\in\xs}e^{-\beta_0\nu(v)}$ is convergent.

Consider the set $V(R)=\{v\in\xs\colon\nu(v)\le R\}$. For 
every $v\in V(R)$, we have $e^{-\beta_0\nu(v)}\ge e^{-\beta_0 R}$. Consequently, $\Lambda\ge |V(R)|\cdot e^{-\beta_0 R}$, hence $|V(R)|\le\Lambda e^{\beta_0 R}$ for all $R$, which implies that $\beta\le\beta_0$.
\end{proof}

\begin{theorem}
Let $\nu$ be a positive quasi-morphism, and let $\beta$ be the $\nu$-entropy.

    There exist measures $\mu_+$ and $\mu_-$ on $\xo$ and $\xmo$, respectively, such
    \[\mu_+(v\xo)\asymp e^{-\beta\nu(v)}\asymp\mu_-(\xmo v)\]
    for all $v\in\xs$.

    If $\nu$ is $G$-quasi-invariant for an action of $G$ by automorphisms of the rooted tree $\xs$, then the measure $\mu_+$ is quasi-invariant under the action of $G$ on $\xo$.
\end{theorem}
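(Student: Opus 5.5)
I will construct $\mu_+$ and $\mu_-$ by the standard Patterson--Sullivan procedure: form Poincaré series over $\xs$ with exponent slightly larger than the critical one, normalize, and take a weak-$*$ limit. For $s>\beta$, the series $P(s)=\sum_{v\in\xs}e^{-s\nu(v)}$ converges, since $|\{v\colon\nu(v)\le R\}|\asymp e^{\beta R}$. It diverges at $s=\beta$, because each layer $\{R<\nu(v)\le R+1\}$ contributes a bounded-below amount once the counting estimate is applied to $R$ and $R+K$ with $K$ large. If the series were to converge at $\beta$, I would insert the usual Patterson slowly-varying correction factor; the two-sided counting estimate suggests it is unnecessary.

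For $\mu_+$, consider the compactification $\xs\sqcup\xo$, topologized by cylinders $v(\xs\sqcup\xo)$. Put the measure $m_s=P(s)^{-1}\sum_v e^{-s\nu(v)}\delta_v$ on it and let $\mu_+$ be a weak limit as $s\searrow\beta$. Since $P(s)\to\infty$ and each finite set carries a bounded numerator, $\mu_+$ is supported on $\xo$. Cylinders are clopen, so $\mu_+(v\xo)=\lim m_s(v\xs)$.

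The key estimate is that for $w=vu$ we have $e^{-s\nu(vu)}\asymp e^{-s\nu(v)}e^{-s\nu(u)}$, with constant $e^{sC}$ by the quasi-morphism property. Summing over $u$ gives $m_s(v\xs)\asymp e^{-s\nu(v)}$ uniformly in $s$ near $\beta$, and hence $\mu_+(v\xo)\asymp e^{-\beta\nu(v)}$. The measure $\mu_-$ is obtained in exactly the same way on $\xs\sqcup\xmo$, using cylinders $\xs v$ (words ending with $v$) and the factorization $\nu(uv)\doteq\nu(u)+\nu(v)$. One point needs care: in this compactification a sequence of words converges to a left-infinite word when longer and longer suffixes agree. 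The only real obstacle in this step is the support statement, which follows from the divergence of $P(\beta)$, established above from the two-sided bound on $|V(R)|$.

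For quasi-invariance, take $g\in G$. Since $g$ acts by tree automorphisms, it maps the cylinder $v\xo$ onto $g(v)\xo$. Then
\[\mu_+(g(v)\xo)\asymp e^{-\beta\nu(g(v))}\asymp e^{\beta C_g}e^{-\beta\nu(v)}\asymp\mu_+(v\xo).\]
So $g_*\mu_+$ and $\mu_+$ are comparable on all cylinders with uniform constants. Cylinders form a semiring generating the Borel $\sigma$-algebra, and every open set is a disjoint union of cylinders. Hence $g_*\mu_+\le C'\mu_+$ on open sets, then on all Borel sets by outer regularity, and symmetrically in the other direction. Therefore $g_*\mu_+$ and $\mu_+$ are mutually absolutely continuous with bounded Radon--Nikodym derivative.
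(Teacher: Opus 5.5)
Your proposal is correct and follows essentially the same Patterson-type construction as the paper. Both use Poincar\'e series over $\xs$ on the compactification $\xs\cup\xo$, take a weak-$*$ limit as $s\to\beta+$, obtain the cylinder estimate by summing the quasi-morphism inequality over all continuations, and read off quasi-invariance from cylinder comparisons.

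The differences are minor. You normalize by $P(s)$ rather than by $(1-e^{\beta-s})$. This makes non-triviality of the limit automatic, which the paper leaves implicit, but it requires you to prove divergence of $P(\beta)$. Do that with layers $\{R<\nu(v)\le R+K\}$ of width $K$: layers of width $1$ can be empty, for example when $\nu(v)=2|v|$. You also compare $g(v)\xo$ with $v\xo$ directly for each $g$, instead of reducing to nucleus elements, which matches the statement's generality of arbitrary tree automorphisms.
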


\begin{proof}
    Let us prove the first statement of the theorem for $\xo$, the proof for $\xmo$ is the same.

Consider $\xo\cup\xs$ as the natural compactification of $\xs$ by $\xo$.
    For every $s>\beta$, and a continuous function $f\colon \xo\cup\xs\arr\C$, consider the series
\[\mathcal{P}_s(f)=\sum_{v\in\xs}f(v)e^{-s\nu(v)}.\]
It is a positive linear functional on $C(\xo\cup\xs)$.

Let $C$ be such that \[C^{-1}e^{\beta R}\le |\{v\in\xs\colon\nu(v)\le R\}|\le C e^{\beta R}\]
for all $R>0$.

Let $S(n)=\{v\in\xs\colon n-1< \nu(v)\le n\}$.
Then  we have
\begin{multline*}\mathcal{P}_s(f)=\sum_{n=1}^\infty \sum_{v\in S(n)}f(v)e^{-s\nu(v)}\le C\|f\|_\infty\sum_{n=1}^\infty e^{-s(n-1)}e^{\beta n}=\\ 
C\|f\|_\infty\sum_{n=0}^\infty e^{-sn}e^{\beta(n+1)}=
Ce^\beta\|f\|_\infty\sum_{n=0}^\infty e^{(\beta-s)n}=\frac{C_1\|f\|_\infty}{1-e^{\beta-s}}.
\end{multline*}

Define the measure $\mu_s$ on $\xs\cup\xo$ by
\[\int f\;d\mu_s=(1-e^{\beta-s})\mathcal{P}_s(f)\]
for all continuous functions $f\colon \xs\cup\xo\arr\C$. By the above, we have $\int f\;d\mu_s\le C_1\|f\|_\infty$ for all $f\in C(\xs\cup\xo)$ and all $s>\beta$.

It follows from the Uniform Boundedness Principle that the set $\{\mu_s\colon s>\beta\}$ is bounded in the dual of $C(\xs\cup\xo)$. Hence, by the Banach-Alaoglu Theorem, there exists a weakly convergent sequence $\mu_{s_k}$ for $s_k\to\beta+$. Let $\mu$ be its limit. 

For every $v\in\xs$, the sequence $(1-e^{\beta-s_k})e^{-s_k\nu(v)}$ converges to 0, hence $\mu$ is supported on $\xo$.

Let $v\in\xs$, and let $\xi_v$ be the indicator of $v\xs\cup v\xo$ (which is a continuous function). Then
\[\mu(v\xo)=\lim_{k\to\infty}(1-e^{\beta-s_k})\sum_{w\in\xs}e^{-s_k\nu(vw)}.\]
We have \[e^{-s_k(\nu(w)+\nu(v)+C)}\le e^{-s_k\nu(vw)}\le e^{-s_k(\nu(w)+\nu(v)-C)}.\]
Multiplying the inequalities by $(1-e^{\beta-s})$ and adding them over all $w\in\xs$, we get
\[e^{-C}e^{-s_k\nu(v)}\mu_{s_k}(\xs\cup\xo)\le\mu_{s_k}(v\xs\cup v\xo)\le e^C e^{-s_k\nu(v)}\mu_{s_k}(\xs\cup\xo).\] Passing to the limit, we get
\[K_1 e^{-\beta\nu(v)}\le\mu(v\xo)\le K_2 e^{-\beta\nu(v)}\]
for $K_1=e^{-\Delta}\mu(\xo)$ and $K_2=e^\Delta\mu(\xo)$.

Note that, as a consequence of the estimates, we get that for any $v_1, v_2\in\xs$, the measure $\mu$ is quasi-invariant under the transformation $w\mapsto vw$ with the Radon-Nikodym derivative belonging to the interval $(C_1^{-1}e^{-\beta\nu(v)}, C_1e^{-\beta\nu(v)})$ for a  constant $C_1$, not depending on $v$.

It follows that in order to show that $\mu$ is quasi-invariant under the action of $G$, it is enough to show that it is quasi-invariant under the action of the elements of the nucleus. But this follows from an estimate $\nu(s(v))\doteq \nu(v)$ for all $v\in\xs$ and $s\in\nuke$.
\end{proof}

\subsection{Visual metrics associated with a quasi-morphism}
Fix a contracting group $(G, \bim)$ and a basis $\alb\subset\bim$.

Let $\nu\colon \xs\arr [0, \infty)$ be a positive $G$-quasi-invariant quasi-morphism. 

Let $k$ be such that $\nuke^3|_v\subset\nuke$ for all $v\in\alb^k$.
Denote by $\ell_\nu(\xi, \eta)$ the maximal value of $\max(\nu(x_n\ldots x_2x_1), \nu(y_n\ldots y_2y_1))$ for all pairs of sequences $\ldots x_2x_1, \ldots y_2y_1\in\xmo$ representing $\xi, \eta$ and all $n$ such that $s(x_n\ldots x_2x_1)=y_n\ldots y_2y_1$ for some $s\in\nuke$.

If $\ldots x_2'x_1', \ldots y_2'y_1'$ is another pair of sequences representing $\xi, \eta$, then there exists $s'\in\nuke^3$ such that $s'(x_n'\ldots x_2'x_1)=y_n'\ldots y_2'y_1$. Consequently, there exists $s''\in\nuke$ such that $s''(x_{n-k}'\ldots x_2'x_1')=y_{n-k}'\ldots y_2'y_1'$.

Since $\nu$ is a $G$-quasi-invariant quasi-morphism, we conclude that there exists a constant $\Delta_1$ such that if $\ldots x_2x_1, \ldots y_2y_1$ are arbitrary sequences representing $\xi, \eta$, and $n$ is the maximal index such that $s(x_n\ldots x_2x_1)=y_n\ldots y_2y_1$ for some $s\in\nuke$, then
\[\ell_\nu(\xi, \zeta)\doteq_{\Delta_1}\nu(x_n\ldots x_2x_1).\]

We also get that $\ell_\nu$ satisfies the inequality~\eqref{eq:logscale}, i.e., is a log-scale. Consequently, for all sufficiently small positive values of $\alpha$ there exists a \emph{visual metric associated with $\nu$}, i.e., a metric $d_{\nu, \alpha}$ on $\lims$ such that 
\[d_{\nu, \alpha}(\xi, \eta)\asymp e^{-\alpha\ell_\nu(\xi, \eta)}.\]

\begin{defi}
Let $\nu\colon\xs\arr [0, \infty)$ be a quasi-invariant quasi-morphism for a self-similar contracting group $(G, \bim)$. Then the \emph{$\nu$-dimension} $\dim_\nu(G, \bim)$ is the infimum of the Hausdorff dimensions of the visual metrics $d_{\nu, \alpha}$ on $\lims$. 
\end{defi}

We have $\dim_\nu(G, \bim)=\frac{\beta}{\alpha}$, where $\beta$ is the $\nu$-entropy, and $\alpha$ is the supremum of the lower exponents of the log-scale $\ell_\nu$ (see Definition~\ref{def:lowerexponentforlog}).

\begin{proposition}
\label{pr:quasimorphismsquasisymmetric}
Any two visual metrics associated with positive $G$-quasi-invariant quasi-morphisms are quasi-symmetric to each other.
\end{proposition}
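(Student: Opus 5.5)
The plan is to reduce everything to comparisons of log-scales. Let $\nu_1,\nu_2$ be positive $G$-quasi-invariant quasi-morphisms, and let $d_i=d_{\nu_i,\alpha_i}$ be associated visual metrics, so that $d_i(\xi,\eta)\asymp e^{-\alpha_i\ell_{\nu_i}(\xi,\eta)}$. It suffices to show that each $d_i$ is quasi-symmetric to a uniform metric $d_\alpha$ (the visual metric for $\nu(v)=|v|$, i.e.\ for the log-scale $\ell_\alb$). Quasi-symmetry is an equivalence relation: the composition of the control functions $\eta$ gives transitivity, and the inverse of $\eta$ gives symmetry after swapping $y$ and $z$. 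So I would fix one $\nu$ and compare $\ell_\nu$ with $\ell_\alb$.

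The first step is to record that $\ell_\nu(\xi,\eta)\doteq_{\Delta_1}\nu(x_n\ldots x_1)$, where $n=\ell_\alb(\xi,\eta)$ up to a bounded additive error. This follows from the discussion preceding the definition of $d_{\nu,\alpha}$: $n$ is the maximal index with $s(x_n\ldots x_1)=y_n\ldots y_1$, $s\in\nuke$, which is $\ell_\alb$ up to a constant. Thus, for a fixed base point $\xi=\ldots x_2x_1$, both log-scales are read off along the same ray. We have $\ell_\alb(\xi,\cdot)\approx n$ and $\ell_\nu(\xi,\cdot)\approx \nu(x_n\ldots x_1)$.

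The key step is the three-point estimate. Take $x,y,z$ with $\ell_\alb(x,y)=n$ and $\ell_\alb(x,z)=m$, and use a single representative $\ldots x_2x_1$ of $x$ for both. This is allowed, since changing representatives costs only a bounded additive error, via the $\nuke^3$ and $k$-shift argument used before. If $n\ge m$, write $x_n\ldots x_1=u\,w$ with $w=x_m\ldots x_1$ and $|u|=n-m$. The quasi-morphism property gives
\[\ell_\nu(x,y)-\ell_\nu(x,z)\doteq \nu(u),\]
and by the estimate $m^{-1}|u|\le\nu(u)\le C|u|$ established above this lies between $c_1(n-m)-c_2$ and $c_3(n-m)+c_2$. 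The case $n<m$ is symmetric. Hence
\[\frac{d_\nu(x,y)}{d_\nu(x,z)}\asymp e^{-\alpha_\nu(\ell_\nu(x,y)-\ell_\nu(x,z))}\]
is bounded above by $C t^{a}$ when $t=d_\alpha(x,y)/d_\alpha(x,z)\asymp e^{-\alpha(n-m)}\le 1$. The exponent is $a=\alpha_\nu c_1/\alpha$ in this regime, and $a'=\alpha_\nu c_3/\alpha$ when $t\ge1$. Taking $\eta(t)=C\max(t^{a},t^{a'})$ gives an increasing homeomorphism, and the same argument with the roles reversed handles the converse direction.

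I expect the main obstacle to be the bookkeeping in the key step. The pairs $(x,y)$ and $(x,z)$ realize $\ell_\alb$ through possibly different representatives of $x$, and the points may lie at log-scale distance $\infty$ (when $x=y$). I would handle this by fixing a representative of $x$, using Lemma~\ref{lem:intersectingtiles} and the contracting property to show the index $n$ computed from it differs from $\ell_\alb$ by a constant, and then absorbing all additive constants into the multiplicative constant of $\eta$.
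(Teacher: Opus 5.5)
Your proposal is correct and follows essentially the same route as the paper: reduce to comparison with a uniform metric, fix a representative $\ldots x_2x_1$ of the base point, and use $\ell_\nu(\xi,\cdot)\doteq\beta(\ell(\xi,\cdot))$ with $\beta(n)=\nu(x_n\ldots x_1)$. The quasi-morphism property and positivity then give two-sided linear control of $\beta(n_1)-\beta(n_2)$ (the paper's function $\gamma$), which yields a power-type control function equivalent to your $C\max(t^{a},t^{a'})$.
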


\begin{proof}
It is enough to show that the visual metric associated with a positive $G$-quasi-invariant quasi-morphism $\nu$ is quasi-symmetric to a uniform metric.

Let $\xi\in\lims$, and let $\ldots x_2x_1$ be a sequence representing it. Denote 
\[\beta(n)=\nu(x_n\ldots x_2x_1).\]

There exist constants $\Delta>0$ and $C_1>1$ such that
\[\beta(n_1)-\beta(n_2)\doteq_{\Delta}\nu(x_{n_1}x_{n_1-1}\ldots x_{n_2+1})\]
and
\[\nu(x_{n_1}\ldots x_{n_2+1})\asymp_{C_1} n_1-n_2\]
for all $n_1>n_2$.

It follows that 
\[
\beta(n_1)-\beta(n_2)\ge\left\{\begin{array}{rr}
C^{-1}(n_1-n_2)-\Delta & \text{if $n_1\ge n_2$}\\
C(n_1-n_2)-\Delta & \text{if $n_1<n_2$.}
\end{array}\right.
\]
Denote $\gamma(n)=C^{-\mathop{\mathrm{sign}}(n)}n$, so that we have
\[\beta(n_1)-\beta(n_2)\ge\gamma(n_1-n_2)-\Delta\]
for all $n_1, n_2$. Note that $\gamma$ is a strictly increasing homeomorphism $\gamma\colon \R\arr\R$.

By the definition of the functions $\ell_\nu$ and $\ell$ (i.e., the function $\ell_{|\cdot|}$), there exists $\Delta_1>0$ such that
\[\ell_\nu(\xi, \eta)\doteq_{\Delta_1}\beta(\ell(\xi, \eta))\]
for all $\eta\in\lims$.

Let $\alpha$ and $\alpha_0$ be such that the visual metric $|\cdot|_{\nu, \alpha}$ and the uniform metric $|\cdot|_{\alpha_0}$ are defined.

Then there exist  constants $C_i>1$ such that
\begin{multline*}\frac{d_{\nu, \alpha}(\xi, \eta)}{d_{\nu, \alpha}(\xi, \zeta)}\le C_1e^{-\alpha(\ell_\nu(\xi, \eta)-\ell_\nu(\xi, \zeta))}\le C_1e^{2\alpha\Delta_1}e^{-\alpha(\beta(\ell(\xi, \eta))-\beta(\ell(\xi, \zeta)))}\le \\
C_2e^{-\alpha \gamma(\ell(\xi, \eta)-\ell(\xi, \zeta))}.
\end{multline*}

There also exist a constants $C_3>1$ and $C_4>0$ such that
\[\frac{d_{\alpha_0}(\xi, \eta)}{d_{\alpha_0}(\xi, \zeta)}\ge C_3^{-1}e^{-\alpha_0(\ell(\xi, \eta)-\ell(\xi, \zeta))},\]
hence
\[\ell(\xi, \eta)-\ell(\xi, \zeta)\ge -\alpha_0^{-1}\log\left(\frac{d_{\alpha_0}(\xi, \eta)}{d_{\alpha_0}(\xi, \zeta)}\right)-C_4\]

Consequently, 
\[\frac{d_{\nu, \alpha}(\xi, \eta)}{d_{\nu, \alpha}(\xi, \zeta)}\le\eta\left(\frac{d_{\alpha_0}(\xi, \eta)}{d_{\alpha_0}(\xi, \zeta)}\right)\]
for
\[\eta(t)=C_2\exp\left(-\alpha\gamma\left(-\alpha_0^{-1}\log t-C_4\right)\right),\]
which shows that $d_{\nu, \alpha}$ and $d_{\alpha_0}$ are quasi-symmetric.
\end{proof}

\begin{corollary}
Note that metrics on $\lims$ associated with two quasi-morphisms $\nu_1\colon(\alb^n)^*\arr (0, \infty)$ and $\nu_2\colon(\alb^m)^*\arr(0, \infty)$ are quasi-symmetric to each other.
\end{corollary}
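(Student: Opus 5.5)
The corollary follows from Proposition~\ref{pr:quasimorphismsquasisymmetric} once both metrics are seen to be visual metrics for one and the same self-similar group. By the transitivity of quasi-symmetry, it is enough to show that the metric associated with $\nu_1$ is quasi-symmetric to a uniform metric of $(G, \bim)$; the same argument then applies to $\nu_2$.

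First, I will identify the structures. The basis $\alb^n$ of $\bim^{\otimes n}$ gives the self-similar group $(G, \bim^{\otimes n})$. Its limit space is identified with $\lims[(G,\bim)]$ by regrouping sequences of $\xmo$ into blocks of length $n$. The tree $(\alb^n)^*$ is the subset $\bigcup_k \alb^{nk}$ of $\xs$. A positive $G$-quasi-invariant quasi-morphism $\nu_1$ on $(\alb^n)^*$ is the same notion for the group $(G,\bim^{\otimes n})$, whose nucleus is finite by Proposition~\ref{pr:sumgeometric}.

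Next, I will check that the uniform metrics of $(G,\bim)$ and of $(G,\bim^{\otimes n})$ coincide. The paper already notes this: a metric is uniform of exponent $\alpha$ for $\bim$ if and only if it is uniform of exponent $n\alpha$ for $\bim^{\otimes n}$. The reason is that the log-scales satisfy $\ell_{\alb^n}\doteq \ell_\alb/n$. Intersecting tiles of level $nk$ for $\alb$ are exactly intersecting tiles of level $k$ for $\alb^n$. The additive error comes from the rounding $\lfloor \ell_\alb/n\rfloor$ and from the fact that the nucleus of $(G,\bim^{\otimes n})$ is contained in the nucleus of $(G, \bim)$, which can be verified from the definition of the nucleus.

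With these identifications, I apply Proposition~\ref{pr:quasimorphismsquasisymmetric} to $(G,\bim^{\otimes n})$ and $\nu_1$. This gives that $d_{\nu_1,\alpha}$ is quasi-symmetric to a uniform metric of $(G,\bim^{\otimes n})$, which is a uniform metric of $(G,\bim)$. The analogous statement holds for $\nu_2$ with $m$ in place of $n$. Uniform metrics of $(G,\bim)$ with different exponents are quasi-symmetric to each other, and compositions and inverses of quasi-symmetries are quasi-symmetries. So $d_{\nu_1,\alpha_1}$ and $d_{\nu_2,\alpha_2}$ are quasi-symmetric.

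The main obstacle is the bookkeeping in the comparison $\ell_{\alb^n}\doteq\ell_\alb/n$. One must make sure the nucleus used for $\bim^{\otimes n}$ produces intersecting tiles consistently with the nucleus of $\bim$. I would handle this through the tile description, Lemma~\ref{lem:intersectingtiles}: both quantities equal, up to rounding, the largest level at which $\xi$ and $\zeta$ lie in intersecting tiles, and the tiles of level $k$ for $\alb^n$ are exactly the tiles $\til_v$ with $|v|=nk$ for $\alb$.
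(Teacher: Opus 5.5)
Your argument is correct, but it is organized differently from the paper's.

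The paper uses a common refinement. It restricts both $\nu_1$ and $\nu_2$ to $(\alb^{nm})^*$, so that they become positive $G$-quasi-invariant quasi-morphisms for the single group $(G,\bim^{\otimes nm})$. It then applies Proposition~\ref{pr:quasimorphismsquasisymmetric} once.

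You instead apply the proposition separately to $(G,\bim^{\otimes n})$ and $(G,\bim^{\otimes m})$. You use the uniform metrics of $(G,\bim)$ as the common reference point. For this you rely on two things: the uniform class is unchanged under passing to tensor powers (with exponent scaled by $n$), and quasi-symmetry is transitive.

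The paper's version is shorter and calls the proposition only once. It does tacitly use that restricting $\nu_1$ to words whose length in $\alb^n$ is divisible by $m$ does not change the associated visual metrics. This holds because $\nu_1$ is quasi-additive and bounded on letters, so truncating to a multiple of $m$ changes $\ell_{\nu_1}$ by a bounded amount.

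Your version tacitly uses $\ell_{\alb^n}=\lfloor \ell_\alb/n\rfloor$. As you say, this is immediate from the tile description. Lying in intersecting tiles of level $nk$ is a property of the sets $\til_v$ with $|v|=nk$, so it does not depend on which nucleus is used to describe the intersections.

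One small correction: the containment of the nucleus of $(G,\bim^{\otimes n})$ in that of $(G,\bim)$ is not really a source of additive error. On its own it gives only the inequality $n\,\ell_{\alb^n}\le\ell_\alb$. The tile argument gives both inequalities, and it makes that remark unnecessary.

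Your route has the mild advantage that its only input beyond Proposition~\ref{pr:quasimorphismsquasisymmetric} is a fact the paper states explicitly.
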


\begin{proof}
Restrict the quasi-morphisms to $(\alb^{nm})^*$ and then use Proposition~\ref{pr:quasimorphismsquasisymmetric} for the self-similar group $(G, \bim^{\otimes nm})$.
\end{proof}

\begin{theorem}
Let $\nu\colon \xs\arr [0, \infty)$ be a positive $G$-quasi-invariant quasi-morphism. Let $\beta$ be the $\nu$-entropy.

Consider a visual metric $d_{\nu, \alpha}$, and let $\mu$ be the push forward to $\lims$ of the measure on $\xmo$ associated with $\nu$. Let $B_\xi(R)$ denote the ball of radius $R$ with center in $\xi$ defined using the metric $d_{\nu, \alpha}$. Then there exists a constant $C>1$ such that
\[\mu(B_\xi(R))\asymp_C R^{\beta/\alpha}\]
for all $R<\diam\lims$ and $\xi\in\lims$.
\end{theorem}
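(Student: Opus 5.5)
The plan is to imitate the proof of the analogous statement for the uniform measure and uniform metric, replacing the word length $|v|$ by $\nu(v)$ and the Bernoulli measure by the measure $\mu_-$ on $\xmo$ from the preceding theorem. That theorem gives $\mu_-(\xmo v)\asymp e^{-\beta\nu(v)}$ for all $v\in\xs$. By the construction of $d_{\nu,\alpha}$ we have $d_{\nu,\alpha}(\xi,\eta)\asymp_{C_0} e^{-\alpha\ell_\nu(\xi,\eta)}$.

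For $\xi\in\lims$ and $r\ge 0$, I would define
\[T^\nu_r(\xi)=\{\zeta\colon \ell_\nu(\xi,\zeta)\ge r\}.\]
Then, as in the uniform case,
\[T^\nu_{-\log(C_0^{-1}R)/\alpha}(\xi)\subseteq B_\xi(R)\subseteq T^\nu_{-\log(C_0R)/\alpha}(\xi).\]
It therefore suffices to prove that $\mu(T^\nu_r(\xi))\asymp e^{-\beta r}$ uniformly in $\xi$ and in $r$ beyond some threshold. Small $r$, i.e.\ $R$ comparable to $\diam\lims$, is handled by adjusting constants, since $\mu$ has full support.

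\emph{Lower bound.} Fix a representative $\ldots x_2x_1$ of $\xi$. Choose $n$ minimal with $\nu(x_n\ldots x_1)\ge r+\Delta_1$. Every point represented by a sequence ending in $x_n\ldots x_1$ is taken with $s=1\in\nuke$. By the estimate $\ell_\nu\doteq_{\Delta_1}\nu(x_n\ldots x_1)$, or simply by the definition of $\ell_\nu$ as a maximum, such points lie in $T^\nu_r(\xi)$. Hence $\mu(T^\nu_r(\xi))\ge\mu_-(\xmo x_n\ldots x_1)\asymp e^{-\beta\nu(x_n\ldots x_1)}$.

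By minimality of $n$ and the quasi-morphism property with bounded $\nu$ on letters, we get $\nu(x_n\ldots x_1)\le r+\Delta_1+M+C$. This gives the lower bound $\gtrsim e^{-\beta r}$.

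\emph{Upper bound.} Let $\zeta\in T^\nu_r(\xi)$, and let $n$ be minimal with $\nu(x_n\ldots x_1)\ge r-\Delta_1-\Delta'$, where $\Delta'$ is a constant chosen below. Using the remark preceding the definition of $d_{\nu,\alpha}$, with the fixed representative of $\xi$ and a representative $\ldots y_2y_1$ of $\zeta$, the maximal $m$ with $s(x_m\ldots x_1)=y_m\ldots y_1$, $s\in\nuke$, satisfies $\nu(x_m\ldots x_1)\ge r-\Delta_1$. Positivity of $\nu$ and the quasi-morphism property then give $m\ge n$ once $\Delta'$ is large enough. Hence $y_n\ldots y_1=s(x_n\ldots x_1)$ for some $s\in\nuke$.

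So the preimage of $T^\nu_r(\xi)$ in $\xmo$ lies in $\bigcup_{s\in\nuke}\xmo s(x_n\ldots x_1)$. Since $\nu$ is $G$-quasi-invariant and $\nuke$ is finite, $\nu(s(v))\doteq\nu(v)$ uniformly, so each piece has measure $\asymp e^{-\beta\nu(x_n\ldots x_1)}\asymp e^{-\beta r}$. The total is at most $|\nuke|$ times this.

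\emph{Main obstacle.} The hardest step is the bookkeeping in the upper bound. One must check that $\ell_\nu(\xi,\zeta)\ge r$ forces agreement, modulo the nucleus, of prefixes whose $\nu$-value is $r-O(1)$. The constants must not depend on the chosen representatives; this is handled by the $\nuke^3$ argument with the shift by $k$ letters already made in the paper.

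Finally, substituting $r=-\log(C_0^{\pm1}R)/\alpha$ gives $e^{-\beta r}\asymp R^{\beta/\alpha}$, which proves the theorem.
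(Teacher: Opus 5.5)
Your proof is correct and follows essentially the paper's route. You sandwich balls between combinatorial neighborhoods of $\xi$, bound their measure from below by a single cylinder $\xmo x_n\ldots x_1$ and from above by finitely many nucleus-translates of it, and use the $G$-quasi-invariance of $\nu$. The only difference is bookkeeping: the paper uses the level-$n$ neighborhoods $T_\xi(n)$ (hence $\nuke^3$-translates) and interpolates between the radii $R_n=e^{-\alpha\nu(x_n\ldots x_1)}$ using the bounded ratio $R_n/R_{n+1}$, whereas you use superlevel sets of $\ell_\nu$ with a continuous parameter and take $n$ to be the first crossing of the threshold. That choice already forces $m\ge n$, so your extra constant $\Delta'$ is unnecessary.
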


Consequently, the metric $d_{\nu, \alpha}$ belongs to the class $\mathcal{AR}(G, \bim)$ and has Hausdorff dimension $\beta/\alpha$.

\begin{proof}
Let $\ldots x_2x_1\in\xmo$ be a sequence representing $\xi$. Then, for some constants $C_1>1$, the neighborhood $T_\xi(n)$ contains the ball $B_\xi(C_1^{-1}e^{-\alpha\nu(x_n\ldots x_2x_1)})$ and is contained in the ball $B_\xi(C_1e^{-\alpha\nu(x_n\ldots x_2x_1)})$. 

The preimage of the set $T_\xi(n)$ in $\xmo$ contains $\xmo x_n\ldots x_2x_1$ and is contained in $\bigcup_{g\in\nuke^3}\xmo g(x_n\ldots x_1)$. Since $\nu$ is $G$-quasi-invariant, it follows that there exists a constant  $C_2>1$ such that
\[C_2^{-1}e^{-\beta\nu(x_n\ldots x_2x_1)}\le\mu(T_\xi(n))\le C_2e^{-\beta\nu(x_n\ldots x_2x_1)}.\]

Consequently, for $R_n=e^{-\alpha\nu(x_n\ldots x_2x_1)}$, we have
\[\mu(B_\xi(C_1^{-1}R_n))\le\mu(T_\xi(n))\le C_2 R_n^{\beta/\alpha}\]
and
\[\mu(B_\xi(C_1R_n))\ge\mu(T_\xi(n))\ge C_2^{-1}R_n^{\beta/\alpha}.\]

Let $R\in (0, R_1)$. Let $n_1$ be the maximum of the set of natural numbers $n$ such that $C_1^{-1}R_n\ge R$, which exists, since $R_n\to 0$ as $n\to\infty$. Let $n_0$ be the minimum of the set of natural numbers $n$ such that $C_1R_n\le R$. Since the ratio $R_n/R_{n+1}$ is bounded from below and from above by positive constants, the ratios $R_{n_0}/R$ and $R_{n_1}/R$ are also bounded from below and from above by positive constants.

We have 
\[\mu(B_\xi(R))\le\mu(B_\xi(R_{n_1}))\le C_2R_{n_1}^{\beta/\alpha}\le C_3R^{\beta/\alpha}\]
and
\[\mu(B_\xi(R))\ge\mu(C_1R_{n_0})\ge C_2^{-1}R_{n_0}^{\beta/\alpha}\ge C_3^{-1}R^{\beta/\alpha}\]
for some constant $C_3>1$.
\end{proof}

\subsection{Local quasisimilarities}

Denote  $\widetilde\til_v=\bigcup_{g\in\nuke^2}\til_{g(v)}$, for $v\in\xs$. If $\xi\in\lims$ is represented by a sequence $\ldots x_2x_1\in\xmo$, then the neighborhood $T_\xi(n)$ satisfies
\[\widetilde\til_{x_{n+k}\ldots x_2x_1}\subseteq T_\xi(n)\subseteq \widetilde\til_{x_n\ldots x_2x_1},\]
where $k$ is such that $\nuke^2|_{\alb^k}\subset\nuke$.

Let $A_{n, v}$ be ball of radius $2$ in the graph $\G_n$ of the action of $\nuke$ on $\alb^n$. We consider $A_{n, v}$ as a labeled rooted directed graph. There are only finitely many isomorphism classes of graphs $A_{n, v}$ for all $n, v$.

\begin{proposition}
\label{pr:Sv1Sv2}
Suppose that $A_{n_1, v_1}$ and $A_{n_2, v_2}$ are isomorphic. Then the map \[S_{v_1, v_2}\colon \bigcup_{g\in\nuke^2}\xmo g(v_1)\arr\bigcup_{g\in\nuke^2}\xmo g(v_2)\colon wg(v_1)\mapsto wg(v_2)\]
induces a well defined homeomorphism $\widetilde\til_{v_1}\arr\widetilde\til_{v_2}$.
\end{proposition}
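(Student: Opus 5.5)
The plan is to check that $S_{v_1,v_2}$ is well defined on $\bigcup_{g\in\nuke^2}\xmo g(v_1)$ and respects the asymptotic equivalence of Proposition~\ref{pr:lims}. Then, by symmetry and compactness, it descends to a homeomorphism $\widetilde\til_{v_1}\arr\widetilde\til_{v_2}$.

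Fix an isomorphism $\phi\colon A_{n_1,v_1}\arr A_{n_2,v_2}$ of labeled rooted graphs. Since $\phi$ preserves labels, for each $g\in\nuke^2$ written as a product $s_1s_2$ of nucleus elements, the word $g(v_1)$ is the endpoint of the path from $v_1$ with edge labels (the generators) $s_2,s_1$. Its image under $\phi$ is the endpoint of the same labeled path starting at $v_2$, namely $g(v_2)$. Hence $\phi(g(v_1))=g(v_2)$, so the map $u\mapsto\phi(u)$ on the vertices of the ball is exactly $g(v_1)\mapsto g(v_2)$. In particular, $g(v_1)=g'(v_1)$ implies $g(v_2)=g'(v_2)$, and $S_{v_1,v_2}$ is well defined as a map of sets of sequences. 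It is a homeomorphism $\bigcup\xmo g(v_1)\arr\bigcup\xmo g(v_2)$ of clopen subsets of $\xmo$, with inverse $S_{v_2,v_1}$.

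The main step is to show that asymptotically equivalent sequences are mapped to asymptotically equivalent sequences. Take $wu$ and $w'u'$ in the domain, with $u,u'$ vertices of the ball around $v_1$ of length $n_1$, representing the same point of $\lims$. By Proposition~\ref{pr:lims} there are $h_m\in\nuke$ with $h_m$ mapping the length-$m$ suffixes of $wu$ to those of $w'u'$ for all $m$. Take $m>n_1$ and write $h_m(w_0u)=h_m(w_0)\,h_m|_{w_0}\ldots$; more carefully, $h_m(w_0u)=h_m|_{\cdot}$ acting on $u$ first: $h_m(u)=u'$ and $h_m|_u(w_0)=w_0'$, where $w_0$ is the appropriate prefix of $w$ and our words are read right to left. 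Thus $u'=h(u)$ for a single $h=h_{n_1}\in\nuke$, with $h|_u$ relating the tails $w$ and $w'$ in the required way.

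Since $u$ and $u'$ lie in the ball of radius $2$ and are joined by an edge labeled $h$ with label (section) $h|_u$, this edge lies in the ball (both endpoints do) and is preserved by $\phi$. So $\phi(u')=h(\phi(u))$ and $h|_{\phi(u)}=h|_u$. Consequently the sequences $w\phi(u)$ and $w'\phi(u')$ are related by $h$ on the suffix of length $n_2$, and then by the same sections $h_m|_u$ beyond it. Rearranging with the section formulas~\eqref{eq:sections} gives a bounded sequence in $\nuke$ witnessing their asymptotic equivalence. The delicate point, which I expect to be the main obstacle, is to make sure that the relations for all $m$, not just $m\ge n_1$, come from a bounded set. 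For shorter suffixes one uses restrictions of $h$ to suffixes of $u$, which stay in $\nuke$ since sections of nucleus elements lie in $\nuke$.

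Applying the same argument to $S_{v_2,v_1}$ shows that equivalence is reflected as well. So $S_{v_1,v_2}$ induces a continuous bijection $\widetilde\til_{v_1}\arr\widetilde\til_{v_2}$, continuity following from $\lambda$ being a quotient map restricted to a clopen saturated-in-range set. Since the domain is compact and the target Hausdorff, this bijection is a homeomorphism.
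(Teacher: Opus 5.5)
Your overall structure is the same as the paper's: well-definedness from the labeled ball isomorphism, preservation of asymptotic equivalence, symmetry via $S_{v_2,v_1}$, and compact-to-Hausdorff. However, the main step uses the wrong direction of the action.

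In this paper $g(v_1v_2)=g(v_1)\,g|_{v_1}(v_2)$, and the element in Proposition~\ref{pr:lims} reads $x_m\ldots x_2x_1$ starting from $x_m$. So if you write the relevant suffix of $wu$ as $w_0u$, you get $h_m(w_0u)=h_m(w_0)\,t_m(u)$ with $t_m=h_m|_{w_0}\in\nuke$. The tails are matched by $h_m$ itself, and the element carrying $u$ to $u'$ is the section $t_m$, which depends on $m$ and on the tail.

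Your ``more careful'' version, $h_m(u)=u'$ and $h_m|_u(w_0)=w_0'$, is false. Sections at $u$ play no role at all, since nothing follows $u$ in $\ldots x_2x_1u$. Consequently, transporting the single edge $(h_{n_1},u)$ and using $h|_{\phi(u)}=h|_u$ says nothing about the long suffixes of $w\phi(u)$ and $w'\phi(u')$. What you actually need is $t_m(\phi(u))=\phi(u')$ for every $m$. This does not follow from the one relation $h_{n_1}(\phi(u))=\phi(u')$ that you checked, because several elements of $\nuke$ may send $u$ to $u'$, and your argument never looks at $t_m$.

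The repair uses your own key observation. Each $t_m$ is an element of $\nuke$ with $t_m(u)=u'$, so the edge $(t_m,u)$ lies in the ball, and the isomorphism gives $t_m(\phi(u))=\phi(u')$. Hence $h_m(w_0\phi(u))=w_0'\,\phi(u')$. Suffixes of length at most $n_2$ are handled by sections of $t_m$; this is the point you flagged as delicate, and it is in fact trivial.

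The paper does this slightly more economically by using the compatible form of the equivalence: a sequence $s_n\in\nuke$ with $s_n\cdot x_n=y_n\cdot s_{n-1}$ and $s_0(u)=u'$. Then $S_{v_1,v_2}$ leaves the tail data untouched, and only the single edge $(s_0,u)$ at the junction has to be transported. Note also that, with the correct orientation, only the generator labels of the ball are used, not the section labels.
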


\begin{proof}
Since $A_{n_1, v_1}$ and $A_{n_2, v_2}$ are isomorphic, $g_1(v_1)=g_2(v_1)$ for $g_1, g_2\in\nuke^2$ is equivalent to $g_1(v_2)=g_2(v_2)$, hence the map $wg(v_1)\mapsto wg(v_2)$ is well defined.

Since the cylindrical sets $\xmo v$ are compact, it is enough to show that the maps $S_{v_1, v_2}$ preserve the equivalence relation. A sequence $\ldots x_2x_1g_1(v_1)$ is equivalent to $\ldots y_2y_1g_2(v_1)$ if and only if there exists a sequence $s_n\in\nuke$ such that $s_n\cdot x_n=y_n\cdot s_{n-1}$ and $s_0g_1(v_1)=g_2(v_1)$. The latter condition is equivalent to $s_0g_1(v_2)=g_2(v_2)$. Therefore, the same sequence $s_n$ will show that $S_{v_1, v_2}(\ldots x_2x_1g_1(v_1))=S_{v_1, v_2}(\ldots y_2y_1g_2(v_1))$.
\end{proof}

The following proposition is a version of a result of~\cite[Section~6]{haispilgr:subhyperbolic}. It also implies that $(\lims, d_{\nu, \alpha})$ is \emph{approximately self-similar} in the sense of~\cite{bourdonkleiner}. 

\begin{proposition}
\label{pr:localssimilarities}
Let $d_{\nu, \alpha}$ be a visual metric of exponent $\alpha$ associated with a quasi-morphism $\nu$. There exists $C>1$ such that for every pair $v_1, v_2$ such that the homeomorphism $S_{v_1, v_2}\colon \widetilde\til_{v_1}\arr\widetilde\til_{v_2}$ is defined,  we have
\[\frac{d_{\nu, \alpha}(S_{v_1, v_2}(\xi_1), S_{v_1, v_2}(\xi_2))}{d_{\nu, \alpha}(\xi_1, \xi_2)}\asymp_C e^{-\alpha(\nu(v_2)-\nu(v_1))}\]
for every pair $\xi_1, \xi_2\in\widetilde\til_{v_1}$ of different points.
\end{proposition}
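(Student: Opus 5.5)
The plan is to reduce the comparison of metrics to a comparison of the log-scale $\ell_\nu$. Since $d_{\nu,\alpha}(\xi,\eta)\asymp e^{-\alpha\ell_\nu(\xi,\eta)}$, it suffices to find a constant $\Delta$ such that
\[
\ell_\nu(S_{v_1,v_2}(\xi_1),S_{v_1,v_2}(\xi_2))\doteq_\Delta \ell_\nu(\xi_1,\xi_2)+\nu(v_2)-\nu(v_1)
\]
for all $\xi_1,\xi_2\in\widetilde\til_{v_1}$. Exponentiating this additive estimate gives the multiplicative bound in the statement, with $C$ depending only on $\Delta$ and the visual-metric constants.

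To compute $\ell_\nu$, I would write $\xi_i=\lambda(w_ig_i(v_1))$ with $g_i\in\nuke^2$ and $w_i\in\xmo$. Then $S_{v_1,v_2}(\xi_i)=\lambda(w_ig_i(v_2))$. By the remark after the definition of $\ell_\nu$, $\ell_\nu$ can be computed up to the constant $\Delta_1$ from any pair of representing sequences, as $\nu$ of the longest common-length suffix on which the two sequences are related by an element of $\nuke$.

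The key combinatorial step is to check that the maximal such suffix has the form $u_1g_1(v_1)$ and $u_2g_2(v_1)$ for the first pair of sequences, and $u_1g_1(v_2)$ and $u_2g_2(v_2)$ for the second, with the same $u_1,u_2$. This uses the argument in the proof of Proposition~\ref{pr:Sv1Sv2}. A nucleus element $s$ with $s(u_1g_1(v_1))=u_2g_2(v_1)$ yields a chain $s_k\in\nuke$ through the letters of $u_1$ and ends at some $s_0$ with $s_0g_1(v_1)=g_2(v_1)$. Because $A_{n_1,v_1}\cong A_{n_2,v_2}$ as labeled graphs, the relation $s_0g_1(v_1)=g_2(v_1)$ is equivalent to $s_0g_1(v_2)=g_2(v_2)$, and the same chain works for the $v_2$-sequences.

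I expect the main obstacle to be the case where the maximal relating suffix is shorter than $|v_1|$ (or $|v_2|$), that is, where the points are far apart relative to the tile. Here I would use the ball of radius $2$: relations of length at most the tile level are encoded by paths in $A_{n,v}$, which is the reason $\nuke^2$ and radius $2$ appear. I would also need the bound $\ell(\xi_1,\xi_2)\ge n_1-c$ for points of $\widetilde\til_{v_1}$, which holds since they lie in tiles connected within $A_{n_1,v_1}$. Consequently, truncations shorter than the level change $\ell_\nu$ only by a bounded amount, which is absorbed into $\Delta$.

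Granting this, the quasi-morphism property gives
\[
\nu(u_ig_i(v_j))\doteq\nu(u_i)+\nu(g_i(v_j)),
\]
and $G$-quasi-invariance for the finitely many elements of $\nuke^2$ gives $\nu(g_i(v_j))\doteq\nu(v_j)$. The difference of the two values of $\ell_\nu$ is therefore $\nu(v_2)-\nu(v_1)$ up to a uniform additive constant, and the uniform constant yields the claimed $C$.
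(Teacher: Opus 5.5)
Your proposal is correct and follows essentially the same route as the paper. You reduce to the additive estimate $\ell_\nu(S_{v_1,v_2}(\xi_1),S_{v_1,v_2}(\xi_2))\doteq\ell_\nu(\xi_1,\xi_2)+\nu(v_2)-\nu(v_1)$ and represent the points by sequences ending in $g_i(v_1)$. You use the lower bound $\ell_\nu(\xi_1,\xi_2)\ge\nu(v_1)-C$ that holds on $\widetilde\til_{v_1}$, transfer the common part of the maximal nucleus-related suffix to $v_2$ via $A_{n_1,v_1}\cong A_{n_2,v_2}$ as in Proposition~\ref{pr:Sv1Sv2}, and finish with the quasi-morphism property.

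Two of your steps are left implicit in the paper: the use of $G$-quasi-invariance to get $\nu(g_i(v_j))\doteq\nu(v_j)$, and the case where the maximal suffix is shorter than $|v_j|$. In that case both values of $\ell_\nu$ lie in a bounded window below $\nu(v_j)$, and the graph isomorphism guarantees that both pairs fall into the same case.
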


\begin{proof}
Let $k_0\ge 1$ be such that $\nuke^2|_{\alb^{k_0}}=\nuke$.

Let $\xi_1, \xi_2\in \widetilde\til_{v_1}$. Then $\xi_i$ can be represented by sequences $\ldots x_2x_1g_1(v_1)$ and $\ldots y_2y_1g_2(v_1)$ for some $g_1, g_2\in\nuke$.  Then $\ell_\nu(\xi_1, \xi_2)\ge \nu(v_1)-C$ for some uniform constant $C$. Since $\nu$ is a quasi-morphism, we have
\[\ell_\nu(\xi_1, \xi_2)\doteq \nu(x_m\ldots x_2x_1)+\nu(v_1),\] where $m$ is the largest index such that $g(x_m\ldots x_2x_1)=y_m\ldots y_2y_1$ for some $g\in\nuke^3$. Similarly, 
\[\ell_\nu(S_{v_1, v_2}(\xi_1), S_{v_1, v_2}(\xi_2))\doteq \nu(x_m\ldots x_2x_1)+\nu(v_2).\] It follows that \[\ell_\nu(S_{v_1, v_2}(\xi_1), S_{v_1, v_2}(\xi_2))\doteq \ell_\nu(\xi_1, \xi_2)+\nu(v_2)-\nu(v_1),\] which implies the estimate of the proposition.
\end{proof}

Suppose now that $(G, \bim)$ is regular, see Definition~\ref{def:regularssgroup} and Theorem~\ref{th:regularimg}.

For a fixed metric $d$, we denote by $B_\xi(r)/D$ the ball (with respect to metric $d$) of radius $r$ with center in $\xi$ as a metric space with the metric $\frac 1Dd$.

The following proposition characterizes visual metrics as metrics, with respect to which local branches of $f^{-n}$ are quasisimilarities with a uniform bi-Lipschitz distortion.

\begin{proposition}
\label{pr:uniformlocalsim}
A metric $d$ on $\lims$ is a visual metric associated with a $G$-quasi-invariant positive quasi-morphism $\nu\colon\xs\arr[0, \infty)$ if and only if there exist $L>1$ and $\delta>0$ such that for every $\xi\in\lims$ and every $n\ge 1$ there exists a neighborhood $U$ of $\xi$ such that $f^n\colon U\arr B_{f^n(\xi)}(\delta)/D$ is an $L$-bi-Lipschitz homeomorphism for some $D>0$ (depending on $\xi$ and $n$).
\end{proposition}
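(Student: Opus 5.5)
The two directions will be handled separately. Throughout, $f=\si$ is the limit dynamical system, which for a regular group is a covering map by Theorem~\ref{th:regularimg}. I will use the fact that on a small tile neighbourhood $\widetilde\til_{v}$, with $|v|=n$, the map $f^n$ acts by deleting the suffix $v$. This identifies $\widetilde\til_{v}$ with a neighbourhood of $f^n(\xi)$ of the form $\widetilde\til_\emptyset$ translated, more precisely with a union of level-$0$ tiles in $\limg$ projected down. Regularity (freeness of the action on $\limg$) guarantees that this identification is injective on a uniform neighbourhood.

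\textbf{Forward direction.} Let $d=d_{\nu,\alpha}$. I would imitate Proposition~\ref{pr:localssimilarities} with $v_2$ the empty word. For $\xi\in\lims$ represented by $\ldots x_2x_1$, put $v=x_n\ldots x_1$ and let $U$ be the interior part of $\widetilde\til_{v}$ on which $f^n$ is a homeomorphism onto its image; regularity gives this. The key estimate is
\[
\ell_\nu(f^n\xi_1,f^n\xi_2)\doteq\ell_\nu(\xi_1,\xi_2)-\nu(v),
\]
which follows exactly as in the proof of Proposition~\ref{pr:localssimilarities}, using that $\nu$ is a quasi-morphism and that sections of $\nuke^3$ at long words lie in $\nuke$. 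This makes $f^n|_U$ an $L$-bi-Lipschitz map after rescaling by $D=e^{\alpha\nu(v)}$. It remains to check that the image contains a ball $B_{f^n(\xi)}(\delta)$ of uniform radius. For this I use that $\widetilde\til$ contains a fixed neighbourhood $T_{\zeta}(n_0)$ of each of its points (by the finiteness of the balls $A_{n,v}$ and the Baire argument from Proposition~\ref{pr:diamoftiles}), together with compactness. I then shrink $U$ to the preimage of that ball.

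\textbf{Converse.} Given $d$ with the stated property, I would define $\nu(v)$ for $v\in\alb^n$ as $-\log D$, where $D=D(\xi,n)$ is the scaling constant at any point $\xi\in\til_v$. The first task is to show this is well defined up to an additive constant. Two points of the same tile, or of adjacent tiles, have overlapping uniform neighbourhoods, and on the overlap two $L$-bi-Lipschitz rescalings force $D$ and $D'$ to agree up to the factor $L^2$. This uses the fact that $\diam\til_v$ is comparable to $D\delta$, which the bi-Lipschitz property gives.

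The main steps of the converse are then as follows.
\begin{enumerate}
\item \emph{Quasi-morphism property.} $f^{n_1+n_2}=f^{n_2}\circ f^{n_1}$, and composing rescaled $L$-bi-Lipschitz maps gives $D(\xi,n_1+n_2)\asymp D(\xi,n_1)\,D(f^{n_1}\xi,n_2)$. This yields $\nu(v_1v_2)\doteq\nu(v_1)+\nu(v_2)$. Care is needed with the order of words: $f$ deletes the rightmost letter, so the factorisation must match the convention $\xmo v$.
\item \emph{Quasi-invariance.} This follows from adjacency of $\til_v$ and $\til_{g(v)}$ for $g\in\nuke$, and then for all $g\in G$ by the contracting property.
\item \emph{Positivity.} This follows from Proposition~\ref{pr:diamoftiles}: $\diam\til_v\to0$ uniformly, so $D\to\infty$.
\item \emph{Identifying $d$ as the visual metric.} I show $d(\xi_1,\xi_2)\asymp e^{-\ell_\nu(\xi_1,\xi_2)}$, so that $d$ is visual with $\alpha=1$. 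Take $n=\ell(\xi_1,\xi_2)$, the largest level of adjacent tiles. Then $f^n$ maps $\xi_1,\xi_2$ to points lying in non-adjacent tiles at the next levels. Their distance is bounded below by a uniform constant, by compactness and finiteness of configurations, and bounded above by $\diam\lims$. Rescaling by $D$ gives the claim.
\end{enumerate}

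I expect the main obstacle to be this last step, specifically the uniform lower bound. It needs a quantitative Lebesgue-number argument: points of $\lims$ lying in non-adjacent tiles of level $k$ are at $d$-distance bounded below uniformly in the position, after rescaling. I would obtain this by applying the hypothesis again at level $n+k$ and using compactness of the finitely many local patterns $A_{n,v}$.
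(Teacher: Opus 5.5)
Your overall architecture is the paper's. The forward direction goes through Proposition~\ref{pr:localssimilarities}. The converse builds $\nu$ from the scaling constants and then checks the quasi-morphism property, quasi-invariance, positivity and $d\asymp e^{-\ell_\nu}$. Your pointwise composition argument $D(\xi,n_1+n_2)\asymp D(\xi,n_1)D(f^{n_1}\xi,n_2)$ is correct and neater than the paper's chain of estimates.

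There is, however, a genuine gap: you work at the wrong level, and regularity never enters your converse.

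\begin{itemize}
\item \emph{Well-definedness fails as argued.} For $|v|=n$ we have $f^n(\til_v)=\lims$, so no chart $U$ of $f^n$, which $f^n$ maps onto a $\delta$-ball, can contain $\til_v$. Charts at two points of $\til_v$ whose images are more than $2\delta$ apart are disjoint. Tiles need not be connected, so there is no chain of overlapping charts to fall back on. Hence ``$D(\xi,n)$ for any $\xi\in\til_v$'' is not shown to be well defined, and the comparison of $\diam\til_v$ with $\delta/D$ that you rely on is not available at this level.
\item \emph{Step~4 is false as stated.} $f^{\ell(\xi_1,\xi_2)}$ need not send $\xi_1,\xi_2$ to non-adjacent tiles; it can identify them. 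For the doubling map, $\xi_1=0$ and $\xi_2=2^{-m}$ lie in intersecting dyadic arcs of level $m+1$ but not of level $m+2$. Yet $f^{m}(\xi_1)=f^{m}(\xi_2)=0$. So the two points lie in no common chart, and there is nothing to rescale.
\end{itemize}

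The missing idea is the offset used in the paper. Choose $k_1\ge k_0$, with $k_0$ coming from regularity, such that tiles of level $k_1$ have diameter $<\delta$. Define $D_v$ for $|v|=n$ through $f^{n-k_1}$ on $\til_v$, which now lands inside a single $\delta$-ball. In the last step apply $f^{n-k_1}$, not $f^n$.

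The real content of that step is that $\ell(f^{n-k_1}\xi_1,f^{n-k_1}\xi_2)$ stays bounded, and this is exactly where regularity is used. Near $\til_{x_n\ldots x_1}$ the map $f^{n-k_1}$ is one of the homeomorphisms $S_{v_0u,v_0}$ with $|v_0|=k_1\ge k_0$. The paper then reads the boundedness off Proposition~\ref{pr:localssimilarities}. Once this is known, the uniform lower bound on $d$ is just the compactness argument, so the obstacle you singled out is the easy part.

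The same issue affects your forward direction:
\begin{itemize}
\item Proposition~\ref{pr:Sv1Sv2} does not apply with $v_2=\emptyset$, since that would require $A_{n,v}\cong A_{0,\emptyset}$.
\item $f^n$ is not injective on $\widetilde\til_v$, whose image is all of $\lims$.
\end{itemize}
Take instead $U\subset\widetilde\til_{v_0v}$ with $|v_0|=k_0$. There regularity makes $A_{n+k_0,v_0v}$ depend only on $v_0$, so $f^n=S_{v_0v,v_0}$. The uniform ball comes from $T_\zeta(k_0)\subset\widetilde\til_{v_0}$ for $\zeta\in\til_{v_0}$. It does not come from a neighbourhood of every point of $\widetilde\til$, which fails at boundary points.

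Finally, since $D\to\infty$ you want $\nu=\log D$, not $-\log D$.
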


\begin{proof}
Let us first prove that a visual metric $d_{\nu, \alpha}$ satisfies the conditions of the proposition.

By the regularity condition, there exists $k_0$ such that for every $g\in\nuke^3$ and every $v\in\alb^{k_0}$ we have either $g(v)\ne v$ or $g(v)=v$ and $g|_v=1$.

Then the isomorphism class of $A_{n, u}$ for $u\in\alb^n$, $n\ge k_0$, depends only on the prefix of length $k$ of the word $u$.
Consequently, the restriction of $f^n$ to $\widetilde\til_{v_0v}$ for $v_0\in\alb^{k_0}, v\in\alb^n$ is the homeomorphism $S_{v_0v, v_0}\colon \widetilde\til_{v_0v}\arr\widetilde\til_{v_0}$, satisfying Proposition~\ref{pr:localssimilarities}. Hence there exists a constant $L>1$, not depending on $v_0$ and $v$, such that $f^n$ restricted to $\widetilde\til_{v_0v}$ is an $L$-bi-Lipschitz homeomorphism to $\frac 1D\widetilde\til_{v_0}$ for some $D$.

For every $\xi\in\til_{v_0}$ we have $T_\xi(k_0)\subset\widetilde\til_{v_0}$. There exists $\delta>0$ such that $B_\xi(\delta)\subset T_\xi(k_0)$ for all $\xi\in\lims$. Then, for every $\xi\in\lims$ and $n$, we can take $U$ equal to the preimage of $B_{f^n(\xi)}(\delta)$ under the homeomorphism $S_{v_0v, v_0}\colon \widetilde\til_{v_0v}\arr\widetilde\til_{v_0}$, where $v_0\in\alb^{k_0}$ and $v\in\alb^n$ are such that $\xi\in\til_{v_0v}$.

Conversely, suppose that $d$ satisfies the conditions of the proposition. There exists $k_1\ge k_0$ such that $\diam(\til_v)<\delta$ for all $v\in\alb^{k_1}$.

Let $\xi$ be a point of $\lims$ represented by a sequence $\ldots x_2x_1\in\xmo$.
We have \[f^{n-k_1}(\til_{x_n\ldots x_1})\subset\til_{x_n\ldots x_{n-k_1+1}}\subset B_{f^{n-k_1}(\xi)}(\delta).\]
Consequently, there exists $D=D_{x_n\ldots x_2x_1}$ such that we have 
\begin{equation}
\label{eq:Dv}
d(f^{n-k_1}(\xi_1), f^{n-k_1}(\xi_2))\asymp_L D_{x_n\ldots x_2x_1}d(\xi_1, \xi_2)
\end{equation}
for all pairs of different points $\xi_1, \xi_2\in\til_{x_n\ldots x_2x_1}$.

Consider a word $x_{n+k_1}\ldots x_2x_1$, and two points $\xi_1, \xi_2\in\til_{x_{n+k_1}\ldots x_2x_1}$. Denote $\zeta_i=f^{n-k_1}(\xi_i)$. 
Since $\xi_i\in\til_{x_n\ldots x_2x_1}$, the estimate~\eqref{eq:Dv} is written as
\[d(\zeta_1, \zeta_2)\asymp_L D_{x_n\ldots x_2x_1}d(\xi_1, \xi_2).\]
We have $\zeta_i\in\til_{x_{n+k_1}\ldots x_{n-k_1+1}}$, so, using~\eqref{eq:Dv} with $\xi_i$ replaced by $\zeta_i$ and $n$ by $2k_1$, we get
\[d(f^n(\xi_1), f^n(\xi_2))=d(f^{k_1}(\zeta_1), f^{k_1}(\zeta_2))\asymp_L
D_{x_{n+k_1}\ldots x_{n-k_1+1}}d(\zeta_1, \zeta_2)\asymp_{L_1} d(\zeta_1, \zeta_2),\]
where $L_1=\max_{v\in\alb^{2k_1}}D_v$.
It follows that
\begin{equation}
\label{eq:Dv2}
d(f^n(\xi_1), f^n(\xi_2))\asymp_{L^2L_1}D_{x_n\ldots x_2x_1}d(\xi_1, \xi_2)
\end{equation}
for all $n\ge k_1$ and $\xi_1, \xi_2\in\til_{x_{n+k_1}\ldots x_2x_1}$. 

For every $v\in\alb^{n+k_1}$, the set $f^n(\til_v)$ is a tile of the level $k_1$, so it has diameter less than $\delta$. Consequently, by the condition of the proposition, the restriction of $f^n$ to $\til_v$ is bi-Lipschitz.
Since the number of words $v\in\xs$ of a given length is finite, we can choose $L$ large enough and define $D_v=1$ for all $|v|<k_1$, so that the estimate~\eqref{eq:Dv} holds for all words $x_n\ldots x_2x_1$ and all $\xi_1, \xi_2$ contained in one tile of the level $n+k_1$.

Consider now a word $x_{n+m+k_1}\ldots x_2x_1$ for some $n, m\ge 0$. Choose two points $\xi_1, \xi_2\in\til_{x_{n+m+k_1}\ldots x_2x_1}$. Then, by the proven above, we have
\[d(f^{n+m}(\xi_1), f^{n+m}(\xi_2))\asymp_{L^2L_1} D_{x_{n+m}\ldots x_2x_1}d(\xi_1, \xi_2).\]

We have $f^m(\xi_i)\in \til_{x_{n+m+k_1}\ldots x_{m+2}x_{m+1}}$, so
\[d(f^{n+m}(\xi_1), f^{n+m}(\xi_2))\asymp_{L^2L_1} D_{x_{n+m}\ldots x_{m+2}x_{m+1}}d(f^m(\xi_1), f^m(\xi_2)).\]
We also have $\xi_i\in\til_{x_{m+k_1}\ldots x_2x_1}$, so
\[d(f^m(\xi_1), f^m(\xi_2))\asymp_{L^2L_1} D_{x_m\ldots x_2x_1}d(\xi_1, \xi_2).\]

We proved that
\begin{equation}
\label{eq:Dmorphism}
D_{x_{n+m}\ldots x_1}\asymp_{L^4L_1} D_{x_{n+m}\ldots x_{n+1}}D_{x_n\ldots x_2x_1}
\end{equation}
for all words $x_{n+m}\ldots x_2x_1$.

Since $f$ is locally expanding, $\max_{v\in\alb^n}D_v$ converges to $0$ as $v\to\infty$. Consequently, the estimate~\eqref{eq:Dmorphism} implies that $\nu(v)=-\log D_v$ extends to a positive quasi-morphism $\nu\colon \xs\arr[0, \infty)$.

Let us show that $\nu$ is $G$-quasi-invariant. Let $v\in\xs$ and $g\in\nuke$. Then $\til_v\cap\til_{g(v)}\ne\emptyset$. Let $\xi\in\til_v\cap \til_{g(v)}$, and let $\ldots x_2x_1, \ldots y_2y_1$ be sequences representing $\xi$ such that $x_n\ldots x_2x_1=v$ and $y_n\ldots y_2y_1=g(v)$. We assume that $n\ge k_1$. Then $f^{n-k_1}(\til_v\cup\til_{g(v)})$ is contained in $B_{f^n(\xi)}(\delta)$, so 
\[d(f^{n-k_1}(\xi_1), f^{n-k_1}(\xi_2))\asymp_L Dd(\xi_1, \xi_2)\]
for all $\xi_1, \xi_2\in\til_v\cup\til_{g(v)}$. It follows that $D\asymp_{L^2} D_v$ and $D\asymp_{L^2} D_{g(v)}$. Consequently, $D_v\asymp_{L^4} D_{g(v)}$, hence $\nu$ is $G$-quasi-invariant.

Let us show that $d$ is bi-Lipschitz equivalent to a visual metric $d_{\nu, 1}$ of exponent $\alpha=1$ associated with $\nu$. Let $\ldots x_2x_1, \ldots y_2y_1\in\xmo$ represent the points $\xi_1, \xi_2\in\lims$, and let $n$ be the largest index such that $g(x_n\ldots x_2x_1)=y_n\ldots y_2y_1$ for some $g\in\nuke$. Then $d_{\nu, 1}(\xi_1, \xi_2)\asymp e^{-\nu(x_n\ldots x_2x_1)}$. 

On the other hand, by the same arguments as in the proof of $G$-quasi-invariance of $\nu$, we have \[d(f^{n-k_1}(\xi_1), f^{n-k_1}(\xi_2))\asymp e^{\nu(x_n\ldots x_2x_1)}d(\xi_1, \xi_2).\]

By Proposition~\ref{pr:localssimilarities}, we have
\[d_{\nu, 1}(f^{n-k_1}(\xi_1)-f^{n-k_1}(\xi_2))\asymp e^{\nu(x_n\ldots x_2x_1)}d_{\nu, 1}(\xi_1, \xi_2)\asymp 1.\]
Since $\lims$ is compact, this implies that there exists a uniform constants $\epsilon>0$ independent of $\xi_1, \xi_2$ such that 
\[\epsilon^{-1}>d(f^{n-k_1}(\xi_1), f^{n-k_1}(\xi_2))>\epsilon.\]
Consequently, $d(\xi_1, \xi_2)\asymp e^{-\nu(x_n\ldots x_2x_1)}\asymp d_{\nu, 1}(\xi_1, \xi_2)$.
\end{proof}

\begin{example}
Let $f$ be a hyperbolic complex rational function. It follows from Koebe distortion theorem that the spherical metric on the Riemann sphere restricted to the Julia set of $f$ satisfies the condition of Proposition~\ref{pr:uniformlocalsim}. (See also a direct proof of it in~\cite[Proposition~3.7]{nek:psmeasure}.) Consequently, it is bi-Lipschitz to a visual metric associated with a quasi-morphism.
\end{example}

\subsection{Lifting metrics and measures from $\lims$ to $\limg$}
\label{ss:liftingmm}

Let $d$ be a metric on $\lims$. Let $P\colon \limg\arr\lims$ be the quotient map.

\begin{defi}
    \label{def:liftofmetric}
    The \emph{lift} of $d$ to $\limg$ is defined for $\xi_1, \xi_2\in\limg$ as the infimum $\tilde d(\xi_1, \xi_2)$ of the expression
\[\sum_{i=1}^m\diam(P(\til\otimes v_i)),\]
over all sequences $v_1, v_2, \ldots, v_m\in\bim^{\otimes n}$ and all $n$ such that $\xi_1\in\til\otimes v_1$, $\xi_2\in\til\otimes v_m$ and $\til\otimes v_i\cap\til\otimes v_{i+1}\ne\emptyset$ for all $i=1, 2, \ldots, m-1$. The function $\tilde d(\xi_1, \xi_2)$ is defined to be infinite if such a sequence does not exist.
\end{defi}

\begin{proposition}
The function $\tilde d$ is a $G$-invariant metric (with infinite distances allowed) compatible with the topology on $\limg$.

The map $P\colon(\limg, \tilde d)\arr(\lims, d)$ is distance non-increasing. For every $v\in\xs$ the diameter of $\til\otimes v$ in $(\limg, \tilde d)$ is equal to the diameter of $\til_v=P(\til\otimes v)$ in $(\lims, d)$.
\end{proposition}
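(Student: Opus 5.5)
Most of the statement follows directly from the definition of $\tilde d$; the one real issue is positivity of $\tilde d$ together with compatibility with the topology of $\limg$.

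\emph{Pseudometric and $G$-invariance.} Symmetry is immediate, since a chain can be reversed. For the triangle inequality, take a chain of level $n_1$ from $\xi_1$ to $\xi_2$ and a chain of level $n_2$ from $\xi_2$ to $\xi_3$, and assume $n_1\le n_2$. Subdivide every tile of the first chain into its level-$n_2$ subtiles, $\til\otimes v=\bigcup_{u\in\alb^{n_2-n_1}}\til\otimes v\otimes u$. For this to help, we need a chain of level-$n_2$ subtiles of $\til\otimes v$ whose total diameter sum is at most $\diam P(\til\otimes v)$ up to an error. That is not obvious.

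The cleaner route is to replace the defining expression by an infimum over chains of tiles of mixed levels, with consecutive tiles intersecting. This gives the same infimum, since a mixed-level chain can be converted back to a single level, or the definition can simply be read this way. With mixed levels the two chains concatenate, because both contain tiles through $\xi_2$, and the triangle inequality follows. $G$-invariance holds because the right action of $g$ maps $\til\otimes v$ to $\til\otimes(v\cdot g)$, preserves intersections, and $P(\til\otimes v\cdot g)=P(\til\otimes v)$. The claim that $P$ is distance non-increasing follows from the triangle inequality in $\lims$: the images $P(\til\otimes v_i)$ form a chain from $P(\xi_1)$ to $P(\xi_2)$, so $d(P\xi_1,P\xi_2)\le\sum\diam P(\til\otimes v_i)$.

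\emph{Diameter of a tile.} Because the one-element chain $\til\otimes v$ connects any two of its points, $\diam_{\tilde d}(\til\otimes v)\le\diam\til_v$. The reverse inequality cannot come from $P$ being non-increasing alone, because $P$ need not be injective on the tile. I therefore need $P|_{\til\otimes v}$ to be onto $\til_v$, which holds by definition, and I need pairs realizing the diameter. For $\zeta_1,\zeta_2\in\til_v$, lift them to $\xi_1,\xi_2\in\til\otimes v$. Then $\tilde d(\xi_1,\xi_2)\ge d(\zeta_1,\zeta_2)$, and taking the supremum gives $\diam_{\tilde d}(\til\otimes v)\ge\diam\til_v$.

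\emph{Positivity and topology.} This is the main obstacle. Here I use properness of the $G$-action on $\limg$. For small $r$, each $\xi\in\limg$ has a neighbourhood $W=T_\xi(n_0)$ on which $P$ is injective modulo the finite stabilizer. Moreover, any chain of small total diameter starting at $\xi$ consists of tiles whose images have small diameter, and by Proposition~\ref{pr:diamoftiles} and uniform continuity these lie at a deep level. Since consecutive tiles intersect, the chain stays inside the finite union of tiles around $\xi$, that is, inside $T_\xi(n_0)$.

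If $\xi_1\neq\xi_2$ and $P\xi_1\neq P\xi_2$, then positivity follows from the previous paragraph. If $P\xi_1=P\xi_2$, then $\xi_2=\xi_1 g$ with $g\neq1$, and a short chain would stay inside $T_{\xi_1}(n_0)$, which for large $n_0$ does not contain the translate $\xi_1 g$ by properness. Hence $\tilde d(\xi_1,\xi_2)>0$.

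For compatibility, the inclusions $T_\xi(n)\subseteq\tilde B_\xi\bigl(2\max_{|v|=n}\diam\til_v\bigr)$ together with the reverse containment just argued show that the $\tilde d$-balls and the neighbourhoods $T_\xi(n)$ form equivalent neighbourhood bases.
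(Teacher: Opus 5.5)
Your suspicion about the triangle inequality is right, but your remedy is not. Mixed-level chains do \emph{not} give the same infimum, and for single-level chains, as literally defined, the triangle inequality fails. Take the adding machine $a=(01)(1,a)$, so $\limg=\R$ with level-$n$ tiles $[k2^{-n},(k+1)2^{-n}]$. Let $d$ be the standard metric on $\lims=\R/\Z$, which lies in $\mathcal{AR}$. Then $P(\til)=\lims$ has diameter $1/2$, while a level-$n$ tile with $n\ge 1$ has image of diameter $2^{-n}$.
\begin{itemize}
\item The single tile $\til=[0,1]$ gives $\tilde d(0,1)\le 1/2$, and a single level-$m$ tile gives $\tilde d(1,1+2^{-m})\le 2^{-m}$.
\item A level-$0$ chain from $0$ to $1+2^{-m}$ needs two tiles, so its total is at least $1$.
\item A chain of level $n\ge 1$ consists of intervals covering $[0,1+2^{-m}]$ whose image diameters equal their lengths, so its total is at least $1+2^{-m}$.
\end{itemize}
Hence $\tilde d(0,1+2^{-m})=1>1/2+2^{-m}$ for $m\ge 2$, whereas the mixed chain $[0,1],[1,1+2^{-m}]$ has total $1/2+2^{-m}$. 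So drop the equivalence claim and state that you \emph{define} $\tilde d$ using mixed-level chains; this is what the paper's ``easy to check'' tacitly requires. With that reading, your arguments for symmetry, the triangle inequality, $G$-invariance, monotonicity of $P$ and the diameter of $\til\otimes v$ are correct and agree with the paper's. (Also, the deeper subtiles of $\til\otimes v$ are $\til\otimes u\otimes v$, not $\til\otimes v\otimes u$.)

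The second gap is the confinement step in the positivity argument. If $P\xi_1\ne P\xi_2$ nothing is needed, since $\tilde d(\xi_1,\xi_2)\ge d(P\xi_1,P\xi_2)>0$. The real case is $\xi_2=\xi_1g$ with $g\notin G_{\xi_1}$. There your sentence ``since consecutive tiles intersect, the chain stays inside $T_\xi(n_0)$'' does not follow, because a long chain of deep tiles can wander arbitrarily far combinatorially. What confines the chain is its projection. Make three choices:
\begin{itemize}
\item by properness, a neighbourhood $U$ of $\xi_1$ with $Uh=U$ for $h\in G_{\xi_1}$ and $Uh\cap U=\emptyset$ otherwise;
\item a smaller neighbourhood $U'$ and an $n_1$ such that every tile of level at least $n_1$ meeting $U'$ lies in $U$; by equivariance, such a tile meeting $U'h$ then lies in $Uh$;
\item $r>0$ with $B(P\xi_1,r)\subseteq P(U')$, using that $P$ is open.
\end{itemize}
Suppose a chain from $\xi_1$ has total diameter less than $r$ and less than the least diameter of the finitely many tiles $\til_v$ with $|v|<n_1$. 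This finiteness, not Proposition~\ref{pr:diamoftiles}, is what forces the tiles to be deep. Every tile then has image in $B(P\xi_1,r)$, so it lies in $\bigcup_h U'h$ and therefore in a single translate $Uh$. The first tile contains $\xi_1$ and consecutive tiles intersect, so all tiles lie in $U$, which does not contain $\xi_1g$. Taking $U\subseteq T_\xi(n)$, the same argument shows that a small $\tilde d$-ball around $\xi$ lies in $T_\xi(n)$. This is the half of compatibility with the topology that you need.

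For comparison, the paper's proof uses the same dichotomy. A chain using a tile of level $<n$ costs at least the minimal diameter of such tiles; otherwise the total exceeds the distance $\epsilon$ from $P(\xi_1)$ to the complement of $P(T_n(\xi_1))$. But the paper leaves exactly this lifting step implicit, including the case $P\xi_1=P\xi_2$, where the projected chain is a loop. It also derives compatibility from the diameter equality alone, which only places $T_\xi(n)$ inside small $\tilde d$-balls. With the two corrections, your proof is a complete version of the paper's argument, the extra ingredient being properness of the $G$-action.
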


\begin{proof}
It is easy to check that 
the defined function $\tilde d$ satisfies the triangle inequality, is symmetric, and $G$-invariant. 

Let $\xi_1, \xi_2\in\limg$. 
Let $(v_i)$ be a sequence satisfying the conditions of Definition~\ref{def:liftofmetric}. Choose a point $\zeta_i\in\til\otimes v_i\cap\til\otimes v_{i+1}$ for every $i=1, 2, \ldots, m-1$, and set $\zeta_0=\xi_1$ and $\zeta_m=\xi_2$. Then we have \[d(P(\xi_1), P(\xi_2))\le
\sum_{i=0}^{m-1}d(P(\zeta_i), P(\zeta_{i+1}))\le 
\sum_{i=1}^m\diam(P(\til\otimes v_i)),\]
since $\zeta_{i-1}, \zeta_i\in\til\otimes v_i$. Consequently, $d(P(\xi_1), P(\xi_2))\le\tilde d(\xi_1, \xi_2)$.

If $\xi_1\ne\xi_2$, then there exists $n$ such that the neighborhoods $T_n(\xi_i)$ are disjoint. Then $P(T_n(\xi_i))$ is a neighborhood of $P(\xi_i)$, since $P$ is open (see, for example, the proof of~\cite[Proposition~2.19]{nek:dimension}). Let $\epsilon>0$ be the distance from $P(\xi_1)$ to the complement of $P(T_n(\xi_1))$. Then for any chain in the definition of $\tilde d(\xi_1, \xi_2)$ the corresponding sum of diameters is greater than the minimal diameter of $\til_v$ for $|v|<n$, or is greater than $\epsilon$. It follows that $\tilde d(\xi_1, \xi_2)>0$ for any $\xi_1\ne \xi_2$.

For any two points $\xi_1, \xi_2\in \til\otimes v$ we have $\tilde d(\xi_1, \xi_2)\le\diam(P(\til\otimes v))$, hence $\diam_{\tilde d}(\til\otimes v)\le\diam_d(P(\til\otimes v))$. On the other hand, since $P(\til\otimes v)$ is compact, there exist $\xi_1, \xi_2\in\til\otimes v$ such that $d(P(\xi_1), P(\xi_2))=\diam(P(\til\otimes v))$. Then $\tilde d(\xi_1, \xi_2)\ge d(P(\xi_1), P(\xi_2))=\diam(P(\til\otimes v))$.
Consequently, 
\begin{equation}
    \label{eq:diamforlift}
    \diam(\til\otimes v)=\diam(P(\til\otimes v))
\end{equation}
for all $v\in\bim^*$.
 
The equality $\diam(\til\otimes v)=\diam(P(\til\otimes v))$ implies that this metric is compatible with the topology, since the sets $T_n(\xi)$ form a basis of neighborhoods of a point $\xi$ both in $\limg$ and in $\lims$.
\end{proof}

The action of $G$ on $\limg$ is proper and topologically free (i.e., the set of fixed points of every non-trivial element of $G$ is nowhere dense), see~\cite[Proposition~4.6.2]{nek:book}. Therefore, we can pull back a measure $\mu$ on $\lims$ to a measure $\mu^*$ on $\limg$ given by the condition 
\[\int f(\xi)\;d\mu^*(\xi)=\int\sum_{P(\xi)=\zeta}f(\xi)\;d\mu(\zeta)\]
for all compactly supported continuous functions $f\colon\limg\arr\C$.

Note that since the projection $P\colon \limg\arr\lims$ is at most $|\nuke|$-to-one, we have
\begin{equation}
    \label{eq:measureforlift}
    \mu(P(\til\otimes v))\le\mu^*(\til\otimes v)\le |\nuke|\cdot \mu(P(\til\otimes v))
\end{equation}
for all $v\in\bim^*$.

\section{Combinatorial modulus}

This section is based on ~\cite{carrascopiaggio}. Its results do not apply directly to the general limit space of a
contracting group, since we do not assume local connectedness.

 On the other hand, our setting is much more restrictive, so some parts of the proof of the main result of~\cite{carrascopiaggio} are easier in our setting (especially when using the techniques of self-similar groups).

We also establish a connection between conformal dimension and $G$-quasi-invariant quasi-morphisms.

\subsection{Definition}

Fix a self-similar contracting group $(G, \bim)$, and choose a basis $\alb$.  Let $\nuke$ be the corresponding nucleus, and let $S\subset G$ be a finite symmetric (i.e., such that $S=S^{-1}$) set containing the nucleus.  Let $\G_{n, S}=\G_n$ be the corresponding graph of the action on the level $\alb^n$ of $\xs$. It is the graph with the set of vertices $\alb^n$ and the set of directed edges $S\times\alb^n$, where the edge $(s, v)$ starts in $v$ and ends in $s(v)$. If we label the edge $(s, v)$ by $(s, s|_v)$, we get the \emph{dual Moore diagram} of the generating set.

We will write paths in $\G_n$ as sequences $(s_m, v_m), \ldots, (s_2, v_2), (s_1, v_1)$, where $s_i\in S$, $v_i\in\alb^n$, and $s_i(v_i)=v_{i+1}$.

Let $\mathcal{A}$ be a class of functions $\phi\colon\alb^n\arr [0, \infty)$ or $\phi\colon S\times\alb^n\arr [0, \infty)$ on the set of vertices or edges of $\G_{n, S}$ (we will choose the appropriate classes $\mathcal{A}$ later). We denote then, for $p>0$, the \emph{combinatorial modulus}
\[M_{\mathcal{A}, p}(n)=\inf_{\phi\in\mathcal{A}}\sum_x \phi(x)^p,\]
where the sum is taken over all $x\in\alb^n$ or $x\in S\times\alb^n$, respectively.

We also denote 
\[M_{\mathcal{A}, \infty}(n)=\inf_{\phi\in\mathcal{A}}\max_x \phi(x),\]
where the maximum is also taken over all $x\in\alb^n$ or $x\in S\times\alb^n$, respectively.

We will now define several classes of \emph{admissible functions} $\mathcal{A}$. Different classes will be convenient in different situations, but we will show that the corresponding combinatorial moduli are equivalent in the sense that any two classes $\mathcal{A}, \mathcal{B}$ satisfy $M_{\mathcal{A}, p}\asymp_C M_{\mathcal{B}, p}$, where the constant $C$ does not depend on $n$.

If $(e_m, \ldots, e_2, e_1)$ is an edge-path in $\G_n$, where $e_i=(s_i, v_i)$, then we have $v_{i+1}=s_i(v_i)$, so that the end of the path is equal to the image of the beginning under the action of the product $g=s_m\cdots s_2s_1$ of the first halves of the labels in the dual Moore diagram. We also have $(s_m\cdots s_2s_1)|_{v_1}=s_m|_{v_m}\cdots s_2|_{v_2}s_1|_{v_1}$, i.e., the section $g|_{v_1}$ is equal to the product of the second halves of the labels of the path in the dual Moore diagram.

If $\phi$ is a function on the set of vertices or the set of edges of $\G_{n, S}$, and $\gamma$ is a path in $\G_{n, S}$, then we denote by $\phi(\gamma)$ the sum of values of $\gamma$ along the path (in all vertices or all edges of $\gamma$, respectively).

We say that the path is \emph{essential} if $(s_m\cdots s_1)|_{v_1}\notin\nuke$, i.e., if the product of the corresponding sections $s_i|_{v_i}$ is not in the nucleus.  More generally, if $N$ is any finite subset containing the nucleus, then we say that the path is \emph{$N$-essential} if $(s_m\cdots s_1)|_{v_1}\notin N$.

We say that a function $\phi\colon\alb^n\arr [0, \infty)$ (resp.\ a function $\phi\colon\nuke\times\alb^n\arr [0, \infty)$ on the set of edges of $\G_n$) is \emph{admissible} if for every essential path $\gamma$ in $\G_n$ we have $\phi(\gamma)\ge 1$. 

When describing a dual Moore diagram in this section, we label an edge $(s, x)$ just by $s|_x$. Then, for a path starting in $v$ and corresponding to a product $s_1s_2\cdots s_n$, the section $(s_1s_2\cdots s_n)|_v$ is equal to the product of the labels along the path. Moreover, out of two arrows $(s, v)$ from $v$ to $s(v)$ labeled by $s|_v$ and $(s^{-1}, s(v))$ labeled by $s|_v^{-1}=s^{-1}|_{s(v)}$, we leave only one, and allow paths to go against the arrows, with the agreement that if we go agains an arrow, we invert its label.

\begin{example}
Consider the adding machine from Example~\ref{ex:angledoubling}. Its $n$th level dual Moore diagram is a cycle of $2^n$ edges, with all labels trivial except for the arrow from $11\ldots 1$ to $00\ldots 0$ labeled by $a$. Then every essential path makes a full loop and contains all arrows of the diagram. 
\end{example}

\begin{example}
The $n$th level dual Moore diagram of the Grigorchuk group~\cite{grig85_en} is a chain from $11\ldots 10$ to $11\ldots 11$ consisting of multiple edges and loops such that all their labels are trivial except for the labels $b, c, d$ of three loops at $11\ldots 11$ and two loops labeled by $a$ at $11\ldots 10$. Since $\{1, b, c, d\}$ and $\{1, a\}$ are groups contained in the nucleus, every essential path must pass through $11\ldots 10$ and $11\ldots 11$.
\end{example}

\begin{proposition}
Let $\mathcal{A}$ and $\overline{\mathcal{A}}$ be the sets of admissible functions on $\alb^n$ and $S\times\alb^n$, respectively. Then for every $p\in [1, \infty]$ we have $M_{\mathcal{A}, p}(n)\asymp_{C_p} M_{\overline{\mathcal{A}}, p}(n)$.
\end{proposition}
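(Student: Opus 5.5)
We compare the two moduli directly, converting a vertex-admissible function into an edge-admissible one and back. The constants will depend only on $|S|$, $|\nuke|$ and $p$. We also need the fact that every vertex of $\G_n$ has degree at most $2|S|$.

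\emph{Edge modulus is bounded by vertex modulus.} Let $\phi\colon\alb^n\arr[0,\infty)$ be admissible. Define $\psi(s,v)=\phi(v)+\phi(s(v))$ on edges. Every essential path $\gamma$ visits the vertex at the start or end of each of its edges, and every vertex of $\gamma$ is an endpoint of some edge of $\gamma$. Hence $\psi(\gamma)\ge\phi(\gamma)\ge 1$, provided $\gamma$ has at least one edge. An essential path cannot be empty, since the empty product $1$ lies in $\nuke$. So $\psi$ is admissible.

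Next we bound the $p$-sum. We have $\psi(s,v)^p\le 2^{p-1}(\phi(v)^p+\phi(s(v))^p)$. Each vertex occurs as $v$ or as $s(v)$ in at most $2|S|$ edges. Therefore $\sum\psi^p\le 2^p|S|\sum\phi^p$. For $p=\infty$ we get $\max\psi\le 2\max\phi$ directly.

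\emph{Vertex modulus is bounded by edge modulus.} This is the direction needing care. Given an admissible edge function $\psi$, the natural choice is
\[\phi(v)=\max\{\psi(e): e \text{ incident to } v\}.\]
Every edge of a path has its initial vertex on the path, so $\phi(\gamma)\ge\psi(\gamma)$, and $\phi$ is admissible. Bounding the maximum by the sum over incident edges gives $\sum\phi^p\le 2|S|\sum\psi^p$ (and the sup bound for $p=\infty$).

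The one subtle point is the convention for $\phi(\gamma)$ on vertex paths: does a repeated vertex count with multiplicity, and is the final vertex included? We will read $\phi(\gamma)$ as summing over the vertices $v_1,\dots,v_m$ at which the path's edges start, counted with multiplicity. With that reading, $\psi(e_i)\le\phi(v_i)$ termwise, and the inequality $\phi(\gamma)\ge\psi(\gamma)$ holds.

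If instead vertices are counted without multiplicity, a path could revisit a vertex. In that case we would argue that a path revisiting a vertex can be shortened while staying essential, at the cost of enlarging the nucleus to a finite set $N$ (absorbing loops). Equivalently, we can note that loop segments can be removed without losing essentialness up to a bounded error. This reduction is the step I expect to be the main obstacle; under the multiplicity convention it disappears.

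Combining the two directions gives $M_{\mathcal{A},p}(n)\asymp_{C_p}M_{\overline{\mathcal{A}},p}(n)$ with $C_p=2^p|S|$.
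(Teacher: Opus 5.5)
Your proof is correct and follows the paper's argument. The vertex-to-edge direction uses $\psi(s,v)=\phi(v)+\phi(s(v))$, and the edge-to-vertex direction aggregates the edges at $v$: the paper takes the sum $\sum_{s\in S}\psi(s,v)$ where you take the maximum, and your reading with multiplicity is the paper's (it writes $\phi(v_1)+\cdots+\phi(v_m)$ for vertex paths). With the sum the multiplicity question does not arise at all, so the loop-removal contingency is unnecessary. That contingency is also doubtful: deleting a loop changes the product of sections by an unbounded amount, which no finite set $N$ can absorb.
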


\begin{proof}
Let $\phi\colon\alb^n\arr[0, \infty)$ be an admissible function.
If $e$ is an edge starting in $v_1$ and ending in $v_2$, then define $\phi_1(e)=\phi(v_1)+\phi(v_2)$. Then $\phi_1$ is an admissible function on the set of edges and we have \[\phi_1^p(e)=(\phi(v_1)+\phi(v_2))^p\le \max(1, 2^{p-1})(\phi(v_1)^p+\phi(v_2)^p),\]
if $p$ is finite.

 It follows that \[\sum\phi_1^p(e)\le \max(1, 2^{p-1})|S|\sum_{v\in\alb^n}\phi(v)^p.\] Consequently, \[M_{\overline{\mathcal{A}}, p}(n)\le\max(1, 2^{p-1})|S| \cdot M_{\mathcal{A}, p}(n).\]
  
 We also obviously have $\max\phi_1(e)\le 2\max\phi(v)$, so $M_{\overline{\mathcal{A}}, \infty}(n)\le 2M_{\mathcal{A}, \infty}(n)$.

On the other hand, suppose that $\phi$ is an admissible function on the set of edges. Define $\phi_0(v)$ as the sum $\sum_{s\in S}\phi(s, v)$ of values of $\phi$ in all edges adjacent to $v$. Then $\phi_0$ is admissible. We have $\phi_0(v)^p\le |S|^{p-1}\sum_{s\in\nuke} \phi(s, v)^p$, hence \[\sum_{v\in\alb^n}\phi_0(v)^p\le 2|S|^{p-1}\sum_{s\in\nuke, v\in\alb^n}\phi(s, v)^p.\]

 Consequently, $M_{\mathcal{A}, p}(n)\le 2|S|^{p-1} M_{\overline{\mathcal{A}}, p}(n)$.
Similarly, $M_{\mathcal{A}, \infty}(n)\le 2|S| M_{\overline{\mathcal{A}}, \infty}(n)$.
\end{proof}

\begin{proposition}
\label{pr:moduluschanginggenset}
Let $N$ be a finite subset of $G$ containing the nucleus $\nuke$, and let $S$ be a finite symmetric generating set, also containing the nucleus.
Let $\mathcal{A}_\nuke$ be the set of $\nuke$-admissible functions on the set of edges of the graph $\G_{n, \nuke}$. Let $\mathcal{A}_S$ be the set of $N$-admissible functions on the set of edges of $\G_{n, S}$.

Then $M_{\mathcal{A}_\nuke, p}(n)\asymp_C M_{\mathcal{A}_S, p}(n)$, where $C$ does not depend on $n$.
\end{proposition}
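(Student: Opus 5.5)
We prove the two inequalities separately. In each direction we turn an admissible function for one graph into an admissible function for the other. The cost should be a multiplicative constant in the $p$-sum, or in the maximum for $p=\infty$, independent of $n$. The main tool is passing between levels: an essential path at level $n$ is related to paths at level $n-k$ for a fixed $k$ that depends only on $S$, $N$ and $\nuke$.

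\emph{From $\mathcal{A}_S$ to $\mathcal{A}_\nuke$.} Since $\nuke\subset S$, the graph $\G_{n,\nuke}$ is a subgraph of $\G_{n,S}$. Let $\phi\in\mathcal{A}_S$, and let $\psi$ be its restriction to $\nuke\times\alb^n$. A $\nuke$-essential path in $\G_{n,\nuke}$ has section product outside $\nuke$, but the product may still lie in $N\setminus\nuke$, so this path need not be $N$-essential. We fix this by shifting levels. Choose $k$ such that $h|_u\in\nuke$ for all $h\in N$ and $u\in\alb^k$; this is possible because $N$ is finite and $G$ is contracting.

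Now work on $\G_{n+k,\nuke}$, writing words as $uv$ with $u\in\alb^k$ and $v\in\alb^n$. Take a $\nuke$-essential path at level $n+k$ with edges $(s_i, u_iv_i)$. Using \eqref{eq:sectionaction}, its projection $(s_i|_{u_i}, v_i)$ is a path at level $n$ with labels in $\nuke$. The section of the projected path at $v_1$ is $g|_{u_1v_1}$, which is not in $\nuke$. If the projected path were not $N$-essential, this section would equal $h|_{u_1'}$ for some $h\in N$, and then it would lie in $\nuke$ by the choice of $k$. Hence the projected path is $N$-essential.

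Pulling $\phi$ back along this projection gives an admissible function at level $n+k$. Each edge upstairs maps to a single edge, and each edge downstairs has at most $|\alb|^k|\nuke|$ preimages. So
\[M_{\mathcal{A}_\nuke,p}(n+k)\le C\,M_{\mathcal{A}_S,p}(n).\]
It remains to compare $M_{\mathcal{A}_\nuke,p}(n+k)$ with $M_{\mathcal{A}_\nuke,p}(n)$ up to constants. The monotonicity needed here runs in the harmless direction: projecting paths of level $n+k$ to level $n$ preserves essentiality, so $M(n+k)\le C M(n)$. To get a comparison at the same level $n$, we run the same argument with $n$ replaced by $n-k$ and then compare $M(n-k)$ with $M(n)$ using the reverse-direction estimate below.

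\emph{From $\mathcal{A}_\nuke$ to $\mathcal{A}_S$.} Each $s\in S$ is a word of bounded length over $\nuke$. This uses that the nucleus generates a group containing $S$; we may need to pass to the subgroup generated by $\nuke$, or else first replace $S$ by sections at a deeper level, where $S|_{\alb^k}\subset\nuke$. We use the deeper-level version. Given $\phi$ admissible on $\G_{n,\nuke}$, define a function on edges of $\G_{n+k,S}$ as follows. An edge $(s, uv)$ with $|u|=k$ is sent to the edge $(s|_u, v)$ of $\G_{n,\nuke}$, and we assign it the value of $\phi$ there.

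Now take an $N$-essential path at level $n+k$. Its projection to level $n$ has section product $g|_{u_1v_1}$, where $g|_{u_1v_1}=g|_{u_1}|_{v_1}$. The projected path in $\G_{n,\nuke}$ is $\nuke$-essential, because its product equals the original section product, which lies outside $N\supseteq\nuke$. The fibers of the projection have bounded size, so
\[M_{\mathcal{A}_S,p}(n+k)\le C\,M_{\mathcal{A}_\nuke,p}(n).\]

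\emph{Main obstacle.} Both directions produce level shifts, so the key step is a uniform estimate $M_{\mathcal{A},p}(n)\asymp M_{\mathcal{A},p}(n+k)$ for fixed $k$. One inequality comes from the pullback along projection shown above. For the other, we push an admissible function at level $n+k$ down to level $n$ by summing over the $|\alb|^k$ fibers, i.e.\ setting $\phi'(e)=\sum_{e'\mapsto e}\phi(e')$. Admissibility then requires lifting essential paths at level $n$ to essential paths at level $n+k$. Such a lift exists by \eqref{eq:sectionaction}, but its section product is $g|_{v_1}$ restricted further, which could fall back into $\nuke$. We handle this by lifting on the left instead, via $uv\mapsto$ prefixing by $u$, choosing $u$ so that the section stays outside the nucleus, or by using that $g\notin\nuke$ at level $n$ forces non-nuclear behaviour somewhere in the level-$k$ extension. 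Making this step precise is where we expect the real difficulty. Hölder's inequality bounds the resulting $p$-sums, and for $p=\infty$ the maxima are controlled directly.
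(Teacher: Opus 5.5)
Your second half (prefix removal $(s,uv)\mapsto(s|_u,v)$ with $S|_{\alb^k}\subset\nuke$) is exactly the paper's first step and is correct, because that projection leaves the section product unchanged. For the same reason it cannot be used in your first half. The projected path has the same product $g|_{u_1v_1}$ as the original. So a $\nuke$-essential path whose product lies in $N\setminus\nuke$ projects to a path that is not $N$-essential. Your claim that this product ``would equal $h|_{u_1'}$ for some $h\in N$'' has no justification, and the choice of $k$ with $N|_{\alb^k}\subset\nuke$ plays no role.

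Here is a concrete failure. Take the adding machine with $S=\nuke=\{1,a^{\pm1}\}$ and $N=\{a^j\colon |j|\le 2\}$. The function equal to $2^{-n-1}$ on the $a^{\pm1}$-edges and $0$ on the loops is $N$-admissible on level $n$. Now take the level-$(n+k)$ path starting at $1^{n+k}$ and following $2^{n+k}+1$ consecutive $a$-edges. Its product is $a^2\notin\nuke$, but your pullback has sum $\frac12+2^{-n-1}<1$ along it.

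The fix is to truncate \emph{suffixes}, as the paper does. Send $(s,vu)$ with $|u|=k$ to $(s,v)$. Labels stay in $\nuke$, and the truncated path has product $g|_{v_1}$. If $g|_{v_1}\in N$, then $g|_{v_1u_1}=g|_{v_1}|_{u_1}\in N|_{\alb^k}\subset\nuke$, a contradiction. Since $\nuke\subset S$, you can then restrict $\phi$ to nucleus edges directly, as you suggested. This makes the paper's rewriting of nucleus edges as $S$-paths of length at most $r$ unnecessary.

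With this correction you obtain $M_{\mathcal{A}_{\nuke},p}(n+k)\le C\,M_{\mathcal{A}_S,p}(n)$ and $M_{\mathcal{A}_S,p}(n+k)\le C\,M_{\mathcal{A}_{\nuke},p}(n)$. This is exactly what the paper's proof establishes. Its constructions go from level $n-n_1$ (resp.\ $n-n_2$) to level $n$, even though the resulting inequalities are written with the same index on both sides. These shifted comparisons suffice for every later use, which only involves $\lim_n M(n)^{1/n}$ or whether $M(n)\to 0$.

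A literal same-level comparison would need the reverse monotonicity $M(n)\le C\,M(n+k)$ in \emph{both} conversions; the easy inequality $M(n+k)\le C\,M(n)$ never helps. Neither you nor the paper proves this reverse inequality, and your sketches for it do not work as stated:
\begin{itemize}
\item An element outside $\nuke$ may have all its sections in $\nuke$; for the adding machine, $a^2=(a,a)$. So truncation lifts of essential paths need not be essential.
\item Lifting by prefixes requires realizing the prescribed labels $s_i$ as sections $t_i|_{u_i}$ of nucleus elements along a consistent chain $t_i(u_i)=u_{i+1}$. Such a chain need not exist.
\end{itemize}
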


\begin{proof}
There exists $n_1$ such that for every $s\in S$ and every $v\in\alb^{n_1}$ we have $g|_v\in\nuke$.
Let $n>n_1$, and consider the map $P\colon \G_{n, S}\arr\G_{n-n_1, \nuke}$ mapping an edge $(s, x_1x_2\ldots x_n)$ of $\G_{n, S}$ to the edge $g|_{x_1x_2\ldots x_{n_1}}, x_{n_1+1}x_{n_1+2}\ldots x_n)$ of $\G_{n-n_1, \nuke}$. In other words, we remove the prefix of length $n_1$ and replace $s$ by the corresponding section. The map $P$ is  $|\alb^{n_1}|$-to-one.

Let $\phi$ be an admissible (i.e., $\nuke$-admissible) function on the set of edges of $\G_{n-n_1, \nuke}$. Define the function $\tilde\phi$ on $\G_{n, \mathcal{S}}$ by $\tilde\phi(e)=\phi(P(e))$.

For every $N$-essential edge path $\gamma$ in $\G_{n, S}$, the corresponding sections of $\gamma$ and $P(\gamma)$ coincide. Since the former does not belong to $N$, the latter does not belong to $\nuke\subset N$. Consequently, $P(\gamma)$ is also essential. Consequently, the sum of values of $\phi$ along the edges of $P(\gamma)$ is not less than 1. But it is equal to the sum of values of $\tilde\phi$ along the edges of $\gamma$. Consequently, $\tilde\phi$ is admissible. We have $\sum\left(\tilde\phi(e)\right)^p\le |\alb|^{n_1}\sum\phi(e)^p$. Consequently, \[M_{\mathcal{A}_S, p}(n)\le |\alb|^{n_1}\cdot M_{\mathcal{A}_\nuke, p}(n)\] for all $n$.

Suppose now that $\phi$ is an $N$-admissible function on $\G_{n, S}$. Let $n_2$ be such that for every $h\in N$ and $v\in\alb^{n_2}$ we have $h|_v\in\nuke$.
Let $\gamma$ be a path in $\G_{n, \nuke}$ connecting a vertex $x_1x_2\ldots x_n$ to $y_1y_2\ldots y_n$, and let $g\in G$ be the corresponding product such that $g(x_1x_2\ldots x_n)=y_1y_2\ldots y_n$. There exists $r$ such that every element of $\nuke$ is a product of elements of $S$.

Let $\gamma'$ be the path in $\G_{n, S}$ obtained by replacing each edge $(s_i, v_i)$ by the path in $\G_{n, S}$ corresponding to the decomposition of $s_i$ as a product of elements of $S$. The path $\gamma'$ has the same endpoints as $\gamma$, and the same product $g$.

Suppose that $\gamma$ is essential, i.e., $g|_{x_1x_2\ldots x_n}\notin\nuke$. Since $N|_{x_{n-n_2+1}\ldots x_n}\subset\nuke$, it follows that $g|_{x_1x_2\ldots x_{n-n_2}}\notin N$. Consequently, the projection $\gamma''$ of the path $\gamma'$ to a path in $\G_{n-n_2, S}$ obtained by removing the suffixes of length $n_2$ is $N$-essential.

Let $\phi$ be an $N$-admissible function on $\G_{n-n_2, S}$. Define $\tilde\phi$ on an edge $(s, x_1x_2\ldots x_n)$ of $\G_{n, \nuke}$ as the sums of values of $\phi$ along the corresponding path in $\G_{n-n_2, S}$ of length at most $r$ starting in $x_1x_2\ldots x_{n-n_2}$ (giving $s$ as the product of the first labels of the edges). Then the sum of values of $\tilde\phi$ along $\gamma$ will be equal to the sum of values of $\phi$ along $\gamma''$. It follows that $\tilde\phi$ is admissible.

If an edge $(s, x_1x_2\ldots x_{n-n_2})$ of $\G_{n-n_2, S}$ appears in the sum defining the value of $\tilde\phi$ for an edge $(s, z_1z_2\ldots z_n)$ of $\G_{n, \nuke}$, then $z_1z_2\ldots z_{n-n_2}$ is on distance at most $r$ from $x_1x_2\ldots x_{n-n_2}$ in $\G_{n-n_2, S}$. It follows that a given edge of $\G_{n-n_2, S}$ appears in these sums at most $D=\deg(\bim)^{n_2}(1+|S|+\cdots+|S|^r)$ times. Using the inequality $(a_1+\cdots+a_k)^p\le k^{p-1}(a_1^p+\cdots a_k^p)$, we conclude that the sum of values of the $p$th powers of $\tilde\phi$ is not larger than $r^{p-1}D\sum\phi(e)^p$. Consequently, 
\[M_{\mathcal{A}_\nuke, p}(n)\le r^{p-1}D M_{\mathcal{A}_S p}(n)\]
for all $n$, which finishes the proof.
\end{proof}

\begin{proposition}
\label{pr:moduluschangealb}
Let $\mathcal{A}_\alb, \mathcal{A}_{\mathsf{Y}}$ be the sets of admissible functions on the sets of vertices of graphs $\G_n$ defined using different bases $\alb$ and $\mathsf{Y}$ of $\bim$, then 
\[M_{p, \mathcal{A}_\alb}(n)\asymp M_{p, \mathcal{A}_{\mathsf{Y}}}(n).\]
\end{proposition}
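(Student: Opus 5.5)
The plan is to compare both moduli through Proposition~\ref{pr:sumgeometric}, after moving everything to admissible functions on edges. By the earlier proposition comparing vertex and edge admissible functions, $M_{p,\mathcal{A}_\alb}(n)$ is comparable to the edge modulus of $\G_{n,\nuke_\alb}$, and likewise for $\mathsf{Y}$. Here $\nuke_\alb$ and $\nuke_{\mathsf{Y}}$ are the two nuclei, which differ in general. The first step is to take a single finite symmetric set $S$ that contains both nuclei and generates $G$, together with $N=\nuke_\alb\cup\nuke_{\mathsf{Y}}$. Proposition~\ref{pr:moduluschanginggenset} then lets us use, for each basis, $N$-admissible functions on the graphs $\G_{n,S}^\alb$ and $\G_{n,S}^{\mathsf{Y}}$, losing only constants.

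Next comes the dictionary between the bases. Every $y\in\mathsf{Y}$ can be written as $y=x\cdot h_y$ with $x\in\alb$ and $h_y\in G$. Proposition~\ref{pr:sumgeometric}, applied to the finite set $A=\mathsf{Y}\subset\bim$, gives a finite set $\nuke_A$ such that every word $y_1\cdots y_n\in\mathsf{Y}^n$ equals $v\cdot g$ with $v\in\alb^n$ and $g\in\nuke_A$. This defines a map $\Phi\colon\mathsf{Y}^n\arr\alb^n$, and $\Phi$ is a bijection because both sets are bases of $\bim^{\otimes n}$ and both meet every right orbit exactly once. For $s\in S$ and $w\in\mathsf{Y}^n$, write $w=\Phi(w)\cdot g$. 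Then
\[s\cdot w=s\cdot\Phi(w)\cdot g=s(\Phi(w))\cdot s|_{\Phi(w)}\,g,\]
so $\Phi(s(w))=s(\Phi(w))$. The $\mathsf{Y}$-section of $s$ at $w$ is $g'^{-1}s|^\alb_{\Phi(w)}g$, where $g,g'\in\nuke_A$. Thus $\Phi$ is an isomorphism of the underlying graphs. For a path $\gamma$ with product $h$, the $\mathsf{Y}$-section of $h$ at the starting point equals the $\alb$-section conjugated by a pair of elements of the finite set $\nuke_A$.

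The key step is to compare essential paths. Let $N'=\nuke_A^{-1}N\nuke_A$, a finite set. A path that is $N'$-essential for $\mathsf{Y}$ is $N$-essential for $\alb$, and conversely with the roles of the bases swapped (using $\nuke_A N\nuke_A^{-1}$). Proposition~\ref{pr:moduluschanginggenset} holds for any finite set containing the nucleus. So the $N$-admissible and $N'$-admissible moduli are each comparable to the nucleus modulus, and hence to each other. Pulling admissible functions back along $\Phi$ then gives inequalities with constants independent of $n$ in both directions.

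The main obstacle I expect is the edge labels in the step above: the two bases must yield the same graph structure, and the conjugating elements must not depend on the path. Both points rest on the identity $\Phi(s(w))=s(\Phi(w))$ and on the fact that the correction is only by the endpoint elements $g,g'$ in $\nuke_A$. I would check both carefully using the recursion $(g_1g_2)|_v=g_1|_{g_2(v)}g_2|_v$.
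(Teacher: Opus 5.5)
Your proposal is correct and is essentially the paper's proof: you identify $\alb^n$ with $\mathsf{Y}^n$ through right orbits, use Proposition~\ref{pr:sumgeometric} to show that the $\alb$- and $\mathsf{Y}$-sections of a path's product differ by multiplication on both sides by elements of a fixed finite set (depending only on the endpoints), and then compare admissibility with respect to enlarged nuclei via Proposition~\ref{pr:moduluschanginggenset}. One small detail to add is that this proposition requires $N'=\nuke_A^{-1}N\nuke_A$ and $\nuke_A N\nuke_A^{-1}$ to contain the respective nuclei. This holds if you put $1\in\nuke_A$, or because $\nuke_{\mathsf{Y}}\subset\nuke_A^{-1}\nuke_\alb\nuke_A$ automatically, as the paper observes.
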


\begin{proof}
We identify the trees $\xs$ and $\mathsf{Y}^*$ by the natural isomorphism mapping $v\in\xs$ to the unique representative in $\mathsf{Y}^*$ of the right orbit $v\cdot G$.
 Let $\nuke_\alb$  and $\nuke_{\mathsf{Y}}$ be the corresponding nuclei. 
  
 By Proposition~\ref{pr:sumgeometric}, there exists a finite set $A\subset G$ such that if $h_1$ is the section of an element $g\in G$ in a word $v_1\in\xs$ (for the standard action associated with $\alb$), and $h_2$ is the section of $g$ in the corresponding word $v_2\in\mathsf{Y}^*$ (such that $v_2\in v_1\cdot G$), then there exist $f_1, f_2\in B$ such that $h_2=f_1^{-1}h_1f_2$.
 In particular, if the $\alb$-section of $g$ in $v$ belongs to $\nuke_\alb$, then the $\mathsf{Y}$-section of $g$ in the corresponding word belongs to $B^{-1}\nuke_\alb B$. Note that this implies that $\nuke_{\mathsf{Y}}\subset B^{-1}\nuke_\alb B$, since all sections in long enough words must belong to the nucleus. We may assume that $B$ contains the identity, so we also have $\nuke_\alb\subset B^{-1}\nuke_\alb B$.

Consequently, if a path $\gamma$ in $\G_{n, \nuke_\alb\cup\nuke_{\mathsf{Y}}}$ is such that the corresponding product of sections defined using $\alb$ does not belong to $B^{-1}\nuke_\alb B$, then the product of the sections defined using $\mathsf{Y}$ does not belong to $\nuke_{\mathsf{Y}}$. 

In other words, every path in $\G_{n, \nuke_\alb\cup\nuke_\mathsf{Y}}$ that is $B^{-1}\nuke_\alb B$-essential in the sense of sections defined using $\alb$ is $\nuke_{\mathsf{Y}}$-essential in the sense of sections defined using $\mathsf{Y}$. 

Consequently, every $\nuke_{\mathsf{Y}}$-admissible function in the sense of sections defined by $\mathsf{Y}$ is $B^{-1}\nuke_\alb B$-admissible in the sense of sections defined using $\alb$. This, together with Proposition~\ref{pr:moduluschanginggenset}, finishes the proof.
\end{proof}

\subsection{Asymptotic $p$-modulus}

For a function $\phi\colon\alb^n\arr[0, \infty)$, denote 
\[\hat\phi(v)=\min_{s\in\nuke}\phi(s(v)).\] 



Let $\widehat{\mathcal{A}}$ be the set of functions $\phi\colon\alb^n\arr[0, \infty)$ such that for every essential vertex-path $v_1, v_2, \ldots, v_m$ in the graph $\G_{n, \nuke}$ we have
\[\hat\phi(v_1)+\hat\phi(v_2)+\cdots+\hat\phi(v_{m-2})>1.\]

We say that the path is \emph{almost essential} if it can be continued by one edge (at the end of the directed path) to an essential path.

\begin{lemma}
\label{lem:hatequivalent}
Let $\mathcal{A}$ be the set of admissible functions $\alb^n\arr [0, \infty)$. Then there exists $C$, not depending on $n$, such that $M_{\mathcal{A}, p}(n)\asymp M_{\widehat{\mathcal{A}}, p}(n)$.
\end{lemma}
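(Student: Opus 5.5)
The plan is to prove the two inequalities $M_{\mathcal{A},p}(n)\le M_{\widehat{\mathcal{A}},p}(n)$ and $M_{\widehat{\mathcal{A}},p}(n)\le C\,M_{\mathcal{A},p}(n)$ separately, each by a direct transformation of admissible functions. We use that the nucleus $\nuke$ is symmetric and contains the identity, and that every section of an element of $\nuke$ lies in $\nuke$.

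\emph{First inequality.} Take $\phi\in\widehat{\mathcal{A}}$. Since $1\in\nuke$, we have $\hat\phi\le\phi$ pointwise. Hence for every essential vertex-path $v_1,\ldots,v_m$,
\[\phi(v_1)+\cdots+\phi(v_m)\ \ge\ \hat\phi(v_1)+\cdots+\hat\phi(v_{m-2})>1 .\]
So $\phi\in\mathcal{A}$, which gives $M_{\mathcal{A},p}(n)\le M_{\widehat{\mathcal{A}},p}(n)$ for all $p\in[1,\infty]$.

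\emph{Second inequality.} Take an admissible $\phi$. For a vertex $v$ let $\nuke^3(v)=\{g(v)\colon g\in\nuke^3\}$ be the \emph{set} of vertices within distance $3$ of $v$ in $\G_{n,\nuke}$, and define
\[\psi(v)=4\sum_{w\in\nuke^3(v)}\phi(w).\]
The key observation is the following. For $s\in\nuke$ and $h\in\nuke^2$ we have $hs^{-1}\in\nuke^3$, hence $h(v)=(hs^{-1})(s(v))\in\nuke^3(s(v))$. Therefore
\[\hat\psi(v)=\min_{s\in\nuke}\psi(s(v))\ \ge\ 4\sum_{w\in\nuke^2(v)}\phi(w).\]
In particular $\hat\psi(v)\ge 4\phi(w)$ for every $w$ at distance at most $2$ from $v$.

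Now let $v_1,\ldots,v_m$ be an essential path. A path with one edge has its section in $\nuke$, since sections of nucleus elements lie in the nucleus, and a path with no edges has trivial section. So $m\ge3$.
\begin{itemize}
\item For $i\le m-3$ we use $\hat\psi(v_i)\ge 4\phi(v_i)$.
\item For $i=m-2$, the vertices $v_{m-2},v_{m-1},v_m$ all lie in $\nuke^2(v_{m-2})$, so $\hat\psi(v_{m-2})\ge\frac43\bigl(\phi(v_{m-2})+\phi(v_{m-1})+\phi(v_m)\bigr)$. This holds even if some of these vertices coincide, by taking the maximum of the three terms.
\end{itemize}
Summing, we get $\sum_{i\le m-2}\hat\psi(v_i)\ge\frac43\sum_{i=1}^m\phi(v_i)\ge\frac43>1$, so $\psi\in\widehat{\mathcal{A}}$.

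\emph{Norm estimate.} Each $\psi(v)$ is a sum of at most $|\nuke|^3$ terms. Using $(a_1+\cdots+a_k)^p\le k^{p-1}\sum a_i^p$ gives $\psi(v)^p\le 4^p|\nuke|^{3(p-1)}\sum_{w\in\nuke^3(v)}\phi(w)^p$. Each $w$ lies in $\nuke^3(v)$ for at most $|\nuke|^3$ vertices $v$, since $\nuke$ is symmetric. Hence
\[\sum_v\psi(v)^p\le 4^p|\nuke|^{3p}\sum_w\phi(w)^p,\]
and for $p=\infty$ we get $\max\psi\le4|\nuke|^3\max\phi$. These constants are independent of $n$, which gives $M_{\widehat{\mathcal{A}},p}(n)\le C_pM_{\mathcal{A},p}(n)$.

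The only delicate point is the second direction: the definition of $\widehat{\mathcal{A}}$ drops the last two vertices of the path and uses the minimum over nucleus neighbours. Both effects are absorbed by summing $\phi$ over a neighbourhood of radius $3$, one step for the minimum in $\hat\psi$ and two steps for the two dropped vertices.
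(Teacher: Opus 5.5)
Your proof is correct and follows the paper's argument. The paper uses the same two ingredients: the inclusion $\widehat{\mathcal{A}}\subset\mathcal{A}$ coming from $\hat\phi\le\phi$, and a function $\psi$ that sums $\phi$ over the radius-$3$ neighbourhood of $v$, so that $\hat\psi(v_{m-2})$ absorbs the two dropped vertices and a $|\nuke|^{3p}$-type norm bound follows. Your factor $4$ with the maximum trick, and your check that $m\ge 3$, are small refinements the paper's write-up passes over: they secure the strict inequality $>1$ (admissibility alone only gives $\ge 1$) and handle coincidences among $v_{m-2},v_{m-1},v_m$.
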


\begin{proof}
We obviously have $\widehat{\mathcal{A}}\subset\mathcal{A}$. Consequently, $M_{\widetilde{\mathcal{A}}, p}(n)\ge M_{\mathcal{A}, p}$. 

Let $\phi\in \mathcal{A}$. Consider the function $\psi(v)=\sum_{s\in\nuke^3}\phi(s(v))$. Then for every $v\in\alb^n$ there exists $s\in\nuke$ such that $\hat\psi(v)=\psi(s(v))$. For any $s_1, s_2\in\nuke$, the set $\nuke^3s$ contains $1$, $s_1$ and $s_2s_1$, hence
\[\hat\psi(v)=\psi(s(v))=\sum_{h\in\nuke^2}\phi(hs(v))\ge \phi(v)+\phi(s_1(v))+\phi(s_2s_1(v)).\]

Consequently, for any essential vertex-path $v_1, v_2, \ldots, v_m$, we have $\hat\psi(v_i)\ge \phi(v_i)$ for $i=1, 2, \ldots, m-1$, and $\hat\psi(v_{m-2})\ge\phi(v_{m-2})+\phi(v_{m-1})+\phi(v_m)$. Consequently,
\[\hat\psi(v_1)+\hat\psi(v_i)+\cdots+\hat\psi(v_{m-2})\ge\phi(v_1)+\phi(v_2)+\cdots+\phi(v_m)>1.\]
It follows that $\psi\in\widehat{\mathcal{A}}$.

We have $(\psi(v))^p\le |\nuke^3|^{p-1}\sum_{s\in\nuke^3}(\phi(s(v))^p$, consequently,
\[\sum_{v\in\alb^n}(\psi(v))^p\le |\nuke^3|^{p-1}\sum_{v\in\alb^n}\sum_{s\in\nuke^3}(\phi(s(v))^p=|\nuke^3|^p\sum_{v\in\alb^n}(\phi(v))^p,\]
hence $M_{\widehat{\mathcal{A}}, p}(n)\le |\nuke^3|^pM_{\mathcal{A}, p}(n)$.
\end{proof}

\begin{lemma}
\label{lem:productadmissible}
Let $\phi_1\colon\alb^{n_1}\arr [0, \infty)$ and $\phi_2\colon\alb^{n_2}\arr [0, \infty)$ belong to $\widehat{\mathcal{A}}$. Then the function $\phi\colon\alb^{n_1+n_2}\arr[0, \infty)$ given by $\phi(v_1v_2)=\phi_1(v_1)\phi_2(v_2)$ also belongs to $\widehat{\mathcal{A}}$.
\end{lemma}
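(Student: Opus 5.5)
The plan is to project an essential path at level $n_1+n_2$ onto level $n_1$ (prefixes) and level $n_2$ (suffixes). I would cut the prefix path into consecutive blocks, each of which is "almost essential" in $\G_{n_1}$. The block sections then form an essential path in $\G_{n_2}$, and multiplying the two hat-admissibility inequalities gives the result.

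\emph{Setup.} Let $v_1,\ldots,v_m$ be an essential vertex-path in $\G_{n_1+n_2,\nuke}$, with $v_{i+1}=s_i(v_i)$ and $s_i\in\nuke$. Write $v_i=u_iw_i$ with $u_i\in\alb^{n_1}$ and $w_i\in\alb^{n_2}$. By~\eqref{eq:sectionaction} we have $u_{i+1}=s_i(u_i)$ and $w_{i+1}=s_i|_{u_i}(w_i)$. So $u_1,\ldots,u_m$ is a path in $\G_{n_1,\nuke}$, and $(s_{m-1}\cdots s_1)|_{v_1}=\bigl((s_{m-1}\cdots s_1)|_{u_1}\bigr)|_{w_1}$ by~\eqref{eq:sections}. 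I would first record the pointwise bound
\[
\hat\phi(v_i)=\min_{s\in\nuke}\phi_1(s(u_i))\,\phi_2\bigl(s|_{u_i}(w_i)\bigr)\ge\hat\phi_1(u_i)\cdot\min_{h\in\nuke^{2}}\phi_2(h(w_i)),
\]
which holds because $s|_{u_i}\in\nuke$.

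\emph{Block decomposition.} Greedily cut the indices into blocks $[a_t,a_{t+1}]$, with $a_1=1$. Here $a_{t+1}$ is chosen so that the section of the sub-path $u_{a_t},\ldots,u_{a_{t+1}}$ at $u_{a_t}$ lies in $\nuke$, but adding the next edge makes that section leave $\nuke$. Each block is therefore almost essential in $\G_{n_1}$. Applying $\phi_1\in\widehat{\mathcal A}$ to the essential path $u_{a_t},\ldots,u_{a_{t+1}+1}$ gives $\sum_{i=a_t}^{a_{t+1}-1}\hat\phi_1(u_i)>1$, and these index ranges are disjoint. Let $h_t\in\nuke$ be the block sections. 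Then $w_{a_1},w_{a_2},\ldots,w_{a_{k+1}}$ (the last block possibly ending at $m$) is a path in $\G_{n_2,\nuke}$ with edge labels $h_t$. Its total section equals the section of the original path, which is not in $\nuke$, so this path is essential. Hence $\phi_2\in\widehat{\mathcal A}$ gives $\sum_{t\le k-1}\hat\phi_2(w_{a_t})>1$. Summing blockwise, $\sum_{i\le m-2}\hat\phi(v_i)\ge\sum_t \hat\phi_2(w_{a_t})\sum_{i\in\text{block }t}\hat\phi_1(u_i)>1$, provided the boundary indices are bookkept so that the last two vertices are dropped correctly.

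\emph{Main obstacle.} This last step needs $\min_{h\in\nuke^2}\phi_2(h(w_i))\ge\hat\phi_2(w_{a_t})$ for every $i$ in block $t$. Inside a block we only know $w_i=g(w_{a_t})$ with $g\in\nuke$, so the comparison involves $\nuke^2$ (even $\nuke^3$) rather than $\nuke$. I would first try to fix this by choosing a representative vertex $i_t$ in each block, namely the one minimizing the $\phi_2$-factor. I would then rebuild the coarse path in $\G_{n_2}$ through the points $w_{i_t}$, arguing that consecutive representatives are joined by nucleus edges because all partial block sections lie in $\nuke$. The slack built into $\widehat{\mathcal A}$ (the sum stops at $m-2$, and $\hat\phi$ takes a minimum over $\nuke$-neighbours) is exactly what should absorb one extra nucleus step. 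If that slack turns out to be insufficient, the fallback is to redefine the blocks so that each block's section is taken relative to its minimizing vertex.
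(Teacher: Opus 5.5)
There is a genuine gap, and it is the step you flag yourself. The bound $\hat\phi(u_jw_j)\ge\hat\phi_1(u_j)\,\hat\phi_2(w_{a_t})$ for every $j$ in block $t$ is never proved, and your choice of blocks does not supply it.

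You cut at the first index where the path's \emph{own} next edge makes the prefix path essential. But an interior vertex $u_j$, with $a_t\le j<a_{t+1}$, may have a nucleus neighbour $s(u_j)$ off the path for which $u_{a_t},\dots,u_j,s(u_j)$ is already essential. If that $s$ realizes the minimum defining $\hat\phi(u_jw_j)$, then $s|_{u_j}(w_j)=h(w_{a_t})$ with $h=(s\,s_{j-1}\cdots s_{a_t})|_{u_{a_t}}\notin\nuke$. Nothing in your argument bounds $\phi_2$ at that point below by $\hat\phi_2(w_{a_t})$. The minimum over $\nuke$ built into $\hat\phi_2$ can absorb one nucleus element, but here the element to absorb is $h$, which need not lie in $\nuke$.

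The representative fix does not close this. Consecutive representatives are related by the section of the path from $u_{i_t}$ to $u_{i_{t+1}}$, which is only known to lie in $\nuke^3$. Indeed, the part from $i_t$ to $a_{t+1}$ has section $h_tg^{-1}$, where $h_t,g\in\nuke$ are the sections from $a_t$ to $a_{t+1}$ and from $a_t$ to $i_t$. So the coarse path is not a path in $\G_{n_2,\nuke}$, and the hypothesis $\phi_2\in\widehat{\mathcal{A}}$ cannot be applied. Subdividing it introduces new vertices with no matching lower bound.

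The missing idea, which is how the paper argues, is to cut earlier. Let $u_{a_t},\dots,u_{a_{t+1}}$ be the \emph{shortest almost essential} beginning of $u_{a_t},\dots,u_m$. Here almost essential means that \emph{some} edge of $\G_{n_1,\nuke}$, not necessarily $s_{a_{t+1}}$, extends it to an essential path.

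Then for $a_t\le j<a_{t+1}$ the segment $u_{a_t},\dots,u_j$ is not almost essential. Hence $h=(s\,s_{j-1}\cdots s_{a_t})|_{u_{a_t}}\in\nuke$ for \emph{every} $s\in\nuke$. Since $h(w_{a_t})=s|_{u_j}(w_j)$, you get $\min_{s\in\nuke}\phi_2(s|_{u_j}(w_j))\ge\hat\phi_2(w_{a_t})$. This gives the product bound with no $\nuke^2$ loss.

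These blocks have the other properties you need:
\begin{enumerate}
\item They, and the final leftover segment, are non-essential: if one were essential, dropping its last vertex would give a shorter almost essential beginning. So the block sections lie in $\nuke$, and the coarse path $w_{a_1},w_{a_2},\dots$ ending at $w_m$ is an essential path in $\G_{n_2,\nuke}$.
\item Each block is almost essential. Applying $\phi_1\in\widehat{\mathcal{A}}$ to its essential one-edge extension, which may leave the original path, gives $\sum_{j=a_t}^{a_{t+1}-1}\hat\phi_1(u_j)>1$.
\end{enumerate}
With these blocks, your blockwise summation and the final index bookkeeping go through as you outlined.
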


\begin{proof}
Let $u_1w_1, u_2w_2, \ldots, u_mw_m$ be an essential vertex-path in $\alb^{n_1+n_2}$, where $u_i\in\alb^{n_1}$ and $w_i\in\alb^{n_2}$. Let $s_i\in\nuke$ be such that $s_i(u_iw_i)=u_{i+1}w_{i+1}$.

Set $k_0=1$, and define $k_{i+1}$ inductively, by the condition that $u_{k_i}, u_{k_1+1}, \ldots, u_{k_{i+1}}$ is the shortest almost essential beginning of the path $u_{k_i}, u_{k_i+1}, \ldots, u_m$, if it exists. Let $l$ be the maximal value for which $k_l$ is defined.

Consider one of the segments $u_{k_i}, \ldots, u_{k_{i+1}}$.
Let $k_i\le j<k_{i+1}$. We have
\[\hat\phi(u_jw_j)=\min_{s\in\nuke}\phi(s(u_jw_j))=\min_{s\in\nuke}\phi_1(s(u_j))\phi_2(s|_{u_j}(w_j))\ge\hat\phi_1(u_j)\min_{s\in\nuke}\phi_2(s|_{u_j}(w_j)).\]
Let $s\in\nuke$ be an element realizing the minimum $\min_{s\in\nuke}\phi_2(s|_{u_j}(w_j))$. By construction, the path $u_{k_i}, \ldots, u_j, s(u_j)$ is not essential, i.e., the section $h=(ss_j\cdots s_{k_i+1}s_{k_i})|_{u_{k_i}}$ belongs to the nucleus.

But we have $h(w_{k_i})=s|_{u_j}(w_j)$. Therefore, $\phi_2(s|_{u_j}(w_j))=\phi_2(h(w_{k_i}))\ge \hat\phi(w_{k_i})$.

We have shown that 
\[\hat\phi(u_jw_j)\ge \hat\phi_1(u_j)\hat\phi(w_{k_i}).\] 

Since the path $u_{k_i}, u_{k_i+1}, \ldots, u_{k_{i+1}}$ is almost essential, we can continue it by one edge to an essential path, so the definition of the class $\widehat{\mathcal{A}}$ implies that
\[\sum_{j=k_i}^{k_{i+1}-1}\hat\phi(u_jw_j)\ge \hat\phi(w_{k_i})\sum_{j=k_i}^{k_{i+1}-1}\hat\phi(u_j)>\hat\phi(w_{k_i}).\]

Since the paths $u_{k_i}w_{k_i}, \ldots, u_{k_{i+1}}w_{k_{i+1}}$ are not essential, there exists $h\in\nuke$ such that $w_{k_{i+1}}=h(w_{k_i})$. Consequently, we get a path $w_1=w_{k_0}, w_{k_1}, w_{k_2}, \ldots, w_{k_l}, w_m$ in $\alb^{n_2}$ with the same section as the original path in $\alb^{n_1+n_2}$. Since the original path is essential, we have $\sum_{i=0}^{l-1}\hat\phi_2(w_{k_i})>1$ if $k_l\ne m$, and $\sum_{i=0}^{l-2}\hat\phi_2(w_{k_i})>1$ if $k_l=m$.

If $k_l\ne m$, then $k_l-1<k_l<m$, so $k_l-1\le m-2$, and
\[\sum_{i=1}^{m-2}\hat\phi(u_iw_k)\ge \sum_{i=0}^{l-1}\hat\phi(w_{k_i})>1.\]
If $k_l=m$, then $k_{l-1}-1<k_{l-1}<m$, so $k_{l-1}-1\le m-2$, and
\[\sum_{i=1}^{m-2}\hat\phi(u_iw_k)\ge \sum_{i=0}^{l-2}\hat\phi(w_{k_i})>1,\]
which shows that $\phi\in\widehat{\mathcal{A}}$.
\end{proof}

As a corollary, we get the following.

\begin{proposition}
Let $\mathcal{A}$ be one of the classes of admissible or strongly admissible functions, defined above.

The limits
\[\theta_p=\lim_{n\to\infty} M_{\mathcal{A}, p}(n)^{\frac 1{np}},\qquad\theta_\infty=\lim_{n\to\infty}M_{\mathcal{A}, \infty}(n)^{\frac 1n}\]
exist for every $p>0$ and do not depend on the choice of the class $\mathcal{A}$ defined above.

The inequality $\theta_p<1$ is equivalent to the condition $\lim_{n\to\infty} M_{\mathcal{A}, p}(n)=0$ and to the condition $\inf_n M_{\mathcal{A}, p}(n)=0$.
\end{proposition}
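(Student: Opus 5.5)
The plan is to use Lemma~\ref{lem:productadmissible} to get submultiplicativity of $M_{\widehat{\mathcal{A}},p}(n)$, apply Fekete's lemma, and transfer the result to the other classes through the comparison propositions already proved.

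\emph{Submultiplicativity.} Take $\phi_1\in\widehat{\mathcal{A}}$ on $\alb^{n_1}$ and $\phi_2\in\widehat{\mathcal{A}}$ on $\alb^{n_2}$. By Lemma~\ref{lem:productadmissible}, the product $\phi(v_1v_2)=\phi_1(v_1)\phi_2(v_2)$ belongs to $\widehat{\mathcal{A}}$ on $\alb^{n_1+n_2}$. Its $p$-energy factors:
\[\sum_{v_1,v_2}\phi_1(v_1)^p\phi_2(v_2)^p=\Bigl(\sum\phi_1^p\Bigr)\Bigl(\sum\phi_2^p\Bigr),\]
and similarly $\max\phi\le\max\phi_1\cdot\max\phi_2$. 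Taking infima gives
\[M_{\widehat{\mathcal{A}},p}(n_1+n_2)\le M_{\widehat{\mathcal{A}},p}(n_1)\,M_{\widehat{\mathcal{A}},p}(n_2)\]
for all $p\in(0,\infty]$.

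\emph{Existence of the limit.} Fekete's lemma applied to $\log M_{\widehat{\mathcal{A}},p}(n)$ shows that $\lim \frac1n\log M_{\widehat{\mathcal{A}},p}(n)$ exists and equals $\inf_n\frac1n\log M_{\widehat{\mathcal{A}},p}(n)$. One has to rule out the value $-\infty$, i.e.\ $M=0$. Every vertex lies on some essential path, because the group is infinite and long products of nucleus elements eventually leave $\nuke$. So an admissible $\phi$ cannot vanish identically, but this alone does not rule out $M\to 0$ superexponentially.

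\emph{Lower bound $M(n)\ge c\,\lambda^{-n}$.} Pick a long essential path in $\G_n$ whose length is at most $K|\alb|^n$. For instance, take a bounded-length path in $\G_n$ realizing a fixed element $g\notin\nuke$ whose sections at level $n$ stay outside $\nuke$; its length is $\le|\alb|^n$. Hölder's inequality along this path then gives
\[\sum\phi^p\ge |\alb|^{-n(p-1)}\]
for $p\ge 1$, which yields a positive lower bound for $\theta_p$. I expect this to be the main obstacle, namely finding essential paths of controlled length for all $n$. The first thing I would try is the trivial bound: all $\phi$-values are finite and some edge on any essential path has $\phi\ge 1/\text{length}$. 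I would also allow $\theta_p=0$ as a legitimate limit if needed, since Fekete's lemma still gives existence in $[0,\infty)$.

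\emph{Independence of the class.} The comparison results give $M_{\mathcal{A},p}(n)\asymp_C M_{\mathcal{B},p}(n)$ with $C$ independent of $n$: Lemma~\ref{lem:hatequivalent}, the vertex/edge comparison proposition, Proposition~\ref{pr:moduluschanginggenset}, and Proposition~\ref{pr:moduluschangealb}. Some of these involve a shift $n\mapsto n\pm n_1$. Such a shift costs only a bounded factor, since $M(n+1)\asymp M(n)$ by submultiplicativity together with the trivial comparison from lifting functions along prefixes. Taking $np$-th roots kills both the constants and the shifts, so the limit is the same for every class.

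\emph{Equivalences for $\theta_p<1$.} If $\theta_p<1$, then $M(n)\le(\theta_p+\epsilon)^{np}\to 0$. The implication from $\lim M(n)=0$ to $\inf_n M(n)=0$ is trivial. Conversely, if $\inf_n M(n)=0$, choose $n_0$ with $M_{\widehat{\mathcal{A}},p}(n_0)<1$. By Fekete,
\[\theta_p^{p}=\inf_n M(n)^{1/n}\le M(n_0)^{1/n_0}<1.\]
For the other classes the constant $C$ enters, so I would argue with $\widehat{\mathcal{A}}$ and pass to other classes only through the limits, which coincide. If $C\cdot M_{\mathcal{A}}(n_0)$ must be $<1$, this is achieved by taking $n_0$ with $M_{\mathcal{A}}(n_0)<1/C$, which is possible since the infimum is $0$.
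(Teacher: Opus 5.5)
Your argument is essentially the paper's: submultiplicativity of $M_{\widehat{\mathcal{A}},p}$ via Lemma~\ref{lem:productadmissible}, then Fekete's lemma, then transfer to the other classes through Lemma~\ref{lem:hatequivalent} and the comparison propositions; the equivalences for $\theta_p<1$ then follow from $\theta_p^p=\inf_n M(n)^{1/n}$. Two side remarks are not needed and should be dropped or fixed: the lower-bound paragraph is unnecessary, since Fekete already gives a limit in $[0,\infty)$, and its proposed construction cannot work because in a contracting group every fixed $g$ has all its sections in $\nuke$ on long enough words; likewise the bounded level shifts disappear directly when you take $n$-th roots, so you do not need $M(n+1)\asymp M(n)$, whose lower bound your justification does not supply.
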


\begin{proof} Let us prove the statement for finite $p$, the case $p=\infty$ is similar.
By Lemma~\ref{lem:hatequivalent}, it is enough to prove the proposition for the class $\widehat{\mathcal{A}}$. 

Let $\phi_1, \phi_2, \phi$ be as in Lemma~\ref{lem:productadmissible}.

Since $\sum_{w\in\alb^{n_1+n_2}}\phi(w)^p=\sum_{v\in\alb^{n_1}}\phi_1(v)^p\sum_{u\in\alb^{n_2}}\phi_2(u)^p$, Lemma~\ref{lem:productadmissible} implies the inequality we have \[M_{\widehat{\mathcal{A}}, p}(n_1+n_2)\le M_{\widehat{\mathcal{A}}, p}(n_1) M_{\widehat{\mathcal{A}}, p}(n_2),\]
which finishes the proof of our proposition by Fekete's lemma.
\end{proof}

We call $\theta_p=\theta_p(G, \bim)$ the \emph{asymptotic $p$-modulus}. It follows from Propositions~\ref{pr:moduluschangealb} that it does not depend on the choice of the basis of $\bim$. We have $\theta_p(G, \bim^{\otimes k})=\theta_p(G, \bim)^k$.

\begin{proposition}
\label{pr:asympmodulus}
The asymptotic moduli satisfy, for all $1<p<q\le\infty$, the inequalities:
    \[\theta_p\ge \theta_q,\qquad \mathsf{D}^{-1/p}\theta_p\le \mathsf{D}^{-1/q}\theta_q,\]
    where $\mathsf{D}=|\bim/G|$.
\end{proposition}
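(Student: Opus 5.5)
Both inequalities will come from comparing $\ell^p$ and $\ell^q$ norms of a single admissible function on the finite set $\alb^n$ of size $\mathsf{D}^n$, and then taking $n$th roots. I work with the vertex class $\mathcal{A}$ of admissible functions. This is legitimate because the preceding proposition shows that $\theta_p$ does not depend on the choice of class. Write $M_p(n)=M_{\mathcal{A},p}(n)$ and $\|\phi\|_p=(\sum_{v\in\alb^n}\phi(v)^p)^{1/p}$, with $\|\phi\|_\infty=\max\phi$. Then $\theta_p=\lim_n (\inf_{\phi\in\mathcal{A}}\|\phi\|_p)^{1/n}$ for every $p\in[1,\infty]$.

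For the first inequality $\theta_p\ge\theta_q$, I will first reduce to admissible functions with values in $[0,1]$. If $\phi$ is admissible, so is $\min(\phi,1)$. Indeed, an essential path either passes through a vertex where $\phi\ge1$, or the truncated function agrees with $\phi$ along the whole path. Truncation also does not increase any $\|\cdot\|_p$. For $\phi$ with values in $[0,1]$ and $p<q<\infty$, we have $\phi^q\le\phi^p$ pointwise, so $\|\phi\|_q^q\le\|\phi\|_p^p$, i.e. $\|\phi\|_q\le\|\phi\|_p^{p/q}$. Taking the infimum gives $M_q(n)\le M_p(n)$. Hence
\[
\theta_q=\lim M_q(n)^{1/(nq)}\le\lim M_p(n)^{1/(nq)}=\theta_p^{p/q}.
\]
This is not yet $\theta_q\le\theta_p$ when $\theta_p>1$. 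To close the gap I will use the direct norm comparison $\|\phi\|_q\le\|\phi\|_p$ (monotonicity of $\ell^p$ norms on counting measure). This gives $\inf\|\phi\|_q\le\inf\|\phi\|_p$, and taking $n$th roots yields $\theta_q\le\theta_p$ directly, without truncation. The case $q=\infty$ follows in the same way from $\max\phi\le\|\phi\|_p$.

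For the second inequality, I will use Hölder's inequality on a set with $\mathsf{D}^n$ points: $\|\phi\|_p\le \mathsf{D}^{n(1/p-1/q)}\|\phi\|_q$, including $q=\infty$ with $1/q=0$. Taking the infimum over $\mathcal{A}$ and then $n$th roots gives $\theta_p\le \mathsf{D}^{1/p-1/q}\theta_q$, which is exactly $\mathsf{D}^{-1/p}\theta_p\le\mathsf{D}^{-1/q}\theta_q$.

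The only point needing care is normalization, and this is where I expect the obstacle to lie. The definition $\theta_p=\lim M_p(n)^{1/(np)}$ must be read consistently as the $n$th root of the infimal $\ell^p$ norm. That is what makes the exponents $1/p$ in the second inequality come out right. The introduction states the inequality with $|\alb|^p$, but the form with $\mathsf{D}^{1/p}$ is the one that follows from Hölder. I will also note that the constants relating different admissible classes disappear after taking $n$th roots, so the choice of the vertex class costs nothing.
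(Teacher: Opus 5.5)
Your proof is correct and is the same argument as the paper's. Monotonicity of $\ell^p$ norms for counting measure gives $\theta_p\ge\theta_q$, and H\"older's inequality on the $\mathsf{D}^n$-point set $\alb^n$ gives $\mathsf{D}^{-1/p}\theta_p\le\mathsf{D}^{-1/q}\theta_q$, in each case after taking infima over admissible functions and $n$th roots; the truncation detour is superfluous, and you are right that the exponent in the introduction should read $|\alb|^{1/p}$ rather than $|\alb|^{p}$.
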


The above proposition implies  that $\theta_p$ is a continuous non-increasing function.

\begin{proof}
Since $M_p(n)^{1/p}$ for $p\in (1, \infty)$ and $M_\infty(n)$ for $p=\infty$ are equal to the infimum of the $\ell^p$-norms of admissible functions $\phi\colon \alb^n\arr [0, \infty)$, we get $M_p(n)^{1/p}\ge M_q(n)^{1/q}$ for all $1<p<q<\infty$ and $M_p(n)^{1/p}\le M_\infty(n)$ for all $1<p<\infty$. This implies the inequality $\theta_p\ge\theta_q$ for all $1<p<q\le\infty$.

In the other direction, H\"older's inequality for $\ell^p$-norms in $\R^m$, implies the inequality $M_p(n)^{1/p}\le \mathsf{D}^{n\left(\frac 1p-\frac 1q\right)}M_q(n)^{1/q}$ for all $1<p<q<\infty$ and $M_p(n)^{1/p}\le\mathsf{D}^{\frac np}$ for all $1<p<\infty$. Passing to the limit, we get the inequality $\mathsf{D}^{-1/p}\theta_p\le \mathsf{D}^{-1/q}\theta_q$.
\end{proof}

\begin{example}
\label{ex:Grigmodulus}
As we have seen above, a function on the set of edges of the dual Moore diagram of the adding machine group is strongly admissible if and only if the sum of its values is greater than or equal to $1$. It follows from the inequalities between generalized means that the $p$-modulus $M_p$ (for the class of strongly admissible functions) is realized by the constant function equal to $2^{-n}$. Consequently, $\theta_p=\lim_{n\to\infty}\left(\frac{2^n}{2^{np}}\right)^{1/np}=2^{\frac 1p-1}$.
The same arguments show that the Grigorchuk group has the same value $\theta_p=2^{\frac 1p-1}$ of the asymptotic modulus as the binary adding machine.
\end{example}

\begin{figure}
    \centering 
    \includegraphics{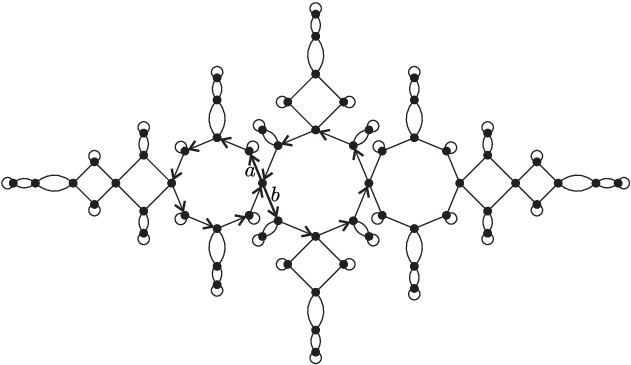}
    \caption{Schreier graph of $\img(z^2-1)$}
    \label{fig:basilicagraph}
\end{figure}

\begin{example}
\label{ex:thetapbasilica}
The dual Moore diagram for the action of the generators $a, b$ of $\img(z^2-1)$ on $\{0, 1\}^6$ is shown in Figure~\ref{fig:basilicagraph}. Any two simple cycles of the diagram are either disjoint or intersect in a single vertex. There are only two arrows with non-trivial labels: one labeled by $a$ and one labeled by $b$. Both of them start at $11\ldots 1$. The nucleus of $\img(z^2-1)$ is $\{1, a^{\pm 1}, b^{\pm 1}, (ab^{-1})^{\pm 1}\}$, see~\ref{ex:basilicanucleus}. Consequently, $M_p$ is realized on a constant function supported on two simple cycles containing the vertex $11\ldots 1$. The lengths of these cycles in a diagram of level $n$ are $2^{n/2}$ and $2^{n/2}$, if $n$ is even, and $2^{(n+1)/2}$ and $2^{(n-1)/2}$, if $n$ is odd. It follows that $\theta_p=(\sqrt{2})^{\frac 1p-1}$.
\end{example}

\subsection{Combinatorial modulus and conformal dimension}
\begin{theorem}
\label{th:modulus}
Let $(G, \bim)$ be a contracting self-similar group, let $\alb\subset\bim$ be a basis.

If  $p>\cdim(G, \bim)$, then $\theta_p(G, \bim)<1$.

If $\theta_p<1$, then there exists $n\ge 1$, a positive $G$-quasi-invariant quasi-morphism $\nu\colon \xs\arr (0, \infty)$ and a visual metric on $\lims$ associated with $\nu$ of Hausdorff dimension less than or equal to $p$. In particular, $\cdim(G, \bim)\le p$.
\end{theorem}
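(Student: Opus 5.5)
The proof has two directions. For the first one I build admissible functions from an Ahlfors-regular metric. For the second I turn a small-modulus admissible function into a quasi-morphism and apply the entropy criterion (Proposition~\ref{pr:entropycriterion}).

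\emph{First statement.} Let $p>\cdim(G,\bim)$. Choose $d\in\mathcal{AR}$ of dimension $q<p$, and let $\mu$ be the associated measure. On $\alb^n$ define
\[\phi(v)=\frac{\diam\widetilde\til_v}{c},\]
with a suitable constant $c$. Take an essential vertex path $v_1,\ldots,v_m$ in $\G_n$. The tiles $\til_{v_i}$ form a chain of intersecting tiles. Essentiality means the section $g|_{v_1}\notin\nuke$, where $g$ is the product along the path, and I claim this forces the chain to have diameter bounded below. Indeed, if the union had small diameter, the chain would lie in a small neighbourhood $T_k(\xi)$. The tile structure of $\limg$, using a lift as in Subsection~\ref{ss:liftingmm}, would then give that $\til\otimes v_1$ and $\til\otimes g\cdot v_1$ are joined inside a small set. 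That would make $g|_{v_1}$ a product of boundedly many nucleus elements, which after shortening by a few letters lies in $\nuke$. I would carry this out with $N$-essential paths (Proposition~\ref{pr:moduluschanginggenset}), absorbing the bounded loss. Granting the claim, $\phi$ is admissible up to a constant.

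Next I estimate $\sum_v\phi(v)^p$. By Proposition~\ref{pr:diamoftiles}, $\mu(\til_v)\asymp(\diam\til_v)^q$, and the tiles of level $n$ overlap with bounded multiplicity. Hence $\sum_v(\diam\til_v)^q\le C$. Also $\diam\til_v\le C\lambda^n$, so
\[\sum_v(\diam\til_v)^p\le C\lambda^{n(p-q)}\to0.\]
This gives $M_p(n)\to0$, so $\theta_p<1$ by the proposition on the existence of the limits. The main obstacle in this direction is the claim that essential chains have diameter bounded below. I would first try to prove it on $\limg$, using properness of the action and the fact that $\til$ meets only the translates $\til\cdot g$ with $g\in\nuke$.

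\emph{Second statement.} Suppose $\theta_p<1$. Pass to $\bim^{\otimes n}$ so that there is $\phi\in\widehat{\mathcal{A}}$ on $\alb^n$ with $\sum\phi^p<1$. Since $\theta_p$ is unchanged in the relevant sense, I rename $\alb^n$ as $\alb$. Define
\[\nu(x_1\ldots x_k)=-\sum_i\log\hat\phi'(x_i),\]
where $\hat\phi'$ is a slightly enlarged, positive version of $\phi$. The enlargement keeps $\nu$ finite and keeps $\sum\phi^p<1$ and admissibility (the inequality is strict). This $\nu$ is a homomorphism, hence a quasi-morphism, and it is positive. The $G$-quasi-invariance is delicate. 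I expect a plain product to fail, so I would instead use a smoothed version
\[\nu(v)=-\log\min_{s\in\nuke}\Phi(s(v)),\]
where $\Phi$ is the product function of Lemma~\ref{lem:productadmissible}, iterated. The proof of that lemma controls how the minimum over the nucleus propagates through concatenation, and I would use it to get $\nu(v_1v_2)\doteq\nu(v_1)+\nu(v_2)$ and $\nu(s(v))\doteq\nu(v)$.

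Entropy: since $\sum_x e^{-p\nu(xv)}\le\lambda e^{-p\nu(v)}$ with $\lambda=\sum\phi^p<1$, Proposition~\ref{pr:entropycriterion} gives $\beta\le p$.

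Lower exponent: I need $\ell_\nu$ to have lower exponent $1$. Admissibility says that any essential chain at level $k$ has $\sum e^{-\nu}\ge1$ after rescaling. Because $\Phi$ is multiplicative across levels, this becomes exactly Definition~\ref{def:lowerexponentforlog} with $\alpha=1$, possibly with $\alpha=1-\epsilon$ to absorb constants. Hence $\dim_\nu=\beta/\alpha\le p/(1-\epsilon)$.

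To reach exactly $p$ I would apply the argument at a slightly smaller $p'<p$ with $\theta_{p'}<1$, which exists by continuity of $\theta$, or else absorb $\epsilon$ into the strict inequality $\sum\phi^p<1$. I expect establishing the lower exponent from admissibility, uniformly across levels, to be the main difficulty of this direction.
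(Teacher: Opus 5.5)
Your first direction is essentially the paper's argument. The paper sets $\phi_n(v)=\diam(\til\otimes v)$ for the lifted metric on $\limg$, which equals $\diam\til_v$. It lifts an essential path to the chain $\til\otimes u_1,\ldots,\til\otimes u_{m+1}$, which runs from $\til$ into $\til\cdot(s_m\cdots s_1)|_{v_1}$. It then uses that $\til$ is at distance at least $r_0>0$ from every $\til\cdot h$ with $h\notin\nuke$, so the triangle inequality bounds the \emph{sum} of the diameters below by $r_0$. You therefore need neither your stronger claim about the diameter of the union in $\lims$ nor the detour through $N$-essential paths.

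The second direction has a genuine gap in the construction of $\nu$. The function $\nu(v)=-\log\min_{s\in\nuke}\Phi(s(v))$, with $\Phi$ multiplicative, is only subadditive. The proof of Lemma~\ref{lem:productadmissible} gives the one-sided bound $\hat\Phi(uw)\ge\hat\Phi(u)\min_{s\in\nuke}\Phi(s|_u(w))\ge\hat\Phi(u)\hat\Phi(w)$ and nothing in the other direction. In general $\nu$ is neither a quasi-morphism nor $G$-quasi-invariant: nucleus elements rewrite arbitrarily long blocks of letters, while you allow only one twist at the top. For the binary odometer $a=(01)(1,a)$ with $\phi(0)<\phi(1)$ and $v=01^k0$, a direct computation gives
\[\nu(a(v))-\nu(v)=\nu(0)+\nu(1^k0)-\nu(01^k0)=(k-1)\log\frac{\phi(1)}{\phi(0)},\]
which is unbounded in $k$.

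The paper's missing idea is to allow a twist at every level at a fixed cost. It gives the vertical edges of the self-similarity complex length $-\log\phi(x)$ and the horizontal (nucleus) edges a large length $M$, and sets $\nu(v)=d_{\Sigma_\phi}(v,\emptyset)$. Quasi-invariance is then the triangle inequality, and the quasi-morphism property follows from the shape of geodesics (Lemma~\ref{lem:geodesicsinSigmaphi}). The same structure gives the two local inequalities the rest depends on: $\nu(xv)\le-\log\phi(x)+\nu(v)$, and $\nu(xv)-\nu(v)\ge-\log\phi(s(x))$ for some $s\in\nuke$. The second only yields $\sum_x e^{-p(\nu(xv)-\nu(v))}\le|\nuke|\sum_x\phi(x)^p$. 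So Proposition~\ref{pr:entropycriterion} needs $\sum_x\phi(x)^p<(2|\nuke|)^{-1}$, not merely $<1$; this is harmless since $M_p(n)\to0$.

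Your lower-exponent step also does not follow from multiplicativity. Admissibility controls chains of tiles of a single level, but the chains defining a visual metric mix tiles of different levels. The paper handles this in Proposition~\ref{prop:exponetiated} by shortening chains. It bounds the length of constant-level segments, removes segments adjacent to two shorter words, and compares the endpoints of the resulting valley-shaped chain with $\ell_\nu$ via Proposition~\ref{pr:sumgeometric}. For this it uses the two-sided class $\mathcal{B}$, i.e.\ both \eqref{eq:Lhcond1} and \eqref{eq:Lhcond2}, rather than $\widehat{\mathcal{A}}$. It also uses the bound $\hat\delta(x_iv_i)\ge\hat\phi(x_i)\hat\delta(v_1)$, which comes from the first local inequality above. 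The result is a visual metric of exponent exactly $1$, so the dimension bound $\beta\le p$ follows directly.
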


\begin{proof}
Let us prove the first statement of the theorem.
By Proposition~\ref{pr:diamoftiles} there exist a metric  and a measure $\mu$ on $\lims$, and a constant $C>1$ such that \[\diam(\til_v)^\alpha\asymp_C \mu(\til_v)\] for all $v\in\xs$. Consider the lifts of this metric and the measure $\mu$ to $\limg$, as described in Subsection~\ref{ss:liftingmm}.

Then there exists $C_1>1$ such that
\[\mu(\til\otimes v)\asymp_{C_1}\diam(\til\otimes v)^\alpha\]
for all $v\in\bim^*$.

Let $r_0>0$ be less than distance between any two tiles $\til$ and $\til\cdot g$ for $g\notin\nuke$. It exists, since any such tiles are disjoint.

Define $\phi_n\colon \alb^n\arr\R$ by $\phi_n(v)=\diam(\til\otimes v)$. Consider an edge-path $\gamma=((s_1, v_1), (s_2, v_2), \ldots, (s_m, v_m))$ in $\Gamma_n$. Consider its lift to the graph of $\bim^{\otimes n}$ starting in $v_1$. It passes through the vertices $u_1=v_1, u_2=s_1\cdot u_1, u_3=s_2\cdot u_2, \ldots, u_{m+1}=s_m\cdot u_m$. We have $u_{m+1}=(s_m\cdots s_1(v_1))\cdot (s_m\cdots s_1)|_{v_1}$. In particular, $\til\otimes v_1\subset\til$ and $\til\otimes u_m\subset\til\cdot (s_m\cdots s_1)|_{v_1}$. It follows that distance between any two points $\xi\in\til\otimes v_1$ and $\zeta\in\til\otimes u_{m+1}$ is more than $r_0$, if $(s_m\cdots s_1)|_{v_1}\notin\nuke$.

On the other hand, since $u_i=v_i\cdot (s_{i-1}\cdots s_1)|_{v_1}$, we have $\diam(\til\otimes u_i)=\diam(\til\otimes v_i)=\phi_n(v_i)$, so
 $d(\xi, \zeta)\le\sum_{i=1}^{m+1}\diam(\til\otimes u_i)=\sum_{i=1}^{m+1}\phi(v_i)$. 
 
 It follows that $r_0^{-1}\phi_n\colon \alb^n\arr\R$ is an admissible function. Let $p>\alpha$. Then $\sum_{v\in\xs}(\phi_n(v))^p\le\max_{v\in\alb^n}(\phi_n(v))^{p-\alpha}\sum_{v\in\xs}(\phi_n(v))^\alpha\le C_1\max_{v\in\alb^n}(\diam(\til\otimes v))^{p-\alpha}\sum_{v\in\xs}\mu(\til\otimes v)$ for some constant $C_1$. We have $\sum_{v\in\xs}\mu(\til\otimes v)\le|\nuke|\mu(\til)$, since the sets $\til\otimes v$ for $v\in\alb^n$ form a cover of $\til$ of multiplicity at most $|\nuke|$. Since $\max_{v\in\alb^n}\diam(\til\otimes v)\le C_2\lambda^n$ for some $C_2>1$ and $\lambda_0\in (0, 1)$, see Proposition~\ref{pr:diamoftiles}, we conclude that $M_n(p)\le C_3(\lambda_0^{p-\alpha})^n$ for all $n$. Consequently, $\theta_p\le\lambda_0^{p-\alpha}<1$.

Let us prove the second statement of the theorem. The idea is, given $\theta_p<1$,  to construct  a $G$-quasi-invariant quasi-morphism such that the associated visual metric of exponent $1$ exists and has Haudorff dimension equal to an arbitrary number greater than $p$, using the following proposition.

\begin{proposition}
\label{prop:exponetiated}
Let $\nu\colon\xs\arr [0, \infty)$ be a $G$-quasi-invariant quasi-morphism. 

Suppose that for every path $(s_{m-1}, x_{m-1}v_{m-1}), \ldots, (s_2, x_2v_2), (s_1, x_1v_1)$ in $\G_n$, where $x_i\in\alb$ and $v_i\in\alb^{n-1}$, such that the path $(s_{m-1}\cdots s_2s_1)|_{x_1}\ne\nuke$, we have
\[e^{-\nu(x_1v_1)}+e^{-\nu(x_2v_2)}+\cdots+e^{-\nu(x_{m-2}v_{m-2})}>e^{-\nu(v_1)}\]
and
\[e^{-\nu(x_2v_2)}+e^{-\nu(x_3v_3)}+\cdots+e^{-\nu(x_{m-1}v_{m-1})}>e^{-\nu(v_i)}\]
for all $i=2, 3, \ldots, m-1$.

Then there exists a visual metric $d_\nu(\xi_1, \xi_2)\asymp e^{-\ell_\nu(\xi_1, \xi_2)}$ on $\lims$ of exponent $1$.
\end{proposition}

\begin{proof}
Let us denote $\delta(v)=e^{-\nu(v)}$.

We say that a sequence $v_1, v_2, \ldots, v_m\in\xs$ is a \emph{chain} if $\til_{v_i}\cap\til_{v_{i+1}}$ for all $i=1, \ldots, m-1$. Its \emph{mass} is the sum $\delta(v_1)+\delta(v_2)+\cdots+\delta(v_m)$. Its \emph{length} is $m$. We say that the chain \emph{connects} points $\xi_1, \xi_2\in\lims$ if $\xi_1\in\til_{v_1}$ and $\xi_2\in\til_{v_m}$.

Define $d(\xi_1, \xi_2)$ as the infimum of masses of all chains connecting $\xi_1$ to $\xi_2$. The function $d(\xi_1, \xi_2)$ is symmetric, satisfies the triangle ineaquality, and $d(\xi, \xi)=0$ for all $\xi\in\lims$. Therefore, the proof of the proposition will be complete, if we show that $d(\xi_1, \xi_2)\asymp e^{-\ell_\nu(\xi_1, \xi_2)}$.

We may consider, in the definition of $d(\xi_1, \xi_2)$, only chains without repetitions, since otherwise, we can remove a segment of the chain between the repetitions, and get a chain of smaller mass and length, connecting the same points.

Let $u=x_k\ldots x_2x_1$ and $v=y_l\ldots y_2y_1$ for $k\ge l$. 
Suppose that $x_1v_1, x_2v_2, \ldots, x_mv_m\in\alb^n$ is a chain consisting of words of equal lengths, where $x_i\in\alb$ and $v_i\in\alb^{n-1}$. Let $s_i\in\nuke$ be such that $s_i(x_iv_i)=x_{i+1}v_{i+1}$. We get a path $(s_{m-1}, x_{m-1}v_{m-1}), \ldots, (s_2, x_2v_2), (s_1, x_1v_1)$ in $\G_n$.

Similarly to the proof of Lemma~\ref{lem:productadmissible}, find a sequence $k_0=1<k_1<k_2<\ldots<k_l\le m$ such that each path $(s_{k_{i+1}-1}, x_{k_{i+1}-1}), \ldots, (s_{k_{i+1}}, x_{k_i+1}), (s_{k_i}, x_{k_i})$ from $x_{k_i}$ to $x_{k_{i+1}}$ in $\G_1$ is almost essential but not essential. The last segment connecting $x_{k_l}$ to $x_m$ is not almost essential, if $k_l<m$.

By the condition of the proposition, we have
\[\delta(x_{k_i}v_{k_i})+\delta(x_{k_i+1}v_{k_i+1})+\cdots+\delta(x_{k_{i+1}-1}v_{k_{i+1}-1})>\delta(v_{k_i}).\]

Note that the path from $x_{k_i}$ to $x_{k_{i+1}}$ is not essential (if it is, then the path obtained by removing the last vertex is almost essential), i.e., 
$(s_{k_{i+1}}\cdots s_{k_i+1}s_{k_i})|_{x_{k_i}}\in\nuke$. We have $(s_{k_{i+1}}\cdots s_{k_i+1}s_{k_i})|_{x_{k_i}}(v_{k_i})=v_{k_{i+1}}$, hence $\til_{v_{k_i}}$ and $\til_{v_{k_{i+1}}}$ are adjacent. Note that $\til_{v_{k_i}}\supset \til_{x_{k_i}v_{k_i}}$ and $\til_{v_{k_{i+1}}}\supset \til_{x_{k_{i+1}}v_{k_{i+1}}}$

We see that if we replace in the the chain $x_1v_1, \ldots, x_mv_m$ the initial segment $x_1v_1, \ldots, x_{k_{l-1}}v_{k_{l-1}}$ by $v_1, v_{k_1}, \ldots, v_{k_{l-1}}$, we get a chain of smaller mass and length. All paths $(s_j, x_j), \ldots, (s_{k_{l-1}+1}, x_{k_{l-1}+1}, (s_{k_{l-1}}, x_{k_{l-1}})$ for $k_{l-1}\le j<k_l$, and all paths $(s_j, x_j), \ldots, (s_{k_l+1}, x_{k_l+1}), (s_{k_l}, x_{k_l})$ for $k_l\le j<m$ are not essential.
It follows that $x_jv_j\in\alb\cdot \nuke(v_{k_{l-1}})$ for $k_{l-1}\le j<k_l$ and $x_jv_j\in\alb\cdot\nuke(v_{k_l})$ for $k_l\le j<m$.

Since we assume that the chain has no repetitions, the length of the segment $x_{k_{l-1}+1}v_{k_{l-1}+1}, \ldots, x_mv_m$ is less than $2|\alb|\cdot|\nuke|$.
Consequently, we may consider only chains such that the length of its segments consisting of words of constant length is at most $2|\alb|\cdot|\nuke|$.

Suppose now that we have a chain $v_0, x_1v_1, \ldots, x_mv_m, v_{m+1}$, where $|v_1|=|v_2|=\ldots=|v_m|=n$ and $|v_0|, |v_{m+1}|\le n$. Then there exist elements $s_0, s_1, \ldots, s_m\in\nuke$ such that $s_i(x_iv_i)=x_{i+1}v_{i+1}$ for $i=1, 2, \ldots, m$, the word $v_{m+1}$ is a suffix of $s_m(x_mv_m)$, and $v_0$ is a suffix of $s_0^{-1}(x_1v_1)$, see Lemma~\ref{lem:intersectingtiles}. Let $x_0, x_{m+1}\in\alb$ and $v_0', v_{m+1}'\in\alb^n$ be such that $x_0v_0'=s_0^{-1}(x_1v_1)$ and $x_{m+1}v_{m+1}'=s_m(x_mv_m)$. Then $v_0$ and $v_{m+1}$ are suffixes of $v_0'$ and $v_{m+1}'$, respectively.

If the path $(s_m, x_m), \ldots (s_1, x_1), (s_0, x_0)$ is not essential, then $s(v_0')=v_{m+1}'$ for $s=(s_m\cdots s_1s_0)|_{x_0}\in\nuke$, hence $\til_{v_0}\cap \til_{v_{m+1}}\ne\emptyset$, so we can delete the segment $x_1v_1, \ldots, x_mv_m$ from the chain, and get a shorter chain of smaller mass, connecting the same points.

Suppose that the path $(s_m, x_m), \ldots, (s_1, x_1), (s_0, x_0)$ is essential. Find its shortest almost essential beginning $(s_{k-1}, x_{k-1}), \ldots, (s_1, x_1), (s_0, x_0)$ connecting $x_0$ to $x_k$ for $k\le m$. Let $(s, x_k), (s_{k-1}, x_{k-1}), \ldots, (s_1, x_1), (s_0, x_0)$ be its essential continuation by one edge. Then, by the condition of the proposition, we have 
\[\delta(v_k)<\delta(x_1v_1)+\delta(x_2v_2)+\cdots+\delta(x_kv_k).\]
Since the path from $x_0$ to $x_k$ is not essential, the tiles $\til_{v_0'}$ and $\til_{v_k}$ intersect, henc $\til_{v_0}$ and $\til_{v_k}$ intersect. We can replace the segment $x_1v_1, \ldots, x_kv_k$ by $v_k$, and get a shorter chain of smaller mass. 

Thus, we can shorten the chain to get a chain of smaller mass in both cases. 
It follows that $d(\xi_1, \xi_2)$ is equal to the infimum of the masses of chains connecting $\xi_1$ to $\xi_2$ such that the segments of the chain consisting of words of constant length have at most $2|\alb|\cdot|\nuke|$ elements and 
are not adjacent to two shorter words.

Suppose that $v_1, v_2, \ldots, v_m$ is such a chain. Let $l$ be an index such that $|v_l|$ is minimal. Then $|v_i|$ is non-increasing for $i\le l$ and non-decreasing for $i\ge l$. It follows that there exist $s_i, r_i\in\nuke$ and $u_i\in\xs$ such that if  $v_{i-1}=s_i(u_iv_i)$ for all $i\le l$, and $v_{i+1}=r_i(u_iv_i)$ for all $i\ge l$. Therefore,
\begin{align*}v_1&=s_2\left(u_2 s_3\left(u_3\ldots s_l\left(u_lv_l\right)\ldots \right)\right)\\
v_m&=r_{m-1}\left(u_{m-1}r_{m-2}\left(u_{m-2}\ldots r_l\left(u_lv_l\right)\ldots\right)\right),
\end{align*}
where no more than $2|\alb|\cdot|\nuke|$ of consecutive words in the sequence $u_i$ can be empty.

Applying Proposition~\ref{pr:sumgeometric} to the set $A=\nuke^{2|\alb|\cdot|\nuke|}$, we find a finite set $\nuke_A$, depending only on $(G, \bim)$ and $\alb$ such that the above equalities imply
\[v_1=w_1 h_1(v_l),\qquad v_m=w_2 h_2(v_l),\]
for some $w_i\in\xs$ and $h_1, h_2\in\nuke_A$. 
The element $h_2h_1^{-1}$ maps the suffix $h_1(v_l)$ of $v_1$ to the suffix $h_2(v_l)$ of $v_m$.

Let $t$ be such that $(\nuke_A\cdot \nuke_A^{-1})|_{\alb^t}\subset\nuke$, and let $v_1', v_m'$ be the suffixes of $v_1, v_m$, respectively, of length $|v_l|-t$. Then there is an element of the nucleus mapping $v_1'$ to $v_m'$. We have $\til_{v_1'}\supset \til_{v_1}$, $\til_{v_m'}\supset\til_{v_m}$, and $\til_{v_1'}\cap\til_{v_2'}\ne\emptyset$. We get a chain connecting $\xi_1$ to $\xi_2$ of mass $\delta(v_1')+\delta(v_2')$. Since $\nu$ is a $G$-quasi-invariant quasi-morphism, and $v_i'$ are images of suffixes of length $|v_l|-t$ of $v_l$ under elements of $\nuke$, there exists a constant $C>0$ such that $\delta(v_i')\le C_1\delta(v_l)$. 

Since $\xi_1\in\til_{v_1}\subset\til_{v_1'}$ and $\xi_2\in\til_{v_m}\subset\til_{v_m'}$, the points $\xi_1, \xi_2$ can be represented by left-infinite sequences with suffixes $v_1'$ and $v_m'$, respectively. Since they are images of each other under the action of elements of the nucleus, $\ell_\nu(\xi_1, \xi_2)\ge\nu(v_1')-C$ for some constant $C_2\ge 0$.

The mass of the original chain $v_1, \ldots, v_m$ is greater or equal to $\delta(v_l)\ge C_1^{-1}(\delta(v_1')+\delta(v_2'))\ge C_1\delta(v_1')\ge C_3e^{-\ell_\nu(\xi_1, \xi_2)}$, which finishes the proof.
\end{proof}

After passing to an iterate $(G, \bim^{\otimes n})$, we may assume that the basis $\alb$ is such that $(\nuke^2)|_{\alb}=\nuke$.

Consider the class $\mathcal{B}$  of functions $\phi\colon\alb^n\arr [0, \infty)$ satisfying the following condition. For every essential path in $\G_{n, \nuke}$ passing through vertices $v_1, v_2, \ldots, v_m$, we have
\begin{equation}
\label{eq:Lhcond1}
\hat\phi(v_1)+\hat\phi(v_2)+\cdots+\hat\phi(v_{m-2})>1
\end{equation}
and
\begin{equation}
\label{eq:Lhcond2}
\hat\phi(v_2)+\hat\phi(v_3)+\cdots+\hat\phi(v_{m-1})>1,
\end{equation}
where, as before, $\hat\phi(v)=\min_{s\in\nuke}\phi(s(v))$.

By the same arguments as in the proof of Lemma~\ref{lem:hatequivalent}, we have $M_{\mathcal{B}, p}(n)\asymp M_{\mathcal{A}, p}(n)$ for the class $\mathcal{A}$ of admissible functions.

Assume that $\theta_p<1$. Then $M_{\mathcal{B}, p}(n)\to 0$ as $n\to\infty$.
Let $\lambda_0$ be a small positive number, which we will choose later. Let $n_0$ be such that $M_{\mathcal{B}, p}(n)<\lambda_0$. Let us pass to  $\alb^{n_0}$, so that we have $M_{\mathcal{B}, p}(1)<\lambda_0$. 
Let $\phi\in\mathcal{B}$ be such that $\sum_{x\in\alb}\phi(x)^p<\lambda_0$.

Note that it follows that $\phi(x)\in [0, 1)$ for every $x\in\alb$. By adding a small constant $\epsilon$ to $\phi$, we may assume that $0<\eta_-\le\phi(x)\le\eta_+<1$ for some constants $\eta_-, \eta_+$ and $\sum_{x\in\alb}\phi(x)^p\le 2\lambda_0<1$. (Provided $\lambda_0<1/2$.)

Consider the metric on the self-similarity complex $\Sigma$ defined by the following lengths of the generators of $\bim$:
\begin{itemize}
    \item $\|x\|=-\log\phi(x)$ for $x\in\alb$,
    \item $\|s\|=M$ for $s\in\nuke$, where $M>2\max\{\|x\|\colon x\in\alb\}$.
\end{itemize}
Let $\Sigma_\phi$ be the obtained metric graph.


The original lengths of paths and distances in $\Sigma$ will be called \emph{combinatorial}.

\begin{lemma}
\label{lem:geodesicsinSigmaphi}
A geodesic path in $\Sigma_\phi$ can not have more than two consecutive horizontal edges, and it has all descending edges appearing before all the ascending ones
\end{lemma}

\begin{proof}
Consider a horizontal path of length $kM$ starting in a vertex $xv\in\xs$ for $x\in\alb$ and corresponding to the word $s_k\cdots s_2s_1$ of elements of $\nuke$. Then $v$ and $(s_k\cdots s_2s_1)|_x(v)$ can be connected by a horizontal path $\gamma$ of combinatorial length at most $\lceil k/2\rceil$. If $k>1$, then $\lceil k/2\rceil\le k-1$. Then the path consisting of the descending edge $(xv, v)$, followed by the path $\gamma$ and then by the ascending edge from $(s_k\cdots s_2s_1)|_x(v)$ to $(s_k\cdots s_2s_1)(v)$ has length less than $M/2+(k-1)M+M/2=kM$, so $\gamma$ is not a geodesic path in $\Sigma_\phi$. This shows that a geodesic path in $\Sigma_\phi$ can not have two consecutive horizontal edges.

A path consisting of an ascending edge $(v, xv)$, horizontal edge $(xv, s(xv))$, and a descending edge $(s(xv), s|_x(v))$ has length $\|x\|+M+\|s(x)\|>M$. But there is a horizontal edge from $v$ to $s|_x(v)$ (or these vertices coincide), which shows that we can not have an ascending edge before a descending edge in a geodesic in $\Sigma_\phi$.
\end{proof}

Denote by $\nu(v)$, for $v\in\xs$, be the distance in $\Sigma_\phi$ from $v$ to $\emptyset$.

\begin{lemma}
    The function $\nu\colon \xs\arr (0, \infty)$ is a positive $G$-quasi-invariant quasi-morphism.
\end{lemma}

\begin{proof}
Positivity of $\nu$ follows from the fact that $\Sigma_\phi$ is quasi-isometric to $\Sigma$. For every $g\in\nuke$ we have $|\nu(v)-\nu(g(v))|\le\|s\|=M$, by the triangle inequality, hence $\nu$ is $G$-quasi-invariant.

For every $v\in\xs$, the geodesic paths from $\emptyset$ to $v$ in $\Sigma_\phi$ contains only ascending or horizontal edges, since, by Lemma~\ref{lem:geodesicsinSigmaphi}, in any geodesic path all descending edges come before the ascending ones, but there are no descending or horizontal edges starting in $\emptyset$.

Let $\gamma_1$, $\gamma_2$, and $\gamma$ be the geodesic paths starting in $\emptyset$ and ending in $v_1, v_2, v_1v_2$ for some $v_1, v_2\in\xs$. Since all edges of these paths are ascending or horizontal, the labels read along them belong to $\alb\cup\nuke$, and no consecutive labels belong to $\nuke$. Let $\overline w_1, \overline w_2, \overline w$ be the corresponding words in $\alb\cup\nuke$, and let $w_1, w_2, w\in\bim^*$ be the corresponding products of the generators of the semigroup $\bim^*$. We obtain $w_1(\emptyset)=v_1, w_2(\emptyset)=v_2, w(\emptyset)=v_1v_2$. Since $\nuke^2|_\alpha=\nuke$ and no two consecutive generators in the wors $\overline w_i, \overline w$ belong to $\nuke$, we get that $w_1=v_1\cdot h_1, w_2=v_2\cdot h_2$, and $w=v_1v_2\cdot h_3$ for some $h_i\in\nuke^2$.

Let $\gamma_1'$ be the path starting in $h_1^{-1}(v_2)$ and labeled by the same labels as $\gamma_1$. Then it will end in $w_1h_1^{-1}(v_2)=v_1h_1h_1^{-1}(v_2)=v_1v_2$. Since the labels of $\gamma_1'$ and $\gamma_1$ are the same, there lengths are equal. Consequently,
\[\nu(v_1v_2)\le d(v_1v_2, h_1^{-1}(v_2))+d(h_1^{-1}(v_2), v_2)+d(v_2, \emptyset)\le \nu(v_1)+\nu(v_2)+2M.\]

Let $w_1'$ be the beginning of $\overline w$ (seen as an element of $\bim^*$) such that $w_1'\in\bim^{\otimes |v_1|}$. Let $w_2'$ be the rest of $w$, so that $w=w_1'w_2'$. Since $w=v_1v_2\cdot h_3$, and there are no two consecutive elements of $\nuke$ in $\overline w$, we have, for some $h_1'\in\nuke^2$, that $w_1'=v_1\cdot h_1'$. We have $v_1v_2\cdot h_3=v_1\cdot h_1'\cdot (h_1')^{-1}v_2\cdot h_3$. Consequently, $v_2'=(h_1')^{-1}v_2$ is a vertex of $\gamma$ splitting it into a path from $\emptyset$ to $v_2'$ labeled by $w_2'=(h_1')^{-1}v_2\cdot h_3$ and the path from $v_2'$ to $v_1v_2$ labeled by $w_1'=v_1\cdot h_1'$. The length of the first path is greater or equal to the distance from $\emptyset$ to $v_2$ minus $|v_2-(h_1')^{-1}(v_2)|\le 2M$. The length of the second path is the same as the length of the path with the same labels connecting $\emptyset$ to $w_1'(\emptyset)=v_1$, which is greater or equal to $\nu(v_1)$. Consequently,
\[\nu(v_1v_2)\ge \nu(v_2)-2M+\nu(v_1),\]
which finishes the proof that $\nu$ is a quasi-morphism.
\end{proof}

It follows from Lemma~\ref{lem:geodesicsinSigmaphi} that a geodesic path in $\Sigma_\phi$ from $v\in\xs$ to $\emptyset$ either starts with a descending edge, or with a horizontal edge followed by a descending edge. Consequently, for every $xv\in\xs$ there exists $s\in\nuke$ such that the word $yu=s(xv)$ satisfies     
\[\nu(yu)=-\log(\phi(y))+\nu(u).\]
Note that it follows from the triangle inequality that
\[\nu(xv)\le-\log(\phi(x))+\nu(v)\]
for all $x\in\alb$ and $v\in\xs$.

Denote $\delta(v)=e^{-\nu(v)}$. Since $\nu$ is $G$-quasi-invariant, we have $\delta\asymp\hat\delta$.
Let us show that $\hat\delta$ satisfies the conditions of Proposition~\ref{prop:exponetiated}, which will show that there exists a metric $d$ on $\lims$ such that $d(\xi_1, \xi_2)\asymp e^{-\ell_\nu(\xi_1, \xi_2)}$. Consider a path \[(s_{m-1}, x_{m-1}v_{m-1}), \ldots, (s_2, x_2v_2), (s_1, x_1v_1)\] in $\G_{n, \nuke}$ connecting $x_1v_1$ to $x_mv_m$, where $x_i\in\alb$ and $v_i\in\alb^{n-1}$, such that the path $(s_{m-1}\cdots s_2s_1)|_{x_1}\notin\nuke$, as in the conditions of Proposition~\ref{prop:exponetiated}.

Let us prove the first inequality
\[\hat\delta(x_1v_1)+\hat\delta(x_2v_2)+\cdots+\hat\delta(x_{m-2}v_{m-2})>\hat\delta(v_1).\]

Let $(s_{k-1}, x_{k-1}), \ldots, (s_2, x_2), (s_1, x_1)$
be the shortest almost essential beginning of the path. It connects $x_1$ to $x_k$. We have $k<m$, since the path for $k=m-1$ is almost essential.

Let $1\le i\le k-1$, and let $h_i\in\nuke$ be such that $\hat\delta(x_iv_i)=\delta(h_i(x_iv_i))$. Since the beginning of the path connecting $x_1$ to $x_iv_i$ is not almost essential, the section $s=(h_is_i\cdots s_1)|_{x_1}$ belongs to the nucleus, hence we have
\[\hat\delta(x_iv_i)=\delta(h_i(x_iv_i))\ge\phi(h_i(x_i))\delta(s(v_1))\ge\hat\phi(x_i)\hat\delta(v_1).\]

Consider a continuation \[(s, x_k),  (s_{k-1}, x_{k-1}), \ldots, (s_2, x_2), (s_1, x_1)\] of the path to an essential path.

Then $\phi\in\mathcal{B}$ implies that
\begin{multline*}
\hat\delta(x_1v_1)+\hat\delta(x_2v_2)+\cdots+\hat\delta(x_{m-2}v_{m-2})\ge \\
\hat\delta(x_1v_1)+\hat\delta(x_2v_2)+\cdots+\hat\delta(x_{k-1}v_{k-1})\ge\\
\left(\hat\phi(x_1)+\hat\phi(v_2)+\cdots+\hat\phi(x_{k-1})\right)\hat\delta(v_1)>\delta(v_1).
\end{multline*}

Let us prove the second inequality of Proposition~\ref{prop:exponetiated}. Choose $j\in\{2, 3, \ldots, m-1\}$. If there is an almost essential path
\[(s_j, x_j), (s_{j+1}, x_{j+1}), \ldots\]
or an almost essential path
\[(s_{j-1}^{-1}, x_j), (s_{j-2}^{-1}, x_{j-1}), \ldots\] going in the opposite direction, then the same argument as above will finish the proof. If all such paths are not almost essential, then the same arguments as above show that $\hat\delta(x_iv_i)\ge\hat\phi(x_i)\hat\delta(v_j)$ for all $j$, hence
\begin{multline*}
\hat\delta(x_2v_2)+\hat\delta(x_3v_3)+\cdots+\hat\delta(x_{m-1}v_{m-1})\ge\\
\left(\hat\phi(x_2)+\hat\phi(x_3)+\cdots+\hat\phi(x_{m-1})\right)\hat\delta(x_j)>\delta(x_j).
\end{multline*}

Consequently, the function $\hat\delta$ satisfies the conditions of Proposition~\ref{prop:exponetiated}, hence there exists a visual metric associated with $\nu$ of exponent $1$.

\begin{lemma}
For any $x\in\alb$ and $v\in\xs$ there exists $s\in\nuke$ such that 
\[\nu(xv)-\nu(v)\ge -\log\phi(s(x)).\]
\end{lemma}

\begin{proof}
If the geodesic path from $xv$ to $\emptyset$  starts with the vertical edge $(xv, v)$, then
\[\nu(xv)=-\log\phi(x)+\nu(v).\]

Suppose that it starts with a horizontal edge $(xv, s(xv))$ for some $s$. Since we can not have two consecutive horizontal edges, the next edge of the geodesic path is the vertical edge $(s(xv), s|_x(v))$, hence
\[\nu(xv)=M-\log\phi(s(x))+\nu(s|_x(v)).\]
We have a horizontal edge $(v, s|_x(v))$, hence, by triangle inequality,
\[\nu(v)\le M+\nu(s|_x(v)).\]
It follows that
\[\nu(xv)-\nu(v)\ge -\log\phi(s(x)),\]
which finishes the proof.
\end{proof}

It follows that for every $v\in\xs$ we have
\begin{multline*}
\sum_{x\in\alb}e^{-p(\nu(xv)-\nu(v))}\le\sum_{x\in\alb}e^{p\log\phi(s_x(x))}\\
=\sum_{x\in\alb}\phi(s_x(x))^p\le|\nuke|\sum_{x\in\alb}\phi(x)^p<2|\nuke|\lambda_0,
\end{multline*}
for some $s_x\in\nuke$.

By Proposition~\ref{pr:entropycriterion}, the $\nu$-entropy is less than or equal to $p$, if $\lambda_0<2|\nuke|^{-1}$. Since there is a visual metric associated with $\nu$ of exponent $1$, we get the inequality $\cdim(G, \bim)\le p$, finishing the proof.\end{proof}

Theorem~\ref{th:modulus} and Proposition~\ref{pr:asympmodulus} imply the following.

\begin{corollary}
    We have
    \[\cdim (G, \bim)=\inf\{p\colon \theta_p<1\}\] and
    \[\cdim(G, \bim)\le \left(\frac 1p-\frac{\log\theta_p}{\log \mathsf{D}}\right)^{-1}<p\]
for every $p\in (1, \infty]$ such that $\theta_p<1$, where $\mathsf{D}=|\bim/G|$.
\end{corollary}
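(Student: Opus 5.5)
The plan is to combine the two halves of Theorem~\ref{th:modulus} for the equality, and then to use the second inequality of Proposition~\ref{pr:asympmodulus} to push the threshold $p$ downwards.

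For the equality $\cdim(G,\bim)=\inf\{p\colon\theta_p<1\}$ I take both inclusions from Theorem~\ref{th:modulus}.
\begin{itemize}
\item If $p>\cdim(G,\bim)$, the first statement of the theorem gives $\theta_p<1$. Hence every $p>\cdim(G,\bim)$ lies in the set, and the infimum is at most $\cdim(G,\bim)$.
\item Conversely, if $\theta_p<1$, the second statement gives $\cdim(G,\bim)\le p$. Taking the infimum over such $p$ gives the reverse inequality.
\end{itemize}
No monotonicity is needed here, although Proposition~\ref{pr:asympmodulus} shows that the set $\{p\colon\theta_p<1\}$ is an up-ray.

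For the quantitative bound, fix $p\in(1,\infty]$ with $\theta_p<1$ and put
\[q_0=\left(\frac 1p-\frac{\log\theta_p}{\log\mathsf{D}}\right)^{-1}.\]
Since $\log\theta_p<0$ and $\mathsf{D}\ge 2$, the quantity in parentheses is strictly larger than $1/p$. Hence $q_0<p$, which is the strict inequality in the statement; when $p=\infty$ it reads $q_0=\log\mathsf{D}/(-\log\theta_\infty)<\infty$.

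Now take any $q$ with $q_0<q<p$. The second inequality of Proposition~\ref{pr:asympmodulus}, applied to the pair $q<p$, gives $\mathsf{D}^{-1/q}\theta_q\le\mathsf{D}^{-1/p}\theta_p$, that is,
\[\log\theta_q\le\left(\frac 1q-\frac 1p\right)\log\mathsf{D}+\log\theta_p.\]
The right-hand side is negative exactly when $\frac1q<\frac1p-\frac{\log\theta_p}{\log\mathsf{D}}$, that is, when $q>q_0$. So $\theta_q<1$, and Theorem~\ref{th:modulus} gives $\cdim(G,\bim)\le q$. Letting $q\downarrow q_0$ yields $\cdim(G,\bim)\le q_0$.

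The main obstacle is the range of validity of Proposition~\ref{pr:asympmodulus}, which is stated only for $1<p<q\le\infty$. If $q_0\ge1$ there is no issue, since the $q$ used above all lie in $(1,p)$. If $q_0<1$, I would reuse the proof of Proposition~\ref{pr:asympmodulus} for exponents $q\in(0,1]$. That proof compares $\ell^q$ and $\ell^p$ quasi-norms on $\R^{\mathsf{D}^n}$ via H\"older's inequality, and the comparison $\|\phi\|_q\le m^{1/q-1/p}\|\phi\|_p$ on $\R^m$ holds for all $0<q<p$. Since the moduli $\theta_q$ are defined for every $q>0$, the same argument gives $\theta_q<1$, hence $\cdim(G,\bim)\le q$, for all $q\in(q_0,p)$. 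Alternatively, one can simply note that the stated bound is only being claimed for $q_0$ in the range where the proposition applies.
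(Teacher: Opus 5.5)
Your proof is correct and follows the paper's route: the paper obtains this corollary directly from Theorem~\ref{th:modulus} together with the H\"older-type inequality $\mathsf{D}^{-1/q}\theta_q\le\mathsf{D}^{-1/p}\theta_p$ of Proposition~\ref{pr:asympmodulus}, exactly as you do. You also extend that inequality to exponents $q\in(0,1]$ via the same $\ell^q$--$\ell^p$ comparison, which is a valid way to handle the case $q_0<1$ that the paper passes over silently. Your ``alternatively'' remark is not accurate, however, since the statement asserts the bound for every $p\in(1,\infty]$ with $\theta_p<1$, whether or not $q_0\ge 1$.
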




\section{Contraction coefficients}

\subsection{Definition}

Let $(G, \bim)$ be an infinite finitely generated (not necessarily faithful) self-similar group. Denote by $l(g)$ the length of an element of $G$ with respect to a fixed generating set. Let $\alb$ be a basis of $\bim$.

For every $p\in (0, \infty)$, define
\[\eta_p(G, \bim)=\lim_{n\to\infty}\left(\limsup_{l(g)\to\infty}\frac{\left(\sum_{v\in\alb^n}l(g|_v)^p\right)^{1/p}}{l(g)}\right)^{1/n}\]
and for $p=\infty$, define
\[\eta_\infty(G, \bim)=\lim_{n\to\infty}\left(\limsup_{l(g)\to\infty}\frac{\max_{v\in\alb^n}l(g|_v)}{l(g)}\right)^{1/n}.\]

If $G$ is not finitely generated, then we define $\eta_p(G, \bim)=\eta_p(\langle\nuke\rangle, \bim)$, where $\nuke$ is the nucleus. This does not change the definition for finitely generated groups.

The following properties of the numbers $\eta_p$ are proved in~\cite[Proposition~4.3.15]{nek:dyngroups}.

\begin{proposition}
The limit $\eta_p=\eta_p(G, \bim)$ exists and does not depend on the choices of the generating set of $G$ or of the basis $\alb$.  We have $\eta_\infty\ge\mathsf{D}^{-1}$. For all $0<p<q\le\infty$ we have
\[\eta_p\ge\eta_q,\qquad \mathsf{D}^{-1/p}\eta_p\le\mathsf{D}^{-1/q}\eta_q,\]
for $\mathsf{D}=|\bim/G|$.
\end{proposition}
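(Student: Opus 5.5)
The plan is to reduce everything to submultiplicativity, which gives existence of the limit, and to elementary norm comparisons, which give the inequalities. Set
\[a_n(p)=\limsup_{l(g)\to\infty}\frac{\|(l(g|_v))_{v\in\alb^n}\|_p}{l(g)},\]
where $\|\cdot\|_p$ is the $\ell^p$ norm on $\R^{\alb^n}$ (for $p<1$ this is only a quasi-norm, but homogeneity is all we need).

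\emph{Existence of the limit.} I will show that there is a constant $C$ with $a_{n+m}(p)\le C\,a_n(p)a_m(p)$; Fekete's lemma applied to $\log(Ca_n)$ then gives existence of $\lim a_n^{1/n}$.

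1. Write $g|_{uw}=(g|_u)|_w$ for $u\in\alb^n$ and $w\in\alb^m$. Then
\[\sum_{u,w}l(g|_{uw})^p=\sum_u\sum_w l\bigl((g|_u)|_w\bigr)^p .\]
2. For each $u$, the inner sum is bounded by $(a_m+\epsilon)^p\,l(g|_u)^p$ once $l(g|_u)$ is large.
3. When $l(g|_u)$ is bounded by some $R$, the inner sum is bounded by a constant $K_R$ depending only on $R$ and $m$.
4. Dividing by $l(g)^p$ and letting $l(g)\to\infty$ makes these bounded terms negligible. The catch is that there are $|\alb|^n$ of them, but $n$ is fixed during the limsup, so this is harmless.
5. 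Also $l(g|_v)\le C'l(g)$, since sections of generators have bounded length. This gives $a_n\le (C')\,|\alb|^{n/p}$, so all $a_n$ are finite.

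This yields $a_{n+m}\le a_m a_n$ up to $\epsilon$. One subtlety: if $a_n=0$ the argument needs care, but the ratio is bounded below because $l(g)\le \mathsf{D}\max_v l(g|_v)+c$. I expect this is handled as in the finiteness bounds.

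\emph{Independence of choices.} Changing the generating set multiplies $l$ by bounded factors, which disappear under the $n$-th root. Changing the basis alters sections by $h_2=f_1^{-1}h_1f_2$ with $f_i$ in a finite set, by Proposition~\ref{pr:sumgeometric}. This changes lengths by an additive constant, which is negligible as $l(g)\to\infty$ except for sections of bounded length; those are treated as in step 4.

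\emph{The bound $\eta_\infty\ge\mathsf{D}^{-1}$.} The key step is the lower bound
\[l(g)\le C\sum_{v\in\alb^n}l(g|_v)+K_n .\]
To get it, express $g$ through its wreath recursion: $g$ is obtained from its sections and a permutation of the $\mathsf{D}^n$ vertices of $\alb^n$, using $g=\sigma\cdot\prod_v(\text{conjugated }g|_v)$ in $G$ via the biset. Concretely, $g\cdot v=g(v)\cdot g|_v$, and writing each $x$ as a biset element, $l(g)$ is at most the sum of section lengths times a constant, plus a constant $K_n$. Then
\[\max_v l(g|_v)\ge \mathsf{D}^{-n}\sum_v l(g|_v)\ge \mathsf{D}^{-n}C^{-1}l(g)-\text{const},\]
and taking $n$-th roots gives $\eta_\infty\ge\mathsf{D}^{-1}$.

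I expect this step to be the main obstacle. The multiplicative constant must not grow with $n$, or must grow only subexponentially. I would try to establish it by iterating the one-level inequality $l(g)\le C_1\sum_{x\in\alb}l(g|_x)+K$, which itself comes from expressing the generator $g$ through a fixed finite decomposition in the biset. The difficulty is that iterating this over $n$ levels produces $C_1^n$, which spoils the bound, so I would need a sharper argument that avoids the exponential loss.

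\emph{The inequalities between $\eta_p$ and $\eta_q$.} These follow from the standard norm comparisons on $\R^{\mathsf{D}^n}$:
\[\|x\|_q\le\|x\|_p\le \mathsf{D}^{n(1/p-1/q)}\|x\|_q .\]
Applying these before the limsup and taking $n$-th roots gives
\[\eta_q\le\eta_p\le \mathsf{D}^{1/p-1/q}\eta_q .\]
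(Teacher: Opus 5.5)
Your submultiplicativity argument ($a_{n+m}\le a_na_m$ followed by Fekete's lemma) is fine. So is the independence of the generating set, and so are the comparison inequalities from $\|x\|_q\le\|x\|_p\le\mathsf{D}^{n(1/p-1/q)}\|x\|_q$ on $\R^{\alb^n}$. The paper itself gives no proof here; it defers to an earlier reference. Two small remarks. First, Fekete's lemma tolerates $a_n=0$, so your unproved claim $l(g)\le\mathsf{D}\max_v l(g|_v)+c$ is not needed. Second, for a change of basis you should not invoke Proposition~\ref{pr:sumgeometric}, which assumes contraction. At a fixed level $n$ the two families of sections differ by conjugation by elements of a finite set, and the resulting additive error, depending only on $n$, disappears as $l(g)\to\infty$.

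The genuine gap is $\eta_\infty\ge\mathsf{D}^{-1}$, and it cannot be closed the way you propose. Suppose $l(g)\le C_n\sum_{v\in\alb^n}l(g|_v)+K_n$ held for \emph{all} $g$, with $C_n$ subexponential in $n$. Then $a_n(1)\ge C_n^{-1}$, hence $\eta_1\ge1$. But the paper's Grigorchuk group example has $\eta_1\le\eta\approx0.80$. So the exponential loss you noticed is real, and no sharper bookkeeping removes it. Moreover, the step $\max_v\ge\mathsf{D}^{-n}\sum_v$ can only give $\eta_\infty\ge\mathsf{D}^{-1}\eta_1$. That is just the case $p=1$, $q=\infty$ of the comparison inequality, and it is too weak: for the Grigorchuk group $\eta_\infty=\frac12$, while $\mathsf{D}^{-1}\eta_1<0.41$.

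What is missing is an argument that uses the $\limsup$, producing \emph{specific} elements with a large section rather than bounding all elements. If $G$ is not contracting, then $\eta_\infty\ge1$ and there is nothing to prove. If $G$ is contracting, proceed as follows.
\begin{enumerate}
\item Fix $v\in\alb^n$ and set $B_k=\{(h(v),h|_v)\colon h\in\nuke^k\}$. Since $1\in\nuke$, these sets increase. Since $B_{k+1}$ is determined by $B_k$, they increase strictly until they stabilize.
\item $\langle\nuke\rangle$ is infinite. Otherwise all sections of $G$ at some fixed level would lie in a finite group, and faithfulness would make $G$ finite.
\item The level-$n$ stabilizer of $\langle\nuke\rangle$ embeds into $\prod_{v\in\alb^n}\langle\nuke\rangle$ via $h\mapsto(h|_v)_v$. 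Hence for some $v$ the union of the $B_k$ is infinite, so $B_k$ leaves $\alb^n\times\nuke$ within $|\nuke|\mathsf{D}^n$ steps.
\item In the notation of the last section this says $L(n)<|\nuke|\mathsf{D}^n$, so $\theta_\infty=\lim L(n)^{-1/n}\ge\mathsf{D}^{-1}$.
\item The depth argument there converts this into $\eta_\infty=\theta_\infty\ge\mathsf{D}^{-1}$.
\end{enumerate}
Faithfulness is genuinely needed. For a biset on which $G$ acts trivially from the left, all sections are trivial and $\eta_\infty=0$.
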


Define
\[\ldim(G, \bim)=\inf\{p\colon\eta_p(G, \bim)<1\}.\]

\begin{proposition}
\label{pr:etatheta}
Let $(G, \bim)$ be a contracting (not necessarily faithful) group. Then we have
\[\eta_p(G, \bim)\le\theta_p(G, \bim)\]
for every $p\in (1, \infty]$, and therefore,
\[\ldim(G, \bim)\le\cdim(G, \bim).\]
\end{proposition}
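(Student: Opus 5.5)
We will show that for every admissible function $\phi\colon\alb^n\arr[0,\infty)$ (for the nucleus graph $\G_{n,\nuke}$, vertex version) and every $g\in G$ of large length, the $\ell^p$-norm of the sections $(l(g|_v))_{v\in\alb^n}$ is controlled by $\|\phi\|_p\cdot l(g)$ up to a constant independent of $n$ and $g$. Taking infima over $\phi$, then $\limsup_{l(g)\to\infty}$, $n$-th roots, and $n\to\infty$ then gives $\eta_p\le\theta_p$. The inequality $\ldim\le\cdim$ follows from Theorem~\ref{th:modulus}: if $p>\cdim$ then $\theta_p<1$, hence $\eta_p<1$.

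\textbf{Key estimate.} We take $\nuke$ (which generates the group $\langle\nuke\rangle$ used in the definition of $\eta_p$) as the generating set. Write $g=s_L\cdots s_1$ with $s_i\in\nuke$ and $L=l(g)$. For each $v\in\alb^n$, the word gives a path $\gamma_v$ in $\G_n$ with vertices $v, s_1(v), s_2s_1(v),\ldots$, and by~\eqref{eq:sections} the product of its labels is $g|_v=s_L|_{\cdots}\cdots s_1|_v$. We cut $\gamma_v$ greedily into consecutive maximal non-essential segments, in the style of the proof of Lemma~\ref{lem:productadmissible}. Each segment has section in $\nuke$, and each segment plus one more edge is essential, so each segment plus its next vertex has $\phi$-sum at least $1$. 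If there are $k_v$ segments, then $g|_v$ is a product of $k_v$ elements of $\nuke$, so
\[
l(g|_v)\le k_v\le 1+\sum_{i=0}^{L}\phi\bigl(s_i\cdots s_1(v)\bigr)\cdot c,
\]
where $c$ accounts for counting each vertex in at most two consecutive blocks (it is enough to use the endpoint overlap once).

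\textbf{Summation.} Set $F(v)=\sum_{i=0}^{L}\phi(h_i(v))$ with $h_i=s_i\cdots s_1$. By Minkowski's inequality in $\ell^p(\alb^n)$, and since each $h_i$ permutes $\alb^n$,
\[
\|F\|_p\le\sum_{i=0}^{L}\|\phi\circ h_i\|_p=(L+1)\|\phi\|_p .
\]
Hence
\[
\Bigl(\sum_v l(g|_v)^p\Bigr)^{1/p}\le \mathsf{D}^{n/p}+c(L+1)\|\phi\|_p .
\]
Dividing by $L$ and letting $L\to\infty$, the constant term disappears, leaving
\[
\limsup_{l(g)\to\infty}\frac{\bigl(\sum_v l(g|_v)^p\bigr)^{1/p}}{l(g)}\le c\,M_p(n)^{1/p}.
\]
Taking $n$-th roots gives $\eta_p\le\theta_p$. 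For $p=\infty$ the same argument applies with the max-norm: $\max_v F(v)\le (L+1)\max\phi$.

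\textbf{Main obstacle.} The delicate point is the bookkeeping in the segment decomposition. Admissibility only says that an essential path has $\phi$-sum at least $1$, so we must check that each maximal non-essential block, extended by one edge, is essential and that these extensions overlap only in single vertices. This gives $k_v-1\le 2F(v)$. The surplus $k_v$ is absorbed into the additive $\mathsf{D}^{n/p}$ term, which vanishes after dividing by $l(g)\to\infty$ for fixed $n$. One must also confirm that the constant $c$ is independent of $n$, which it is, since it depends only on the overlap count.
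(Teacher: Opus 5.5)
Your proof is correct and is essentially the paper's argument: you cut the path of a geodesic word for $g$ into blocks determined by essentiality, charge one unit of $\phi$-mass to each block, bound $l(g|_v)$ by $1$ plus a constant times the $\phi$-mass of the path, and apply Minkowski to the level-permuting translates before dividing by $l(g)$. The paper does this with strongly admissible edge functions and almost essential segments, while you use ordinary vertex-admissible functions with non-essential blocks extended by one vertex, which has the small advantage of working directly with the class whose modulus defines $\theta_p$.

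One bookkeeping correction: consecutive extended blocks share two vertices (the cut vertex and the one after it), and when blocks are single edges a vertex can lie in three extended blocks, so the honest bound is $k_v-1\le 3F(v)$ rather than $2F(v)$. This is harmless, since only the independence of $c$ from $n$ and $g$ is used.
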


\begin{proof}
Recall that we call a path in the graph $\G_n$ almost essential if it can be continued by one edge to an essential path. We say that a function is \emph{strongly admissible} if the sum of its values along any almost essential path is greater than 1.

Let $\phi_0\colon S \times\alb^n\arr[0, \infty)$ be an $(S, \nuke)$-strongly admissible function, where $S$ is a generating set of $G$.  We assume that $\nuke\subset S$.

 Consider a product $g=s_1s_2\cdots s_m$ of elements of $S$ such that $m=l(g)$. Consider the corresponding path in $\G_n$ starting in $v$. It has length $\phi(g, v)=\sum_{i=1}^m \phi(s_i, s_{i+1}\cdots s_m(v))$. Split it into maximal almost essential segments (the last segment is non-essential, but not necessarily almost essential). Suppose that there are $k$ of them. Then $l(g|_v)\le k+1$. Since the $\phi$-length of each of the segments is $\ge 1$, we get that $\phi$-length of the path is at least $k\ge l(g|_v)-1$. 

Then, using the triangle inequality for the $\ell^p$-norms of the vectors
\[\vec F_i=\left(\phi(s_i, s_{i+1}\cdots s_m(v))\right)_{v\in\alb^n}\in\R^{\alb^n}\] and $\vec U=(1)_{v\in\alb^n}$, we get
\begin{multline*}\left(\sum_{v\in\alb^n}l(g|_v)^p\right)^{1/p} \le\left(\sum_{v\in\alb^n}(\phi(g, v)+1)^p\right)^{1/p}=\\
\left(\sum_{v\in\alb^n}\left(\sum_{i=1}^m\phi(s_i, s_{i+1}\cdots s_m(v))+1\right)^p\right)^{1/p}=\\
\|\vec F_1+\cdots+\vec F_m+\vec U\|_p\le\|\vec F_1\|_p+\cdot\|\vec F_m\|_p+|\alb|^{n/p}=\\
\sum_{i=1}^m\left(\sum_{v\in\alb^n}\phi(s_i, v)^p\right)^{1/p}+|\alb|^{n/p}\le l(g)\cdot\left(\sum_{v\in\alb^n, s\in S}\phi(s, v)^p\right)^{1/p}+|\alb|^{n/p}.
\end{multline*}

By varying $\phi_0$, we get:
\[\limsup_{l(g)\to\infty}\frac{\left(\sum_{v\in\alb^n}l(g|_v)^p\right)^{1/p}}{l(g)}\le CM_p(n)^{1/p},\]
where $C$ does not depend on $n$. Taking the $n$th root and passing to the limit, we get $\eta_p\le\theta_p$.
\end{proof}







\subsection{Portraits of group elements}

Denote, for $g\in G$, by $\mathcal{L}(g)$ the set of words $v\in\xs$ such that $g|_v\notin\nuke$. It is a finite prefix-closed subset of $\xs$.

\begin{proposition}
\label{pr:sizeofportrait}
Let $p>\ldim(G, \bim)$. Then there exists a constant $C>0$ such that for every $g\in G$ we have 
\[|\mathcal{L}(g)|\le Cl(g)^p.\]
\end{proposition}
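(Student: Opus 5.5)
Since $p>\ldim(G,\bim)$, we have $\eta_p<1$. By the definition of $\eta_p$ as a limit, we can fix $n$ and $\lambda<1$ such that
\[\Bigl(\sum_{v\in\alb^n}l(g|_v)^p\Bigr)^{1/p}\le \lambda\, l(g)\]
for all $g$ with $l(g)\ge L_0$, for some constant $L_0$. Raising to the $p$th power gives $\sum_{v\in\alb^n}l(g|_v)^p\le\lambda^p l(g)^p$. We may also assume that all elements of length less than $L_0$ have sections in $\nuke$ below some uniform depth $k_0$, because there are finitely many such elements and $G$ is contracting.

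The plan is to count $\mathcal{L}(g)$ level by level in blocks of $n$. Define $F(g)=\sum_{j\ge0}\sum_{v\in\alb^{jn}}\mathbf{1}[\,l(g|_v)\ge L_0\,]\,l(g|_v)^p$; equivalently, write the recursion $\Phi(g)=l(g)^p+\sum_{v\in\alb^n}\Phi(g|_v)$ for $l(g)\ge L_0$, and $\Phi(g)=0$ otherwise.

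First I will show by induction on $l(g)$ that $\Phi(g)\le (1-\lambda^p)^{-1}l(g)^p$. This works because the sections are eventually shorter: $l(g|_v)\le\lambda l(g)<l(g)$ when $l(g)\ge L_0$. The inductive bound is then
\[l(g)^p+(1-\lambda^p)^{-1}\sum_v l(g|_v)^p\le l(g)^p\bigl(1+\lambda^p(1-\lambda^p)^{-1}\bigr)=(1-\lambda^p)^{-1}l(g)^p.\]
Since $l(g|_v)\ge L_0\ge1$ at every counted node, $\Phi(g)$ also bounds the number of words $v\in\alb^{jn}$ with $l(g|_v)\ge L_0$. Call these the long nodes.

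Next I bound $\mathcal{L}(g)$ by the long nodes. Take $w\in\mathcal{L}(g)$, so $g|_w\notin\nuke$, and write $w=vu$ with $|v|$ a multiple of $n$ and $|u|<n$. Because $\mathcal{L}(g)$ is prefix-closed, the word $v$ also lies in $\mathcal{L}(g)$. Two cases then arise. In the first, $v$ is a long node (or $v=\emptyset$); each such $v$ contributes at most $|\alb|^{<n}$ words $w$. In the second, $v$ is not long, so $l(g|_v)<L_0$. Let $v'$ be the longest prefix of $v$ of length divisible by $n$ that is a long node. Each step of length $n$ beyond $v'$ goes to an element of length less than $L_0$, and the length of $g|_w$ grows by at most a bounded factor per level. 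Hence $g|_{v'x}$ has bounded length for every $x\in\alb^n$. By the choice of $k_0$, the descendants of $v'$ that are not in the nucleus lie within depth $n+k_0$ of $v'$. So each long node accounts for at most $|\alb|^{n+k_0+1}$ elements of $\mathcal{L}(g)$.

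Combining the two steps gives
\[|\mathcal{L}(g)|\le C'(\Phi(g)+1)\le C\,l(g)^p.\]
The case where $g$ itself is short is handled by the constant $C$.

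The main obstacle is the bookkeeping between the threshold $L_0$ and the nucleus. The estimate from $\eta_p$ holds only for long elements, and short elements need not lie in $\nuke$. I will handle this with the uniform depth $k_0$ for short elements, using the finiteness of the ball of radius $L_0$ (or of $\max_{|u|\le n}$-section lengths from it) together with the contracting property.
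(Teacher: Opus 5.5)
Your proof is correct and follows the paper's argument. Your recursion for $\Phi$ is the paper's geometric-series bound $\sum l(g|_v)^p\le l(g)^p/(1-\lambda^{n_0})$ over long sections at levels divisible by the block length, and your two cases play the roles of the paper's multiplicity constants $K_1$ (the intermediate levels) and $K_2$ (the subtrees of depth at most $k_0$ below short sections).

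One point needs tightening. $\Phi$ counts only words all of whose block-prefixes are long. With a general generating set, $l(g|_v)$ can exceed $l(g)$, so $\Phi$ does not bound every word with $l(g|_v)\ge L_0$, as you claim. There are two easy fixes. You can call ``long'' only the words reached by the recursion; then in your second case the block $v'x$ on the way to $w$ is short (not every $v'x$ with $x\in\alb^n$), and this already places $w$ within depth $n+k_0$ of $v'$. Alternatively, choose a generating set closed under taking sections, so that section lengths never increase.
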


\begin{proof}
We have $\eta_p<1$. Let $\lambda\in (\eta_p, 1)$. Then there exist $R, n_0>1$ such that for all $g\in G$ such that $l(g)\ge R$ we have 
\[\sum_{v\in\alb^{n_0}}l(g|_v)^p<\lambda^{n_0}l(g)^p.\]
Let $\mathcal{L}_{n_0}(A)$ be the intersection of $\mathcal{L}(g)$ with $(\alb^{n_0})^*$. Denote by, for $v\in\alb^*$ by $v'$ the longest prefix of $v$ of length divisible by $n_0$. Then $|v|-|v'|<n_0$, so the map $v\mapsto v'$ is $K_1$-to-one for some fixed $K_1$. The set $\mathcal{L}(g)$ is prefix-closed, hence $|\mathcal{L}(g)|\le K_1 |\mathcal{L}_{n_0}(g)|$.

Let $\mathcal{L}'_{n_0}(g)$ be the set of elements $v\in(\alb^{n_0})^*$ such that $l(g|_v)\ge R$. Since there exists $k$ such that $g|_v\in\nuke$ for all $v\in\alb^k$ and $g\in G$ such that $l(g)<R$, there exists $K_2$ such that  $|\mathcal{L}_{n_0}(g)|\le K_2|\mathcal{L}'_{n_0}(g)|$. Then $|\mathcal{L}(g)|\le K_1K_2|\mathcal{L}_{n_0}'(g)|$.

We have
\[\sum_{v\in\mathcal{L}'_{n_0}(g)}l(g|_v)^p\le l(g)^p+\lambda^{n_0} l(g)^p+\lambda^{2n_0}l(g)^p+\cdots=\frac{l(g)^p}{1-\lambda^{n_0}}. \]

It follows that
\[|\mathcal{L}(g)|\le K_1K_2|\mathcal{L}_{n_0}'(g)|\le\frac{K_1K_2}{1-\lambda^{n_0}} l(g)^p,\]
which finishes the proof.
\end{proof}

\begin{defi}
The \emph{portrait of an element $g\in G$} is the set $\mathcal{L}(g)$ together with the map $\alpha\colon\mathcal{L}(g)\arr S_{\alb}$ mapping $v$ to the permutation induced by $g|_v$ on $\alb$, and the map $\beta\colon vx\mapsto g|_{vx}\in\nuke$ for all leaves $v$ of the subtree $\mathcal{L}(g)\subset\xs$ and all $x\in\alb$.
\end{defi}

The portrait uniquely determines the action of $g$ on $\xo$ by the rule
\[g(x_1x_2\ldots )=\alpha(\emptyset)(x_1) \alpha(x_1)(x_2) \alpha(x_1x_2)(x_3)\ldots \beta(x_1x_2\ldots x_n)(x_{n+1}x_{n+2}\ldots),\]
where $x_1x_2\ldots x_{n-1}$ is the longest prefix of $x_1x_2\ldots$ belonging to $\mathcal{L}(g)$.

An easy combinatorial counting argument proves the following corollary of Proposition~\ref{pr:sizeofportrait}.

\begin{corollary}
The word growth of a contracting group $(G, \bim)$ is bounded, for every $\epsilon>0$, above by $\exp(C n^{\ldim(G, \bim)+\epsilon})$ for some $C>0$.
\end{corollary}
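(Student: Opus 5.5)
The growth bound will follow by counting portraits. The plan is to show that the number of distinct portraits of elements of word length at most $n$ is at most $\exp(Cn^{p})$ for any fixed $p>\ldim(G,\bim)$. Since the portrait of $g$ determines the action of $g$ on $\xo$, and the group is assumed faithful, the portrait determines $g$. If $G$ is not faithful, we pass to the faithful quotient. The growth question there is the same up to a finite kernel issue, or we simply count elements via their images in the faithful quotient; I expect the corollary is intended for faithful groups.

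\textbf{Step 1.} Fix $p=\ldim(G,\bim)+\epsilon$. By Proposition~\ref{pr:sizeofportrait}, every $g$ with $l(g)\le n$ satisfies $|\mathcal{L}(g)|\le Cn^{p}$.

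\textbf{Step 2.} We bound the number of possible underlying subtrees. The set $\mathcal{L}(g)$ is a finite prefix-closed subset of $\xs$ containing the root, that is, a rooted subtree of the $|\alb|$-regular tree. The number of such subtrees with $N$ vertices is at most $c^{N}$ for a constant $c$ depending only on $|\alb|$. One way to see this: encode the subtree by, for each vertex, the subset of $\alb$ consisting of its children in the subtree. Listing the vertices in breadth-first order gives an injective encoding into sequences of length $N$ over the alphabet of subsets of $\alb$, so $c=2^{|\alb|}$ works. Summing over $N\le Cn^p$ gives at most $Cn^p\cdot 2^{|\alb|Cn^p}$ subtrees.

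\textbf{Step 3.} We count the decorations on a fixed tree. The map $\alpha$ assigns one of $|\alb|!$ permutations to each of at most $Cn^p$ vertices. The map $\beta$ assigns an element of $\nuke$ to each word $vx$ with $v$ a leaf and $x\in\alb$. The number of such words is at most $|\alb|\cdot Cn^p$. (One may equally take $\beta$ defined on all children of vertices of $\mathcal{L}(g)$ that lie outside $\mathcal{L}(g)$; the bound is the same.) Hence the number of decorations is at most $(|\alb|!)^{Cn^p}|\nuke|^{|\alb|Cn^p}$.

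\textbf{Step 4.} Multiplying the bounds from Steps 2 and 3, the ball of radius $n$ has at most $\exp(C'n^{p})$ elements, which is the claimed bound.

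The only point needing care is the injectivity of $g\mapsto$ portrait. Proposition~\ref{pr:sizeofportrait} uses $\nuke$ as in the contracting setting, so words beyond $\mathcal{L}(g)$ have sections in $\nuke$. The displayed formula then shows that the portrait determines the action on $\xo$, and hence determines $g$ by faithfulness. There is also a small edge case when $\mathcal{L}(g)$ is empty, i.e.\ $g\in\nuke$; this contributes only finitely many elements. I do not expect any step to be a real obstacle beyond checking this bookkeeping.
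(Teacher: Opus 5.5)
Your proposal is correct and follows the paper's route: the paper derives this corollary from Proposition~\ref{pr:sizeofportrait} by exactly this kind of portrait count, which it only calls ``an easy combinatorial counting argument,'' so your Steps 2--4 supply the omitted details. Your bookkeeping remarks are apt. The map $\beta$ must indeed be recorded on every child of a vertex of $\mathcal{L}(g)$ that lies outside $\mathcal{L}(g)$. Faithfulness is genuinely needed: for a non-faithful contracting group the kernel can be infinite (e.g.\ a trivial virtual endomorphism on an index-two subgroup of a free group gives $\ldim=0$ but exponential growth). So drop the ``finite kernel'' hedge and either assume faithfulness or state the bound for the faithful quotient.
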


Note that this corollary is trivial if $\ldim(G, \bim)\ge 1$, but it shows that groups satisfying $\ldim(G, \bim)<1$ have subexponential growth.

\begin{example}
Consider the Grigorchuk group $G=\langle a, b, c, d\rangle$ from~\cite{grigorchuk:80_en}. 
Take the generating set $\{a, b, c, d, ab, ac, ad\}$. Let $\eta$ be the real root of the polynomial $x^3+x^2+x-2$, and define the lengths of the generators, as in~\cite{bartholdi:upperbd}:
\begin{alignat*}{2}
\|a\|&=1-\eta^3, &\qquad \|b\|&=\eta^3,\\
  \|c\|&=1-\eta^2, &\qquad   \|d\|&=1-\eta.
\end{alignat*}
Then 
\begin{align*}
\|ab\|&=\|a\|+\|b\|=1,\\
\|ac\|&=\|a\|+\|c\|=\eta,\\
\|ad\|&=\|a\|+\|d\|=\eta^2.
\end{align*}
We have $ab=\sigma(a, c)$, $ac=\sigma(a, d)$, $ad=\sigma(1, b)$. We see that for each of them we have $\eta\cdot l(g)=l(g|_0)+l(g|_1)$. It follows that $\eta_1(G)\le\eta$.

Consider the substitution
\[\psi\colon ba\mapsto cadaba,\quad ca\mapsto baba,\quad da\mapsto caca,\]
see~\cite{bartholdiErschler:imbeddings}.

The wreath recursion satisfies $\psi(g)=\sigma (ga, ag)$ or $\psi(g)=(aga, g)$, and we have $l(\psi^n(ad))\asymp (2/\eta)^n$, where $\eta\approx 0.8015$ is the root of $x^3+x^2+x-2=0$.

We get \[\sum_{v\in\alb^k}l(\psi^n(ad)|_v)^p\asymp 2^k(2/\eta)^{-kp}l(\psi^{n-k}(ad))^p,\] which implies \[\eta_p\le 2^{\frac 1p-1}\eta.\]
Note that since $\eta_\infty=\theta_\infty=1/2$, we can not have equality for large $p$.

On the other hand, it is known that $\eta_1\le\eta$, see~\cite{bartholdi:upperbd}. Consequently, $\eta_p\le 2^{\frac 1p-1}\eta$ for all $p\in (0, 1]$. In particular, we get $\ldim(G, \bim)=\frac{\log 2}{\log 2-\log\eta}\approx 0.7674$.

We see that $\eta_p=2^{\frac 1p-1}\eta$ for all $p\in (0, 1]$ and $2^{\frac 1p-1}\eta\le\eta_p\le 2^{\frac 1p-1}$ for all $p\in (0, \infty]$. We know $\theta_p=2^{\frac 1p-1}$ in this case (see Example~\ref{ex:Grigmodulus}).
\end{example}

\subsection{Thurston's $p$-map}
\label{ss:thurstonpmap}
The following approach to \emph{lower} estimates of $\ldim(G, \bim)$ (and hence for $\cdim(G, \bim)$) is related to Thurston’s theorem (see~\cite{DH:Thurston}) characterizing post-critically finite
branched self-coverings of the sphere that are realizable as complex rational functions. 

Let $\mathcal{C}$ be the set of conjugacy classes of elements of infinite order in $G$ and consider the vector space $\bigoplus_{[g]\in\mathcal{C}}\R\cdot[g]$ with the basis $\mathcal{C}$. For an element $[g]\in\mathcal{C}$ represented by an element $g\in G$, consider all the cycles $c=(x_1, \ldots, x_k)$ of the action of $g$ on $\alb$. For each of them, the conjugacy class $[g_c]$ of $g^k|_{x_i}$ does not depend on $i$. Define $T_p([g])$ as the sum of the vectors $k^{1-p}[g_c]$ over all cycles $c$ of the action of $g$ on $\alb$. The operator does not depend on the choice of $\alb\subset\bim$. It is also easy to check that the operator associated with $\bim^{\otimes n}$ is equal to $T_p^n$.

The following proposition is proved in~\cite[Theorem~4.10]{mattebonnekzheng:liouvills}.

\begin{proposition}
Let $A\subset\mathcal{C}$ be a finite set, and let $P_A$ be the projector
\[P_A([g])=\left\{\begin{array}{ll} [g] & \text{if $[g]\in A$,}\\
0 & \text{otherwise}\end{array}\right.\] onto the span $\R^A$ of $A$.

Then the spectral radius of the operator $P_AT_p\colon\R^A\arr\R^A$ is less than or equal to $\eta_p^p$.
\end{proposition}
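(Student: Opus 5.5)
Suppose the spectral radius of $P_AT_p$ on $\R^A$ exceeds $\eta_p^p$. The plan is to find an explicit family of group elements whose sections grow faster than $\eta_p$ allows.

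\textbf{Step 1: a nonnegative eigenvector.} The matrix of $P_AT_p$ has nonnegative entries $k^{1-p}$, so Perron--Frobenius gives a nonnegative vector $a=\sum_{[g]\in A}a_{[g]}[g]$ with $P_AT_pa=\rho a$, where $\rho$ is the spectral radius. Since $T_p$ for $\bim^{\otimes n}$ equals $T_p^n$, entrywise monotonicity gives $(P_AT_p)^n\le P_AT_p^nP_A$. Hence $P_AT_p^na\ge\rho^na$ coordinatewise.

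\textbf{Step 2: translate into statements about sections.} For each $[g]\in A$ I would use the quantity $l(g^N)$ for large $N$, which behaves like $N\tau(g)$, where $\tau(g)=\lim l(g^N)/N>0$ is the stable translation length. I expect $\tau>0$ for infinite order elements of a contracting group; if this fails I would instead use $l(g^N)$ directly together with a $\liminf$.

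\textbf{Step 3: the key computation for a single level.} Take a cycle $c$ of length $k$ of $g$ on $\alb$, and let $N$ be divisible by $k$. For each $x\in c$ the section $g^N|_x$ is conjugate, via bounded sections, to $(g^k|_{x})^{N/k}$. Hence
\[
\sum_{x\in c}l(g^N|_x)^p\approx k\,(N/k)^p\,\tau(g_c)^p=N^p\,k^{1-p}\,\tau(g_c)^p ,
\]
up to additive bounded errors. This reproduces exactly the coefficient $k^{1-p}$ of $T_p$. Iterating over $\alb^n$ (the operator for level $n$ is $T_p^n$) gives
\[
\sum_{v\in\alb^n}l(g^N|_v)^p\gtrsim N^p\sum_{[h]\in A}(T_p^n[g])_{[h]}\,\tau(h)^p ,
\]
where we drop the classes outside $A$ (they contribute nonnegatively) and conjugation changes lengths by bounded amounts that become negligible as $N\to\infty$.

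\textbf{Step 4: combine and conclude.} Choose $g$ with $a_{[g]}>0$, or a combination weighted by $a$. Apply Step 3 and use $l(g^N)\le N\tau(g)+o(N)$. Combining with the eigenvector bound from Step 1 and taking the constants $\min\tau$ and $\max\tau$ over the finite set $A$, the ratio in the definition of $\eta_p$ at level $n$ is bounded below by $c\,\rho^{n/p}$. Taking $n$th roots and letting $n\to\infty$ gives $\eta_p\ge\rho^{1/p}$, that is, $\rho\le\eta_p^p$.

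The main obstacle is the uniformity in Step 3. The conjugators relating $g^N|_x$ and $(g^k|_x)^{N/k}$ must have length bounded independently of $N$. The additive errors must also not accumulate over $n$ levels. I would handle both by fixing $n$ first and letting $N\to\infty$, which is legitimate because $\eta_p$ takes a $\limsup$ over $l(g)\to\infty$ at fixed $n$.
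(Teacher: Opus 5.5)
The paper does not prove this proposition itself; it cites Matte Bon--Nekrashevych--Zheng. Your outline therefore has to stand on its own, and it has a genuine gap in Step~2.

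\textbf{The gap.} The stable translation length $\tau(g)=\lim_N l(g^N)/N$ need not be positive for elements of infinite order in a contracting group. Here is an example.
\begin{itemize}
\item Let $H$ be the integral Heisenberg group generated by $x,y$, with $z=[x,y]$ central.
\item Take the self-similar action given by the virtual endomorphism $\phi^{-1}\colon\phi(H)\arr H$, where $\phi(x)=x^2$, $\phi(y)=y^2$, $\phi(z)=z^4$. This is the iterated monodromy group of an expanding self-covering of the Heisenberg nilmanifold. It is faithful and contracting, with $|\alb|=16$.
\item Here $l(z^N)\asymp\sqrt N$, so $\tau(z)=0$.
\item On the other hand, $z$ acts on $\alb$ with four cycles of length $4$ and $z^4|_x=z$, so $T_p[z]=4^{2-p}[z]$.
\end{itemize}
For $A=\{[z]\}$ your Step~3 lower bound is identically $0$, and the constant in Step~4 built from $\min_A\tau$ vanishes.

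More generally, $l(f|_x)\le C\,l(f)$ gives $\tau(h)\le Ck\,\tau(g)$ whenever $[h]$ occurs in $T_p[g]$ through a cycle of length $k$. So the classes with $\tau=0$ span a $P_AT_p$-invariant block, and your argument says nothing about that block. Switching to a $\liminf$ does not help: $l(h^N)/N$ converges by Fekete's lemma, and the trouble is that the limit is $0$. When $\tau>0$ on all of $A$, your original version is correct.

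\textbf{The missing idea and the repair.} Use subadditivity of $m\mapsto l(h^m)$ instead of linear growth. Weight the classes by $w_N([h])=l(h^N)^p$, for fixed representatives $h$, at a common large $N$, rather than by $\tau(h)^p$.

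Let $c$ be a cycle of length $k$ of $g$ on $\alb^n$, let $v\in c$, and let $k\mid N$. Then $g^N|_v=(g^k|_v)^{N/k}$, and $g^k|_v$ is conjugate to the representative $h_c$ of its class by an element not depending on $N$. Moreover $l(h_c^N)\le k\,l(h_c^{N/k})$. Since $l(h_c^N)\to\infty$, the bounded conjugation errors are absorbed. Letting $N\to\infty$ through multiples of $(|\alb|^n)!$, you get
\[
\sum_{v\in\alb^n}l(g^N|_v)^p\ \ge\ (1-o(1))\sum_{c}k^{1-p}\,l(h_c^N)^p=(1-o(1))\,\langle T_p^n[g],w_N\rangle .
\]
Cycles whose sections have finite order only add nonnegative terms. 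The coefficient $k^{1-p}$ now appears as an inequality, which is all you need. The denominator in the definition of $\eta_p$ is exactly $w_N([g])=l(g^N)^p$.

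Write $\eta_p^{(n)}$ for the level-$n$ quantity $\limsup_{l(f)\to\infty}\bigl(\sum_{v\in\alb^n}l(f|_v)^p\bigr)^{1/p}/l(f)$. Then
\[
\langle T_p^n[g],w_N\rangle\le\bigl((\eta_p^{(n)})^p+o(1)\bigr)w_N([g])
\]
holds simultaneously for the finitely many $[g]\in A$. Pair this with your Perron--Frobenius vector $a$ from Step~1, noting that $\langle a,w_N\rangle>0$. This gives $\rho^n\le(\eta_p^{(n)})^p$, and hence $\rho\le\eta_p^p$.

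With this substitution the rest of your outline goes through, including fixing $n$ before letting $N\to\infty$.
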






\begin{example}
Consider the group $\img(z^2-1)$ computed in Example~\ref{ex:basilicaimgcomput}. The generators $a$ and $b$ have infinite order, since $a^2(0w)=0b(w)$, $a^2(1w)=1b(w)$ and $b(1w)=1a(w)$. These relations also imply that
\[T_p([a])=2^{1-p}[b],\qquad T_p([b])=[a],\]
i.e., that its matrix is $\left(\begin{array}{cc} 0 & 1\\ 2^{1-p} & 0\end{array}\right)$. The spectral radius of this matrix is $2^{\frac{1-p}2}$. We get hence an estimate $\sqrt{2}^{\frac 1p-1}\le\eta_p$. On the other hand, we know from Example~\ref{ex:thetapbasilica} that $\theta_p=\sqrt{2}^{\frac 1p-1}$. Since $\eta_p\le\theta_p$, we conclude that $\eta_p=\theta_p=\sqrt{2}^{\frac 1p-1}$ for all $p\ge 1$. 
\end{example}

\section{Algebras $\mathcal{D}_p$}
\label{subsec:algebras} 

\subsection{Definitions}

Denote by $C(\lims)$ the $C^*$-algebra of continuous functions $\lims\arr\C$, and let $C_0(\limg)$ be the $C^*$-algebra of continuous functions $\limg\arr\C$ vanishing at infinity.

Note that $C(\lims)$ is naturally identified with the algebra of continuous $G$-invariant functions $\limg\arr\C$. Namely, a function $f\colon \lims\arr\C$ is identified by its lift to $\limg$ by the quotient map $\limg\arr\lims$. We will often use this identifications and treat elements of $C(\lims)$ as $G$-invariant continuous functions on $\limg$.

\begin{defi}
\label{def:aplim}
For $p\ge 1$, let $\mathcal{D}_p(\limg)\subset C_0(\limg)$ be the set of functions $f\colon \limg\arr\C$ such that
\[\sum_{v\in\bim^*}|f(\xi_1\otimes v)-f(\xi_2\otimes v)|^p<\infty\]
for any two points $\xi_1, \xi_2\in\limg$.

For $p\ge 1$, let $\mathcal{D}_p(\lims)\subset C(\lims)$ be the set of functions $f$ such that their lift $f\colon \limg\arr\C$ satisfies
\[\sum_{v\in\xs}|f(\xi_1\otimes v)-f(\xi_2\otimes v)|^p<\infty\]
for all $\xi_1, \xi_2\in\limg$.

For $p>1$ and $\lambda\ge 0$, let $\mathcal{D}_{p, \lambda}(\limg)\subset C_0(\limg)$ be the set of functions $f\in C_0(\limg)$ such that for every pair $\xi_1, \xi_2\in\limg$ there exists $C>1$ such that
\[\sum_{v\in\bim^{\otimes n}}|f(\xi_1\otimes v)-f(\xi_2\otimes v)|^p\le C\lambda^n\]
for all $n\ge 1$.

For $p>1$ and $\lambda\ge 0$, let $\mathcal{D}_{p, \lambda}(\limg)\subset C(\lims)$ be the set of functions $f$ such that for every pair $\xi_1, \xi_2\in\limg$ there exists $C>1$ such that
\[\sum_{v\in\alb^{\otimes n}}|f(\xi_1\otimes v)-f(\xi_2\otimes v)|^p\le C\lambda^n\]
for all $n\ge 1$. 
\end{defi}

We obviously have $\mathcal{D}_{p, \lambda}(\lims)\subset\mathcal{D}_p(\lims)$ and $\mathcal{D}_{p, \lambda}(\limg)\subset\mathcal{D}_p(\limg)$.

The algebras in Definition~\ref{def:aplim} are invariant under composition of their elements with a Lipschitz function $\C\arr\C$.

\begin{proposition}
The sets $\mathcal{D}_p(\limg), \mathcal{D}_p(\lims), \mathcal{D}_{p, \lambda}(\limg), \mathcal{D}_{p, \lambda}(\lims)$ are $*$-subalgebras of the corresponding  $C^*$-algebras $C_0(\limg)$ and $C(\lims)$.

In particular, these sub-algebras are dense if and only if they separate the points (of $\limg$ and $\lims$, respectively).
\end{proposition}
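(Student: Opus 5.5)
We need to show that each of the four sets is closed under addition, scalar multiplication, multiplication and complex conjugation. The "in particular" part then follows from the Stone--Weierstrass theorem.

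\emph{Linear structure and conjugation.} For $f,h$ in one of the sets and $c\in\C$, write $\Delta_v f=f(\xi_1\otimes v)-f(\xi_2\otimes v)$. We have
\[|\Delta_v(f+ch)|^p\le 2^{p-1}\bigl(|\Delta_v f|^p+|c|^p|\Delta_v h|^p\bigr),\]
so finiteness of the sums, or the bound $C\lambda^n$ on each level, is preserved. Since $|\Delta_v\bar f|=|\Delta_v f|$, conjugation is immediate. Continuity and vanishing at infinity are preserved by these operations. In the $\lims$ case, lifts of sums and products are sums and products of lifts, so the $G$-invariant lifts again lie in the set.

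\emph{Products.} We use the identity
\[\Delta_v(fh)=f(\xi_1\otimes v)\,\Delta_v h+h(\xi_2\otimes v)\,\Delta_v f,\]
so that
\[|\Delta_v(fh)|\le\|f\|_\infty|\Delta_v h|+\|h\|_\infty|\Delta_v f|.\]
Elements of $C(\lims)$ and of $C_0(\limg)$ are bounded, so the same $2^{p-1}$ estimate gives
\[\sum_v|\Delta_v(fh)|^p\le 2^{p-1}\Bigl(\|f\|_\infty^p\sum_v|\Delta_v h|^p+\|h\|_\infty^p\sum_v|\Delta_v f|^p\Bigr).\]
This holds both for the full sums over $\bim^*$ (or $\xs$) and level by level over $\bim^{\otimes n}$ (or $\alb^n$). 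Hence $\mathcal{D}_p$ and $\mathcal{D}_{p,\lambda}$ are closed under multiplication, with constant $C$ depending on the pair $\xi_1,\xi_2$ as allowed.

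\emph{Density.} The remaining and main point is the density criterion. For $\lims$, which is compact Hausdorff, Stone--Weierstrass says a $*$-subalgebra of $C(\lims)$ is dense if and only if it separates points and contains the constants, or at least vanishes nowhere. The constant function $1$ lies in $\mathcal{D}_p(\lims)$ and in $\mathcal{D}_{p,\lambda}(\lims)$, since all its differences vanish. So density is equivalent to separating points; necessity is clear, because a dense subalgebra must separate points that continuous functions separate.

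For $\limg$, which is locally compact, the locally compact version of Stone--Weierstrass requires, in addition to separating points, that the algebra vanish at no point. Here I interpret "separate the points" in the usual sense for $C_0$, i.e. including the point at infinity of the one-point compactification; under that reading the nowhere-vanishing condition is included. Otherwise I would note that separation together with the presence of compactly supported elements gives nonvanishing. That second route is the one step needing care, and I would handle it by observing that if all elements vanished at some $\xi$, they could not separate $\xi$ from the point at infinity.
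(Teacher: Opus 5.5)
Your proof is correct and is essentially the paper's. The paper gets closure under sums from Minkowski's inequality for the vector $\partial_{\xi_1,\xi_2}(f)\in\ell^p$, which your $2^{p-1}$ convexity bound replaces harmlessly. It gets closure under products from the same Leibniz identity. It leaves the density criterion as an immediate consequence of Stone--Weierstrass, read exactly as you read it.

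One caveat concerns your fallback remark for $\limg$. Separating the points of $\limg$ and containing compactly supported elements does not by itself give nonvanishing: for example, $\{f\in C_0(\R)\colon f(0)=0\}$ separates the points of $\R$ but vanishes at $0$.

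Under that weaker reading of ``separate the points,'' you can argue instead as follows.
\begin{itemize}
\item These algebras are invariant under the right $G$-action $f\mapsto f(\cdot\, g)$, because $v\mapsto v\cdot g$ permutes each $\bim^{\otimes n}$.
\item A separating algebra can vanish at no more than one point. By invariance, that point would have to be fixed by all of $G$.
\item If $G$ is infinite, properness of the action rules this out, since point stabilizers are finite.
\item If $G$ is finite, then $\limg$ is compact, so the constants already lie in the algebra.
\end{itemize}
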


\begin{proof}
Let us prove the statement for the set $\mathcal{D}_p(\limg)$. The proof of the other statement is analogous. The set $\mathcal{D}_p(\limg)$ is obviously $*$-invariant.

Denote by $\partial_{\xi_1, \xi_2}(f)$ the map $\bim\arr\C$ given by $\partial_{\xi_1, \xi_2}(f)(v)=f(\xi_1\otimes v)-f(\xi_2\otimes v)$. Then the sum in the definition of $\mathcal{D}_p(\limg)$ is equal to the $p$-th power of the $\ell^p$-norm  $\|\partial_{\xi_1, \xi_2}(f)\|_p$.

For any $f_1, f_2\in C_0(\limg)$, we have 
\[\partial_{\xi_1, \xi_2}(f_1+f_2)=\partial_{\xi_1, \xi_2}(f_1)+\partial_{\xi_1, \xi_2}(f_2),\] hence $\|\partial_{\xi_1, \xi_2}(f_1+f_2)\|_p\le\|\partial_{\xi_1, \xi_2}(f_1)\|_p+\|\partial_{\xi_1, \xi_2}(f_2)\|_p$, so $\mathcal{D}_p(\limg)$ is closed under addition.

We also have 
\begin{multline*}\partial_{\xi_1, \xi_2}(f_1f_2)(v)=f_1(\xi_1\otimes v)f_2(\xi_1\otimes v)-f_1(\xi_2\otimes v)f_2(\xi_2\otimes v)=\\ (f_1(\xi_1\otimes v)-f_1(\xi_2\otimes v))f_2(\xi_1\otimes v)+f_1(\xi_2\otimes v)(f_2(\xi_1\otimes v)-f_2(\xi_2\otimes v)),
\end{multline*} hence
\[\|\partial_{\xi_1, \xi_2}(f_1f_2)\|_p\le\|f_2\|\cdot\|\partial_{\xi_1, \xi_2}(f_1)\|_p+\|f_1\|\cdot\|\partial_{\xi_1, \xi_2}(f_2)\|_p,\]
where $\|f\|=\max_{\zeta\in\limg}|f(\zeta)|$ is the $C^*$-norm on $C_0(\limg)$. It follows that $\mathcal{D}_p(\limg)$ is closed under multiplication. Its $*$-invariance is obvious.
\end{proof}

\subsection{Density of the algebras}

\begin{proposition}
The algebra $\mathcal{D}_p(\lims)$ is dense in $C(\lims)$ if and only if $\mathcal{D}_p(\limg)$ is dense in $C_0(\limg)$. The algebra $\mathcal{D}_{p, \lambda}(\lims)$ is dense in $C(\lims)$ if and only if $\mathcal{D}_{p, \lambda}(\limg)$ is dense in $C_0(\limg)$.
\end{proposition}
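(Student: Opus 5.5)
The plan is to prove both directions by transferring separating functions between $\lims$ and $\limg$, using the quotient map $P\colon\limg\arr\lims$ and the fact that the $G$-action on $\limg$ is proper. By the preceding proposition, density of each of these $*$-subalgebras is equivalent to point separation. So it suffices to show that $\mathcal{D}_p(\lims)$ separates points of $\lims$ iff $\mathcal{D}_p(\limg)$ separates points of $\limg$, and likewise for $\mathcal{D}_{p,\lambda}$. The argument is the same in both cases, since the estimates below respect each level $n$ separately.

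\emph{From $\limg$ to $\lims$.} Assume $\mathcal{D}_p(\limg)$ is dense in $C_0(\limg)$. Given $f\in\mathcal{D}_p(\limg)$ with compact support, I will form the $G$-averaged function $F(\xi)=\sum_{g\in G}f(\xi\cdot g)$. Properness makes this sum locally finite, so $F$ is $G$-invariant and continuous, hence $F\in C(\lims)$. To check that $F\in\mathcal{D}_p(\lims)$, I expand
\[F(\xi_i\otimes v)=\sum_g f(\xi_i\otimes v\cdot g).\]
Writing $v\cdot g$ ranging over $\bim^{\otimes n}$ with $v\in\alb^n$, the difference sum $\sum_{v\in\alb^n}|F(\xi_1\otimes v)-F(\xi_2\otimes v)|^p$ is controlled by $\sum_{w\in\bim^{\otimes n}}|f(\xi_1\otimes w)-f(\xi_2\otimes w)|^p$. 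This requires a bounded-multiplicity statement: for each point, only boundedly many $w$ (uniformly in $n$) give $\xi_i\otimes w\in\operatorname{supp} f$. I would get it from tiles, using that $\xi\otimes w\in \til\cdot h$ with $h$ in a finite set $N$ forces $w$ into a set of size at most $|N|\cdot|\nuke|$ times the number of $v\in\alb^n$ with the given projection. The remaining factor $k^{p-1}$ from $(a_1+\cdots+a_k)^p\le k^{p-1}\sum a_i^p$ is harmless. Averaging preserves separation: for distinct orbits $P(\zeta_1)\ne P(\zeta_2)$, I choose $f\ge0$ in the dense algebra close to a bump equal to $1$ near $\zeta_1$ and $0$ near the finite-intersection set $\zeta_2 G\cap K$. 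To avoid approximation issues with nonnegativity, I would use the Lipschitz-composition invariance: compose with a Lipschitz cutoff $\C\arr[0,\infty)$ vanishing near $0$, which also produces compact support.

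\emph{From $\lims$ to $\limg$.} Assume $\mathcal{D}_p(\lims)$ separates points. Its lifts are $G$-invariant, so they separate distinct orbits but not points of the same orbit, and they are not in $C_0$. I will therefore multiply by a cutoff. Fix a compactly supported function $\chi$ on $\limg$ of the form $\chi=\psi\circ u$, which lies in $\mathcal{D}_p(\limg)$. I will build it from the tiles $\til\cdot g$ as a "partition of unity" function $\chi_h(\xi)=\rho(\xi\cdot h^{-1})$, where $\rho$ is a function supported near $\til$ and constructed via the level-$n$ tile structure, exactly like visual-metric Lipschitz functions. For a separating $\chi$, I expect to take $\chi(\xi)=\max(0,1-d(\xi,\til)/r)$ for a lifted metric $\tilde d$ from Subsection~\ref{ss:liftingmm}. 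This is Lipschitz, and the question becomes whether Lipschitz functions lie in $\mathcal{D}_p(\limg)$. That need not hold for every $p$.

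\emph{Main obstacle.} Producing compactly supported elements of $\mathcal{D}_p(\limg)$ separating points within an orbit, assuming only that $\mathcal{D}_p(\lims)$ is dense. My fix is to avoid needing Lipschitz cutoffs. Since the action is proper and topologically free, the composite $\limg\arr\lims$ is a local homeomorphism off a nowhere dense set, and near any $\zeta$ there is a neighborhood $U$ on which $P$ is injective modulo the finite stabilizer. I would take $u\in\mathcal{D}_p(\lims)$ that equals $1$ at $P(\zeta)$ and is small off $P(U)$, then set $\chi=\psi(u)\cdot\mathbf 1$ on the component $U$ of $P^{-1}(P(U))$, and $0$ elsewhere. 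Because $P^{-1}(P(U))$ is a disjoint union of translates $U\cdot g$, this is continuous. Its difference sums are bounded by those of $\psi\circ u$, since the tensor action commutes with the right $G$-action. Points in distinct translates are then separated by such localized functions, and points in distinct orbits by products $\chi\cdot u'$, using the algebra property.
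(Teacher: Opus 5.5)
Your passage from $\limg$ to $\lims$ is correct and differs only slightly from the paper's. The paper averages a function supported in a single slice neighborhood $U$ of $\zeta_1$ (translates of $U$ equal or disjoint, $\zeta_2\notin U\cdot G$). You instead make $f\ge 0$ vanish on $\zeta_2G$, so no cancellation can occur in $\sum_g f(\cdot\,g)$. Your bound of at most $|\nuke|\cdot|N|$ elements $g$ with $\xi\otimes v\cdot g\in\til\cdot N$ is exactly what the paper's ``follows directly'' leaves implicit.

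The gap is in the passage from $\lims$ to $\limg$. There you follow the paper's idea: restrict $\psi\circ u$ to one translate $U$ of a slice. But you justify the key estimate only by ``the tensor action commutes with the right $G$-action.'' That lets you rewrite $\bim^{\otimes n}$ as $\alb^n\times G$. It does not control words $w$ with $\xi_1\otimes w\in\mathrm{supp}\,\chi$ and $\xi_2\otimes w\in U\cdot g\ne U$. For such $w$ the term for $\chi$ is $|\psi(u)(\xi_1\otimes w)|^p$, whereas the term for $\psi\circ u$ is $|\psi(u)(\xi_1\otimes w)-\psi(u)(\xi_2\otimes w)|^p$, which need not dominate it. Nothing in your argument rules out such $w$ on infinitely many levels.

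What is missing is that $\xi_1\otimes w$ and $\xi_2\otimes w$ become uniformly close in a $G$-invariant metric.
\begin{itemize}
\item Write $\xi_i=\eta_i\cdot g_i$ with $\eta_i\in\til$, and let $m$ be the depth of $g_2g_1^{-1}$. Then the two points lie respectively in two intersecting tiles of level $|w|-m$.
\item Hence their distance in the lifted metric $\tilde d$ of Subsection~\ref{ss:liftingmm} tends to $0$ as $|w|\to\infty$.
\item Since $\mathrm{supp}\,\chi$ is a compact subset of the interior of $U$, for $|w|$ large either both values of $\chi$ vanish or both coincide with those of $\psi\circ u$.
\item By the slice property, for each $v\in\alb^n$ at most $|G_\zeta|$ elements $g$ give $\xi_1\otimes v\cdot g\in U$.
\end{itemize}
Hence for large $n$ the level-$n$ sum for $\chi$ is at most $|G_\zeta|$ times that for $\psi\circ u$. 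Each of the finitely many remaining levels contributes only finitely many nonzero terms, and this handles $\mathcal{D}_{p,\lambda}$ as well.

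Separately, your use of products $\chi\cdot u'$ is not covered by the algebra property, since $u'\notin\mathcal{D}_p(\limg)$; it would need the same multiplicity argument. It is simpler to choose $U$ with $\zeta_2\notin U$ from the start, as the paper does. Then a single localized function separates $\zeta_1$ from $\zeta_2$ whether or not they lie in the same orbit.
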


\begin{proof}
We identify continuous functions on $\lims$ with continuous $G$-invariant functions on $\limg$ throughout the proof.

  Suppose that the algebra $\mathcal{D}_p(\lims)$ is dense. Let us show that the algebra $\mathcal{D}_p(\limg)$ separates the points.  Let $\zeta_1, \zeta_2\in\limg$ be different points. Since the action of $G$ on $\limg$ is proper, there exists a compact neighborhood $U$ of $\zeta_1$ in $\limg$ not containing $\zeta_2$ such that for every $g\in G$ we have either $U\cdot g=U$ and $\zeta_1\cdot g=\zeta_1$ or $U\cdot g\cap U=\emptyset$. Moreover, the set of elements $g\in G$ satisfying the first condition is equal to the stabilizer $G_{\zeta_1}$ of $\zeta_1$, and hence is finite.

Since $\mathcal{D}_p(\lims)$ is dense, there exits a $G$-invariant function $\tilde f\in\mathcal{D}_p(\lims)$ such that $\tilde f(\zeta_1)=1$ and $|\tilde f(\zeta)|\le 1/2$ for $\zeta\notin U$. Composing $\tilde f$ with a Lipschitz function $\C\arr\C$ equal to $1$ on a neighborhood of $1$ and to 0 for all $z\in\C$ such that $|z|\le 1/2$, we will get an element $f\in\mathcal{D}_p(\lims)$ supported in the interior of $U$ and such that $f(\zeta_1)=1$.

Since the sets $U\cdot g$ are disjoint or coincide, and $f$ is supported it their union, the function
\[h(\zeta)=\left\{\begin{array}{ll} f(\zeta) & \text{if $\zeta\in U$,}\\
0 & \text{otherwise}\end{array}\right.\] is continuous. It is compactly supported, hence is contained in $C_0(\limg)$.
  
Let $\xi_1, \xi_2\in\limg$ be arbitrary points. The distance between $\xi_1\otimes v$ and $\xi_2\otimes v$ uniformly on $v$ converges to 0 as the length of $v$ goes to infinity.

Consequently, there exists $n$ such that for every $v\in\bim^*$ with $|v|>n$ the following two conditions hold.  If $\xi_1\otimes v$ belongs to the support of $h$, then $\xi_2\otimes v\in U$, and if $\xi_2\otimes v$ belongs to the support of $h$, then $\xi_1\otimes v\in U$. 

Then, for all $v\in\bim^*$, $|v|>n$, we have either $h(\xi_1\otimes v)=h(\xi_2\otimes v)=0$, or $h(\xi_1\otimes v)=f(\xi_1\otimes v)$ and $h(\xi_2\otimes v)=f(\xi_2\otimes v)$. Note that the latter case happens only for at most $|G_{\zeta_1}|$ points in the right $G$-orbit of a given $v$.

It follows that $\sum_{v\in\bim^*, |v|\ge n}|h(\xi_1\otimes v)-h(\xi_2\otimes v)|^p\le |G_{\zeta_1}|\sum_{v\in\xs, |v|\ge n}|f(\xi_1\otimes v)-f(\xi_2\otimes v)|^p$, which implies that $h\in\mathcal{D}_p(\limg)$ if $f\in\mathcal{D}_p(\lims)$. We have $h(\zeta_1)=1$ and $h(\zeta_2)=0$, so we have shown that $\mathcal{D}_p(\limg)$ separates the points of $\limg$.
  
Suppose now that $\mathcal{D}_p(\limg)$ is dense. 
Let $\zeta_1, \zeta_2$ be two points of $\limg$ that have different images in $\lims$. Let $U$ be defined for $\zeta_1$ in the same way as above with additional condition that $\zeta_2\notin U\cdot g$ for every $g\in G$. We can achieve this by choosing $U$ small enough, as the images of $\zeta_1$ and $\zeta_2$ in $\lims=\limg/G$ are different, and the action of $G$ on $\limg$ is proper. 

By the same argument as above, there exists a real-valued function $h\in\mathcal{D}_p(\limg)$ supported in $U$ and such that $h(\zeta_1)=1$.

Define then $f(\zeta)=|G_{\zeta_1}|^{-1}\sum_{g\in G}h(\zeta\cdot g)$. Then $f$ is a $G$-invariant continuous function supported in $U\cdot G$ and satisfying $f(\zeta_1)=1$, $f(\zeta_2)=0$. It follows directly from the condition $h\in\mathcal{D}_p(\limg)$ that $f\in\mathcal{D}_p(\lims)$.
\end{proof}

We obviously have $\mathcal{D}_{p_1}(\lims)\subseteq\mathcal{D}_{p_2}(\lims)$ for $p_1<p_2$. Consequently, if $\mathcal{D}_{p_1}(\lims)$ is dense, then so is $\mathcal{D}_{p_2}(\lims)$. 

\begin{theorem}
\label{th:DpDlambda}
The conformal dimension $\cdim(G, \bim)$ is equal to the infimum of values of $p$ for which $\mathcal{D}_p(\lims)$ is dense in $C(\lims)$.

Moreover, for every $p>\cdim(G, \bim)$ the infimum of the values $\lambda$ for which $\mathcal{D}_{p, \lambda}(\lims)$ is dense in $C(\lims)$ is equal to $\theta_p$.
\end{theorem}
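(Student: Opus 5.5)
The plan is to prove two inequalities: an upper bound on the moduli coming from density of the algebras, and a construction of many functions in the algebras coming from smallness of the moduli. Throughout I read the growth rate in $\mathcal{D}_{p,\lambda}$ with the normalization of Definition~\ref{def:aplim}: sums of $p$th powers over level $n$ are compared with $M_p(n)\approx\theta_p^{np}$. I will match $\lambda$ to $\theta_p$ accordingly; the bookkeeping is identical whichever exponent convention is intended. By the preceding proposition it suffices to work with $\mathcal{D}_p(\limg)$ and $\mathcal{D}_{p,\lambda}(\limg)$ whenever convenient.

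\emph{Density implies small modulus.} Suppose $\mathcal{D}_{p,\lambda}(\lims)$ (resp.\ $\mathcal{D}_p(\lims)$) is dense, so $\mathcal{D}_{p,\lambda}(\limg)$ is dense in $C_0(\limg)$. The set $\til$ is disjoint from the closed set $\bigcup_{h\notin\nuke}\til\cdot h$, which is closed by properness. Using density and composing with a Lipschitz cut-off, I get $f\in\mathcal{D}_{p,\lambda}(\limg)$ with $f\equiv1$ on $\til$ and $f\equiv0$ on $\til\cdot h$ for all $h\notin\nuke$.

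For each $s\in\nuke$ fix a point $\zeta_s\in\til\cap\til\cdot s^{-1}$ (or the appropriate intersection point), and let $F\subset\til$ be this finite set. Define an edge function on $\G_n$ by
\[\phi_n(s,v)=\sum_{a,b\in F}|f(a\otimes v)-f(b\otimes v)|.\]
Lift an essential edge-path as in the proof of Theorem~\ref{th:modulus}. It gives consecutive intersecting tiles $\til\otimes u_i$ from $\til\otimes v_1\subset\til$ to a tile inside $\til\cdot h$ with $h\notin\nuke$. The intersection points of consecutive tiles can be taken of the form $\zeta_s\otimes u_i$. Telescoping $f$ along these points, each increment is a difference $f(a\otimes u)-f(b\otimes u)$ with $a,b\in F$. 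By $G$-equivariance in the right factor this equals the same difference at the basis word $v_i$, since $u_i=v_i\cdot g$. Hence the sum of $\phi_n$ along the path is at least $1$, so $\phi_n$ is admissible. By the definition of $\mathcal{D}_{p,\lambda}$ applied to the finitely many pairs $(a,b)$, we get $\sum\phi_n^p\le C\lambda^n$, so $M_p(n)\le C\lambda^n$ and $\theta_p\le\lambda$ in the paper's normalization. If only $f\in\mathcal{D}_p$, then $\sum_n\sum_v\phi_n^p<\infty$ forces $M_p(n)\to0$, hence $\theta_p<1$ and $\cdim\le p$ by Theorem~\ref{th:modulus}.

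\emph{Small modulus implies density.} Let $p>\cdim$, so $\theta_p<1$, and fix $\lambda>\theta_p$. Follow the second half of the proof of Theorem~\ref{th:modulus}: pass to $\alb^{n_0}$ with $n_0$ large and choose $\phi\in\mathcal{B}$ with $\sum_x\phi(x)^p\le(\theta_p+\epsilon)^{n_0p}$. This yields a quasi-morphism $\nu$ with a visual metric $d_\nu\asymp e^{-\ell_\nu}$ of exponent $1$, and the lemma there gives
\[\sum_{x}e^{-p(\nu(xv)-\nu(v))}\le|\nuke|\sum_x\phi(x)^p .\]
Iterating, $\sum_{|v|=n}e^{-p\nu(v)}\le(|\nuke|\sum_x\phi(x)^p)^{n}$. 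Per original level this rate is $|\nuke|^{1/n_0}(\theta_p+\epsilon)^{p}$, which is below the target for $n_0$ large. Lipschitz functions for $d_\nu$ (lifted to $\limg$) separate points, so it suffices to show they lie in $\mathcal{D}_{p,\lambda}$.

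For $\xi_1,\xi_2\in\limg$, connect them by a finite chain of level-$0$ tiles $\til\cdot g_j$; this is possible since $\til\cdot g$ and $\til\cdot gs$ meet for $s\in\nuke$, and (after replacing $G$ by $\langle\nuke\rangle$) $\nuke$ generates. Then $\xi_1\otimes v$ and $\xi_2\otimes v$ are joined by a chain of boundedly many level-$|v|$ tiles $\til\otimes g_jv$. By quasi-invariance of $\nu$ these tiles have diameters $\lesssim e^{-\nu(v)}$. Hence $|f(\xi_1\otimes v)-f(\xi_2\otimes v)|\lesssim e^{-\nu(v)}$, and summing gives the required bound $C\lambda^n$.

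Combining both directions gives $\cdim=\inf\{p:\mathcal{D}_p\text{ dense}\}$ and the characterization of $\theta_p$. The step I expect to be most delicate is the admissibility argument in the first direction. There the $\mathcal{D}_p$ condition only controls differences at fixed pairs of points rather than oscillations over whole tiles, and the argument hinges on choosing the finite set $F$ of intersection points so that the telescoping works with the identification $u_i=v_i\cdot g$.
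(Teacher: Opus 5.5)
Your first direction has a genuine gap: as you define it, $\phi_n$ is identically zero, so it cannot be admissible. Since $F\subset\til$ and $\til\otimes v\subset\til$ for every basis word $v\in\alb^n$, every point $a\otimes v$ with $a\in F$ lies in $\til$, where $f\equiv 1$.

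The step meant to bridge this is the identity $f(a\otimes v_i\cdot g_i)-f(b\otimes v_i\cdot g_i)=f(a\otimes v_i)-f(b\otimes v_i)$, which you justify ``by $G$-equivariance in the right factor''. That identity amounts to $f(\zeta\cdot g)=f(\zeta)$, i.e.\ $G$-invariance of $f$. Your $f$ cannot be $G$-invariant, since it equals $1$ on $\til$ and $0$ on $\til\cdot h$.

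All increments of your telescoping sum occur at the non-basis elements $u_i=v_i\cdot g_i$ of $\bim^{\otimes n}$, where $g_i=(s_{i-1}\cdots s_1)|_{v_1}$. This is exactly why the algebras on $\limg$ sum over all of $\bim^{\otimes n}$ rather than over $\alb^n$. The missing idea is to put the whole right orbit $v\cdot G$ into the edge function, as the paper does in Proposition~\ref{pr:pcgreater}. For a single point $\xi\in\til$, set
\[\phi(s,v)=\Bigl(\sum_{g\in G}|f(\xi\otimes v\cdot g)-f(\xi\otimes s\cdot v\cdot g)|^p\Bigr)^{1/p}.\]
Then $\sum_{v\in\alb^n}\phi(s,v)^p=\sum_{w\in\bim^{\otimes n}}|f(\xi\otimes w)-f((\xi\cdot s)\otimes w)|^p$. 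This quantity is exactly what membership in $\mathcal{D}_p(\limg)$ or $\mathcal{D}_{p,\lambda}(\limg)$ controls, for the finitely many pairs $(\xi,\xi\cdot s)$.

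You can then telescope directly along the orbit points $\xi\otimes s_i\cdots s_1\cdot v_1$, with no intersection points needed. Each increment is one summand of $\phi(s_i,v_i)^p$. With this fix your quantitative conclusion $M_p(n)\le C\lambda^n$ goes through. It even supplies a step the paper leaves implicit, since the paper only records $M_p(n)\to 0$.

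Your second direction is sound and is sharper than the paper's. The paper's argument yields only $\lambda_1=\lambda^{p-p_0}$, where $\lambda$ is the tile-diameter decay rate, and this is not tied to the modulus. Building $\nu$ from a near-optimal $\phi\in\mathcal{B}$ and iterating $\sum_x e^{-p(\nu(xv)-\nu(v))}\le|\nuke|\sum_x\phi(x)^p$ gives the sharp rate.

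There are two smaller points:
\begin{enumerate}
\item With Definition~\ref{def:aplim} read literally, both of your directions place the infimum at $\theta_p^p$, because $M_p(n)\approx\theta_p^{np}$. The value $\theta_p$ corresponds to bounding the $p$th root of the sum by $C\lambda^n$. Say which convention you use instead of appealing to ``the paper's normalization''.
\item Level-$0$ tiles $\til\cdot g$ and $\til\cdot h$ meet if and only if $hg^{-1}\in\nuke$, so your chains should be $g_{j+1}=s_jg_j$, not $g_js_j$. You can avoid both this issue and the reduction to $\langle\nuke\rangle$ by using, as the paper does, that $\xi_i\otimes v\in\til\cdot\nuke$ for all sufficiently long $v$.
\end{enumerate}
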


Denote by $p_s$ the infimum of $p$ for which $\mathcal{D}_p(\lims)$ is dense in $C(\lims)$. We will prove stronger statements than both inequalities $p_s\le\cdim(G, \bim)$ and $p_s\ge\cdim(G, \bim)$.

\begin{proposition}
If there exists a metric $d\in\mathcal{AR}(G, \bim)$ of Hausdorff dimension $p_0$, then for every $p>p_0$ and $\lambda\in (\theta_p, 1)$ the algebra $\mathcal{D}_{p, \lambda}(\limg)$ is dense in $C_0(\limg)$.
\end{proposition}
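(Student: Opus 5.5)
The plan is to build, for each $\lambda\in(\theta_p,1)$, enough functions in $\mathcal{D}_{p,\lambda}(\limg)$ to separate points, and then conclude density from the $*$-subalgebra property. Since $\lambda>\theta_p$, we have $\theta_p<1$. So Theorem~\ref{th:modulus} (second part), and more precisely its proof, applies. After passing to an iterate $\bim^{\otimes n_0}$, we obtain a positive $G$-quasi-invariant quasi-morphism $\nu$ built from a function $\phi\in\mathcal{B}$ on $\alb$, together with a visual metric $d=d_{\nu,1}$ of exponent $1$. I would redo that construction with $\phi$ chosen almost optimal at level $n_0$, namely $\sum_{x}\phi(x)^p\le\lambda^{n_0}$. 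This is possible because $M_{\mathcal{B},p}(n)^{1/n}\to\theta_p^p$, and we may replace $\lambda$ by something slightly smaller than it but still larger than $\theta_p$, with $\lambda^p$ versus $\lambda$ bookkeeping adjusted. The Lemma following the construction then gives
\[\sum_{x\in\alb}e^{-p(\nu(xv)-\nu(v))}\le|\nuke|\sum_x\phi(x)^p,\]
and iterating yields
\[\sum_{u\in\alb^n}e^{-p\nu(uv)}\le C\,(|\nuke|\lambda')^{n}e^{-p\nu(v)}.\]
The factor $|\nuke|$ is the first nuisance. I would remove it by replacing $\phi$ with $\hat\phi$-type minima, or by absorbing it into the iterate: it appears once per $n_0$-block, so its contribution to the exponential rate is $|\nuke|^{1/n_0}\to1$.

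The separating functions are Lipschitz functions for $d$ lifted to $\limg$. Take $f(\zeta)=\max(0,r-\tilde d(\zeta,\zeta_1))$ for the lifted metric $\tilde d$ of Subsection~\ref{ss:liftingmm}. This function is compactly supported, continuous, and $1$-Lipschitz for $\tilde d$, and such functions separate points of $\limg$. Given $\xi_1,\xi_2\in\limg$, connect them by a chain of tiles of some fixed level $k$. For $v\in\bim^{\otimes n}$, the points $\xi_1\otimes v$ and $\xi_2\otimes v$ are then joined by the image chain of tiles of level $n+k$, namely $\til\otimes w_i\otimes v$. Hence
\[|f(\xi_1\otimes v)-f(\xi_2\otimes v)|\le\sum_i\diam(\til\otimes w_iv)\le C\sum_i e^{-\nu(\text{word of }w_iv)}\le C'e^{-\nu(v)},\]
using the quasi-morphism property and $G$-quasi-invariance, with boundedly many $w_i$. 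The relevant words here are the basis representatives after moving group elements through the tensor product, so I would use Proposition~\ref{pr:sumgeometric}.

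It remains to sum over $v\in\bim^{\otimes n}$ with $f(\xi_1\otimes v)$ or $f(\xi_2\otimes v)$ nonzero. Compact support and properness imply that, for each $v\in\alb^n$, only boundedly many $g$ with $v\cdot g$ contribute. The sum is therefore bounded by $C\sum_{v\in\alb^n}e^{-p\nu(v)}\le C''\lambda^n$ by the entropy-type estimate above. This shows $f\in\mathcal{D}_{p,\lambda}(\limg)$. Consequently $\mathcal{D}_{p,\lambda}(\limg)$ separates points and is dense.

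The main obstacle is the exponent calibration: we need the per-letter $p$-sum of $e^{-\nu}$ to decay at a rate as close to $\theta_p^{p}$ as we like, not merely below $1$ as in Theorem~\ref{th:modulus}. I also have to reconcile the normalization of $\lambda$ in $\mathcal{D}_{p,\lambda}$, where the sum of $p$-th powers is compared with $\lambda^n$, against $\theta_p$, defined via the $np$-th root. This may force comparing $\lambda$ with $\theta_p^p$ and rescaling the statement accordingly. The hypothesis on $d\in\mathcal{AR}$ of dimension $p_0<p$ is used only to ensure $\theta_p<1$, via the first part of Theorem~\ref{th:modulus}.
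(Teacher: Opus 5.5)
Your argument is correct in substance, but its key estimate differs from the paper's.

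\textbf{The paper's route.} It keeps the given $d\in\mathcal{AR}$ of dimension $p_0$, made into a quasi-morphism visual metric via Theorem~\ref{th:modulus}. It takes an arbitrary $d$-Lipschitz $G$-invariant function and bounds $\diam(\til_{v})^p\le C\diam(\til_{v})^{p-p_0}\mu(\til_{v})$. It then sums the Ahlfors-regular measures of the tiles and uses the exponential decay of tile diameters from Proposition~\ref{pr:diamoftiles}. This covers a large, natural class of test functions and needs only the AR structure. However, the rate it produces is $\lambda^{p-p_0}$, where $\lambda$ is the diameter-decay rate of the chosen metric, and this is not tied to $\theta_p$. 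So as written it yields density of $\mathcal{D}_{p,\lambda_1}$ for some $\lambda_1<1$, which suffices for $p_s\le\cdim(G,\bim)$. It does not yield density for every $\lambda\in(\theta_p,1)$.

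\textbf{Your route.} You rebuild $\nu$ from a nearly optimal admissible function at a large level $n_0$. Then $\sum_{v\in\alb^n}e^{-p\nu(v)}$ decays at rate $(|\nuke|\,M_{\mathcal{B},p}(n_0))^{1/n_0}$, which tends to $\theta_p^p$. This is what reaches the whole interval, and it uses the AR hypothesis only to guarantee $\theta_p<1$.

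\textbf{Normalization.} No rescaling is needed. Since $\theta_p<1$ and $p\ge 1$, we have $\theta_p^p\le\theta_p<\lambda$, so you actually obtain density for every $\lambda>\theta_p^p$. This threshold is sharp: running the argument of Proposition~\ref{pr:pcgreater} with rates shows that density of $\mathcal{D}_{p,\lambda}(\limg)$ forces $M_p(n)\le C\lambda^n$, hence $\lambda\ge\theta_p^p$.

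\textbf{One step needs repair.} You cannot in general join arbitrary $\xi_1,\xi_2\in\limg$ by a chain of tiles of one level. Tiles of a given level are adjacent only via left multiplication by $\nuke$. So if $\langle\nuke\rangle\ne G$, a point of $\til$ and a point of $\til\cdot g$ with $g\notin\langle\nuke\rangle$ are at infinite $\tilde d$-distance. Use contraction instead, as the paper does.
\begin{enumerate}
\item Reduce to $v\in\alb^n$ and write $v=v_av_b$ with $|v_a|=k$ large. Write $\xi_i=\zeta_i\cdot g_i$ with $\zeta_i\in\til$.
\item Then $\xi_i\otimes v_a=\zeta_i\otimes g_i(v_a)\cdot g_i|_{v_a}\in\til\cdot s_i$ with $s_i=g_i|_{v_a}\in\nuke$ for $k$ large enough.
\item Hence $\xi_1\otimes v$ and $\xi_2\otimes v$ lie in $\til\otimes s_1\cdot v_b$ and $\til\otimes s_2\cdot v_b$, and both of these tiles meet $\til\otimes v_b$.
\item This three-tile chain has diameters $\lesssim e^{-\nu(v_b)}\le C_k e^{-\nu(v)}$, by $G$-quasi-invariance and the quasi-morphism property. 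Pass to suffixes of length divisible by $n_0$ wherever $\nu$ is needed, since $\nu$ is defined only there.
\end{enumerate}
With this change the rest of your estimate (bounded multiplicity over right $G$-translates via properness, then the entropy-type sum) goes through unchanged.
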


Since $\mathcal{D}_{p, \lambda}(\limg)\subset\mathcal{D}_p(\limg)$, the density of $\mathcal{D}_{p, \lambda}(\limg)$ implies the density of $\mathcal{D}_p(\limg)$. Therefore, the above proposition implies $p_s\le\cdim(G, \bim)$.

\begin{proof}
We may assume that $d$ is a visual metric associated with a $G$-quasi-invariant quasi-morphism $(\alb^n)^*\arr[0, \infty)$, by Theorem~\ref{th:modulus}.

Let us show that there exists $\lambda\in (\theta_p, 1)$ such that every function $f\colon \limg\arr\C$ which is Lipschitz with respect to the metric $d$ belongs to $\mathcal{D}_{p, \lambda}$.

Let $L$ be the Lipschitz constant for $f$. Let $\xi_1, \xi_2\in\limg$. We will denote by $P(\xi)$ the image in $\lims$ of a point $\xi\in\limg$.

There exists $n_0$ such that $\xi_i\otimes v\in\til\cdot\nuke$ for all $v\in\xs$, $|v|\ge n_0$, $i=1, 2$. Then, for all $v_1\in\alb^{n_0}$ and $v_2\in\xs$ we have $P(\xi_i\otimes v_1v_2)\in\til_{s_i(v_2)}$ for some $s_i\in\nuke$. It follows that $d(P(\xi_1\otimes v_1v_2)-P(\xi_2\otimes v_1v_2))$ is not greater than diameter of the set $\bigcup_{s\in\nuke}\til_{s(v_2)}$. Since each set $\til_{s(v)}$ intersects $\til_v$, the diameter of $\bigcup_{s\in\nuke}\til_{s(v_2)}$ is less than a constant times the diameter of $\til_{v_2}$, since $d$ is associated with a $G$-quasi-invariant quasi-morphism.

Using Proposition~\ref{pr:diamoftiles}, we see that for some constants $C_1, C_2, C_3$, and $\lambda\in (0, 1)$, not depending on $n$, $v_1\in\alb^{n_0}$ and $v_2\in\alb^n$, we have
\begin{multline*}
|f(P(\xi_1\otimes v_1v_2))-f(P(\xi_2\otimes v_1v_2))|^p\le\\
 L^pd(P(\xi_1\otimes v), P(\xi_2\otimes v))^p\le C_1\diam(\til_{v_2})^p\le \\ C_2\diam(\til_{v_2})^{p-p_0}\mu(\til_{v_2})\le C_3\lambda^{n(p-p_0)}\mu(\til_{v_2}),
 \end{multline*} where $\mu$ is the corresponding measure on $\lims$.
 
 It follows that for all $n\ge n_0$ we have
 \begin{multline*}\sum_{v\in\alb^n}|f(P(\xi_1\otimes v))-f(P(\xi_2\otimes v))|^p\le\\ 
 (\deg\bim)^{n_0}C_3\lambda^{(n-n_0)(p-p_0)}\sum_{v_2\in\alb^{n-n_0}}\mu(\til_{v_2})\le\\
  C_4\lambda_1^n |\nuke|\mu(\lims)=C_5\lambda_1^n
  \end{multline*}
  for $\lambda_1=\lambda^{p-p_0}$. Consequently, $f\in\mathcal{D}_{p, \lambda_1}$.
\end{proof}

\begin{proposition}
\label{pr:pcgreater}
Let $\xi\in\til$. Consider the algebra $\widehat{\mathcal{D}}_{p, \xi}$  consisting of functions $f\in C_0(\limg)$ such that $\lim_{n\to\infty}\sum_{v\in\bim^{\otimes n}}|f(\xi\otimes v)-f(\xi\otimes g\cdot v)|^p=0$ for all $g\in G$.

If $\widehat{\mathcal{D}}_{p, \xi}$ is dense in $C_0(\limg)$, then there exists a metric $d\in\mathcal{AR}(G, \bim)$ of dimension $p$.
\end{proposition}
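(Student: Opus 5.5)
The plan is to deduce from density of $\widehat{\mathcal{D}}_{p, \xi}$ that $\inf_n M_p(n)=0$, i.e.\ $\theta_p<1$, and then invoke the second part of Theorem~\ref{th:modulus}. That theorem gives a visual metric $d_\nu$, associated with a positive $G$-quasi-invariant quasi-morphism, of Hausdorff dimension $q\le p$. If $q<p$, I would replace $d_\nu$ by the snowflake $d_\nu^{q/p}$. This metric is still a visual metric for the quasi-morphism $\frac{p}{q}\nu$, so it is again quasi-symmetric to uniform metrics and Ahlfors-regular, and its dimension is exactly $p$. Hence it lies in $\mathcal{AR}(G, \bim)$ with dimension $p$.

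\emph{Choice of a test function.} Let $r_0>0$ be as in the proof of Theorem~\ref{th:modulus}: the distance between $\til$ and $\til\cdot h$ for $h\notin\nuke$, computed in a lifted metric on $\limg$. Let $W$ be the open $r_0/3$-neighbourhood of $\til$; then $W$ is disjoint from $\til\cdot h$ for all $h\notin\nuke$. By Urysohn and density, choose $f\in\widehat{\mathcal{D}}_{p, \xi}$ with $|f-u|<1/4$, where $u$ is a compactly supported continuous function equal to $1$ on $\til$ and to $0$ off $W$.

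Take an essential edge-path $(s_m, v_m), \ldots, (s_1, v_1)$ in $\G_{n, \nuke}$ and lift it to $\bim^{\otimes n}$ as in the proof of Theorem~\ref{th:modulus}: $u_1=v_1$ and $u_{i+1}=s_i\cdot u_i=v_{i+1}\cdot g_{i+1}$. Then $\xi\otimes u_1\in\til$, while $\xi\otimes u_{m+1}\in\til\cdot h$ with $h=(s_m\cdots s_1)|_{v_1}\notin\nuke$. Therefore $|f(\xi\otimes u_1)-f(\xi\otimes u_{m+1})|\ge 1/2$. Telescoping gives
\[
\frac12\le\sum_{i=1}^m |f(\xi\otimes s_i\cdot v_i\cdot g_i)-f(\xi\otimes v_i\cdot g_i)|.
\]

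\emph{Admissible function.} Define
\[
\phi(s,v)=2\sup_{g\in G}|f(\xi\otimes s\cdot v\cdot g)-f(\xi\otimes v\cdot g)|
\]
for $s\in\nuke$ and $v\in\alb^n$. The display above shows that $\phi$ is admissible on the edges of $\G_{n, \nuke}$.

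The key point is that for fixed $s,v$ only boundedly many $g$ give a nonzero term. Indeed, $f$ has compact support $K$. The points $\xi\otimes v\cdot g$ and $\xi\otimes s\cdot v\cdot g$ lie in $\til\cdot g$ and $\til\cdot g'$ with $g'\in\nuke g$. By properness, only a fixed finite number $N_K$ of $g$ satisfy $\til\cdot g\cap K\neq\emptyset$. Consequently
\[
\phi(s,v)^p\le 2^pN_K^{p-1}\sum_{g}|f(\xi\otimes s\cdot v\cdot g)-f(\xi\otimes v\cdot g)|^p .
\]
Summing over $v\in\alb^n$ and $s\in\nuke$, and using that $v\cdot g$ runs over $\bim^{\otimes n}$ exactly once, gives
\[
M_p(n)\le 2^pN_K^{p-1}\sum_{s\in\nuke}\sum_{w\in\bim^{\otimes n}}|f(\xi\otimes s\cdot w)-f(\xi\otimes w)|^p .
\]
This tends to $0$ by the definition of $\widehat{\mathcal{D}}_{p, \xi}$. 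Hence $\inf_n M_p(n)=0$, so $\theta_p<1$.

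The step needing most care is this bounded-multiplicity count. It must be uniform in $n$ and $v$, which follows because $\xi\otimes v\in\til$ for all $v\in\xs$. The other delicate point is checking that the snowflaking step stays inside $\mathcal{AR}$, which I expect to be routine via the remark that $d_{\nu,\alpha}^t$ is visual for $t\nu$.
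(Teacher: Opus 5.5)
Your proposal is correct and follows essentially the paper's proof. It uses a test function separating $\til$ from the tiles $\til\cdot h$ with $h\notin\nuke$, shows the edge function is admissible by telescoping along the lift of an essential path, bounds $M_p(n)$ by the sums defining $\widehat{\mathcal{D}}_{p,\xi}$, and then applies Theorem~\ref{th:modulus}; your snowflaking step just makes explicit the passage from dimension at most $p$ to exactly $p$, which the paper leaves implicit.

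The one difference is that the paper takes $\phi(s,v)=\bigl(\sum_{g\in G}|f(\xi\otimes v\cdot g)-f(\xi\otimes s\cdot v\cdot g)|^p\bigr)^{1/p}$, so $\sum_v\phi(s,v)^p$ is exactly the defining sum. With your supremum, the trivial bound $\sup_g|a_g|^p\le\sum_g|a_g|^p$ already suffices, so the bounded-multiplicity count is unnecessary. That count also relied on $f$ being compactly supported, which density alone does not give.
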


It is easy to see that $\mathcal{D}_p(\limg)\subset\widehat{\mathcal{D}}_{p, \xi}$, so density of $\mathcal{D}_p(\limg)$ implies density of $\widehat{\mathcal{D}}_{p, \xi}$. Consequently, the above proposition implies the inequality $p_c\ge\cdim(G, \bim)$.

\begin{proof}
Choose a basis $\alb$ and let $\nuke$ be the corresponding nucleus.
Suppose that $\widehat{\mathcal{D}}_{p, \xi}$ is dense. This algebra is also invariant under post-compositions with Lipschitz functions, so there exists a function $f\in\widehat{\mathcal{D}}_{p, \xi}$ such that $f(\xi)=1$ for every $\xi\in \til$ and $f(\xi)=0$ for every $\xi\in\til\cdot g$ for $g\in G\setminus \nuke$.

For every $n$, define then the function $\phi$ on the edges of the graph $\Gamma_n$ by \[\phi(s, v)=\left(\sum_{g\in G}|f(\xi\otimes v\cdot g)-f(\xi\otimes s\cdot v\cdot g)|^p\right)^{1/p}.\]
Then $\sum_{v\in\alb^n}\phi(s, v)^p=\sum_{v\cdot g\in\bim^{\otimes n}}|f(\xi\otimes v\cdot g)-f(\xi\otimes s\cdot v\cdot g)|^p\to 0$, by the definition of $\widehat{\mathcal{D}}_{p, \xi}(\limg)$.

Let $(s_1, v_1), (s_2, v_2), \ldots, (s_m, v_m)$ be an edge-path in $\Gamma_n$. Here $v_{i+1}=s_i(v_i)$. Suppose that $(s_m\cdots s_2s_1)|_{v_1}\notin\nuke$. Consider the corresponding sequence 
\[\xi\otimes v_1, \xi\otimes s_1\cdot v_1, \xi\otimes s_2s_1\cdot v_1, \ldots, \xi\otimes s_m\cdots s_2s_1\cdot v_1.\]
Then $\xi\otimes v_1\in\til$, but 
\[\xi\otimes s_m\cdots s_2s_1\cdot v_1=\xi\otimes s_m(v_m)\cdot (s_m\cdots s_2s_1)|_{v_1}\in\til\cdot (s_m\cdots s_2s_1)|_{v_1}.\] It follows that $f(\xi\otimes v_1)=1$ and $f(\xi\otimes s_m\cdots s_2s_1\cdot v_1)=0$. Consequently,

\begin{multline*}|f(\xi\otimes v_1)-f(\xi\otimes s_1\cdot v_1)|+|f(\xi\otimes s_2s_1\cdot v_1)-f(\xi\otimes s_3s_2s_1\cdot v_1)|+\cdots\\ +|f(\xi\otimes s_{m-1}\cdots s_1\cdot v_1)-f(\xi\otimes s_m\cdots s_1\cdot v_1)|\ge 1.
\end{multline*}
We have $\phi(s_i, v_i)=\left(\sum_{g\in G}|f(\xi\otimes v_i\cdot g)-f(\xi\otimes s_i\cdot v_i\cdot g)|^p\right)^{1/p}$. One of the summands in this sum is $|f(\xi\otimes v_i\cdot (s_{i-1}\cdots s_2s_1)|_{v_1})-f(\xi\otimes s_i\cdot v_i\cdot (s_{i-1}\cdots s_2s_1)|_{v_1})|^p=|f(\xi\otimes (s_{i-1}\cdots s_2s_1)\cdot v_1)-f(\xi\otimes (s_i\cdots s_2s_1)\cdot v_1)|^p$.
Consequently, $\phi(s_i, v_i)\ge |f(\xi\otimes (s_{i-1}\cdots s_2s_1)\cdot v_1)-f(\xi\otimes (s_i\cdots s_2s_1)\cdot v_1)|$, which implies that $\sum_{i=1}^m\phi(s_i, v_i)\ge 1$, i.e., that $\phi$ is admissible.

We conclude that $M_n(p)\to 0$ as $n\to\infty$, so by Theorem~\ref{th:modulus}, there exists a metric $d\in\mathcal{AR}(G, \bim)$ of dimension $p$.
\end{proof}

\begin{example}
Consider the Julia set $\mathcal{J}$ of $f(z)=z^2-1$, i.e., the limit space of $\img(z^2-1)$. There is a topological conjugacy between the action of $f^2$ on the closure of the Fatou component containing $0$ with the action of $z^2$ on the closed unit disc. The restriction of this conjugacy to the boundary of the Fatou component can be also seen as the map induced by the morphism from the binary adding machine biset to the $\langle a\rangle$-biset generated by $\{11, 01\}\subset\bim_{z^2-1}^{\otimes 2}$.

Consider an arbitrary Lipschitz function on the unit circle, and transfer it to a function $\varphi$ on the boundary $B_0$ of the Fatou component of $0$ by the conjugacy. The closure of every component $C$ of $\mathcal{J}\setminus B_0$ intersects $B_0$ in a single point, so we can extend $\varphi$ to $C$ as a constant function, thus getting a continuous function on $\mathcal{J}$. The boundary $B$ of any other Fatou component of $\mathcal{J}$ is mapped onto $B_0$ homeomorphically by some iteration of $f$, so we can use that iteration to define in a similar way a copy of $\varphi$ on $B$ which is constant on the components of $\mathcal{J}\setminus B$. One can show that all such functions belong to $\mathcal{D}_{1, 1}$, and hence to $\mathcal{D}_p$ for all $p>1$. It is also easy to see that such functions generate a dense sub-algebra in $C(\mathcal{J})$. It follows that the conformal dimension of the limit space of $\img(z^2-1)$ is equal to 1.
\end{example}

\section{Uniform dimension}

Recall that the visual metrics associated with the morphism $\nu(v)=|v|$ are called \emph{uniform}. 

\begin{defi}
    Let $(G, \bim)$ be a contracting group. The \emph{uniform dimension} $\udim(G, \bim)$ is the infimum of the Hausdorff dimensions of the uniform metrics on $\lims$.
\end{defi}

We will need the following algebraic interpretation of the  contracting coefficient $\theta_\infty$.

\begin{proposition}
Let $(G, \bim)$ be a contracting group. Choose a basis $\alb\subset\bim$, and let $\nuke$ be the associated nucleus.

Denote by $L(n)$ the largest number for which $\nuke^{L(n)}|_v\subset\nuke$ for all $v\in\alb^n$. 

Then \[\lim_{n\to\infty}L(n)^{1/n}=\sup_n L(n)^{1/n}=\theta_\infty^{-1}.\]
\end{proposition}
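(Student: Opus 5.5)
The plan is to show that $L(n)$ is supermultiplicative, and then that $L(n)$ is comparable to $M_\infty(n)^{-1}$ when $M_\infty$ is computed for the class of admissible functions on the edges of $\G_{n,\nuke}$. Both equalities then follow from Fekete's lemma and the definition of $\theta_\infty$. That definition is independent of the admissible class, by the proposition establishing the existence of $\theta_p$.

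\emph{Step 1: supermultiplicativity.} I will show $L(n+m)\ge L(n)L(m)$. Take a product $g=g_1g_2\cdots g_k$ of $k=L(n)L(m)$ elements of $\nuke$ and a word $uv$ with $|u|=n$, $|v|=m$. Group the factors into $L(m)$ consecutive blocks $h_1,\dots,h_{L(m)}$, each a product of $L(n)$ factors. By~\eqref{eq:sections}, $g|_u=h_1|_{u_1}h_2|_{u_2}\cdots h_{L(m)}|_{u_{L(m)}}$ for suitable words $u_j\in\alb^n$. Each $h_j|_{u_j}$ lies in $\nuke^{L(n)}|_{\alb^n}\subset\nuke$, so $g|_u\in\nuke^{L(m)}$. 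Then $g|_{uv}=(g|_u)|_v\in\nuke$. Since $L(n)\ge1$ (because $\nuke|_v\subset\nuke$), Fekete's lemma applied to $\log L(n)$ gives $\lim L(n)^{1/n}=\sup_n L(n)^{1/n}$. Finiteness holds because $L(n)\le C\,|\alb|^n$, which follows from Step 2 combined with the trivial bound $M_\infty(n)\ge|\alb|^{-n}/C$, or directly.

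\emph{Step 2: $M_\infty(n)\asymp 1/L(n)$.} Here $M_\infty(n)$ is the infimum, over $\nuke$-admissible $\phi\colon\nuke\times\alb^n\arr[0,\infty)$, of $\max\phi$.

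For the upper bound, note that an edge path of length $m\le L(n)$ starting at $v$ has label product $(s_m\cdots s_1)|_v\in\nuke^{m}|_v$. This set lies in $\nuke^{L(n)}|_v\subset\nuke$, since $1\in\nuke$. So every essential path has at least $L(n)+1$ edges, the constant function $1/(L(n)+1)$ is admissible, and $M_\infty(n)\le 1/L(n)$.

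For the lower bound, maximality of $L(n)$ gives $s_1,\dots,s_{L(n)+1}\in\nuke$ and $v\in\alb^n$ with $(s_{L(n)+1}\cdots s_1)|_v\notin\nuke$. This is an essential path of $L(n)+1$ edges, so every admissible $\phi$ has $\max\phi\ge 1/(L(n)+1)\ge 1/(2L(n))$.

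Hence $M_\infty(n)^{1/n}$ and $L(n)^{-1/n}$ have the same limit, which gives $\theta_\infty^{-1}=\lim L(n)^{1/n}$.

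The only delicate point is bookkeeping in Step 1. The section of a product at $u$ is the product of the sections of the factors at the successively moved words, not at $u$ itself. The hypothesis on $L(n)$ is uniform over all of $\alb^n$, so this causes no trouble. I also need to check that the path convention of $\G_n$ (the essential condition on $(s_m\cdots s_1)|_{v_1}$) matches the products in the definition of $L(n)$.
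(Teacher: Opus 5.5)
Your proof is correct and is essentially the paper's argument: the paper likewise sandwiches $M_\infty(n)$ between $(L(n)+1)^{-1}$ and $L(n)^{-1}$, using the constant function for one bound and a single essential path of length $L(n)+1$ for the other, and it proves $L(n+m)\ge L(n)L(m)$ by the same splitting into $L(m)$ blocks of $L(n)$ nucleus elements before applying Fekete's lemma. Your extra care (using $1\in\nuke$ to handle paths shorter than $L(n)$, checking $L(n)\ge 1$, and remarking on finiteness) only makes explicit what the paper leaves implicit.
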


\begin{proof}
The constant function equal to $L(n)^{-1}$ on the edges of $\G_n$ is admissible, since any essential path has length $\ge L(n)+1$. Consequently, $\overline M_\infty(n)\le L(n)^{-1}$. 

On the other hand, suppose that $\phi$ is an admissible function on the set of edges of $\G_n$. There exists an essential path of length $L(n)+1$. Since the sum of values of $\phi$ on the edges of this path is not less than $1$, there exists an edge $e$ such that $\phi(e)\ge (L(n)+1)^{-1}$.
Consequently, $\|\phi\|_\infty\ge (L(n)+1)^{-1}$. 

We have shown that 
\[(L(n)+1)^{-1}\le \overline M_\infty(n)\le L(n)^{-1}.\]
Consequently, $\lim_{n\to\infty}L(n)^{1/n}=\theta_\infty^{-1}$. 

Consider a product of length $L(n)L(m)$ of elements of $\nuke$, write it as $g_1g_2\ldots g_{L(m)}$ for $g_i\in\nuke^{L(n)}$ and let $w=v_1v_2$ for $v_1\in\alb^{n}$ and $v_2\in\alb^m$. Then $g_1g_2\ldots g_{L(m)}|_{v_1}=g_1|_{g_2\cdots g_{L(m)}(v_1)}\cdots g_{L(m)}|_{v_1}\in\nuke^{L(m)}$, so $(g_1\cdots g_{L(m)})|_{v_1v_2}\in\nuke$. It follows that $L(n+m)\ge L(n)L(m)$.
Consequently, by Fekete's Lemma, $\lim_{n\to\infty}L(n)^{1/n}=\sup_n L(n)^{1/n}$.
\end{proof}

\begin{proposition}
For every contracting group $(G, \bim)$, we have  \[\udim(G, \bim)=\frac{\log\mathsf{D}}{-\log\theta_\infty},\]
where $\mathsf{D}=|\bim/G|$.
\end{proposition}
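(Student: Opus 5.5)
The uniform metric of exponent $\alpha$ has Hausdorff dimension $\log\mathsf{D}/\alpha$ by the proposition on uniform measures. So $\udim(G,\bim)=\log\mathsf{D}/\alpha_0$, where $\alpha_0$ is the supremum of $\alpha$ for which $d_\alpha$ is a metric. By the characterization theorem for visual metrics from~\cite{nek:smale}, $\alpha_0$ equals the supremum of lower exponents of the log-scale $\ell_\alb$. It therefore suffices to show that this supremum equals $-\log\theta_\infty=\lim_n \frac{1}{n}\log L(n)$, using the preceding proposition.

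\emph{Lower exponents are at most $\log\theta_\infty^{-1}$.} Take an essential path in $\G_{n,\nuke}$ of length $L(n)+1$. Such a path exists by the maximality of $L(n)$; passing to a minimal one, its proper subpaths are non-essential. Fix a point $\zeta$ and lift the path to $\limg$, or equivalently take points $\xi_i\in\til_{v_i}$ chosen consecutively in the intersections $\til_{v_i}\cap\til_{v_{i+1}}$. This gives a chain of $m\approx L(n)$ points with $\ell_\alb(\xi_i,\xi_{i+1})\ge n$. I would arrange the endpoints so that $\ell(\xi_0,\xi_m)$ is bounded by a constant. The natural tool is Lemma~\ref{lem:intersectingtiles}: since the product of sections is not in $\nuke$, the end tile and start tile are separated at a bounded level, as in the $r_0$ argument of Theorem~\ref{th:modulus}. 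If the lower exponent $\alpha$ held, we would get $L(n)\gtrsim e^{\alpha n}$ for every $n$, hence $\alpha\le\log\theta_\infty^{-1}$.

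\emph{Every $\alpha<\log\theta_\infty^{-1}$ is a lower exponent.} Fix $k$ with $L(k)^{1/k}>e^{\alpha}$. Suppose $\xi_0,\dots,\xi_m$ is a chain with consecutive $\ell\ge n$ and $\ell(\xi_0,\xi_m)=r$. Consecutive points lie in intersecting level-$n$ tiles, so, after the bounded-error bookkeeping with $\nuke^3$ as in the proof of the measure proposition, the chain yields a path in $\G_n$ of length $\asymp m$. The representing words of the endpoints are then related by the product of the labels, and this product truncated to level $r+C$ lies outside $\nuke$, by the definition of $\ell_\alb$. I then split level $n$ into blocks of length $k$ between levels $r+C$ and $n$. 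Each block can absorb products of length $L(k)$ into $\nuke$. Applying the argument of the submultiplicativity step $L(n+m)\ge L(n)L(m)$ iteratively, a path whose section at the truncated level is not in $\nuke$ must have length at least $L(k)^{\lfloor (n-r-C)/k\rfloor}$. This gives $m\ge C' e^{\alpha(n-r)}$.

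The main obstacle is this conversion between chains of points in $\lims$ and paths in Schreier graphs. One must control the choice of representing sequences, which costs a $\nuke^3$ factor, and a $\nuke^{3}|_{\alb^{t}}\subset\nuke$ shift of levels, so that the losses are only additive constants in levels and multiplicative constants in $m$. Once this is done, combining the two bounds gives $\alpha_0=-\log\theta_\infty$ and the formula for $\udim$.
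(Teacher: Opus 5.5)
Your second half, showing that every $\alpha<-\log\theta_\infty$ is a lower exponent, is sound and is essentially the paper's argument. A chain with consecutive $\ell\ge n$ and $\ell(\xi_0,\xi_m)=r$ gives, after re-choosing representatives at bounded cost, an essential path of length at most $2m$ in $\G_{n-r-1}$ on the prefixes. Hence $m\gtrsim L(n-r-1)$. You can take $L(j)\ge c\lambda^{-j}$ for $\lambda>\theta_\infty$ directly from $L(j)^{1/j}\to\theta_\infty^{-1}$ instead of rebuilding supermultiplicativity.

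\textbf{The gap is in the first half.} You take points $\xi_i\in\til_{v_i}$ along an essential path of length $L(n)+1$ and claim that $\ell_\alb(\xi_0,\xi_m)$ is bounded, ``as in the $r_0$ argument.'' But the $r_0$ argument separates $\til$ from $\til\cdot g$, $g\notin\nuke$, in $\limg$. Essentiality says nothing about the projections to $\lims$, so ``lift the path to $\limg$'' and ``choose points in $\lims$'' are not equivalent here.

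The adding machine is a counterexample. There $L(n)=2^n$, and the minimal essential path $a^{2^n+1}$ starts at $1\ldots1$ and ends at $0\ldots0$. It winds once around the circle and ends in a tile adjacent to the starting one, so $\ell(\xi_0,\xi_m)\ge n$. The lower-exponent inequality then gives nothing, and minimality of the path does not help.

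To stay inside $\lims$, you would have to show that some intermediate point satisfies $\ell(\xi_0,\xi_j)\le C$ with $C$ independent of $n$. That requires an argument about how $\limg\arr\lims$ behaves on small neighbourhoods (properness of the action), which the proposal does not contain.

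The simplest repair is the paper's route. For $\udim\ge\log\mathsf{D}/(-\log\theta_\infty)$ you only need to bound the exponents of actual uniform metrics; by your own reduction via the cited theorem, this is the same supremum as that of lower exponents.

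So let $d$ be a uniform metric of exponent $\alpha$, and lift it to $\tilde d$ on $\limg$ (Definition~\ref{def:liftofmetric}). Then $\diam(\til\otimes v)=\diam(\til_v)\lesssim e^{-\alpha n}$ for $v\in\alb^n$. Moreover $\delta$, the $\tilde d$-distance from $\til$ to $\bigcup_{g\notin\nuke}\til\cdot g$, is positive.

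Lift the essential path starting in $v_0$. It gives a chain of $L(n)+2$ pairwise-consecutively intersecting level-$n$ tiles running from $\til\otimes v_0\subset\til$ to a tile inside $\til\cdot g$ with $g\notin\nuke$. Hence $\delta\lesssim L(n)e^{-\alpha n}$, which gives $e^{\alpha}\le\theta_\infty^{-1}$.
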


\begin{proof}
Suppose that $\xi_1, \xi_2, \ldots, \xi_m$ is a sequence of points of $\lims$ such that $\ell(\xi_i, \xi_{i+1})\ge n$. If $v_1, v_2, \ldots, v_m$ are suffixes of length $n$ of sequences representing points $\xi_i$, then $v_{i+1}=s_i(v_i)$ for some $s\in\nuke$. If $u_1, u_m$ are the suffixes of length $k+1$ of words representing $\xi_1, \xi_m$, then there is no element $s\in\nuke$ such that $s(u_1)=u_m$. In particular, the section of $s_{m-1}\cdots s_2s_1$ in the suffix of length $n-k$ of $u_1$ does not belong to the nucleus. It follows that $m-1>L(n-k)$. Let $\lambda>\theta_\infty$. Then $L(n)\ge\lambda^{-n}$ for all $n$ large enough, so $m>\lambda^{-(n-k)}$ for all $n-k\ge m_0$ for some fixed $m_0$. If $n-k<m_0$, then $\lambda^{-(n-k)}<\lambda^{-m_0}$, so $m>1=\lambda^{m_0}\lambda{-m_0}>\lambda^{m_0}\lambda^{-(n-k)}$. Hence $m>C\lambda^{-(n-k)}$ for all $n\ge k$, where $C=\lambda^{m_0}$.

We have shown that $-\log\lambda$ is a lower exponent for the log-scale $\ell(\cdot, \cdot)$, see Definition~\ref{def:lowerexponentforlog}. It follows that there exists a uniform metric $d(\xi_1, \xi_2)\asymp e^{\log\lambda\cdot \ell(\xi_1, \xi_2)}$ of Hausdorff dimension $\frac{\mathsf{D}}{-\log\lambda}$. Since $\lambda$ was an arbitrary number greater than $\theta_\infty$, we get \[\udim(G, \bim)\le\frac{\log\mathsf{D}}{-\log\lambda}.\]

In the other direction, suppose that $d$ is a uniform  metric on $\lims$ of dimension $p$. Let $\delta$ be the distance in $\limg$ (with respect to the lifted metric) from the tile $\til$ to the union of the tiles $\til\cdot g$ for $g\in G\setminus\nuke$. By definition of $L(n)$, there exists a sequence $s_1, s_2, \ldots, s_{L(n)+1}$ of elements of the nucleus and a word $v_0\in\alb^n$ such that $g=(s_{L(n)+1}\cdots s_2s_1)|_{v_0}\notin\nuke$. Consider the elements $v_1=s_1\cdot v_0, v_2=s_2\cdot v_1, \ldots v_{L(n)+1}=s_{L(n)+1}\cdot v_{L(n)}$ of $\bim^{\otimes n}$. Then $\til\otimes v_i\cap \til\otimes v_{i+1}\ne\emptyset$ for all $i=0, \ldots, L(n)$, and  $\til\otimes v_{L(n)+1}\subset \til\cdot g$ for $g\notin\nuke$. Let $\xi_i\in\til\otimes v_i\cap \til\otimes v_{i+1}$. Then $d(\xi_i, \xi_{i+1})\le\diam(\til\otimes v_i)\le\diam(\til_{v_i})\asymp e^{-n\frac{\log\mathsf{D}}p}$

We have $\xi_0\in\til\otimes v_0\subset\til$ and $\xi_{L(n)}\in\til\otimes v_{L(n)+1}\subset\til\cdot g$ for $g\notin\nuke$. Consequently, $d(\xi_0, \xi_{L(n)})\ge\delta$, so 
\[\delta\le d(\xi_0, \xi_{L(n)})\le\sum_{i=0}^{L(n)-1}d(\xi_i, \xi_{i+1})\le CL(n)e^{-n\frac{\log\mathsf{D}}p},\]
hence $\log\delta\le \log C+\log L(n)-n\frac{\log\mathsf{D}}p$, hence
\[p\ge \frac{n\log\mathsf{D}}{\log C-\log\delta+\log L(n)}\to\frac{\log\mathsf{D}}{-\log\theta_\infty},\]
which finishes the proof.
\end{proof}

\begin{example}
    We have seen that, in the case of $\img(z^2-1)$, we have $\eta_p=\theta_p=2^{\frac{1-p}{2p}}$ for all $p\ge 1$. It follows that $\ldim(\img(z^2-1))=\cdim(\img(z^2-1))=1$, while $\udim(\img(z^2-1))=\frac{\log 2}{-\log 2^{1/2}}=2.$
\end{example}

\begin{proposition}
    For every contracting group $(G, \bim)$, we have $\eta_\infty=\theta_\infty$. 
\end{proposition}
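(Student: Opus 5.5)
One inequality is already available: Proposition~\ref{pr:etatheta} gives $\eta_\infty\le\theta_\infty$. So the work is the reverse inequality $\eta_\infty\ge\theta_\infty$. I would prove it through the quantity $L(n)$ of the preceding proposition, using $\theta_\infty^{-1}=\lim_n L(n)^{1/n}$. It then suffices to show the following: for every $\lambda>\eta_\infty$ there is a constant $c>0$ such that $L(n)\ge c\lambda^{-n}$ for all large $n$.

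The first step is to fix a generating set. I take $S=\nuke\cup\nuke^{-1}$ together with finitely many extra generators of $\langle\nuke\rangle$. The lengths $l$ on $\langle\nuke\rangle$ computed with respect to $S$ are comparable to those for any other finite generating set, so by the invariance part of the proposition on $\eta_p$ this choice does not affect $\eta_\infty$.

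Next, fix $\lambda>\eta_\infty$. By the definition of $\eta_\infty$ there are $n_0$ and $R$ such that
\[
\max_{v\in\alb^{n_0}}l(g|_v)\le\lambda^{n_0}l(g)\quad\text{whenever } l(g)\ge R.
\]
Iterating this bound along blocks of length $n_0$, and using $l(g|_v)\le K\,l(g)$ for short words $v$, I get a constant $C$ with
\[
\max_{v\in\alb^n}l(g|_v)\le C\lambda^n l(g)+C
\]
for all $g$ and all $n$. The additive constant is needed because the recursion stops once the length of a section drops below $R$. Since $G$ is contracting, there is a $k$ such that every $h$ with $l(h)\le C+R'$ satisfies $h|_u\in\nuke$ for all $|u|\ge k$.

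Now take any product $g=s_1\cdots s_m$ of $m$ nucleus elements, so $l(g)\le m$. For $v\in\alb^{n+k}$, write $v=v_1v_2$ with $|v_1|=n$. We have $l(g|_{v_1})\le C\lambda^n m+C$. If $m\le\lambda^{-n}$, this gives $l(g|_{v_1})\le 2C$, and then $g|_{v}\in\nuke$ by the choice of $k$. Hence $L(n+k)\ge\lfloor\lambda^{-n}\rfloor$, and so $\lim_n L(n)^{1/n}\ge\lambda^{-1}$. Combined with the previous proposition this gives $\theta_\infty\le\lambda$. Since $\lambda>\eta_\infty$ was arbitrary, $\theta_\infty\le\eta_\infty$.

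The main obstacle is the iteration step, specifically controlling the additive error when the limsup in the definition of $\eta_\infty$ only applies to long elements. I would handle it by induction over blocks of length $n_0$. In that induction a section of length below $R$ stays bounded forever, by contraction: its sections eventually lie in $\nuke$, and sections of elements of bounded length have bounded length. This is what produces the additive constant $C$.
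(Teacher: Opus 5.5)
Your proof is correct and follows essentially the same route as the paper. Both fix $\lambda>\eta_\infty$, iterate the contraction bound in blocks of length $n_0$ to show that an element of length $l$ has all its sections in $\nuke$ below depth roughly $\log l/(-\log\lambda)$ (the paper writes this as $\rho(g)\le\frac{\log l(g)}{-\log\lambda}+C$), apply this to products of nucleus elements to get $L(n)\gtrsim\lambda^{-n}$, and take the reverse inequality from Proposition~\ref{pr:etatheta}. The undefined $R'$ in your choice of $k$ is harmless; it just needs to be the bound $2C$ you use afterwards.
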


\begin{proof}
Define the \emph{depth} $\rho(g)$ of $g\in G$ as the smallest $n$ such that $g|_v\in\nuke$ for all $v\in\alb^n$.

Let $\lambda>\eta_\infty$. Then there exists a finite set $A\subset G$ and $n_0$ such that $l(g|_v)\le\lambda^{n_0} l(g)$ for all $g\in G\setminus A$ and $v\in\alb^{n_0}$. This implies that for every $g\in G$ we have
\[\rho(g)\le\frac{\log l(g)}{-\log\lambda}+C,\]
where $C$ is any number greater than $\rho(g)$ for $g\in A$.

Recall that $L(n)$ was defined as the largest number such that $\nuke^{L(n)}|_v\subset\nuke$ for all $v\in\alb^n$. For every $n$ there exists $v_n\in\alb^n$ and $g_n\in\nuke^{L(n)+1}$ such that $g_n|_v\notin\nuke$. Then $\rho(g_n)\ge n$ and $l(g_n)\le L(n)+1$. Therefore,
\[n\le\rho(g_n)\le\frac{\log(L(n)+1)}{-\log\lambda}+C,\] 
hence
\[L(n)\ge \lambda^{-n+C}-1,\]
for all $n$, which implies $\theta_\infty=\lim_{n\to\infty}L(n)^{-1/n}\le\lambda$. Since $\lambda$ is an arbitrary number greater than $\eta_\infty$, we get $\theta_\infty\le\eta_\infty$. But we also proved in Proposition~\ref{pr:etatheta} that $\eta_\infty\le\theta_\infty$.
\end{proof}

It is proved in~\cite[Proposition~2.13.8]{nek:book} that $\frac{\log\mathsf{D}}{-\log\theta_\infty}+\epsilon$ is, for every $\epsilon>0$, an upper bound on the degree of polynomial growth of the orbits of the action of $G$ on $\xo$.

\begin{corollary}
\label{cor:udimcdim}
The following two conditions for a contracting group $(G, \bim)$ are equivalent:
\begin{enumerate}
\item $\udim(G, \bim)=\cdim(G, \bim)$;
\item $\theta_p=\mathsf{D}^{1/p}\theta_\infty$ for all $p\ge\cdim(G, \bim)$.
\end{enumerate}
\end{corollary}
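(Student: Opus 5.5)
The plan is to compare the two quantities through the asymptotic moduli. On one side we have the formula $\udim(G,\bim)=\log\mathsf{D}/(-\log\theta_\infty)$. On the other we have the corollary of Theorem~\ref{th:modulus}, $\cdim(G,\bim)=\inf\{p\colon\theta_p<1\}$, which rests on the continuity and monotonicity of $p\mapsto\theta_p$ coming from Proposition~\ref{pr:asympmodulus}. Throughout, set $\Theta(p)=\mathsf{D}^{1/p}\theta_\infty$. Proposition~\ref{pr:asympmodulus}, applied with $q=\infty$, gives
\[
\theta_\infty\le\theta_p\le\Theta(p)
\]
for all $p$. The function $\Theta$ is strictly decreasing in $p$, and $\Theta(p)=1$ exactly when $p=p_u:=\log\mathsf{D}/(-\log\theta_\infty)=\udim(G,\bim)$. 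Since $\Theta(p)<1$ for $p>p_u$, the bound $\theta_p\le\Theta(p)$ already yields $\cdim\le\udim$ in general, so the whole question is whether the reverse inequality holds.

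For (2)$\Rightarrow$(1), I would use that $\theta_p=\Theta(p)$ for every $p\ge\cdim$. By continuity of $\theta_p$ we have $\theta_{\cdim}=1$; if $\cdim$ were $0$ this step needs a small separate comment, but generically $\cdim\ge 1$. Then $\Theta(\cdim)=1$, so $\cdim=p_u=\udim$. A cleaner way to say the same thing: for $p\ge\cdim$ we have $\theta_p<1$ iff $\Theta(p)<1$ iff $p>p_u$. Taking the infimum over such $p$ and comparing with $\cdim=\inf\{p\colon\theta_p<1\}$ gives $\cdim=p_u$, provided $p_u\ge\cdim$, and this last inequality is exactly the general bound noted above.

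For (1)$\Rightarrow$(2), assume $p_u=\cdim$. The key observation is that $\log\theta_p-\frac1p\log\mathsf{D}$ is non-decreasing in $p$, again by the second inequality of Proposition~\ref{pr:asympmodulus}, and that it equals $\log\theta_\infty$ in the limit $p\to\infty$. So $\log\theta_p\le\frac1p\log\mathsf{D}+\log\theta_\infty$, which is the upper bound we already have. For the reverse inequality I would use the monotonicity in the other direction: for $p\ge p_u$, the non-decreasing function $h(p)=\log\theta_p-\frac1p\log\mathsf{D}$ satisfies $h(p)\ge h(p_u)$. At $p_u=\cdim$, continuity gives $\theta_{p_u}=1$, so $h(p_u)=-\frac1{p_u}\log\mathsf{D}=\log\theta_\infty$ by the definition of $p_u$. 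Hence $h(p)\ge\log\theta_\infty$ for all $p\ge\cdim$, i.e.\ $\theta_p\ge\Theta(p)$, and combined with the upper bound this gives equality.

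The main obstacle is justifying $\theta_{\cdim}=1$. It needs continuity of $\theta_p$, which is asserted after Proposition~\ref{pr:asympmodulus}. It also needs $\theta_p\ge1$ for $p<\cdim$, which holds because $\theta_p$ is non-increasing and $\cdim$ is the infimum of the set where $\theta_p<1$. The remaining point is the range of validity of Proposition~\ref{pr:asympmodulus}, stated for $p>1$: the degenerate case $\cdim\le1$ should be handled by the same limiting argument or excluded, noting that $\theta_p\le1$ always holds trivially for large $p$.
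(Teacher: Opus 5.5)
Your proposal is correct and is essentially the paper's argument. In (2)$\Rightarrow$(1) you use that $\mathsf{D}^{1/p}\theta_\infty<1$ exactly when $p>\log\mathsf{D}/(-\log\theta_\infty)=\udim(G,\bim)$, and in (1)$\Rightarrow$(2) you use the monotonicity of $\mathsf{D}^{-1/p}\theta_p$ together with $\mathsf{D}^{-1/\udim(G,\bim)}=\theta_\infty$, exactly as the paper does. The only difference is that the paper proves (1)$\Rightarrow$(2) by contrapositive: a $p>\udim$ with $\theta_p<\mathsf{D}^{1/p}\theta_\infty$ forces $\theta_{\udim}<1$, hence $\cdim<\udim$. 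That step relies on the same openness of $\{p\colon\theta_p<1\}$ that you invoke as $\theta_{\cdim}=1$, and your direction in fact only needs $\theta_{\cdim}\ge 1$.
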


\begin{proof}
We have $\mathsf{D}^{1/p}\theta_\infty\le 1$  if and only if $p>\frac{\log\mathsf{D}}{-\log\theta_\infty}$. Consequently, if $\theta_p=\mathsf{D}^{1/p}\theta_\infty$, then $\cdim(G, \bim)=\frac{\log\mathsf{D}}{-\log\theta_\infty}$.

On the other hand, we have $\theta_p\le\mathsf{D}^{1/p}\theta_\infty$ for all $(G, \bim)$ and all $p\in (0, \infty]$, by Proposition~\ref{pr:asympmodulus}. 

If there exists $p>\udim(G, \bim)$ such that $\theta_p<\mathsf{D}^{1/p}\theta_\infty$, then
\[\theta_\infty\theta_{\udim(G, \bim)}=\mathsf{D}^{-1/\udim(G, \bim)}\theta_{\udim(G, \bim)}\le \mathsf{D}^{-1/p}\theta_p<\theta_\infty,\]
hence $\theta_{\udim(G, \bim)}<1$, which implies that $\cdim(G, \bim)<\udim(G, \bim)$.
\end{proof}

\begin{proposition}
Let $(G, \bim)$ be a contracting self-similar group. The following conditions are equivalent.
\begin{enumerate}
\item $\ldim(G, \bim)=\udim(G, \bim)$.
\item $\eta_p=\mathsf{D}^{\frac 1p}\eta_\infty$ for all $p>\ldim(G, \bim)$.
\end{enumerate}
\end{proposition}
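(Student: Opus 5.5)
The plan is to mirror the proof of Corollary~\ref{cor:udimcdim}, replacing $\theta_p$ by $\eta_p$. First I will record the consequences of the monotonicity inequalities for $\eta_p$ (Proposition~4.3.15 of~\cite{nek:dyngroups}, quoted above). Applied with $q=\infty$, the inequality $\mathsf{D}^{-1/p}\eta_p\le\mathsf{D}^{-1/q}\eta_q$ gives
\[
\eta_p\le\mathsf{D}^{1/p}\eta_\infty \quad\text{for all } p>0.
\]
Together with $\eta_\infty=\theta_\infty$ (the preceding proposition) and $\udim(G,\bim)=\frac{\log\mathsf{D}}{-\log\theta_\infty}$, this yields
\[
\udim(G,\bim)=\frac{\log\mathsf{D}}{-\log\eta_\infty}=\inf\{p\colon \mathsf{D}^{1/p}\eta_\infty<1\}.
\]
In particular $\ldim(G,\bim)\le\udim(G,\bim)$ always holds, since $\mathsf{D}^{1/p}\eta_\infty<1$ forces $\eta_p<1$. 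I will also note that $p\mapsto \mathsf{D}^{-1/p}\eta_p$ is non-decreasing and $p\mapsto\eta_p$ is non-increasing, so $\eta_p$ is continuous in $p$; this is needed at boundary values.

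For (2)$\Rightarrow$(1), assume $\eta_p=\mathsf{D}^{1/p}\eta_\infty$ for all $p>\ldim$. Suppose, for contradiction, that $\ldim<\udim$. Pick $p$ with $\ldim<p<\udim$. Then $\eta_p<1$ by the definition of $\ldim$ and monotonicity. On the other hand $\eta_p=\mathsf{D}^{1/p}\eta_\infty\ge1$, because $p<\udim=\log\mathsf{D}/(-\log\eta_\infty)$. This is a contradiction, so $\ldim=\udim$.

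For (1)$\Rightarrow$(2), assume $\ldim=\udim=:d$, and suppose some $p>d$ has $\eta_p<\mathsf{D}^{1/p}\eta_\infty$. Since $\mathsf{D}^{-1/q}\eta_q$ is non-decreasing in $q$, we get
\[
\mathsf{D}^{-1/d}\eta_d\le\mathsf{D}^{-1/p}\eta_p<\eta_\infty .
\]
Hence $\eta_d<\mathsf{D}^{1/d}\eta_\infty=1$, the last equality coming from $d=\udim$. By continuity of $\eta$, we then have $\eta_{d'}<1$ for some $d'<d$, so $\ldim<d$, which is a contradiction.

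The main delicate point is these boundary and continuity issues. One must confirm that $\mathsf{D}^{1/d}\eta_\infty=1$ exactly at $d=\udim$, which is immediate from the formula. One must also confirm that $\eta_d<1$ really forces $\ldim<d$ rather than $\ldim=d$. Here the continuity of $p\mapsto\eta_p$, deduced from the two monotonicity inequalities exactly as the paper does for $\theta_p$, resolves it.
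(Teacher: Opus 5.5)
Your proposal is correct and follows the paper's route. The paper's proof just says the argument is the same as for Corollary~\ref{cor:udimcdim}, and that is what you carry out: the threshold $\mathsf{D}^{1/p}\eta_\infty<1 \iff p>\udim$ gives one direction, and the chain $\mathsf{D}^{-1/d}\eta_d\le\mathsf{D}^{-1/p}\eta_p<\eta_\infty$ gives the other. You also make explicit two points the paper leaves implicit: that $\eta_\infty=\theta_\infty$ is needed to write $\udim=\log\mathsf{D}/(-\log\eta_\infty)$, and that continuity of $p\mapsto\eta_p$ is what turns $\eta_d<1$ into $\ldim<d$.
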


Note that since $\ldim(G, \bim)\le\cdim(G, \bim)\le\udim(G, \bim)$, the condition $\ldim(G, \bim)=\udim(G, \bim)$ implies $\cdim(G, \bim)=\udim(G, \bim)$.

\begin{proof}
The proof is the same as for Corollary~\ref{cor:udimcdim}.
\end{proof}

Let $(G, \bim)$ be a self-similar group. A subgroup $H\le G$ is called \emph{self-similar} if there exists a basis $\alb\subset\bim$ such that $h|_x\in H$ for all $h\in H$ and $x\in\alb$. Note that if $\nuke$ is the nucleus of $G$ with respect to $\alb$, then $\nuke\cap H$ is the nucleus of $H$, and the subgroup generated by $H\cap\nuke$ is self-similar. 

The identity map on $\xmo$ and the embedding $\xmo\times H\subset\xmo\times G$ induce a surjective semiconjugacy $\lims[H]\arr\lims[G]$ of the limit dynamical systems and an $H$-equivariant map $\limg[H]\arr\limg[G]$.

Let us describe some classes of groups for which we have coincidence of all three dimension $\ldim=\cdim=\udim$.

\begin{proposition}
Let $(G, \bim)$ be a level-transitive self-similar group, $\mathsf{D}=\deg\bim$, and let $H\le G$ be an infinite self-similar subgroup of $G$ contained in the center of $G$. Then for every $p>0$ we have
\[\mathsf{D}^{1/p}\eta_\infty(H)=\eta_p(H)\le\eta_p(G).\]

In particular, if $\eta_\infty(H)=\eta_\infty(G)$, then 
\[\eta_p(G)=\theta_p(G)=\mathsf{D}^{1/p}\eta_\infty(G)\]
 and $\cdim(G, \bim)=\udim(G, \bim)$.
\end{proposition}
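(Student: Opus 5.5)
The plan is to prove first that the sections of a central element are essentially the same at every vertex of a given level. From this, the equality $\mathsf{D}^{1/p}\eta_\infty(H)=\eta_p(H)$ follows at once. The comparison with $G$ and the final chain of inequalities then use only results already established earlier in the paper.

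\textbf{Step 1 (sections of a central element are almost constant on a level).} Since $H$ is central, it is abelian. We may replace $H$ by the subgroup generated by $H\cap\nuke$, which is self-similar and finitely generated, so that $l_H$ is a word length on a finitely generated abelian group. Let $h\in H$ and $g\in G$. From $hg=gh$ and the rule~\eqref{eq:sections} we get $h|_{g(v)}g|_v=g|_{h(v)}h|_v$. Because $h|_v\in H$ is central, this gives
\[h|_{g(v)}=h|_v\cdot g|_{h(v)}g|_v^{-1},\qquad\text{with } g|_{h(v)}g|_v^{-1}\in H.\]

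By level-transitivity, fix a finite set $F\subset G$ acting transitively on $\alb$. For $x,y\in\alb$ we then have $h|_y=h|_x c$, where $c$ ranges over a finite set $E\subset H$ that does not depend on $h$.

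Now induct on the level. For $xu,yu'\in\alb^{n}$ we write $h|_{xu}=(h|_x)|_u$ and compare $h|_x$ with $h|_y$ through the bounded error in $E$. Sections of the error at deeper levels again lie in a finite set, namely the nucleus. The accumulated discrepancy is therefore a product of boundedly many elements at each level, and I expect it to stay bounded. What I aim for is
\[\max_{v,w\in\alb^n}l_H\bigl(h|_v^{-1}h|_w\bigr)\le K\]
with $K$ independent of $n$ and $h$. This requires choosing the transitive elements at level $n$ as products of level-one elements of $F$ composed with sections, which is where contraction enters. This step is the main obstacle. If a uniform constant fails, I would settle for a bound $K_n$ that is subexponential in $n$, which is enough after taking $n$th roots.

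\textbf{Step 2.} Given Step 1, $l(h|_v)\doteq_K\max_w l(h|_w)$ for all $v\in\alb^n$. Since $\max_w l(h|_w)\to\infty$ along sequences with $l(h)\to\infty$ (otherwise $\eta_\infty(H)=0$, which contradicts $\eta_\infty\ge\mathsf{D}^{-1}$), we get
\[\Bigl(\sum_{v\in\alb^n}l(h|_v)^p\Bigr)^{1/p}\sim \mathsf{D}^{n/p}\max_{v\in\alb^n}l(h|_v).\]
Taking the $\limsup$ and then $n$th roots yields $\eta_p(H)=\mathsf{D}^{1/p}\eta_\infty(H)$.

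\textbf{Step 3 ($\eta_p(H)\le\eta_p(G)$).} I would restrict the $\limsup$ in the definition of $\eta_p(G)$ to elements of $H$. The difficulty is that $l_G$ and $l_H$ may differ by distortion. I would handle this by using central elements that realize the lengths. Alternatively, I would note that Step 1 also holds with $l_G$ in place of $l_H$, so $\eta_p^{G\text{-length}}(H)=\mathsf{D}^{1/p}\eta_\infty^{G\text{-length}}(H)$. I would then compare $\eta_\infty$ for the two length functions through the depth characterization $\rho(h)\approx\log l(h)/(-\log\eta_\infty)$ from the proof that $\eta_\infty=\theta_\infty$, since depth does not depend on the choice of length.

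\textbf{Step 4 (conclusion).} Assume $\eta_\infty(H)=\eta_\infty(G)$. Combining Proposition~\ref{pr:etatheta}, Proposition~\ref{pr:asympmodulus}, and $\theta_\infty=\eta_\infty$ gives the chain
\[\eta_p(G)\le\theta_p(G)\le\mathsf{D}^{1/p}\theta_\infty(G)=\mathsf{D}^{1/p}\eta_\infty(G)=\mathsf{D}^{1/p}\eta_\infty(H)=\eta_p(H)\le\eta_p(G).\]
Hence all terms are equal. Finally, the equivalence proposition with $\eta_p=\mathsf{D}^{1/p}\eta_\infty$ gives $\ldim=\udim$, and therefore $\cdim(G,\bim)=\udim(G,\bim)$.
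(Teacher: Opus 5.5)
Your overall strategy is the paper's. Centrality of $H$ together with level-transitivity forces all sections $h|_v$, $v\in\alb^n$, to have almost the same length, so the $\ell^p$-sum at level $n$ is $\mathsf{D}^{n/p}$ times the maximum. Your chain in Step 4 is also the right way to get the ``in particular'' part.

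However, Step 1, which carries the whole argument, is not proved. You state an expectation that $l_H(h|_v^{-1}h|_w)$ stays bounded uniformly in $n$, offer a subexponential bound as a fallback, and establish neither. The missing observation is that no control in $n$ is needed. In the definition of $\eta_p$, the $\limsup_{l(h)\to\infty}$ is taken at a \emph{fixed} level $n$; only afterwards come the $n$th root and the limit in $n$. So it suffices to have $l(h|_v)\doteq_{K_n}l(h|_{v_0})$ for all $v\in\alb^n$, with $K_n$ depending on $n$ in any way. By the triangle inequality in $\ell^p$ (or $|a^p-b^p|\le|a-b|^p$ for $p<1$), $\bigl(\sum_v l(h|_v)^p\bigr)^{1/p}$ differs from $\mathsf{D}^{n/p}l(h|_{v_0})$ by at most $\mathsf{D}^{n/p}K_n$. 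This error vanishes after dividing by $l(h)\to\infty$. The same remark makes the asymptotic equivalence in Step 2 unnecessary. It also removes the need for $\max_w l(h|_w)\to\infty$, and your justification of that via $\eta_\infty(H)>0$ does not work anyway, since the $\limsup$ may be attained along a different sequence.

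Such a $K_n$ comes directly from your own level-one identity, applied on $\alb^n$ instead of $\alb$. Level-transitivity gives, for each $v\in\alb^n$, some $g_v\in G$ with $g_v(v_0)=v$, and then
\[
h|_v=h|_{v_0}\cdot g_v|_{h(v_0)}\,g_v|_{v_0}^{-1}.
\]
The correction lies in the finite set $\{g_v|_u\,g_v|_{v_0}^{-1}\colon v,u\in\alb^n\}\cap H$, which does not depend on $h$. This is exactly the paper's proof, written there as $v=r_v\cdot v_0\cdot t_v$ with $h$ commuted through.

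Your inductive route would also close. Let $E\subset H$ be the level-one correction set and $E^*$ the finite set of all sections of its elements. Then $h|_{yw'}=(h|_x c)|_{w'}=h|_x|_{c(w')}\,c|_{w'}$, and induction on the central element $h|_x$ gives $h|_v^{-1}h|_w\in(E^*)^n$, a linear bound. That is more than needed.

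Your distortion worry in Step 3 is legitimate; the paper simply calls $\eta_p(H)\le\eta_p(G)$ obvious. Your depth idea gives the needed direction. Take $\lambda>\eta_\infty$ computed with $l_G$ on $H$. The argument in the proof of $\eta_\infty=\theta_\infty$ shows that elements of $H$ of depth greater than $n$ have $l_G$, hence $l_H$, at least of order $\lambda^{-n}$. So the intrinsic $\eta_\infty(H)=\theta_\infty(H)$ is at most $\lambda$. Combining this with the fixed-level comparison for both length functions yields $\eta_p(H)\le\eta_p(G)$.
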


\begin{proof}
The inequality $\eta_p(H)\le\eta_p(G)$ is obvious, so it is enough to prove the equality $\eta_p(H)=\mathsf{D}^{1/p}\eta_\infty(H)$.
Let $\alb\subset\bim$ be a basis such that $H$ is self-similar with respect to it. Take $n\ge 1$, choose $v_0\in\alb^n$, and choose elements $r_v, t_v\in G$ for all $v\in\alb^n$ such that $v=r_v\cdot v_0\cdot t_v$ for all $v\in\alb^n$.

Let $h\in H$. Then
\[h\cdot v=h\cdot r_v\cdot v_0\cdot t_v=r_v\cdot h(v)\cdot v_0\cdot h|_{v_0}\]
for all $v\in\alb^n$. It follows that $l(h|_v)\doteq_C l(h|_{v_0})$ for all $v\in\alb^n$, where $C$ does not depend on $h$ (but may depend on other choices). Consequently, using the triangle inequality for the $p$-norm, when $p\ge 1$, we get that the limit
\[\limsup_{l(h)\to\infty, h\in H}\frac{\left(\sum_{v\in\alb^n} l(h|_v)^p\right)^{1/p}}{l(h)}=\limsup_{l(h)\to\infty, h\in H}\frac{(\deg\bim)^{n/p}l(h|_{v_0})}{l(h)}\]
does not depend on $p$. For $p\in (0, 1)$ we get the same equality, using the triangle inequality for the sum of $p$th powers. We also get
\[\limsup_{l(h)\to\infty, h\in H}\frac{\max_{v\in\alb^n} l(h|_v)}{l(h)}=\limsup_{l(h)\to\infty, h\in H}\frac{l(h|_{v_0})}{l(h)}.\]
Taking the $n$-th root and passing to the limit as $n\to\infty$, we get $\eta_p(H)=(\deg\bim)^{1/p}\eta_\infty(H)$.
\end{proof}

\begin{corollary}
For every contracting level transitive abelian group $(G, \bim)$ we have
\[\ldim(G, \bim)=\cdim(G, \bim)=\udim(G, \bim).\]
\end{corollary}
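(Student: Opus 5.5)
The plan is to prove the chain $\ldim(G,\bim)\le\cdim(G,\bim)\le\udim(G,\bim)\le\ldim(G,\bim)$. The first two inequalities are already available: $\ldim\le\cdim$ is Proposition~\ref{pr:etatheta}, and $\cdim\le\udim$ holds because uniform metrics belong to $\mathcal{AR}(G,\bim)$. It therefore suffices to show $\udim\le\ldim$. By the preceding proposition this is equivalent to proving $\eta_p(G)=\mathsf{D}^{1/p}\eta_\infty(G)$ for all $p>\ldim$. Since $G$ is abelian, $G$ is its own center, so I will apply the last proposition with $H=G$.

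The steps are as follows.

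\textbf{Step 1: $G$ is self-similar in itself.} With respect to any basis, sections of elements of $G$ lie in $G$, so $G$ is a self-similar subgroup of $G$. It is also central, since $G$ is abelian.

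\textbf{Step 2: $G$ is infinite.} A level-transitive group acts transitively on $\alb^n$ for every $n$, which forces $|G|\ge\mathsf{D}^n$ for all $n$. The degenerate case $\mathsf{D}=1$ is excluded, or trivial, since then the limit space is a point.

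\textbf{Step 3: apply the proposition.} The hypothesis $\eta_\infty(H)=\eta_\infty(G)$ holds tautologically for $H=G$. The proposition then gives $\eta_p(G)=\theta_p(G)=\mathsf{D}^{1/p}\eta_\infty(G)$ for all $p>0$, and also $\cdim=\udim$.

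\textbf{Step 4: conclude $\ldim=\udim$.} From Step 3, $\eta_p=\mathsf{D}^{1/p}\eta_\infty$ holds for all $p$, and in particular for all $p>\ldim$. The preceding equivalence then yields $\ldim=\udim$. One can also see this directly. Since $\eta_\infty=\theta_\infty$ and $\eta_p=\mathsf{D}^{1/p}\theta_\infty$, we have $\eta_p<1$ exactly when $p>\log\mathsf{D}/(-\log\theta_\infty)=\udim$. Hence $\ldim=\udim$. Combined with $\ldim\le\cdim\le\udim$, this proves all three dimensions coincide.

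The main obstacle I expect is checking that the hypotheses of the central-subgroup proposition really hold for $H=G$. In particular, that proposition is stated for finitely generated groups, and it requires choosing elements $r_v,t_v$ with $v=r_v\cdot v_0\cdot t_v$, which uses level-transitivity. If $G$ is not finitely generated, I would replace $G$ by $\langle\nuke\rangle$ as in the definition of $\eta_p$. This subgroup is still abelian and self-similar, but I still need to make sure it remains level-transitive, or else argue that $G$ and $\langle\nuke\rangle$ have the same limit space and the same values of $\eta_p$ and $\theta_p$.
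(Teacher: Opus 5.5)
Your proposal is correct and matches the paper's argument. There the corollary comes from the central self-similar subgroup proposition applied with $H=G$: an abelian group is its own center, so $\eta_\infty(H)=\eta_\infty(G)$ holds automatically. This gives $\eta_p=\mathsf{D}^{1/p}\eta_\infty$, and together with $\eta_\infty=\theta_\infty$ and $\udim=\log\mathsf{D}/(-\log\theta_\infty)$ it yields $\ldim=\udim$, which squeezes $\cdim$ in between. Your caveat about finite generation is a reasonable extra precaution that the paper does not address.
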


Consider a level-transitive self-similar group $(\Z^n, \bim)$, and let $x\in\bim$. Then the stabilizer of $x$ in $\Z^n$ is a subgroup $H$ of index $\deg\bim$ in $\Z^n$, and the map $g\mapsto g|_x$ from $H$ to $\Z^n$ is a homomorphism. Let $A$ be the matrix of its linear extension to $\mathbb{Q}^n$. Then $A^{-1}$ has integer entries and $\det A=\deg\bim$. 

The group $(\Z^n, \bim)$ is contracting if and only if the spectral radius of $A$ is less than 1. In fact, the spectral radius $\rho(A)$ is equal to $\eta_\infty(\Z^n, \bim)$, see~\cite{neksid}.

It follows that \[\ldim(\Z^n, \bim)=\cdim(\Z^n, \bim)=\udim(\Z^n, \bim)=\frac{\log|\det A|}{\log\rho(A)}.\]

For example, for a solenoid of ``base $1.5$'' odometer, we get $\udim=\frac{\log 3}{\log 3-\log 2}$.

Let $H$ be a finitely generated self-similar subgroup of $G$. Let $l_H$ be the word length on $H$, and let $l_G$ be the word length on $G$. We may assume that the generating set of $H$ is contained in the generating set of $G$. Then $l_H(h)\ge l_G(h)$ for all $h\in H$.

\begin{defi}
We say that $H$ is \emph{undistorted} if there exists $C>1$ such that $l_H(h)\le Cl_G(h)$ for all $h\in H$.
\end{defi}

Since word lengths $l_H$ and $l_G$ restricted to an undistorted subgroup $H$ of $G$ are bi-Lipschitz equivalent, if we use the length $l_G$ in the definition of $\eta_p(H)$, we get the same number as for $l_H$. Consequently, we get the following.

\begin{lemma}
If $H\le G$ is an undistorted self-similar subgroup, then $\eta_p(H)\le\eta_p(G)$ for all $p\in (0, \infty]$.
\end{lemma}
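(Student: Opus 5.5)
The lemma concerns an undistorted self-similar subgroup $H\le G$, and I want to show $\eta_p(H)\le\eta_p(G)$ directly from the definition of the contraction coefficients. First I will justify the remark preceding the lemma: computing $\eta_p(H)$ with $l_G$ in place of $l_H$ gives the same number. Take a basis $\alb$ with respect to which $H$ is self-similar. By the earlier proposition from~\cite{nek:dyngroups}, $\eta_p$ does not depend on the choice of basis, so I may use this $\alb$ for both $H$ and $G$. For $h\in H$ and $v\in\alb^n$ the section $h|_v$ lies in $H$, and undistortedness gives
\[
l_G(h|_v)\le l_H(h|_v)\le C\,l_G(h|_v),\qquad l_G(h)\le l_H(h)\le C\,l_G(h).
\]
Hence for each fixed $n$ the ratio
\[
\frac{\left(\sum_{v\in\alb^n}l(h|_v)^p\right)^{1/p}}{l(h)}
\]
computed with $l_H$ differs from the same ratio computed with $l_G$ by a factor in $[C^{-2},C^2]$. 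The same holds for the $\max$ version when $p=\infty$. I also note that $l_H(h)\to\infty$ if and only if $l_G(h)\to\infty$. So the two $\limsup$'s are comparable up to the constant $C^2$, and after taking the $n$-th root and letting $n\to\infty$ this constant disappears. Therefore $\eta_p(H)$ equals the quantity defined with $l_G$ restricted to $H$.

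Next I compare this quantity with $\eta_p(G)$. With the common length $l_G$ and the common basis $\alb$, the $\limsup$ over $h\in H$ with $l_G(h)\to\infty$ is a $\limsup$ over a subset of the elements of $G$ with $l_G(g)\to\infty$, so it is at most the $\limsup$ over all $g\in G$. Taking $n$-th roots and passing to the limit, whose existence is guaranteed by the cited proposition, gives $\eta_p(H)\le\eta_p(G)$. The argument works uniformly for $0<p<\infty$ and for $p=\infty$.

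There are two minor points to check. One is that $H$ is infinite, so that the $\limsup$ is meaningful; if $H$ is finite, $\eta_p(H)$ is degenerate and I would treat that case by convention. The other is that $\eta_p(H)$ should be taken with respect to a basis in which $H$ is self-similar, which the basis-independence result lets me assume. I don't expect a real obstacle here; the only care needed is keeping track of which basis and which length function appear in each definition.
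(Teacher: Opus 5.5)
Your proof is correct and follows the paper's own argument: the paper likewise notes that $l_H$ and $l_G$ are bi-Lipschitz on $H$, so $\eta_p(H)$ may be computed with $l_G$, and then the $\limsup$ over $H$ is dominated by the $\limsup$ over $G$. Your additional points make explicit what the paper leaves implicit: using a common basis $\alb$ via basis-independence, and noting that the constant $C^{\pm 2}$ disappears after taking $n$-th roots.
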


\begin{theorem}
\label{th:undistorted}
Let $(G_0, \bim_0)$ be an undistorted self-similar subgroup of a contracting group $(G, \bim)$. If $\ldim(G_0, \bim_0)=\udim(G_0, \bim_0)$
and $\eta_\infty(G_0, \bim_0)=\eta_\infty(G, \bim)$, then 
  $\ldim(G, \bim)=\udim(G, \bim)$.
\end{theorem}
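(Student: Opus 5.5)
The plan is to reduce everything to the characterization in the preceding proposition: $\ldim=\udim$ holds if and only if $\eta_p=\mathsf{D}^{1/p}\eta_\infty$ for all $p>\ldim$. Equivalently, since $\udim=\frac{\log\mathsf{D}}{-\log\eta_\infty}$ (because $\eta_\infty=\theta_\infty$ and $\udim=\frac{\log\mathsf{D}}{-\log\theta_\infty}$), the equality $\ldim=\udim$ means that $\eta_p\ge 1$ for every $p<\frac{\log\mathsf{D}}{-\log\eta_\infty}$. The inequality $\ldim\le\cdim\le\udim$ is already known, so only $\ldim(G,\bim)\ge\udim(G,\bim)$ needs proof.

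First I would make sure the degrees agree. Since $(G_0,\bim_0)$ is a self-similar subgroup with respect to a basis $\alb$ of $\bim$, I take $\bim_0$ to be the $G_0$-biset spanned by $\alb$, so $\deg\bim_0=|\alb|=\mathsf{D}$. Then $\eta_\infty(G_0)=\eta_\infty(G)$ gives $\udim(G_0,\bim_0)=\frac{\log\mathsf{D}}{-\log\eta_\infty(G_0)}=\frac{\log\mathsf{D}}{-\log\eta_\infty(G)}=\udim(G,\bim)$.

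Next, since $G_0$ is undistorted, the preceding lemma gives $\eta_p(G_0)\le\eta_p(G)$ for all $p$. If $\eta_p(G)<1$, then $\eta_p(G_0)<1$, so $\{p\colon\eta_p(G)<1\}\subseteq\{p\colon\eta_p(G_0)<1\}$ and hence $\ldim(G,\bim)\ge\ldim(G_0,\bim_0)$. Combining this with the hypothesis and the previous step yields
\[\ldim(G)\ge\ldim(G_0)=\udim(G_0)=\udim(G)\ge\cdim(G)\ge\ldim(G),\]
which gives equality throughout.

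The main obstacle is the bookkeeping in the first step. One must check that the limit space and nucleus conventions for $(G_0,\bim_0)$ are computed using the same basis $\alb$ and the same $\mathsf{D}$. One must also check that the formula for $\udim$, which was proved for contracting groups, applies to $G_0$; this holds because $G_0$ is contracting, having nucleus $\nuke\cap G_0$. In addition, $\eta_\infty$ for $G_0$ must be computed with the length $l_{G_0}$, which undistortedness makes harmless. If the paper intends $\bim_0$ to be of different degree, I would instead pass to a suitable tensor power to match degrees, using $\eta_p(G,\bim^{\otimes k})=\eta_p(G,\bim)^k$.
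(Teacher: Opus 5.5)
Your argument is correct. It is the same squeeze as the paper's, but run on dimensions rather than on the coefficients $\eta_p$.

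The paper fixes $p$ and traps $\eta_p(G,\bim)$ between two bounds. The lower bound is $\eta_p(G_0,\bim_0)$, by undistortedness. By the characterization of $\ldim=\udim$ applied to $G_0$, this equals $\mathsf{D}^{1/p}\eta_\infty(G_0,\bim_0)=\mathsf{D}^{1/p}\eta_\infty(G,\bim)$. The upper bound is the general inequality $\eta_p\le\mathsf{D}^{1/p}\eta_\infty$. The paper then applies the characterization to $G$.

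You instead convert the hypothesis on $\eta_\infty$ into $\udim(G_0,\bim_0)=\udim(G,\bim)$, using $\udim=\frac{\log\mathsf{D}}{-\log\theta_\infty}$ and $\eta_\infty=\theta_\infty$. Your check that $\bim_0=\alb\cdot G_0$ has the same degree $\mathsf{D}$ is exactly what this step needs. You then close the loop with $\ldim\le\cdim\le\udim$. So, although you announce the characterization proposition as your plan, you never actually use it.

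The two routes prove the same thing, since that proposition makes $\eta_p(G,\bim)=\mathsf{D}^{1/p}\eta_\infty(G,\bim)$ for $p>\ldim(G,\bim)$ equivalent to the conclusion. The paper's route is a one-line computation. Yours only compares thresholds, so it never needs the equality $\eta_p(G_0,\bim_0)=\mathsf{D}^{1/p}\eta_\infty(G_0,\bim_0)$ at small $p$. The paper asserts that equality for every $p>1$, although the characterization supplies it only for $p>\ldim(G_0,\bim_0)$; that range is all that matters there.

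Your detour through $\cdim$ is harmless but avoidable. The inequality $\ldim\le\udim$ follows directly from $\eta_p\le\mathsf{D}^{1/p}\eta_\infty$ together with $\udim=\frac{\log\mathsf{D}}{-\log\eta_\infty}$.
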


\begin{proof}
For $\mathsf{D}=|\bim/G|$ and every $p>1$, we have
\[\mathsf{D}^{1/p}\eta_\infty(G, \bim)\ge \eta_p(G, \bim)\ge\eta_p(G_0, \bim_0)=\mathsf{D}^{1/p}\eta_\infty(G_0, \bim_0)=\mathsf{D}^{1/p}\eta_\infty(G, \bim),\]
so we have equalities, which implies that $\udim(G, \bim)=\ldim(G, \bim)$.
\end{proof}

Let $(G_i, \bim_i)$, for $i=1, 2$, be contracting groups. Their direct product is the self-similar group $(G_1\times G_2, \bim_1\times\bim_2)$ defined in the natural way.

Then $G_1\times\{1\}$ is an undistorted self-similar subgroup of $G_1\times G_2$. 

\begin{proposition}
Let $(G_i, \bim_i)$, for $i=1, 2$, be contracting groups such that $\ldim(G_1, \bim_1)=\udim(G_1, \bim_1)$ and $\eta_\infty(G_1, \bim_1)\ge\eta_\infty(G_2, \bim_2)$. Denote $\mathsf{D}_i=|\bim_i/G_i|$. Then \[\ldim(G_1\times G_2, \bim_1\times\bim_2)=\udim(G_1\times G_2, \bim_1\times\bim_2)=\frac{\log(\mathsf{D}_1\mathsf{D}_2)}{-\log\eta_\infty(G_1, \bim_1)}.\]
\end{proposition}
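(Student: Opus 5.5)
The plan is to apply Theorem~\ref{th:undistorted} to the undistorted self-similar subgroup $G_0=G_1\times\{1\}$ of $G=G_1\times G_2$, whose biset is $\bim_0=\bim_1\times\bim_2$ restricted to $G_1\times\{1\}$. Two things have to be checked: that $\eta_\infty(G,\bim)=\eta_\infty(G_0,\bim_0)$, and that $\ldim(G_0,\bim_0)=\udim(G_0,\bim_0)$. The formula then follows from $\udim=\log\mathsf{D}/(-\log\theta_\infty)$ and $\theta_\infty=\eta_\infty$, with $\mathsf{D}=\mathsf{D}_1\mathsf{D}_2$.

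\textbf{Step 1: computing $\eta_\infty$.} Take the basis $\alb=\alb_1\times\alb_2$, so that words of length $n$ are pairs $(v_1,v_2)$. For $g=(g_1,g_2)$ we have $g|_{(v_1,v_2)}=(g_1|_{v_1},g_2|_{v_2})$. Use the generating set $S_1\times\{1\}\cup\{1\}\times S_2$, for which $l(g)=l_1(g_1)+l_2(g_2)$. Then
\[\max_{(v_1,v_2)}l(g|_{(v_1,v_2)})=\max_{v_1}l_1(g_1|_{v_1})+\max_{v_2}l_2(g_2|_{v_2}).\]
For any $\lambda_i>\eta_\infty(G_i)$ and large $n$, this is at most $\lambda_1^n l_1(g_1)+\lambda_2^n l_2(g_2)+C_n\le\max(\lambda_1,\lambda_2)^n l(g)+C_n$. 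Hence $\eta_\infty(G)\le\max(\eta_\infty(G_1),\eta_\infty(G_2))=\eta_\infty(G_1)$.

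For $G_0$, each section $g_1|_{v_1}$ appears $\mathsf{D}_2^n$ times but does not depend on $v_2$, so the maximum is unchanged and $\eta_\infty(G_0,\bim_0)=\eta_\infty(G_1,\bim_1)$. The lemma on undistorted subgroups gives $\eta_\infty(G_0)\le\eta_\infty(G)$, so equality holds.

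\textbf{Step 2: computing $\eta_p(G_0)$.} Here
\[\sum_{(v_1,v_2)}l_1(g_1|_{v_1})^p=\mathsf{D}_2^n\sum_{v_1}l_1(g_1|_{v_1})^p,\]
so $\eta_p(G_0,\bim_0)=\mathsf{D}_2^{1/p}\eta_p(G_1,\bim_1)$. The hypothesis $\ldim(G_1)=\udim(G_1)$, combined with the preceding proposition characterizing $\ldim=\udim$, gives $\eta_p(G_1)=\mathsf{D}_1^{1/p}\eta_\infty(G_1)$ for $p>\ldim(G_1)$. Therefore $\eta_p(G_0)=(\mathsf{D}_1\mathsf{D}_2)^{1/p}\eta_\infty(G_0)$ for all such $p$.

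This is exactly what the chain of inequalities in the proof of Theorem~\ref{th:undistorted} needs, with $\mathsf{D}=|\bim/G|=\mathsf{D}_1\mathsf{D}_2$; note that the degree of $\bim_0$ as a $G_0$-biset is also $\mathsf{D}$. That chain is
\[\mathsf{D}^{1/p}\eta_\infty(G)\ge\eta_p(G)\ge\eta_p(G_0)=\mathsf{D}^{1/p}\eta_\infty(G_0)=\mathsf{D}^{1/p}\eta_\infty(G),\]
valid for every $p$ with $\eta_p(G_0)$ of the above form, and in particular for $p$ near the threshold. It forces $\eta_p(G)=\mathsf{D}^{1/p}\eta_\infty(G)$.

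\textbf{Step 3: the formula.} The condition $\eta_p(G)<1$ then holds exactly when $p>\log\mathsf{D}/(-\log\eta_\infty(G))$. Since $\ldim\le\cdim\le\udim=\log\mathsf{D}/(-\log\theta_\infty)=\log\mathsf{D}/(-\log\eta_\infty)$, all three coincide and equal $\log(\mathsf{D}_1\mathsf{D}_2)/(-\log\eta_\infty(G_1,\bim_1))$.

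\textbf{Main obstacle.} The delicate point is the range of $p$. The identity $\eta_p(G_1)=\mathsf{D}_1^{1/p}\eta_\infty(G_1)$ is only given for $p>\ldim(G_1)$, whereas we need it near the threshold $\log(\mathsf{D}_1\mathsf{D}_2)/(-\log\eta_\infty(G_1))$, which is larger than $\ldim(G_1)=\log\mathsf{D}_1/(-\log\eta_\infty(G_1))$. So the identity is available exactly where it is needed. A second, lesser concern is the technicality that $G_0$ is not faithful on the product tree. I would handle this by noting that $\eta_p$ is defined for non-faithful groups, and that undistortedness of $G_0$ is immediate from the chosen generating set.
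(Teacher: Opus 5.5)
Your proposal is correct and follows the paper's proof: show $\eta_\infty(G_1\times G_2,\bim_1\times\bim_2)=\max\bigl(\eta_\infty(G_1,\bim_1),\eta_\infty(G_2,\bim_2)\bigr)$, then apply Theorem~\ref{th:undistorted} to the undistorted subgroup $G_1\times\{1\}$. Your computation $\eta_p(G_0,\bim_0)=\mathsf{D}_2^{1/p}\eta_p(G_1,\bim_1)=(\mathsf{D}_1\mathsf{D}_2)^{1/p}\eta_\infty(G_1,\bim_1)$ for $p>\ldim(G_1,\bim_1)$ makes explicit the hypothesis $\ldim(G_0,\bim_0)=\udim(G_0,\bim_0)$, for a biset of degree $\mathsf{D}_1\mathsf{D}_2$, which the paper uses without stating it; your faithfulness worry is moot, since $G_1\times\{1\}$ acts faithfully on the product tree whenever $G_1$ acts faithfully on $\alb_1^*$.
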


\begin{proof}
It follows from the definitions that 
\[\eta_\infty(G_1\times G_2, \bim_1\times\bim_2)=\max(\eta_\infty(G_1, \bim_1), \eta_\infty(G_2, \bim_2)).\]
Consequently, if $\eta_\infty(G_1, \bim_1)\ge\eta_\infty(G_2, \bim_2)$, then we can apply Theorem~\ref{th:undistorted}.
\end{proof}

\providecommand{\bysame}{\leavevmode\hbox to3em{\hrulefill}\thinspace}
\providecommand{\MR}{\relax\ifhmode\unskip\space\fi MR }
\providecommand{\MRhref}[2]{%
  \href{http://www.ams.org/mathscinet-getitem?mr=#1}{#2}
}
\providecommand{\href}[2]{#2}

\end{document}